\documentclass[12pt]{amsart}

\usepackage{amsmath}
\usepackage{mathtools}
\usepackage{amssymb}
\usepackage{amsthm}
\usepackage{amscd}
\usepackage{mathrsfs}

\usepackage{graphicx}
\usepackage[dvipsnames]{xcolor}
\usepackage[normalem]{ulem}
\usepackage{comment}

\usepackage{multirow}
\usepackage{bigdelim}
\usepackage{enumerate}
\usepackage{enumitem}
\usepackage[all]{xy}
\usepackage{tikz}

\usepackage[most]{tcolorbox}
\tcbuselibrary{breakable, skins, theorems}
\tcbset{
  mybox/.style={
    colback=white,
    colframe=green!35!black,
    fonttitle=\bfseries,
    breakable=true
  }
}

\usepackage[final]{showkeys}

\setlist[enumerate]{label=$(\arabic*)$}

\usepackage[colorlinks,linkcolor=red,anchorcolor=blue,citecolor=blue]{hyperref}
\newtheorem{thm}{Theorem}[section]

\newtheorem{cor}[thm]{Corollary}
\newtheorem{prop}[thm]{Proposition}

\newtheorem{lem}[thm]{Lemma}

\theoremstyle{definition}
\newtheorem{defn}[thm]{Definition}

\newtheorem{thmdefn}[thm]{Theorem-Definition}

\newtheorem{ex}[thm]{Example}

\theoremstyle{remark}
\newtheorem{rem}[thm]{Remark}
\newtheorem{setup}[thm]{Setup}

\newtheorem{notation}[thm]{Notation}
\newtheorem{claim}[thm]{Claim}
\newtheorem{assup}[thm]{Assumption}

\numberwithin{equation}{section}

\newcommand{\rk}{\operatorname{rk}}

\newcommand{\codim}{\operatorname{codim}}

\newcommand{\Supp}{\operatorname{Supp}}

\newcommand{\End}{\operatorname{End}}

\newcommand{\Hom}{\mathscr{H}\!\mathit{om}}

\DeclareMathOperator{\Ric}{Ric}

\DeclareMathOperator{\tr}{tr}
\DeclareMathOperator{\Id}{Id}
\DeclareMathOperator{\res}{res}
\DeclareMathOperator{\length}{length}

\DeclareMathOperator{\Diff}{Diff}
\DeclareMathOperator{\Lie}{Lie}
\DeclareMathOperator{\Autop}{Aut}
\DeclareMathOperator{\Kerop}{Ker}

\DeclareMathOperator{\grad}{grad}

\newcommand{\cF}{\mathcal{F}}

\DeclareMathOperator{\Sym}{Sym}

\newcommand{\Tor}{\mathrm{tor}}

\newcommand{\Aut}[1]{\Autop(#1)}
\newcommand{\Ker}[1]{\Kerop(#1)}

\newcommand{\R}{\mathbb{R}}
\newcommand{\Z}{\mathbb{Z}}
\newcommand{\N}{\mathbb{N}}
\newcommand{\C}{\mathbb{C}}
\newcommand{\Q}{\mathbb{Q}}

\newcommand{\T}{\mathbb{T}}
\newcommand{\G}{\mathbb{G}}

\DeclareMathOperator{\ch}{ch}
\DeclareMathOperator{\td}{td}
\DeclareMathOperator{\Td}{Td}
\DeclareMathOperator{\CH}{CH}

\DeclareMathOperator{\Cone}{Cone}
\newcommand{\dR}{\mathrm{dR}}

\newcommand{\underalign}[2]{\quad \underset{\mathclap{\strut #1}}{#2}\quad}

\title[Miyaoka--Yau inequality and delta invariant II -Soliton case-]{The Miyaoka--Yau inequality and the delta invariant for Fano varieties II -Soliton case-}

\author{Tomoyuki HISAMOTO}
\address{Graduate School of Mathematics, Nagoya University, Furocho, Chikusa, Nagoya,
Japan}
\email{{\tt  hisamoto@math.nagoya-u.ac.jp}}

\author{Masataka IWAI}
\address{Department of Mathematics, Graduate School of Science, the University of Osaka,
1-1, Machikaneyama-cho, Toyonaka, Osaka 560-0043, Japan.}
\email{{\tt masataka@math.sci.osaka-u.ac.jp}}
\email{{\tt masataka.math@gmail.com}}

\date{\today, version 0.01}

\subjclass[2020]{Primary 14J45, Secondary 14C17, 32Q20, 32Q26}
\keywords{Miyaoka--Yau inequality, Fano varieties, delta invariant, K-stability, K\"ahler--Ricci solitons, equivariant Chern classes, weighted delta invariant}
\begin{document}

\begin{abstract}
%We establish the following Miyaoka--Yau inequality for any \(n\)-dimensional klt Fano variety \(X\), possibly K-unstable, in terms of its delta invariant:
%\[\left(2(n+1)\widehat{c}_2(X)-n c_1(X)^2\right)\cdot c_1(X)^{n-2}\ge -n \left(1-\min\{1,\delta(X)\}\right)^2 \cdot c_1(X)^n.\]
Inspired by recent work of Inoue and Hallam--Lahdili, 
we prove an equivariant Miyaoka--Yau inequality with a correction term for smooth Fano manifolds admitting K\"ahler--Ricci solitons. It recovers the usual Miyaoka--Yau inequality in the K\"ahler--Einstein case. Our proof uses weighted equivariant Bogomolov--Gieseker and Langer inequalities for torsion-free sheaves, together with a weighted slope estimate for the canonical extension sheaf. We also construct toric examples showing that the existence of a soliton implies neither weighted semistability of the canonical extension sheaf nor nonnegativity of the equivariant Miyaoka--Yau number.

\end{abstract}

\maketitle

%%%%%%%%%%%%%%%
% Show entries down to subsubsections in the table of contents.
\setcounter{tocdepth}{2}
%%%%%%%%%%%%%%%%

%%%%%%%%%%%%%%%%%%
% Customize the indentation of section, subsection, and subsubsection entries.
\makeatletter
% Section entries: no extra indentation.
\renewcommand{\tocsection}[3]{%
  \indentlabel{\@ifnotempty{#2}{\ignorespaces#1 #2.\quad}}#3%
}
% Subsection entries: add a small indentation.
\renewcommand{\tocsubsection}[3]{%
  \hspace{0.75em}\indentlabel{\@ifnotempty{#2}{\ignorespaces#1 #2.\quad}}#3%
}
% Subsubsection entries: add a larger indentation.
\renewcommand{\tocsubsubsection}[3]{%
  \hspace{1.5em}\indentlabel{\@ifnotempty{#2}{\ignorespaces#1 #2.\quad}}#3%
}
\makeatother
%%%%%%%%%%%%%%%%%%%%%%%%%%%

\tableofcontents

\section{Introduction}
\label{sec-Introduction}
Using his solution of the Calabi conjecture in \cite{Yau78}, Yau \cite{Yau77} proved that every \(n\)-dimensional compact K\"ahler manifold with ample canonical bundle satisfies
\begin{equation}
\label{equa-intro-yau}
    \big(2(n+1)c_2(X)-n c_1(X)^2\big) \cdot c_1(K_X)^{n-2} \geq 0.
\end{equation}
Independently, Miyaoka \cite{Miy77} proved that $3c_2(X)-c_1(X)^2 \geq 0$ for every smooth projective surface $X$ with big canonical bundle. 
The inequality \eqref{equa-intro-yau} is now called the \emph{Miyaoka--Yau inequality}. 
When the canonical divisor is positive, extensions to klt varieties and log pairs have been studied in \cite{GKPT19b, GT22, CGG23, ZZZ25, IJZ25}.
 In particular, \cite{IJZ25} established the Miyaoka--Yau inequality for klt varieties of arbitrary dimension with big canonical divisor by using non-pluripolar products.

Our main result is an equivariant Miyaoka--Yau inequality with a correction term for K\"ahler--Ricci solitons.
%Ricci solitons play an important role in the study of the Ricci flow introduced by Hamilton, as illustrated by Perelman's celebrated work. 
%K\"ahler--Ricci solitons are the K\"ahler analogues of Ricci solitons. 
K\"ahler--Ricci solitons are K\"ahler analogues of Ricci solitons and arise naturally as limits of the normalized K\"ahler--Ricci flow on Fano manifolds.
They were first studied in \cite{Cao97} and further developed in \cite{TZ00, TZ02, WZ04}. The solution of the Hamilton--Tian conjecture (see \cite{Tian97} for its formulation)\cite{CW12, CW17a, CW17b} shows that the normalized K\"ahler--Ricci flow on a Fano manifold converges along subsequences to a possibly singular \(\mathbb{Q}\)-Fano variety carrying a weak K\"ahler--Ricci soliton. It was further proved in \cite{CSW18} that this limit is unique and that the degeneration admits an algebro-geometric description. A purely algebro-geometric construction was recently obtained in \cite{BLXZ23}. These results suggest that K\"ahler--Ricci solitons play an important role in the study of general Fano manifolds.
%Recently, the Hamilton--Tian conjecture was solved in \cite{CW12, CW17a, CW17b}. and the work \cite{CSW18} showed that the normalized K\"ahler--Ricci flow on a Fano manifold yields a canonical algebraic  degeneration whose limit admits a K\"ahler--Ricci soliton. 

However, in the soliton setting, one cannot expect the Miyaoka--Yau inequality to hold for the usual Chern classes.  Indeed, every toric Fano manifold admits a K\"ahler--Ricci soliton by \cite{WZ04}, whereas the toric Fano $4$-fold $\mathbb{P}_{\C\mathbb{P}^3}(\mathcal{O}_{\C\mathbb{P}^3} \oplus \mathcal{O}_{\C\mathbb{P}^3}(3))$ does not satisfy the Miyaoka--Yau inequality. 
Thus, the usual Chern classes must be replaced by suitable alternatives.

To overcome this difficulty, we use equivariant intersection theory inspired by the recent works of Inoue \cite{Inoue20} and Hallam--Lahdili \cite{HL24}. Following Inoue's approach, which combines algebraic geometry and complex geometry, and extending the work of Hallam--Lahdili to coherent sheaves within an algebro-geometric framework, we establish the following equivariant 
Miyaoka--Yau inequality with a correction term.

\begin{thm}[{cf.~Corollary \ref{cor-equiv-MY-corrected-delta-one}}]
\label{thm-main-equiv}
Let \(X\) be an \(n\)-dimensional smooth Fano manifold with \(n\geq2\) admitting a K\"ahler--Ricci soliton \((\omega,\xi)\), that is, there exist a compact torus \(\T\subset\operatorname{Aut}^{\circ}(X)\), an element \(\xi\in\mathfrak t\coloneqq\operatorname{Lie}(\T)\), a \(\T\)-invariant K\"ahler form \(\omega\in c_1(X)\), and a canonically normalized moment map \(\mu\colon X\to\mathfrak t^\vee\) such that
 \[\operatorname{Ric}(\omega)-\omega=\frac{\sqrt{-1}}{2\pi}\partial\bar\partial\langle\mu,\xi\rangle. \]
Set $\mu_\xi\coloneqq\langle\mu,\xi\rangle$.
Then the following equivariant Miyaoka--Yau inequality with a correction term holds:
\begin{equation*}
\begin{aligned}
&\left(\left(2(n+1)c_2^\T(X)-n c_1^\T(X)^2\right)\cdot e^{c_1^\T(X)}\right)_\T(\xi) \\
&\geq 
-\frac{n+1}{2(e^{c_1^\T(X)})_\T(\xi)} 
\left(\int_X\left|
\frac{(e^{c_1^\T(X)})_\T(\xi)}{\int_X\omega^{[n]}}
e^{-\mu_\xi}-1-\mu_\xi
\right|e^{\mu_\xi}\omega^{[n]}\right)^2.
\end{aligned}
\end{equation*}
\end{thm}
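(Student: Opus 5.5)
We follow the strategy indicated in the abstract: apply a weighted equivariant Bogomolov--Gieseker/Langer inequality to the canonical extension sheaf, and control its failure of weighted semistability by a weighted slope estimate. The correction term will come out of that estimate.

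\textbf{Step 1: the canonical extension and its Chern data.} Let \(\mathcal{E}\) be the canonical extension
\[
0\to\mathcal{O}_X\to\mathcal{E}\to T_X\to0,
\]
with extension class \(c_1(X)\in H^1(X,\Omega_X)\). This is a \(\T\)-equivariant bundle of rank \(n+1\), and
\[
c_1^\T(\mathcal{E})=c_1^\T(X),\qquad c_2^\T(\mathcal{E})=c_2^\T(X).
\]
A direct computation then gives
\[
2(n+1)c_2^\T(\mathcal{E})-n\,c_1^\T(\mathcal{E})^2=2(n+1)c_2^\T(X)-n\,c_1^\T(X)^2 .
\]
So the left-hand side of the statement is exactly the weighted discriminant
\[
\Delta_\xi(\mathcal{E}):=\Bigl(\bigl(2(n+1)c_2^\T-n c_1^\T{}^2\bigr)(\mathcal{E})\cdot e^{c_1^\T(X)}\Bigr)_\T(\xi)
\]
of \(\mathcal{E}\) with respect to the weighted polarization \(e^{c_1^\T(X)}\) evaluated at \(\xi\).

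\textbf{Step 2: the weighted Langer inequality.} For a \(\T\)-equivariant torsion-free sheaf, the weighted equivariant Langer inequality (proved earlier in the paper) bounds \(\Delta_\xi\) from below. The bound is \(-(\text{const})\) times the square of the spread of the weighted Harder--Narasimhan slopes, normalized by \((e^{c_1^\T(X)})_\T(\xi)\). For a two-step filtration with rank \(n+1\), the constant should specialize to
\[
\frac{n+1}{2\,(e^{c_1^\T(X)})_\T(\xi)}\,\bigl(\mu^\xi_{\max}(\mathcal{E})-\mu^\xi_{\min}(\mathcal{E})\bigr)^2 .
\]
More precisely, I would use the form
\[
\Delta_\xi(\mathcal{E})\ \ge\ -\frac{n+1}{2(e^{c_1^\T})_\T(\xi)}\,(\text{bound})^2,
\]
where the bound controls \(\operatorname{rk}(\mathcal{F})\,\bigl(\mu^\xi(\mathcal{F})-\mu^\xi(\mathcal{E})\bigr)\) over all equivariant saturated subsheaves \(\mathcal{F}\subset\mathcal{E}\). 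When \(\mathcal{E}\) is weighted semistable this bound is zero, and we recover the Bogomolov--Gieseker case.

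\textbf{Step 3: the weighted slope estimate (main obstacle).} We must show that every \(\T\)-invariant saturated subsheaf \(\mathcal{F}\subset\mathcal{E}\) has weighted slope excess at most
\[
\int_X\Bigl|\tfrac{V_\xi}{V}e^{-\mu_\xi}-1-\mu_\xi\Bigr|\,e^{\mu_\xi}\omega^{[n]},
\qquad V_\xi=(e^{c_1^\T(X)})_\T(\xi),\quad V=\int_X\omega^{[n]} .
\]
The plan is as follows.

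1. Put on \(\mathcal{E}\) the natural hermitian metric built from \(\omega\): the metric on \(\mathcal{O}\oplus T_X\) twisted by the second fundamental form \(\omega\). Its curvature involves \(\mathrm{Ric}(\omega)-\omega\) and \(\omega\).
2. Compute the weighted mean curvature \(\Lambda_{\omega}F+\text{(moment map term)}\), using the equivariant curvature with the moment map \(\mu_\xi\) and the weight \(e^{\mu_\xi}\).
3. Use the soliton equation \(\mathrm{Ric}(\omega)-\omega=\tfrac{\sqrt{-1}}{2\pi}\partial\bar\partial\mu_\xi\) to see that this mean curvature equals the constant \(V_\xi/V\) plus an error endomorphism. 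The error's pointwise norm is \(\bigl|\tfrac{V_\xi}{V}e^{-\mu_\xi}-1-\mu_\xi\bigr|\), coming from the trace of the moment-map component against the weight.
4. Apply Chern--Weil to \(\mathcal{F}\), writing \(\deg_\xi\mathcal{F}\) as the integral of \(\operatorname{tr}(\pi_\mathcal{F}\cdot\text{mean curvature})\) minus the second fundamental form term. The second fundamental form term is nonnegative and can be dropped.
5. Bound the error contribution via \(|\operatorname{tr}(\pi_\mathcal{F} A)|\le|A|\), with operator norm \(\le\) the scalar above, and integrate against \(e^{\mu_\xi}\omega^{[n]}\).

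I expect the delicate part to be arranging the equivariant normalization (canonical moment map, where to put the weight \(e^{\mu_\xi}\)) so that the constant part is exactly \(V_\xi/V\) and matches \(\mu^\xi(\mathcal{E})\).

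\textbf{Step 4: conclusion.} Insert the estimate from Step 3 into the Langer-type bound of Step 2 to obtain the stated inequality. In the Kähler--Einstein case, \(\xi=0\), \(\mu_\xi=0\) and \(V_\xi=V\), so the correction term vanishes and we recover the usual Miyaoka--Yau inequality.
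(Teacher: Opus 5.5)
Your architecture matches the paper's: the canonical extension, the weighted Harder--Narasimhan filtration with Bogomolov--Gieseker on the factors and the exponential-weight Hodge index on the cross terms, and a Chern--Weil slope estimate. However, Step~2 as you state it is false. Your final constant comes out right only because this error is cancelled by a factor-$2$ loss in Step~3.

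\textbf{Step~2.} Put $V_\xi=(e^{c_1^\T(X)})_\T(\xi)$, and let $B\ge0$ bound $\operatorname{rk}(\mathcal F)\bigl(\mu_v(\mathcal F)-\mu_v(\mathcal E)\bigr)$ over saturated $\mathcal F$. The Harder--Narasimhan argument gives
\[
\Delta_\xi(\mathcal E)\ \ge\ -\frac{1}{V_\xi}\sum_{i<j}r_ir_j(\mu_i-\mu_j)^2=-\frac{r}{V_\xi}\sum_i r_i(\mu_i-\mu)^2\ \ge\ -\frac{2r}{V_\xi}B^2 .
\]
The last step holds because $r_i\ge1$ and the positive and negative parts of $\sum_i r_i(\mu_i-\mu)$ are partial sums along the filtration, each of size at most $B$. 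This is Lemma~\ref{lem-numerical-tail-bound} with $\beta=\mu$.

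The constant $2r$ cannot be lowered to your $r/2$. Take $\xi=0$, $\omega\in c_1(\mathcal O(1))$ on $\mathbb P^N$ with $N\ge2$, and $\mathcal E=\mathcal O(1)\oplus\mathcal O^{\oplus(r-2)}\oplus\mathcal O(-1)$ with $r\ge2$. In the paper's normalization $\omega^{[k]}=\omega^k/k!$ one has $V=\frac1{N!}$ and $B=\frac1{(N-1)!}$. The discriminant is then
\[
-\frac{2r}{(N-2)!}=-\frac{N-1}{N}\cdot\frac{2r}{V}B^2<-\frac{r}{2V}B^2 .
\]
Also, inserting $\mu_v^{\max}-\mu_v\le B$ and $\mu_v-\mu_v^{\min}\le B$ into the final form of the weighted Langer inequality only gives $-\frac{(n+1)^2}{V_\xi}B^2$, which is too weak for $n\ge2$. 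You have to stop at the intermediate sum over the filtration, as the paper does.

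\textbf{Step~3.} With the correct Step~2, your bound $B\le\int_X|f|e^{\mu_\xi}\omega^{[n]}$, where $f=\frac{V_\xi}{V}e^{-\mu_\xi}-1-\mu_\xi$, proves an inequality four times weaker than the statement. The missing ingredient is the sharper estimate $B\le\frac12\int_X|f|e^{\mu_\xi}\omega^{[n]}$, which is Theorem~\ref{thm-slope-equiv-ineq} with $\gamma=1$. It does not come from an operator-norm bound.

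Take the extension class $\sqrt{2\pi\nu}\,c_1(X)$ with $\nu=\frac{V_\xi}{(n+1)V}$, and the metric $e^{2\mu_\xi}h_\omega\oplus e^{\mu_\xi}$. Then the equivariant mean curvature is exactly $s\,\Id+f\,P_0$, where $s=1+2\mu_\xi-\nu e^{-\mu_\xi}$ and $P_0$ is the orthogonal projection onto $\mathcal O_X$. This uses $\kappa=\frac1{2\pi}e^{-\mu_\xi}\Delta_v\mu_\xi=\mu_\xi$, which follows from the soliton equation together with the normalization (Claim~\ref{clm:quotient-kappa-H}).

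The scalar $s$ is not constant, so the curvature is not ``a constant plus an error of norm $|f|$'' as you describe. This is harmless: paired with $\tr P_Q=q$, the scalar part integrates to $q\,\mu_v(\mathscr V)$. The remaining term is $\int_X f\,p\,e^{\mu_\xi}\omega^{[n]}$ with $p=\tr(P_QP_0)\in[0,1]$.

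The factor $\frac12$ comes from combining two facts:
\begin{itemize}
\item $\int_X fe^{\mu_\xi}\omega^{[n]}=-\int_X\mu_\xi e^{\mu_\xi}\omega^{[n]}=0$, which is exactly the canonical normalization;
\item the pointwise inequality $fp\ge\frac{f-|f|}{2}$.
\end{itemize}
Then $\frac{2(n+1)}{V_\xi}B^2$ with $B=\frac12\int_X|f|e^{\mu_\xi}\omega^{[n]}$ is precisely the stated correction term.
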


In particular, by \cite{WZ04}, every toric Fano manifold satisfies this inequality. 
Moreover, when $X$ admits a K\"ahler--Einstein metric, $\xi=0$, so this inequality recovers the usual Miyaoka--Yau inequality proved by \cite{CO75}.
We refer to \cite{TZ00, HL23} for the equivalence between the above definition of a K\"ahler--Ricci soliton and the usual definition formulated in terms of the Lie derivative.
More generally, we obtain a formulation involving the greatest weighted Ricci lower bound $\delta'_v(X)$; see Theorem \ref{thm-equiv-MY-corrected-delta}.

To understand this theorem, we will explain the equivariant intersection number $(c^{\T}_2(X) \cdot e^{c^{\T}_1(X)})_{\T}(\xi)$ and the canonical normalization of \(v\).
First, let \(c_2^{\T}(X)\in H_{\T}^{4}(X,\mathbb R)\) denote the second equivariant Chern class of \(X\), and let
\(
\pi_*\colon H_{\T}^{\bullet}(X,\mathbb R)\rightarrow \operatorname{Sym}\mathfrak t^\vee
\)
be the equivariant pushforward. 
Following \cite{Inoue20, HL24}, by using the Taylor expansion \(e^x=\sum_{k=0}^{\infty}\frac{x^k}{k!}\), we define
\[
(c^{\T}_2(X) \cdot e^{c^{\T}_1(X)})_{\T}
\coloneqq \sum_{k=0}^{\infty}
\pi_{*}\left(c^{\T}_2(X) \cdot \frac{1}{k!}c^{\T}_1(X)^{k}\right)
\in \widehat{\Sym\,\mathfrak{t}^{\vee}}.
\]
By \cite[Theorem 1.1]{Inoue20}, this expression can be evaluated at any \(\xi\in\mathfrak t\), giving a well-defined real number
\(
(c^{\T}_2(X) \cdot e^{c^{\T}_1(X)})_{\T}(\xi).
\)
Similarly, we define the equivariant intersection numbers $(c^{\T}_1(X) \cdot e^{c^{\T}_1(X)})_{\T}(\xi)$ and $(e^{c^{\T}_1(X)})_{\T}(\xi)$.

Second, we say that \emph{$v$ is canonically normalized} if
\begin{equation}
\label{eq-defn-canonically-normalized}
(c^{\T}_1(X) \cdot e^{c^{\T}_1(X)})_{\T}(\xi)
=n(e^{c^{\T}_1(X)})_{\T}(\xi).
\end{equation}
This condition is equivalent to
$\int_{X} \mu_\xi e^{\mu_\xi}\omega^n=0$, 
where $\mu_{\xi} \coloneqq \langle\mu,\xi\rangle$, and hence agrees with the normalization in \cite{TZ02, AJL23}.
%Hence the inequality \eqref{eq-MY-solition-delta} is equivalent to
%$$\left( \left(2(n+1)c^{\T}_2(X) -n c^{\T}_1(X)^2\right)\cdot e^{c^{\T}_1(X)} \right)_{\T}(\xi)\ge  - n^3 (1 - \delta'_v(X))^2 \cdot \left( e^{c^{\T}_1(X)}\right)_{\T}(\xi).$$

One may ask whether the existence of a K\"ahler--Ricci soliton with soliton vector $\xi$ implies
\[
\left(\left(2(n+1)c_2^\T(X)-n c_1^\T(X)^2\right)\cdot e^{c_1^\T(X)}\right)_\T(\xi)\geq0.
\]
The following theorem gives a counterexample.

\begin{thm}[{=Example \ref{ex-negative-equiv-MY-sixfold}}]
\label{thm-negative-equiv-MY}
Let $X\coloneqq\mathbb P_{\C\mathbb P^5}(\mathcal O\oplus\mathcal O(5))$, and let $\T$ be the maximal compact torus of its acting algebraic torus.
Then $X$ admits a $\T$-invariant K\"ahler--Ricci soliton $(\omega,\xi)$ such that
\[
\left(\left(14c_2^\T(X)-6c_1^\T(X)^2\right)\cdot e^{c_1^\T(X)}\right)_\T(\xi)<0.
\]
\end{thm}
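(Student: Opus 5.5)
The plan is to compute everything explicitly by toric localization (Duistermaat--Heckman) on the moment polytope, and then to evaluate at the soliton vector. By \cite{WZ04}, $X$ admits a soliton. Since $\mathbb P(\mathcal O\oplus\mathcal O(5))$ over $\C\mathbb P^5$ has a large symmetry group $U(6)$, the soliton vector $\xi$ must be invariant under the Weyl group of the $U(6)$-action. Hence $\xi$ lies in the one-dimensional fiber direction, generated by the $\C^*$ scaling the fibers. This reduces the problem to one real parameter $t$, where $\xi=t\,\zeta$.

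\textbf{Step 1: find $t$.} The polytope $P$ of $-K_X$ is a truncated simplex, and the moment coordinate along $\zeta$ is a height function $h$ on $P$. The fiber over level $h$ is a dilated $5$-simplex, whose volume is proportional to $(a+bh)^5$ for explicit constants $a,b$ determined by the toric data (degree $5$ twist, $-K$ restricted to fibers, and so on). The soliton vector is determined by the vanishing of the Futaki-type integral
\[
\int_P h\,e^{t h}\,dx=0
\]
(after normalization). In the one-variable reduction this reads $\int_{h_0}^{h_1} h\,e^{th}(a+bh)^5\,dh=0$. This is a transcendental equation in $t$. I would not solve it in closed form. Instead I would bracket $t$ in an explicit interval, using sign changes of this monotone function of $t$.

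\textbf{Step 2: express the equivariant numbers as weighted integrals.}
\begin{itemize}
\item By the definition of the canonical normalization, $(c_1^\T\cdot e^{c_1^\T})_\T(\xi)=\int_P (\text{linear in } h)\,e^{th}$ and $(e^{c_1^\T})_\T(\xi)=\int_P e^{th}\,dx$, up to normalization conventions.
\item For $c_2^\T$ on a toric manifold, $c^\T(X)=\prod_\rho(1+D_\rho^\T)$. The equivariant divisor classes correspond to the affine functions $\ell_\rho$ defining the facets.
\item Using the Brion/localization formula, or equivalently facet integrals, I get
\[
\Bigl(D_\rho^\T D_{\rho'}^\T\, e^{c_1^\T}\Bigr)_\T(\xi)=\int_{F_\rho\cap F_{\rho'}} e^{th},
\]
the integral over codimension-two faces with the appropriate lattice measure.
\item Hence $(c_2^\T\cdot e^{c_1^\T})_\T(\xi)$ is the sum over pairs of adjacent facets of these codimension-two face integrals, and $(c_1^{\T\,2}\cdot e^{c_1^\T})_\T(\xi)$ similarly involves $\sum_{\rho,\rho'}$, including self-intersections.
\item Each such integral reduces, via the symmetry, to a one-variable integral of $e^{th}$ against a polynomial in $h$.
\end{itemize}

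\textbf{Step 3: the sign.} The Miyaoka--Yau combination $14c_2-6c_1^2$ becomes an explicit expression
\[
F(t)=\int e^{th}\,p(h)\,dh+\text{boundary terms at } h_0, h_1.
\]
I would show $F(t)<0$ on the bracketing interval from Step 1, by interval bounds or by verifying monotonicity and checking the endpoints. As a sanity check, at $t=0$ the value must equal the ordinary number $(14c_2-6c_1^2)c_1^4$. For the $n=4$, $\mathcal O(3)$ analogue this number is already known to be negative, which suggests the sixfold is also negative near $t=0$. The soliton weight is expected to improve the sign, which is why the parameters $(6,5)$ are chosen.

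\textbf{Main obstacle.} I expect the hardest part to be Step 3 combined with Step 1. The value $t$ is only implicitly defined, so the inequality must be certified rigorously using tight bounds on $t$. If the negativity margin is small, I would first try to express $F$ as a polynomial in $1/t$ times $e^{t h_i}$, plus exact exponential integrals. I would then prove negativity on the whole admissible interval of $t$ by elementary estimates.
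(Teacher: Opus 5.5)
Your plan follows the paper's argument essentially step for step: symmetry reduces the soliton vector to $\xi=(0,\dots,0,\tau)$, and $\tau$ is bracketed from $A'(\tau)=0$ with $A(t)=\frac1{120}\int_{-1}^1(6+5y)^5e^{ty}\,dy$. The equivariant Euler sequence then gives $c_2^\T(X)=\sum_{i<j}c_1^\T(\mathcal O_X(D_i))\,c_1^\T(\mathcal O_X(D_j))$, where the vertical codimension-two faces contribute $\frac52\int_{-1}^1(6+5y)^3e^{\tau y}\,dy$ and the faces over $y=\pm1$ contribute $\frac14(e^{-\tau}+14641e^{\tau})$, and the sign is certified numerically. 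The one place to streamline is the $c_1^2$ term: self-intersections $c_1^\T(\mathcal O_X(D_\rho))^2$ are not face integrals, so instead differentiate $s\mapsto(e^{sc_1^\T(X)})_\T(\xi)=s^6A(s\tau)$ twice at $s=1$ to get $30A(\tau)+\tau^2A''(\tau)$, as the paper does.
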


We now explain the strategy of the proof of Theorem \ref{thm-main-equiv}.
Our proofs are based on Tian's approach to the Miyaoka--Yau inequality \cite{Tian92}.
Tian constructed a Hermite--Einstein metric on the canonical extension sheaf of a Fano manifold admitting a K\"ahler--Einstein metric (see Definition \ref{defn-canonical-extension-equiv}), thereby proving its semistability.
Applying the Bogomolov--Gieseker inequality to this sheaf gives the Miyaoka--Yau inequality.
Without assuming semistability, Langer's inequality \cite{Lan04} gives a lower bound for the discriminant in terms of the maximal and minimal slopes.
Using the equivariant Bogomolov--Gieseker inequality and the equivariant Hodge index inequality, we prove a weighted equivariant version of Langer's inequality.
In particular, the following holds for exponential weights.

\begin{thm}[{= Corollary \ref{cor-equiv-BG-soliton-case} \& Theorem \ref{thm-langer-ineq-equiv}}]
\label{thm-main-equiv-langer}
Assume Setup \ref{setup-soliton-situation}, and let $v(\mu)=e^{\mu_\xi}$ for some $\xi\in\mathfrak t$.
Then, for any $\G$-equivariant torsion-free sheaf $\mathcal E$ of rank $r$, we have
\[
\left(\left(2rc_2^\T(\mathcal E)-(r-1)c_1^\T(\mathcal E)^2\right)\cdot v(\alpha_\T)\right)
\geq
-\frac{r^2}{(v(\alpha_\T))}
\left(\mu_v^{\max}(\mathcal E)-\mu_v(\mathcal E)\right)
\left(\mu_v(\mathcal E)-\mu_v^{\min}(\mathcal E)\right).
\]
Here, $\mu_v^{\max}(\mathcal E)$ and $\mu_v^{\min}(\mathcal E)$ denote the maximal and minimal weighted slopes of $\mathcal E$, respectively.

In particular, if $\mathcal E$ is $v$-semistable, then $\left(\left(2rc_2^\T(\mathcal E)-(r-1)c_1^\T(\mathcal E)^2\right)\cdot v(\alpha_\T)\right) \ge 0$.
\end{thm}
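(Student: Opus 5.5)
The plan is to follow Langer's proof of his discriminant inequality, with every intersection number replaced by the weighted equivariant pairing $(\,\cdot\, v(\alpha_\T))$ and every slope replaced by the weighted slope $\mu_v$. I will use three inputs:
\begin{enumerate}
\item the weighted equivariant Bogomolov--Gieseker inequality for $v$-semistable sheaves (Corollary \ref{cor-equiv-BG-soliton-case}), which gives the last assertion of the statement directly;
\item a weighted equivariant Hodge index inequality;
\item existence and uniqueness of a $\G$-equivariant weighted Harder--Narasimhan filtration.
\end{enumerate}
For (2), set $\langle a,b\rangle := (a\cdot b\cdot v(\alpha_\T))$ for equivariant classes of degree two and $\deg_v(a):=(a\cdot \alpha_\T \cdot v(\alpha_\T))$. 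The Hodge index inequality should read
\[
\langle a,a\rangle\,(v(\alpha_\T)) \le \deg_v(a)^2 .
\]
For the exponential weight $v=e^{\mu_\xi}$ I expect this to follow from the Duistermaat--Heckman / Inoue integral representation. Both sides become integrals against the positive measure $e^{\mu_\xi}\omega^{[n]}$ of the equivariant representatives $a_\omega+\langle\mu_a,\xi\rangle$. Cauchy--Schwarz combined with the pointwise Hodge index inequality for the primitive part should then give it.

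The Harder--Narasimhan filtration is
\[
0=\mathcal E_0\subset\cdots\subset\mathcal E_m=\mathcal E,
\]
with $v$-semistable graded pieces $\mathcal F_i$ of rank $r_i$, weighted slopes $\mu_i$ strictly decreasing, $\mu_1=\mu_v^{\max}$ and $\mu_m=\mu_v^{\min}$. I expect it is built in the usual way: the maximal destabilizing subsheaf is unique, hence $\G$-invariant, and weighted degree is additive in exact sequences. Write $\Delta(\mathcal F):=2rc_2-(r-1)c_1^2$. The standard algebraic identity
\[
\frac{\Delta(\mathcal E)}{r}=\sum_i\frac{\Delta(\mathcal F_i)}{r_i}-\frac1r\sum_{i<j}r_ir_j\Big(\frac{c_1(\mathcal F_i)}{r_i}-\frac{c_1(\mathcal F_j)}{r_j}\Big)^2
\]
holds in equivariant cohomology because the equivariant Chern character is additive. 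I pair it with $v(\alpha_\T)$. The first sum is nonnegative by (1). In the second sum, each square is bounded above by Hodge index (2) by $(\mu_i-\mu_j)^2/(v(\alpha_\T))$, since $\deg_v$ of $c_1(\mathcal F_i)/r_i$ is exactly $\mu_i$, up to the normalization used in the paper. Hence
\[
(\Delta(\mathcal E)\cdot v(\alpha_\T))\ \ge\ -\frac{1}{(v(\alpha_\T))}\sum_{i<j}r_ir_j(\mu_i-\mu_j)^2 .
\]

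The remaining step is purely combinatorial, as in Langer. Set $\mu=\sum r_i\mu_i/r$. With all $\mu_i\in[\mu_{\min},\mu_{\max}]$, one has
\[
\sum_{i<j}r_ir_j(\mu_i-\mu_j)^2=r\sum_i r_i(\mu_i-\mu)^2\le r^2(\mu_{\max}-\mu)(\mu-\mu_{\min}),
\]
because $(\mu_i-\mu)^2\le(\mu_{\max}-\mu)(\mu-\mu_{\min})+(\mu_{\max}+\mu_{\min}-2\mu)(\mu_i-\mu)$ and the linear term averages to zero. This gives the stated bound. If $\mathcal E$ is $v$-semistable, then $\mu^{\max}_v=\mu_v=\mu^{\min}_v$ and the right-hand side vanishes.

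I expect the main obstacle to be the weighted Hodge index inequality (2). The weighted pairing is not a genuine intersection form, and positivity of the weight is essential. My first approach is the integral formula for $e^{\mu_\xi}$ weights on a $\T$-invariant Kähler form, reducing to Cauchy--Schwarz as described. A secondary concern is that the Harder--Narasimhan filtration may only be reflexive or saturated, not locally free. Torsion-free sheaves have equivariant Chern classes through equivariant resolutions, and the additivity identity only needs those, so this should be harmless.
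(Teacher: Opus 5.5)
Your Langer-type reduction is exactly the paper's proof of Theorem \ref{thm-langer-ineq-equiv}. It takes the $v$-Harder--Narasimhan filtration, pairs the identity of Lemma \ref{lem-Langer-ineq-chow} with $v(\alpha_\T)$, bounds the cross terms by a weighted Hodge index inequality, and finishes with Langer's combinatorial lemma (your $(A-x)(B+x)\ge0$ argument is correct).

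The genuine gap is your input (1). You cannot use Corollary \ref{cor-equiv-BG-soliton-case} as a tool, because the statement \emph{is} that corollary together with Theorem \ref{thm-langer-ineq-equiv}, and the corollary is the substantive half. It is also not a formality. The Harder--Narasimhan factors are only torsion-free even when $\mathcal E$ is locally free, so the metric proof of \cite[Theorem 6.1]{HL24} does not apply directly. Your proposal contains no idea for this step. The paper argues in three stages:
\begin{enumerate}
\item For a weight $(c+\langle\mu,\mathbf p\rangle)^m$ with $\mathbf p$ a lattice point, it builds the semi-simple fibration $X_P=P\times_\G X\to\C\mathbb P^m$ with a K\"ahler form $\omega_P$. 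The ordinary Chern numbers of $\mathcal E_P$ against $\omega_P$ are $V$ times the weighted ones of $\mathcal E$ (Lemma \ref{lem-degree-formula}). Moreover $\mu^{\max}_{\omega_P^{[n+m-1]}}(\mathcal E_P)=V\mu^{\max}_v(\mathcal E)$ (Lemma \ref{lem-max-slope-comparison}). Hence $v$-semistability of $\mathcal E$ gives ordinary semistability of $\mathcal E_P$, and the classical Bogomolov--Gieseker inequality applies.
\item It reaches the exponential weight through $v_m=\exp(\frac{R^2+1}{m})(1+\frac{\mu_\xi}{m})^m$, which satisfy $v_m>v$ and $v_m\to v$ in $C^2$. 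A $v$-stable sheaf is $v_m$-stable for $m\gg0$ by Lemma \ref{lem-openness-stab}, which rests on the finiteness Theorem \ref{thm-GKP16a-2.29}.
\item It reduces the semistable case to the stable one via a Jordan--H\"older filtration and the Hodge index inequality.
\end{enumerate}

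Your route to the Hodge index inequality also fails as written. The relevant degree is $(a\cdot v(\alpha_\T))$, not $(a\cdot\alpha_\T\cdot v(\alpha_\T))$; only the former equals $\mu_v(\mathcal F)$ for $a=c_1^\T(\mathcal F)/\rk\mathcal F$. Take $a=c_1^\T(L)$ for a $\G$-linearized line bundle with $\T$-invariant metric $h$, which is all you need. Put $\theta\coloneqq\frac{\sqrt{-1}}{2\pi}F_h$ and $g\coloneqq\frac{\sqrt{-1}}{2\pi}\bigl(\Lambda_\omega F_h+\langle\Phi_h,\xi\rangle\bigr)$. Then
\[
(a\cdot v(\alpha_\T))=\int_X g\,e^{\mu_\xi}\omega^{[n]},\qquad (a^2\cdot v(\alpha_\T))=\int_X\bigl(g^2-|\theta|_\omega^2\bigr)e^{\mu_\xi}\omega^{[n]}.
\]
So the desired inequality is equivalent to
\[
(v(\alpha_\T))\int_X g^2e^{\mu_\xi}\omega^{[n]}-\Bigl(\int_X g\,e^{\mu_\xi}\omega^{[n]}\Bigr)^2\le(v(\alpha_\T))\int_X|\theta|_\omega^2e^{\mu_\xi}\omega^{[n]}.
\]
Cauchy--Schwarz only says the left-hand side is nonnegative, which is the wrong direction. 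The pointwise primitive inequality does not control this weighted variance of $g$.

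What is missing is a choice of representative. Changing $h$ to $e^{\phi}h$ changes $g$ by $\frac{1}{2\pi}e^{-\mu_\xi}\Delta_v\phi$. This operator is self-adjoint for $e^{\mu_\xi}\omega^{[n]}$ with kernel the constants, so you can make $g$ constant. The inequality then follows from $|\theta|_\omega^2\ge0$. That would be a legitimate, more direct alternative to the paper, which instead derives it from the ordinary Hodge index theorem on $X_P$ (Theorem \ref{thm-equiv-Hodge-index}, constant $\frac{n+m-1}{n+m}$) and lets $m\to\infty$ (Corollary \ref{cor-equiv-Hodge-index}). This repair does not, however, close the first gap.
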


We apply this approach to exponential weights in the proof of Theorem \ref{thm-main-equiv}.
However, the canonical extension sheaf need not be $v$-semistable even when $X$ admits a K\"ahler--Ricci soliton (Example \ref{ex-unstable-canonical-extension}).
We therefore first prove a weighted slope bound with a correction term for its torsion-free quotients (Theorem \ref{thm-slope-equiv-ineq}).
We then use the intermediate estimate in the proof of Theorem \ref{thm-langer-ineq-equiv}, which is expressed in terms of the slopes of the quotients in the Harder--Narasimhan filtration.
Combining this estimate with the slope bound and Lemma \ref{lem-numerical-tail-bound} gives the equivariant Miyaoka--Yau inequality with a correction term.

To prove Theorem \ref{thm-main-equiv-langer}, we first establish a Bogomolov--Gieseker inequality for $v$-semistable torsion-free sheaves with the exponential weight $v(\mu)=e^{\mu_\xi}$.
The main difficulty is that such sheaves may be singular. Their equivariant second Chern classes cannot be defined by the usual Chern--Weil construction using a smooth metric, and the metric proof in \cite[Theorem 6.1]{HL24} does not apply directly.

We address these difficulties using the algebraic equivariant intersection theory of Edidin--Graham \cite{EG98, EG00}.
In this framework, we formulate weighted slope stability following \cite{HL24} and treat the equivariant second Chern classes of torsion-free sheaves.

In Section \ref{sec-preliminary}, we review the algebraic approach of \cite{EG98, EG00} and the complex-geometric approach of \cite{Inoue20, HL24}.
In Section \ref{sec-G-equivariant-intersection-stability}, we define weighted slopes and weighted intersection numbers for torsion-free sheaves, and establish basic results, including the existence of Harder--Narasimhan filtrations, following \cite{Kob14, GKP16a}.
In Section \ref{sec-BG-polynomial-weight}, we prove the Bogomolov--Gieseker inequality for $v$-semistable torsion-free sheaves with polynomial weights $v(\mu)=(c+b\mu_\xi)^m$, using the semi-simple fibration constructed in \cite{AJL23, HL24}.
In Section \ref{sec-BG-soliton}, we pass to the exponential weight $v(\mu)=e^{\mu_\xi}$ using the limit $(1+x/m)^m\to e^x$ as $m\to\infty$.
Combining the resulting inequality with the Harder--Narasimhan filtration gives the weighted equivariant Langer inequality.

\medskip
\noindent{\bfseries Acknowledgments.}
The authors thank the organizers of the ``31st Symposium on Complex Geometry'' held in Japan, where discussions provided the starting point for this work. 
They also thank Prof. Sebastian Boucksom for kindly answering their questions and sharing his unpublished notes.  
%T.\,H.\ thanks him and Prof. Siarhei Finski for helpful discussions concerning Appendix \ref{sec-Calabi}. 
They also thank Prof. Simon Jubert for his valuable comments concerning the greatest weighted Ricci lower bound $\delta_v'(X)$.
M.\,I.\ was supported by the Grant-in-Aid for Early Career Scientists, No.\ 22K13907. 
T.\,H.\ is supported by JSPS Grant-in-Aid for Scientific Research $(\mathrm{C})$, No.\ 26K06811. 

\medskip
\noindent{\bfseries Use of AI.}
Sections~\ref{sec-preliminary}--\ref{sec-BG-soliton} are based on the authors' earlier research, developed with the assistance of ChatGPT 5.5-5.6 Sol and Gemini 3.1.
After the authors had completed their earlier research, ChatGPT 6 Astra suggested the counterexamples in Section~\ref{sec-examples}.
After that, we revised our earlier work in Section~\ref{sec-proof-soliton-MY}, following suggestions from ChatGPT 6 Astra.
The authors have verified most of the paper, including all arguments in Sections~\ref{sec-Introduction}--\ref{sec-proof-soliton-MY}.
For Section~\ref{sec-examples}, we have checked the methods used in the computations, but have not independently verified all numerical calculations, including the values of $\tau$ in Examples \ref{ex-unstable-canonical-extension} and \ref{ex-negative-equiv-MY-sixfold}, 
because these calculations were too extensive for us to check by hand.

The authors used ChatGPT 5.5-5.6 Sol, ChatGPT 6 Astra, and Gemini 3.1
to improve the writing, search the literature, and discuss
and check mathematical arguments and calculations.
The authors reviewed almost all AI-assisted material included in
the paper and checked the arguments, calculations, and
references themselves.
The authors take full responsibility for the results and
proofs in this paper and for citing previous work accurately
and fully.

\medskip
\noindent{\bfseries Notation and conventions.}
%\subsection*{Notation and conventions}\label{subsec-notation}
In this paper, we denote by $\mathbb{N}$ the set of non-negative integers. 
Unless otherwise stated, we work over the complex number field $\C$. 
(Only in Subsection \ref{subsec-equiv-Chern-RR}, we consider a base field other than \(\C\).)
For any real vector space $M$, we write $M^\vee$ for its dual.
%All complex spaces are assumed to be \textit{second countable}.A \textit{complex analytic variety} or simply an \textit{analytic variety} is a reduced and irreducible complex space. Unless otherwise stated, analytic variety $X$ has complex dimension $n$. 
We employ the standard notation and conventions in \cite{Har77, KM98}.
%Additionally, we say that a normal analytic variety $X$ is a \emph{klt variety} if the pair $(X, 0)$ is klt; equivalently, $X$ is log terminal.

Unless stated otherwise, all sheaves considered in this paper are assumed to be coherent.
%Let $\mathcal{E}$ be a torsion-free (coherent) sheaf on a normal algebraic variety $X$.
For any torsion-free (coherent) sheaf $\mathcal{E}$, we define the dual reflexive sheaf $\mathcal{E}^\vee$ by
\(
\mathcal{E}^\vee \coloneqq \Hom(\mathcal{E}, \mathcal{O}_X).
\)
%For any positive integer $m \in \mathbb{N}$, we define the reflexive tensor power by\(\mathcal{E}^{[\otimes m]} \coloneqq (\mathcal{E}^{\otimes m})^{\vee\vee}.\)
%Given a morphism $f \colon Y \rightarrow X$ between analytic varieties, the reflexive pullback of $\mathcal{E}$ is defined as\(f^{[*]}\mathcal{E} \coloneqq (f^*\mathcal{E})^{\vee\vee}.\)
%A reflexive rank-one sheaf $\mathcal{E}$ is called a \textit{$\mathbb{Q}$-line bundle} if there exists an integer $m \in \mathbb{N}$ such that $\mathcal{E}^{[m]}$ is locally free. We denote the torsion part of a sheaf by $\tor$.
We also identify ``locally free sheaves of finite rank'' with ``vector bundles.''
When working on smooth complex varieties, we use the terms ``(Cartier) divisors'' and ``line bundles'' interchangeably.

\section{Equivariant Chern class and equivariant intersection theory}
\label{sec-preliminary}
In this section, we review the algebro-geometric intersection theory developed in \cite{EG98, EG00}, including equivariant Chow groups and the equivariant Grothendieck-Riemann--Roch theorem, and the complex-geometric equivariant intersection theory defined in \cite{Inoue20, HL24}.

\subsection{Equivariant Chern class and Langer's  formula}
\label{subsec-equiv-Chern-RR}
We recall the equivariant Chow groups of \cite{EG98, EG00}. 
Following \cite{EG98}, a scheme means a separated scheme of finite type over a field $k$, 
in particular, it is Noetherian. An algebraic group is always assumed to be linear.
To simplify some arguments, we also assume that $k$ has characteristic zero.  

In \cite{EG98}, the assumption that ``$X$ is an algebraic space'' is imposed because quotients such as $X/G$ do not necessarily exist as schemes. However, it is quite difficult to work with algebraic spaces. We therefore implicitly impose the following assumption throughout this paper.
\begin{assup}[{Existence of the quotient as a scheme; cf. \cite[Proposition 23]{EG98}}]
Let $G$ be a $g$-dimensional algebraic group, and let $X$ be an $n$-dimensional scheme with a $G$-action. Let $U$ be a scheme on which $G$ acts freely. If the principal bundle quotient $U\to U/G$ exists, then the principal bundle quotient
\(
X\times U \rightarrow (X\times U)/G
\)
exists in the category of schemes.
\end{assup}
Beginning with Section \ref{sec-G-equivariant-intersection-stability}, we mainly consider the case where $G$ is an algebraic torus.
Hence condition $(3)$ in \cite[Proposition 23]{EG98} is satisfied, and the above assumption holds. 
%In particular, the principal bundle quotient \(X\times U \rightarrow X\times^G U\) exists in the category of schemes (see \cite[Proposition 23]{EG98}). 
Therefore, it is not necessary to work with algebraic spaces.

\subsubsection{Equivariant Chern classes and Grothendieck--Riemann--Roch}

We recall Chern classes and Grothendieck-Riemann--Roch theorem in the $G$-equivariant setting according to  \cite[Section 2]{EG98},  \cite[Section 3]{EG00}, and \cite[Appendix B]{BHJ17}.
Our notation follows \cite[Appendix B]{BHJ17}.

\begin{defn}
\cite[Subsection 2.2]{EG98}
\label{defn-equivChow}
Let $G$ be a $g$-dimensional algebraic group, and let $X$ be an $n$-dimensional scheme with a $G$-action. 
For $i \in \Z$, choose a representation $V$ of $G$ and an open subset $U \subset V$ on which $G$ acts freely and such that $\codim (V \setminus U)>n-i$. Let $U\to U/G$ be the corresponding principal $G$-bundle, and define the diagonal action on $X\times U$ by $g(x,u)=(gx,gu)$. This action is also free, and the quotient
\[
X\times U \to (X\times U)/G\eqqcolon X_G
\]
exists in the category of schemes and is a principal $G$-bundle.

We define $\CH_i^G(X)$, the \emph{$G$-equivariant Chow homology group} of degree $i$, by
\[
\CH_i^G(X) \coloneqq \CH_{i+ \dim V -g}(X_G)=\CH_{i+ \dim X_G - \dim X}(X_G).
\]
We denote by $\CH^i_G(X)$ the operational Chow group defined in \cite[Subsection 2.6]{EG98}, and call it the \emph{$G$-equivariant Chow cohomology group} of degree $i$.
\end{defn}
%(We note that this is slightly different from the definition in \cite{EG00}.)
We do not discuss the definition of $\CH^i_G(X)$, since this construction is quite complicated.
%(see \cite[Subsection 2.6]{EG98}). 
Notice that $\CH^i_G(X) \cong \CH^G_{n-i}(X)$ when $X$ is smooth by \cite[Proposition 4]{EG98}.
Definition \ref{defn-equivChow} is independent of the choice of $V$ and $U$ by \cite[Proposition 1]{EG98}. 
%The proof uses Bogomolov's double fibration argument. 
The principal bundle $U\to U/G$ can also be regarded as a finite-dimensional approximation of the classifying bundle $EG\to BG$.

\begin{ex}\cite[Example 3.1]{EG98}
\label{ex-EG98-3.1}
Consider the case $\G \coloneqq \mathbb{G}_m$. Set $V \coloneqq k^{m+1}$ and $U \coloneqq  V \setminus \{0\}$. Then we obtain the principal bundle quotient
$$
U = V \setminus \{0\} \to U/\G = \mathbb{P}^{m}.
$$
If $X_{\G} \coloneqq (X\times U)/\G$, then each fiber of $X_{\G} \to \mathbb{P}^{m}$ is isomorphic to $X$.
When $k=\C$, set $E_m\G \coloneqq U=\C^{m+1}\setminus \{0\}$ and $B_m\G \coloneqq U/\G=\C\mathbb{P}^{m}$. Taking the limit as $m\to \infty$, we obtain the classifying space $S^{\infty} \to \C \mathbb{P}^{\infty}$ of $\G$.
\end{ex}

Let $G$ be an algebraic group, and let $X$ be a scheme with a $G$-action.
 Let $K_G^0(X)$ be the Grothendieck group of  $G$-equivariant vector bundles. It is a commutative ring with respect to the tensor product. A $G$-equivariant morphism induces a pull-back
$$
f^{*}\colon K^0_G(Y) \to K^0_G(X),
$$
which is functorial.
 Let $K_0^G(X)$ be the Grothendieck group of virtual $G$-equivariant coherent sheaves. It is a $K_G^0(X)$-module with respect to the tensor product, and every proper $G$-equivariant morphism $f\colon X\to Y$ induces a push-forward homomorphism
\[
f_!\colon K_0^G(X)\to K_0^G(Y)
\quad 
f_![\mathcal{E}] \coloneqq \sum_{q\in \mathbb{N}}(-1)^q [R^qf_*\mathcal{E}].
\]
%$K_G^0(\operatorname{Spec}k)=K_0^G(\operatorname{Spec}k)$ is identified with the representation ring $R(G)$. Hence $K_0^G(X)$ and $K^0_G(X)$ are $R(G)$-modules.

Let $\CH_G^\bullet(X)\coloneqq\bigoplus_{d\in \mathbb{N}} \CH_G^d(X)$ be the $G$-equivariant Chow cohomology ring. Since $\CH_G^d(X)$ may be nonzero for infinitely many $d\in \mathbb{N}$, as in \cite[Section 2]{EG00}, we define the formal completion $\widehat{\CH}_G^\bullet(X)$ of $\CH_G^\bullet(X)$ by
\[
\widehat{\CH}_G^\bullet(X)\coloneqq\prod_{d\in \mathbb{N}} \CH_G^d(X).
\]
We will often use the following groups with rational coefficients, defined by
\[
\CH_G^\bullet(X)_{\Q} \coloneqq 
\bigoplus_{d\in \mathbb{N}} \CH_G^d(X)\otimes_{\Z}\Q
\quad\text{and}\quad
\widehat{\CH}_G^\bullet(X)_{\mathbb{Q}} \coloneqq
\prod_{d\in \mathbb{N}} \CH_G^d(X)\otimes_{\Z}\Q.
\]
%We use the same convention below.

By \cite[Subsection 2.4]{EG98} and \cite[Definition 3.1]{EG00}, the $G$-equivariant Chern character gives a ring homomorphism
\begin{equation}
\label{eq-Gequiv-K}
\ch^G\colon K_G^0(X)\to \widehat{\CH}_G^\bullet(X)_{\mathbb{Q}},
\quad \ch^G(E) \coloneqq \ch(E_{G}),
\end{equation}
which is functorial with respect to pull-backs. 
Here, for a locally free sheaf $E$, we consider the quotient map $E \times U \to E_G$ and choose $U$ and $V$ so that $\codim(V \setminus U)$ is sufficiently large.
%so that $\codim(V \setminus U)>n$, as in Definition \ref{defn-equivChow}.
Moreover, for a $G$-equivariant line bundle $L$, we have
\[
\ch^G(L)=e^{c_1^G(L)}
=\left(\frac{c_1^G(L)^d}{d!} \right)_{d \in \N}
\in \widehat{\CH}_G^\bullet(X)_{\mathbb{Q}} 
%=\sum_{p=0}^{\infty}\frac{c_1^G(L)^p}{p!}
%\in \widehat{\CH}_G^\bullet(X)_{\mathbb{Q}}.
\]
 The $G$-equivariant Chow homology group
\(
\CH^G_\bullet(X) \coloneqq \bigoplus_{p\in \mathbb{Z}} \CH_p^G(X)
\)
is a $\CH_G^\bullet(X)$-module, and
\(
\CH_G^d(X)\cdot \CH_p^G(X)\subset \CH_{p-d}^G(X)
\)
holds.
Note that $\CH_p^G(X)=0$ for $p>\dim X$, whereas in general $\CH_p^G(X)$ may be nonzero for infinitely many negative integers $p$. As in the case of the Chow cohomology groups, set
\[
\widehat{\CH}_\bullet^G(X) \coloneqq \prod_{p\in \mathbb{Z}} \CH_p^G(X).
\]
We also define
\(
\CH^G_\bullet(X)_{\Q}\coloneqq \CH^G_\bullet(X)\otimes_{\Z}\Q
\) and \(
\widehat{\CH}_\bullet^G(X)_{\mathbb Q}\coloneqq \widehat{\CH}_\bullet^G(X)\otimes_{\Z}\Q.
\)
%\quad \text{and} \quad
%\widehat \CH_\bullet^G(X)_{\mathbb Q} \coloneqq \prod_{p\in\mathbb Z}\CH_p^G(X)\otimes_{\Z}\Q.
%\CH^G_\bullet(X)_{\Q} \coloneqq \bigoplus_{p\in \mathbb{Z}} \CH_p^G(X)\otimes_{\Z}\Q.
%and  define $\widehat \CH_\bullet^G(X)_{\mathbb Q} \coloneqq \prod_{p\in\mathbb Z}\CH_p^G(X)\otimes\mathbb Q$.

We first recall the Grothendieck--Riemann--Roch theorem and summarize the form needed in this paper, following \cite[Theorem 2.16]{Inoue20}.
%This may be a slight modification of \cite[Theorem 3.2]{EG00}.
\begin{thm}\cite[Theorem 3.1]{EG00} 
\label{thm-GRR-equiv}
For every algebraic group $G$ and every algebraic $G$-scheme $X$, one can associate a homomorphism of abelian groups
\[
\tau_X^G\colon K_0^G(X)\rightarrow \widehat \CH_\bullet^G(X)_{\mathbb Q},
\]
such that $\tau_X^G$ satisfies the following properties:
\begin{enumerate}
\item \label{item-GRR-equiv-1}
For any proper $G$-equivariant morphism $f\colon X\to Y$ of algebraic schemes and any $\alpha\in K_0^G(X)$, the Grothendieck--Riemann--Roch theorem holds:
\[
f_*\tau_X^G(\alpha)=\tau_Y^G(f_!\alpha).
\]
Here, if $\alpha=[\mathcal{E}]$ is represented by a $G$-equivariant coherent sheaf $\mathcal{E}$, then $f_!\alpha$ is the element
\[
f_!\alpha \coloneqq \sum_i (-1)^i [R^if_*\mathcal{E}]\in K_0^G(Y).
\]
The higher direct image sheaves $R^if_*\mathcal{E}$ are naturally $G$-equivariant.
\item  \label{item-GRR-equiv-2} 
For any $\alpha\in K_0^G(X)$ and $\beta\in K^0_G(X)$, 
\[
\tau_X^G(\alpha\otimes\beta)=\ch^G(\beta)  \cap \tau_X^G(\alpha).
\]
\item \label{item-GRR-equiv-3} 
For any $G$-invariant closed subscheme $Z\subset X$ of pure dimension $p$, 
\[
\tau_X^G(\mathcal O_Z)_{(p)}=[Z]_G\in \CH_p^G(X)_{\mathbb Q}.
\]
Here $\tau_X^G(\mathcal O_Z)_{(p)}\in \CH_p^G(X)_{\mathbb Q}$ denotes the component of degree $p$ of $\tau_X^G(\mathcal O_Z)$.
\item  \label{item-GRR-equiv-4} 
If $X$ is smooth and $\Td^G_{p}(X) \coloneqq \tau_X^G(\mathcal O_X)_{(p)} \in  \CH_{p}^{G}(X)_{\mathbb Q}$, then
\[
\Td^G_{p}(X)=\td_G^{n-p}(X) \cap [X]_G
\in \CH_{p}^{G}(X)_{\mathbb Q}.
\]
Here $\td_G^{n-p}(X) \in \CH^{n-p}_{G}(X)_{\mathbb Q}$ is the usual $G$-equivariant Todd class of the tangent sheaf $T_X$, defined by Chern roots.
\end{enumerate}
\end{thm}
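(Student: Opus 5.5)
The plan is to construct $\tau_X^G$ by finite-dimensional approximation, following Edidin--Graham. First I would reduce to the non-equivariant Riemann--Roch transformation of Fulton (Chapter 18) applied to the mixed spaces $X_G=(X\times U)/G$, and then check that everything is compatible as $U$ varies. The steps are as follows.

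\textbf{Construction.} Fix $p\in\Z$ and choose a representation $V$ with open $U\subset V$ on which $G$ acts freely, with $\codim(V\setminus U)>n-p$. A $G$-equivariant coherent sheaf $\mathcal E$ on $X$ pulls back to $X\times U$. By descent along the principal bundle $X\times U\to X_G$, this gives a coherent sheaf $\mathcal E_G$ on $X_G$, and hence a homomorphism $K_0^G(X)\to K_0(X_G)$. I then set
\[
\tau_X^G(\alpha)_{(p)}\coloneqq \Td(T_{U/G\text{-vert}})^{-1}\cap\tau_{X_G}(\alpha_G)_{(p+\dim X_G-\dim X)}.
\]
The correction is really the inverse Todd class of the relative tangent bundle of $X\times U\to X$ descended, i.e.\ of the bundle $(X\times T_U)/G$ minus $\mathfrak g$. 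This correction makes the construction independent of $U$.

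\textbf{Independence of choices.} I would use Bogomolov's double fibration argument as in \cite[Proposition 1]{EG98}. Given two choices $U_1\subset V_1$ and $U_2\subset V_2$, compare both with $U_1\times V_2$ and $V_1\times U_2$. The projection $(X\times U_1\times V_2)/G\to X_{G,1}$ is a vector bundle. Fulton's $\tau$ is compatible with flat pullback up to the Todd class of the relative tangent bundle, which is exactly what the correction factor cancels. Open restriction induces isomorphisms in the relevant degrees because of the codimension bound. Assembling the degrees $p$ then gives an element of $\widehat{\CH}^G_\bullet(X)_\Q$.

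\textbf{Properties.} Each property reduces to the corresponding non-equivariant statement on the mixed spaces.
\begin{enumerate}
\item For a proper equivariant $f$, the induced $f_G\colon X_G\to Y_G$ is proper, and $(R^if_*\mathcal E)_G=R^i(f_G)_*\mathcal E_G$ by flat base change along $Y\times U\to Y_G$. The Todd correction factors agree because they are pulled back from $U/G$. Fulton's Theorem 18.3(1) then gives (1).
\item Since $(\alpha\otimes\beta)_G=\alpha_G\otimes\beta_G$ and $\ch^G(\beta)=\ch(\beta_G)$, Fulton's module property gives (2).
\item We have $(\mathcal O_Z)_G=\mathcal O_{Z_G}$, and the top-degree term of Fulton's $\tau$ is the fundamental class. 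The correction factor has leading term $1$, so it does not change the top component. This gives (3), since $[Z]_G=[Z_G]$.
\item If $X$ is smooth, then $X_G$ is smooth and $\tau_{X_G}(\mathcal O)=\Td(T_{X_G})\cap[X_G]$. The exact sequence $0\to (X\times U)\times^G$(relative tangent)$\to T_{X_G}\to (T_X)_G\to0$ shows that, after the correction, one is left with $\Td((T_X)_G)=\td^G(X)$. This gives (4).
\end{enumerate}

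\textbf{Main obstacle.} The main obstacle is the independence of the choice of $(V,U)$ and its compatibility with the correction term. One must verify that the Todd factor of the relative tangent bundle of the fibrations in the double fibration argument is exactly absorbed, so that the transformations are defined consistently degree by degree. One must also check that the required codimension bounds can be met simultaneously for finitely many degrees at a time. This compatibility is also what is needed to show that (1) holds with a single correction factor on both $X$ and $Y$.
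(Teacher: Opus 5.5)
Your proposal is correct and is essentially the Edidin--Graham construction behind \cite[Theorem 3.1]{EG00}, which the paper quotes without proof: pass to the mixed space $X_G$, apply Fulton's Riemann--Roch map with an inverse Todd correction for the virtual bundle $V_G-\mathfrak g_G$, prove independence of $(V,U)$ by the double fibration, and read off (1)--(4) from the corresponding properties in \cite[Theorem 18.3]{Ful93}. Including $-\mathfrak g_G$ is exactly what makes (4) hold with $\td^G(T_X)$ rather than $\td^G(T_X-\mathfrak g)$ for non-abelian $G$; it makes no difference for the tori used later.

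The one slip is in your displayed definition: you should cap first and truncate afterwards, i.e.\ set $\tau_X^G(\alpha)_{(p)}\coloneqq\bigl(\Td(V_G-\mathfrak g_G)^{-1}\cap\tau_{X_G}(\alpha_G)\bigr)_{(p+\dim X_G-\dim X)}$, since capping a homogeneous class with $\Td^{-1}$ produces a class of mixed degree.
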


The following lemma appears in \cite[Example 18.3.11]{Ful93} when $G$ is trivial. It may be known, but we include a proof because we could not find one.
\begin{lem}
\label{lem-length-tau}
Let $G$ be an algebraic group, and let $X$ be an $n$-dimensional integral algebraic $G$-scheme. Let $\eta$ be the generic point of $X$. 
Then for any $G$-equivariant coherent sheaf $\mathcal{E}$, 
$$
\tau_X^G(\mathcal{E})_{(n)}=\operatorname{length}_{\mathcal O_{X,\eta}}(\mathcal{E}) \cdot [X]_G
\in {\CH}_n^G(X)_{\mathbb Q}
$$
holds, where $\operatorname{length}_{\mathcal O_{X,\eta}}(\mathcal{E})$ denotes the length of $\mathcal{E}_\eta$ over $\mathcal O_{X,\eta}$.
\end{lem}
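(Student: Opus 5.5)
The plan is to reduce to the non-equivariant statement \cite[Example 18.3.11]{Ful93} on the approximation space $X_G=(X\times U)/G$. This works because both $\tau^G_X$ and $\CH^G_n(X)$ are defined through $X_G$. Fix $V$ and $U\subset V$ with $\codim(V\setminus U)$ large. By construction in \cite{EG00}, $\tau_X^G(\mathcal E)$ in degree $n$ is the component of $\tau_{X_G}(\mathcal E_G)$ in $\CH_{n+\dim X_G-\dim X}(X_G)_{\Q}=\CH_{\dim X_G}(X_G)_{\Q}$. Here $\mathcal E_G$ is the coherent sheaf on $X_G$ obtained by descending $\mathcal E\boxtimes\mathcal O_U$ along the principal $G$-bundle $p\colon X\times U\to X_G$. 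Likewise $[X]_G$ is the fundamental class $[X_G]$.

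First, I will check that $X_G$ is integral of dimension $n+\dim U-g$. The space $X\times U$ is integral, being a product of an integral scheme with an open subset of affine space over a field of characteristic zero. Since $p$ is faithfully flat and surjective, $X_G$ is irreducible and reduced. Let $\eta_G$ be its generic point. Then $\CH_{\dim X_G}(X_G)$ is freely generated by $[X_G]$, and the non-equivariant result gives
\[
\tau_{X_G}(\mathcal E_G)_{\text{top}}=\length_{\mathcal O_{X_G,\eta_G}}(\mathcal E_{G,\eta_G})\cdot[X_G].
\]
If one prefers to avoid citing Fulton, this can be proved directly. Pick a dense open subset $W\subset X_G$ on which $\mathcal E_G$ has a filtration with quotients $\mathcal O_{W}$, and whose length equals the generic length. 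Then use that $\tau$ commutes with restriction to open sets, that top-degree Chow groups do not change under restriction to $W$, and Theorem \ref{thm-GRR-equiv} \ref{item-GRR-equiv-3} with $Z=X_G$. Additivity of $\tau$ in exact sequences gives the multiplicity.

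Second, I will compare lengths. The claim is $\length_{\mathcal O_{X_G,\eta_G}}(\mathcal E_G)=\length_{\mathcal O_{X,\eta}}(\mathcal E)$. Pulling back along the flat morphism $p$ gives $p^*\mathcal E_G\cong\mathcal E\boxtimes\mathcal O_U$. The local ring map $\mathcal O_{X_G,\eta_G}\to\mathcal O_{X\times U,\eta'}$ at the generic point $\eta'$ is a flat map of Artinian local rings, which here are fields since everything is reduced. The same holds for $\mathcal O_{X,\eta}\to\mathcal O_{X\times U,\eta'}$, which is flat with field fibres. Length is preserved under flat base change between fields in this situation: for a flat local map $A\to B$ with $\mathfrak m_AB=\mathfrak m_B$, we have $\length_B(M\otimes_AB)=\length_A(M)$. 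Hence both lengths equal $\length_{\mathcal O_{X\times U,\eta'}}(\mathcal E\boxtimes\mathcal O_U)_{\eta'}$. In practice they are just the generic ranks, namely the dimensions of the generic fibres over the function fields.

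Finally, compatibility with the choice of $(V,U)$ is already built into Definition \ref{defn-equivChow} and into $\tau^G$ by \cite{EG00}, so the identity holds in $\CH^G_n(X)_{\Q}$.

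The main obstacle is bookkeeping rather than substance: confirming that the degree-$n$ component of $\tau_X^G$ really is the top-degree component of $\tau_{X_G}(\mathcal E_G)$ under the index shift. This requires recalling the precise definition in \cite[Section 3]{EG00}, where $\tau^G$ is the inverse limit of Todd-class corrections of $\tau_{X_G}$. I expect the Todd correction, namely multiplication by $\td(T_{U/G})^{-1}$ pulled back, to affect only lower-degree components. Since it has the form $1+(\text{positive degree})$, it leaves the top component unchanged, and I would verify this explicitly.
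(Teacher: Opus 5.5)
Your argument is correct, but it is not how the paper proves this lemma. The paper uses the d\'evissage \cite[Lemma 30.12.6]{stacks-project}. The identity $\tau^G_X(\mathcal E)_{(n)}=\operatorname{length}(\mathcal E_\eta)\,[X]_G$ is additive in short exact sequences. It holds for $\mathcal O_X$ by Theorem \ref{thm-GRR-equiv}\ref{item-GRR-equiv-3}. It holds for $i_*\mathcal O_Z$ with $Z\subsetneq X$ because, by Theorem \ref{thm-GRR-equiv}\ref{item-GRR-equiv-1}, $\tau^G_X(i_*\mathcal O_Z)$ is pushed forward from $\CH^G_\bullet(Z)$, which vanishes in degree $n$.

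That route uses only the formal properties of $\tau^G$ listed in Theorem \ref{thm-GRR-equiv}. Yours instead has to open up the construction in \cite{EG00}. You need three facts there, and you handle each correctly:
\begin{itemize}
\item the degree shift;
\item the fact that the normalizing inverse Todd class has the form $1+(\text{positive codimension})$, whatever the precise virtual bundle is;
\item preservation of generic length along the flat maps $X\leftarrow X\times U\to X_G$.
\end{itemize}

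In exchange, once you are on $X_G$ there is no group left, so the non-equivariant statement \cite[Example 18.3.11]{Ful93} applies verbatim. This is a genuine advantage. The Stacks d\'evissage ranges over all coherent sheaves and all integral closed $Z\subset X$. Such $Z$ need not be $G$-stable, and on the resulting pieces $\tau^G_X$ is not even defined. Making the paper's argument rigorous would therefore require an equivariant d\'evissage. For non-abelian $G$, no equivariant filtration with generically rank-one quotients need exist: take an irreducible two-dimensional representation viewed as a sheaf on $X=\mathrm{pt}$.

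Your approach is precisely the one the paper itself adopts for the top-degree term in the proof of Lemma \ref{lem-torsion-eff-cycle}.
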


\begin{proof}
We use the following d\'evissage of coherent sheaves from \cite[Lemma 30.12.6]{stacks-project}.

\begin{lem}[{\cite[Lemma 30.12.6]{stacks-project}}]
Let $X$ be a Noetherian scheme. Let $\mathcal{P}$ be a property of coherent sheaves on $X$ satisfying the following conditions:
\begin{enumerate}
\item For any short exact sequence of coherent sheaves, if two of the three sheaves have property $\mathcal{P}$, then the remaining one also has property $\mathcal{P}$.
\item For any integral closed subscheme $Z \subset X$ with generic point $\xi$, there exists a coherent sheaf $\mathcal{G}$ satisfying the following conditions:
\begin{enumerate}
\item[$(a)$] $\operatorname{Supp}(\mathcal{G}) = Z$,
\item[$(b)$]  $\mathcal{G}_{\xi}$ is annihilated by $\mathfrak{m}_{\xi}$,
\item[$(c)$]  $\dim_{\kappa(\xi)} \mathcal{G}_{\xi} = 1$, and
\item[$(d)$]  property $\mathcal{P}$ holds for $\mathcal{G}$.
\end{enumerate}
\end{enumerate}
Then property $\mathcal{P}$ holds for every coherent sheaf on $X$.
\end{lem}
Let $\mathcal{P}$ denote the property
\(
\tau_X^G(\mathcal{E})_{(n)}=\operatorname{length}_{\mathcal O_{X,\eta}}(\mathcal{E}) \cdot [X]_G.
\)
It is enough to check conditions (1) and (2) as in the above lemma.
$(1)$ follows immediately, since, for any exact sequence of coherent sheaves
$
0 \to \mathcal{E}_1 \to \mathcal{E}_2 \to \mathcal{E}_3 \to 0,
$
the additivity of $\tau^G_X$ and of length gives
$$
\tau^G_X(\mathcal{E}_2)
=
\tau^G_X(\mathcal{E}_1) + \tau^G_X(\mathcal{E}_3) 
\quad \text{and} \quad 
\operatorname{length}_{\mathcal O_{X,\eta}}(\mathcal{E}_2)
=
\operatorname{length}_{\mathcal O_{X,\eta}}(\mathcal{E}_1) + \operatorname{length}_{\mathcal O_{X,\eta}}(\mathcal{E}_3).
$$

We prove $(2)$. Let $Z \subset X$ be an integral closed subscheme, and let
$i \colon Z \hookrightarrow X$ be the inclusion. Then $\mathcal{G} \coloneqq \mathcal{O}_{X}/\mathcal{I}_Z$ satisfies the desired conditions. Indeed, conditions (a)--(c) follow immediately from $\mathcal{G}=i_{*}\mathcal{O}_Z$.
For condition (d), if $Z=X$, then
$$
\tau^G_X(\mathcal{G})_{(n)}
=\tau^G_X(\mathcal{O}_{X})_{(n)}
\underalign{\text{(Thm. \ref{thm-GRR-equiv} \ref{item-GRR-equiv-3})}}{=}[X]_{G}
=\operatorname{length}_{\mathcal O_{X,\eta}}(\mathcal{O}_{X})[X]_{G}.
$$
If $Z \neq X$, then $i_{*}\tau^G_{Z}(\mathcal{O}_Z)$ is an equivariant cycle of dimension at most $\dim X-1$. Hence,
$$
\tau^G_{X}(\mathcal{G})_{(n)}
\underalign{\text{(by $R^qi_{*}=0$)}}{=}\tau^G_{X}(i_{!}\mathcal{O}_Z)_{(n)}
\quad
\underalign{\text{(Thm. \ref{thm-GRR-equiv} \ref{item-GRR-equiv-1})}}{=}\quad
\left(i_{*}\tau^G_{Z}(\mathcal{O}_Z)\right)_{(n)}
%\underbrace{\left(i_{*}\tau^G_{Z}(\mathcal{O}_Z)\right)}_{\text{equivariant cycle of dimension $\le \dim X-1$}}_{(n)}
=0.
$$
On the other hand, $\operatorname{length}_{\mathcal O_{X,\eta}}(\mathcal{G})=0$. Thus, condition (d) also holds. \end{proof}

\subsubsection{Chern characters of coherent sheaves}

\begin{lem}
\label{lem-equiv-ch-coh}
Let $G$ be an algebraic group, and let $X$ be an $n$-dimensional algebraic $G$-scheme. Assume that every $G$-equivariant coherent sheaf admits a finite $G$-equivariant locally free resolution.
%For example, this holds when $X$ is smooth and projective by Thomason's equivariant resolution theorem.
Then there exists an abelian group homomorphism
$$
\ch'^G\colon K_{0}^G(X) \to \widehat \CH^{\bullet}_G(X)_{\mathbb Q}
$$
such that, for every $\mathcal{E} \in  K_{0}^G(X)$, we have
$$
\tau^G_{X}(\mathcal{E})
=
\ch'^G(\mathcal{E}) \cap \tau^G_{X}(\mathcal{O}_X)
%\underbrace{\tau^G_{X}(\mathcal{O}_X)}_{\eqqcolon\Td^G(X) \in \CH_{\bullet}^G(X)_{\mathbb Q}}.
$$
Thus, we also denote $\ch'^G$ by $\ch^G$.
In particular, if $X$ is smooth, then
$$
\tau_{X}^G(\mathcal{E})
=
\left(\ch^G(\mathcal{E}) \cdot \td^G(T_X)\right) \cap [X]_G.
%\ch^G(\mathcal{E}) \cdot \underbrace{\td^G(T_X)}_{\in \CH^{\bullet}_{G}(X)_{\mathbb Q}} \cap [X]_G.
$$
\end{lem}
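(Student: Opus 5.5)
The plan is to define $\ch'^G$ through a resolution and then use Theorem \ref{thm-GRR-equiv}\ref{item-GRR-equiv-2}. For a $G$-equivariant coherent sheaf $\mathcal{E}$, choose a finite $G$-equivariant locally free resolution
\[
0\to E_m\to\cdots\to E_0\to\mathcal{E}\to 0 ,
\]
and set
\[
\ch'^G(\mathcal{E})\coloneqq\sum_i(-1)^i\ch^G(E_i)\in\widehat\CH^\bullet_G(X)_{\mathbb Q}.
\]
Here $\ch^G$ on $K^0_G(X)$ is the ring homomorphism \eqref{eq-Gequiv-K}.

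Equivalently, the natural map $\varphi\colon K^0_G(X)\to K_0^G(X)$, $[E]\mapsto[E]$, is surjective by the resolution hypothesis. One then wants $\ch'^G=\ch^G\circ\varphi^{-1}$. The identity we need follows quickly. In $K_0^G(X)$ we have $[\mathcal{E}]=\sum_i(-1)^i[E_i]=\sum_i(-1)^i[E_i]\otimes[\mathcal{O}_X]$. Additivity of $\tau^G_X$ together with Theorem \ref{thm-GRR-equiv}\ref{item-GRR-equiv-2}, applied with $\alpha=[\mathcal O_X]$ and $\beta=[E_i]$, gives
\[
\tau^G_X(\mathcal{E})=\sum_i(-1)^i\ch^G(E_i)\cap\tau^G_X(\mathcal O_X)=\ch'^G(\mathcal{E})\cap\tau^G_X(\mathcal O_X).
\]

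The main obstacle is well-definedness: $\ch'^G$ must be independent of the resolution and additive on short exact sequences, so that it descends to a homomorphism on $K_0^G(X)$. This amounts to showing that $\varphi$ is injective, i.e.\ an isomorphism. I would run the standard argument (SGA6 / \cite[§15.1]{Ful93}) equivariantly.

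First, any two equivariant resolutions of $\mathcal{E}$ are dominated by a third one. Given $E_\bullet\to\mathcal{E}$ and $F_\bullet\to\mathcal{E}$, resolve the fibre product $E_0\times_{\mathcal{E}}F_0$ by an equivariant locally free sheaf and continue inductively. The hypothesis supplies the equivariant locally free surjections needed at each step.

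Second, a quasi-isomorphism of bounded complexes of locally free sheaves has acyclic mapping cone. An acyclic bounded complex of locally free sheaves splits into short exact sequences of locally free sheaves, so its alternating class in $K^0_G$ is zero.

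Third, additivity on $0\to\mathcal{E}'\to\mathcal{E}\to\mathcal{E}''\to0$ follows from the horseshoe-type construction of compatible resolutions, again using the hypothesis to produce equivariant surjections.

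Once $K^0_G(X)\cong K_0^G(X)$, we define $\ch'^G\coloneqq\ch^G\circ\varphi^{-1}$, and the computation above applies.

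For smooth $X$, Theorem \ref{thm-GRR-equiv}\ref{item-GRR-equiv-4} gives $\tau^G_X(\mathcal O_X)=\td^G(T_X)\cap[X]_G$ degree by degree. Since cap products are compatible with the ring structure, substituting this into the identity yields
\[
\tau_X^G(\mathcal{E})=\bigl(\ch^G(\mathcal{E})\cdot\td^G(T_X)\bigr)\cap[X]_G .
\]
The completed products behave well here because, in each fixed degree, only finitely many terms contribute.
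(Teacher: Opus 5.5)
Your proposal is correct and follows the paper's proof. Both define $\ch'^G$ through the isomorphism $K_0^G(X)\cong K^0_G(X)$, as alternating sums of $\ch^G$ over an equivariant locally free resolution, then combine additivity of $\tau^G_X$ with property (2) of Theorem \ref{thm-GRR-equiv} for $\alpha=[\mathcal O_X]$, and use property (4) in the smooth case. The only difference is that you also sketch the standard argument (dominating resolutions, acyclic mapping cones, horseshoe construction) for the injectivity of $K^0_G(X)\to K_0^G(X)$, which the paper asserts without proof.
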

Note that, when $X$ is smooth and projective, every $G$-equivariant coherent sheaf admits a finite $G$-equivariant locally free resolution by \cite[Corollary 5.8]{Tho3} (see also \cite[Introduction]{EG00}).

\begin{proof}
Since every coherent sheaf admits a finite locally free resolution, there exists an abelian group isomorphism
$
K_{0}^G(X) \cong K^{0}_G(X).
$
Thus we can define
$$
\ch'^G \colon K_{0}^G(X) \to K^{0}_G(X) \overset{\ch}{\to} \widehat \CH^{\bullet}_G(X)_{\mathbb Q}.
$$
Equivalently, if there exists a finite resolution by $G$-equivariant locally free sheaves
$
0 \to E_k \to E_{k-1} \to\cdots \to E_1\to E_0 \to \mathcal{E} \to 0,
$
then
$$
\ch'^G(\mathcal{E}) \coloneqq \sum_{i=0}^{k} (-1)^i \ch^G(E_i)
\in \widehat \CH^{\bullet}_{G}(X)_{\Q}.
$$
By the additivity of \(\tau_X^G\), the latter assertion follows from
\begin{align*}
\tau^G_{X}(\mathcal{E})
\underalign{\text{(Additivity)}}{=}
\sum_{i=0}^{k} (-1)^i \tau^G_{X}(E_i)
\underalign{\text{(Thm. \ref{thm-GRR-equiv}  \ref{item-GRR-equiv-2})}}{=}
\quad
\sum_{i=0}^{k} (-1)^i (\ch^G(E_i) \cap \tau^G_{X}(\mathcal{O}_X))
=
\ch'^G(\mathcal{E}) \cap \tau^G_{X}(\mathcal{O}_X).
\end{align*}
In particular, if $X$ is smooth, then $\tau^G_{X}(\mathcal{O}_X)=\Td^G(X) = \td^G(T_X) \cap [X]_G$ by Theorem \ref{thm-GRR-equiv} \ref{item-GRR-equiv-4}, and the claim follows.
\end{proof}

Using this, we can define the Chern classes $c^G_1(\mathcal{E}), \ldots, c^G_n(\mathcal{E})$ for a $G$-equivariant coherent sheaf $\mathcal{E}$. 
Since only $c^G_1(\mathcal{E})$ and $c^G_2(\mathcal{E})$ are used in this paper, we define only these two classes.
\begin{defn}
\label{defn-chern-Bogomolov}
Let $G$ be an algebraic group, and let $X$ be a smooth projective $G$-scheme. For any $G$-equivariant coherent sheaf $\mathcal{E}$, we define the \emph{$G$-equivariant Chern character} $\ch^G(\mathcal{E})$, and define the first and second Chern classes by
$$
c^{G}_1(\mathcal{E}) \coloneqq  \ch^{G}_{1}(\mathcal{E}) \in \CH^{1}_{G}(X)_{\Q},
\quad 
c^{G}_2(\mathcal{E}) \coloneqq  
\frac{1}{2}\ch^{G}_1(\mathcal{E})^{2}
-\ch^{G}_{2}(\mathcal{E})  \in \CH^{2}_{G}(X)_{\Q}.
$$
Moreover, for any torsion-free coherent sheaf $\mathcal E$ of rank $r$, we define the \emph{Bogomolov discriminant} by
$$
\Delta^{G}(\mathcal E) \coloneqq  2rc^G_2(\mathcal E) - (r-1)c^G_1(\mathcal E)^2 
=\ch^{G}_{1}(\mathcal E)^2 - 2r \ch^{G}_{2}(\mathcal E)
\in  \CH^{2}_{G}(X)_{\Q}.
$$
\end{defn}

\begin{cor}
\label{cor-exact-ch-vect}
Let $G$ be an algebraic group, and let $X$ be a smooth projective $G$-scheme. For an exact sequence of $G$-equivariant coherent sheaves
$
0 \to \mathcal S \to \mathcal  E \to \mathcal  Q \to 0,
$
we have
$$
{\ch^G}(\mathcal  E) = {\ch^G}(\mathcal  S) + {\ch^G}(\mathcal  Q).
$$
In particular,
$$
c^{G}_1(\mathcal  E) = c^{G}_1(\mathcal  S) +  c^{G}_1(\mathcal  Q)
\quad \text{and} \quad 
c^{G}_2(\mathcal  E) = c^{G}_2(\mathcal  S) +  c^{G}_2(\mathcal  Q) + c^{G}_1(\mathcal  S) c^{G}_1(\mathcal  Q).
$$
\end{cor}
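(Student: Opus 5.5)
The plan is to derive additivity of $\ch^G$ from additivity of $\tau^G_X$, and then read off $c_1^G$ and $c_2^G$ from the low-degree parts using Definition \ref{defn-chern-Bogomolov}. Since $X$ is smooth and projective, every $G$-equivariant coherent sheaf has a finite $G$-equivariant locally free resolution by \cite[Corollary 5.8]{Tho3}. Hence Lemma \ref{lem-equiv-ch-coh} applies, and $\ch^G=\ch'^G$ is defined on $K_0^G(X)$.

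The exact sequence gives the relation $[\mathcal E]=[\mathcal S]+[\mathcal Q]$ in $K_0^G(X)$. By construction $\ch'^G$ is the composite of the isomorphism $K_0^G(X)\cong K^0_G(X)$ with the ring homomorphism $\ch^G$ of \eqref{eq-Gequiv-K}. So $\ch'^G$ is a group homomorphism and additivity is immediate.

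If one wants to avoid checking that $\ch'^G$ is well defined on classes, there is an alternative route. Additivity of $\tau_X^G$ gives $\tau_X^G(\mathcal E)=\tau_X^G(\mathcal S)+\tau_X^G(\mathcal Q)$. Lemma \ref{lem-equiv-ch-coh} turns this into
\[
\bigl(\ch^G(\mathcal E)-\ch^G(\mathcal S)-\ch^G(\mathcal Q)\bigr)\cdot\td^G(T_X)\cap[X]_G=0 .
\]
The class $\td^G(T_X)$ has leading term $1$, so it is invertible in $\widehat{\CH}^\bullet_G(X)_\Q$. Capping with $[X]_G$ is an isomorphism $\CH^i_G(X)\cong\CH^G_{n-i}(X)$ by \cite[Proposition 4]{EG98}. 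Together these give the vanishing of the difference. The point needing care here is extending Poincaré duality degreewise to the completed products; this is fine because the isomorphism holds in each degree.

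For the Chern class formulas, take degree-one parts to get $c_1^G(\mathcal E)=c_1^G(\mathcal S)+c_1^G(\mathcal Q)$. For $c_2^G$, write $a=c_1^G(\mathcal S)$ and $b=c_1^G(\mathcal Q)$. Then
\[
c_2^G(\mathcal E)=\tfrac12(a+b)^2-\ch_2^G(\mathcal S)-\ch_2^G(\mathcal Q)=c_2^G(\mathcal S)+c_2^G(\mathcal Q)+ab,
\]
using that $\CH^\bullet_G(X)$ is commutative. There is no real obstacle; the only nontrivial input is the existence of equivariant resolutions, which gives well-definedness of $\ch'^G$.
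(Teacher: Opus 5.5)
Your proposal is correct and is essentially the paper's proof: the paper also uses $K_0^G(X)\cong K^0_G(X)$ to reduce to locally free sheaves, then invokes additivity of the ordinary Chern character for the induced exact sequence $0\to\mathcal S_G\to\mathcal E_G\to\mathcal Q_G\to 0$ on $X_G$ (which is exactly what makes $\ch^G$ a homomorphism on $K^0_G(X)$), and reads off the $c_1^G,c_2^G$ formulas as you do. Your alternative route through $\tau_X^G$ and the invertibility of $\td^G(T_X)$ is also valid, but it does not really bypass well-definedness, since Lemma \ref{lem-equiv-ch-coh} defines $\ch^G$ of a coherent sheaf through that same isomorphism.
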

\begin{proof}
By the group isomorphism $K^0_{G}(X) \cong K_0^G(X)$, we may assume that $\mathcal S, \mathcal E, \mathcal Q$ are locally free. Recall from \eqref{eq-Gequiv-K} that the Chern character is defined by using the locally free sheaf $\mathcal {E}_G$ on $X_G$ as
${\ch^G}(\mathcal E) \coloneqq \ch(\mathcal {E}_G).$
Thus the claim follows from the exactness of $0 \to \mathcal {S}_G \to \mathcal {E}_G \to \mathcal {Q}_G \to 0$ on $X_G$.
\end{proof}

\begin{lem}[{Langer's formula; cf. \cite{Lan04}}]
\label{lem-Langer-ineq-chow}
Let $G$ be an algebraic group, and let $X$ be a smooth projective $G$-scheme. Let $\mathcal{E}$ be a $G$-equivariant torsion-free sheaf, and assume that there exists a filtration
\[
0 \eqqcolon \mathcal{E}_0 \subsetneq \mathcal{E}_1 \subsetneq \ldots \subsetneq \mathcal{E}_{s}  \coloneqq  \mathcal{E}
\]
such that each quotient
$\mathcal{G}_i  \coloneqq  \mathcal{E}_i / \mathcal{E}_{i-1}$ is a torsion-free sheaf of rank $r_i$.
Then the following identity holds:
\begin{align*}
%\label{eq-Langer-filtration}
\frac{\Delta^{G}(\mathcal{E}) }{r} 
 &= \ \sum_{i=1}^{s} \frac{\Delta^{G}(\mathcal{G}_{i})}{r_i} 
 - \frac{1}{r} \sum_{1 \le i < j \le s} r_i r_j \left( \frac{c^G_1(\mathcal{G}_{i})}{r_i} - \frac{c^G_1(\mathcal{G}_{j})}{r_j} \right)^2 \in  \CH^{2}_{G}(X)_{\Q}.
\end{align*}
\end{lem}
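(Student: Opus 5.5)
The identity is purely formal once we know that $\ch^G$ is additive in exact sequences (Corollary \ref{cor-exact-ch-vect}) and that $\Delta^G(\mathcal E)=\ch^G_1(\mathcal E)^2-2r\ch^G_2(\mathcal E)$ (Definition \ref{defn-chern-Bogomolov}). I will therefore prove it by computing both sides in terms of $\ch_1^G$ and $\ch_2^G$ of the graded pieces, working in the commutative ring $\CH^\bullet_G(X)_{\Q}$.

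The first step applies Corollary \ref{cor-exact-ch-vect} inductively to the exact sequences $0\to\mathcal E_{i-1}\to\mathcal E_i\to\mathcal G_i\to 0$. This gives $\ch^G(\mathcal E)=\sum_i\ch^G(\mathcal G_i)$. Writing $a_i\coloneqq c_1^G(\mathcal G_i)=\ch^G_1(\mathcal G_i)$ and $b_i\coloneqq\ch^G_2(\mathcal G_i)$, we get $c_1^G(\mathcal E)=\sum_i a_i$ and $\ch_2^G(\mathcal E)=\sum_i b_i$. The ranks also add, $r=\sum_i r_i$, since rank is additive at the generic point (equivalently, by Lemma \ref{lem-length-tau} it is the degree-zero part of $\ch^G$).

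The second step is algebraic manipulation. The left side is
\[
\frac{\Delta^G(\mathcal E)}{r}=\frac{(\sum_i a_i)^2}{r}-2\sum_i b_i .
\]
On the right, $\Delta^G(\mathcal G_i)/r_i=a_i^2/r_i-2b_i$, so the $b_i$ terms cancel against the left side. It then remains to verify
\[
\frac{(\sum_i a_i)^2}{r}=\sum_i\frac{a_i^2}{r_i}-\frac1r\sum_{i<j}r_ir_j\Bigl(\frac{a_i}{r_i}-\frac{a_j}{r_j}\Bigr)^2 .
\]
Multiply by $r$ and expand the square:
\[
\sum_{i<j}r_ir_j\Bigl(\frac{a_i}{r_i}-\frac{a_j}{r_j}\Bigr)^2=\sum_{i<j}\Bigl(\frac{r_j}{r_i}a_i^2+\frac{r_i}{r_j}a_j^2-2a_ia_j\Bigr).
\]
Collecting the coefficient of $a_k^2$ gives $\frac{1}{r_k}\sum_{j\ne k}r_j=\frac{r-r_k}{r_k}$. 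Hence the right side times $r$ equals
\[
\sum_k\frac{r}{r_k}a_k^2-\sum_k\frac{r-r_k}{r_k}a_k^2+2\sum_{i<j}a_ia_j=\sum_k a_k^2+2\sum_{i<j}a_ia_j=\Bigl(\sum_k a_k\Bigr)^2,
\]
which is the identity required.

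No step is a real obstacle. The only point needing care is that the Chern classes of torsion-free (non-locally-free) sheaves are well defined and additive. This is exactly what Lemma \ref{lem-equiv-ch-coh} and Corollary \ref{cor-exact-ch-vect} provide, using Thomason's equivariant resolution on the smooth projective $X$.
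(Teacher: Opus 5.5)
Your proposal is correct and is the paper's proof: additivity of $\ch^G$ (Corollary \ref{cor-exact-ch-vect}) cancels the $\ch_2^G$ terms, and what remains is the identity $r\sum_i c_1^G(\mathcal G_i)^2/r_i - c_1^G(\mathcal E)^2=\sum_{i<j} r_ir_j\bigl(c_1^G(\mathcal G_i)/r_i-c_1^G(\mathcal G_j)/r_j\bigr)^2$. The paper proves that identity by symmetrizing to a double sum over all pairs $(i,j)$, whereas you collect the coefficient of each $a_k^2$; this difference is purely cosmetic.
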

\begin{proof}
By Definition \ref{defn-chern-Bogomolov}, $\frac{\Delta^{G}(\mathcal{E}) }{r} = \frac{c^G_1(\mathcal{E})^2}{r} - 2  \ch^{G}_{2}(\mathcal{E})$. Therefore
\begin{equation}
\label{eq-Langer-filtration-1}
\begin{aligned}
\frac{\Delta^{G}(\mathcal{E}) }{r} - \sum_{i=1}^{s} \frac{\Delta^{G}(\mathcal{G}_{i})}{r_i} 
&\underalign{}{=} \quad\left( \frac{c^G_1( \mathcal{E})^2}{r} - \sum_{i=1}^{s}\frac{c^G_1( \mathcal{G}_i)^2}{r_i} \right)
-2\left( \ch^{G}_{2}(\mathcal{E}) - \sum_{i=1}^{s}  \ch^{G}_{2}(\mathcal{G}_i) \right)\\
&\underalign{\text{(Additivity of $\ch$)}}{=}\quad \frac{c^G_1( \mathcal{E})^2}{r} - \sum_{i=1}^{s}\frac{c^G_1( \mathcal{G}_i)^2}{r_i}.
\end{aligned}
\end{equation}
%Thus it is enough to prove
%$$
%r\left(  \sum_{i=1}^{s}\frac{c^G_1( \mathcal{G}_i)^2}{r_i} - \frac{c^G_1( \mathcal{E})^2}{r}\right)
%=\sum_{1 \le i < j \le l} r_i r_j \left( \frac{c^G_1(\mathcal{G}_{i})}{r_i} - \frac{c^G_1(\mathcal{G}_{j})}{r_j} \right)^2.
%$$
From $r = \sum_{i=1}^{s}r_i$ and $c^G_1(\mathcal{E}) = \sum_{i=1}^{s}c^G_1(\mathcal{G}_i)$, it also follows that
\begin{equation}
\label{eq-Langer-filtration-2}
\begin{aligned}
&\sum_{1 \le i < j \le s} r_i r_j \left( \frac{c^G_1(\mathcal{G}_{i})}{r_i} - \frac{c^G_1(\mathcal{G}_{j})}{r_j} \right)^2 
\underalign{\text{}}{=}  
\frac{1}{2} 
\sum_{i=1}^{s}\sum_{j=1}^{s}r_i r_j \left( \frac{c^G_1(\mathcal{G}_{i})}{r_i} - \frac{c^G_1(\mathcal{G}_{j})}{r_j} \right)^2  
\\
%&
%\underalign{}{=}
%\frac{1}{2} 
%\sum_{i=1}^{s}\sum_{j=1}^{s}
%\left( \frac{r_jc^G_1(\mathcal{G}_{i})^2}{r_i} +
%\frac{r_ic^G_1(\mathcal{G}_{j})^2}{r_j} 
%-  2 c^G_1(\mathcal{G}_{i})c^G_1(\mathcal{G}_{j})\right)\\
&\underalign{}{=}
\sum_{i=1}^{s}\sum_{j=1}^{s}
\left(\frac{r_jc^G_1(\mathcal{G}_{i})^2}{r_i} - c^G_1(\mathcal{G}_{i})c^G_1(\mathcal{G}_{j}) \right) 
\underalign{}{=}
r \sum_{i=1}^{s}
\frac{c^G_1( \mathcal{G}_i)^2}{r_i}
- c^G_1( \mathcal{E})^2.
\end{aligned}
\end{equation}
%In the last equality, we use $r = \sum_{i=1}^{s}r_i$ and $c^G_1( \mathcal{E}) = \sum_{i=1}^{s}c^G_1( \mathcal{G}_i)$. 
Thus the desired equation follows from \eqref{eq-Langer-filtration-1} and \eqref{eq-Langer-filtration-2}.
\end{proof}

\begin{lem}
\label{lem-torsion-eff-cycle}
Let $G$ be an algebraic group, and let $X$ be a smooth projective $G$-scheme. Let $\mathcal{E}$ be a $G$-equivariant coherent sheaf, and set $Z \coloneqq \Supp(\mathcal{E})$. Assume that $Z$ has dimension $n-p$ with $0<p\leq n$, and let $Z_1, \ldots, Z_k \subset Z$ be the $G$-invariant irreducible components of $Z$ of dimension $n-p$. Then
$$
\ch^G_0(\mathcal{E})  =\cdots =\ch^G_{p-1}(\mathcal{E})=0
\quad
\text{and}
\quad 
\ch^G_p(\mathcal{E}) \cap [X]_G
= \sum_{i=1}^{k} 
\operatorname{length}_{\mathcal O_{X,\eta_i}} (\mathcal{E}_{\eta_i}) [Z_i]_G.
$$
Here $\eta_i$ is the generic point of the reduced irreducible component $Z_i$, and the length is taken over $\mathcal O_{X,\eta_i}$.

\end{lem}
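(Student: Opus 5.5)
We will compare $\tau^G_X(\mathcal E)$ computed in two ways: through Lemma \ref{lem-equiv-ch-coh} and through Grothendieck--Riemann--Roch applied to the inclusion of the support. The comparison will be done degree by degree in the equivariant Chow homology.

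\emph{Step 1: the sheaf lives on a thickening of $Z$.} Give $Z$ a $G$-invariant closed subscheme structure, say $Z'$ defined by $\mathcal I^m\cdot$ for a $G$-invariant ideal $\mathcal I$ with $\mathcal I^m\mathcal E=0$; for instance take the annihilator ideal of $\mathcal E$, which is $G$-invariant. Let $i\colon Z'\hookrightarrow X$ be the closed immersion. Then $\mathcal E=i_*\mathcal F$ for a $G$-equivariant coherent sheaf $\mathcal F$ on $Z'$. Since $R^qi_*=0$ for $q>0$, Theorem \ref{thm-GRR-equiv} \ref{item-GRR-equiv-1} gives
\[
\tau^G_X(\mathcal E)=i_*\tau^G_{Z'}(\mathcal F).
\]
Because $\CH^G_q(Z')=0$ for $q>n-p$, this class has no components in degrees $q>n-p$.

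The component in degree $n-p$ lies in $\CH^G_{n-p}(Z')_{\Q}$, which is freely generated by the classes $[Z_i]_G$. Here the irreducible components of top dimension are automatically $G$-invariant when $G$ is connected; in general we interpret $[Z_i]_G$ as in the statement. To identify the coefficient of $[Z_i]_G$, we restrict to a $G$-invariant open set $U_i\subset X$ meeting $Z$ only in $Z_i$, shrunk so that $Z_i\cap U_i$ is integral. On $U_i$ we apply Lemma \ref{lem-length-tau} to the integral scheme $(Z_i)_{\mathrm{red}}\cap U_i$, after first filtering $\mathcal F$ by $\mathcal I^k\mathcal F$ so that the graded pieces are modules over the reduced structure. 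Additivity of $\tau$ and of length gives coefficient $\operatorname{length}_{\mathcal O_{X,\eta_i}}(\mathcal E_{\eta_i})$. The localization sequence $\CH^G_{n-p}(Z\setminus U_i)\to\CH^G_{n-p}(Z)\to\CH^G_{n-p}(Z\cap U_i)\to 0$, together with compatibility of $\tau$ with flat (open) restriction, justifies computing coefficients locally. The key point is that $Z\setminus U_i$ contains no $Z_i$, so restriction detects exactly that coefficient.

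\emph{Step 2: translate to Chern characters.} By Lemma \ref{lem-equiv-ch-coh},
\[
\tau^G_X(\mathcal E)=\bigl(\ch^G(\mathcal E)\cdot\td^G(T_X)\bigr)\cap[X]_G .
\]
Since $\td^G(T_X)=1+(\text{higher terms})$ is invertible and $\cap[X]_G\colon\CH^k_G(X)\to\CH^G_{n-k}(X)$ is an isomorphism (Poincaré duality for smooth $X$), write $\td^G(T_X)^{-1}=1+t$ with $t$ of positive degree. Then
\[
\ch^G(\mathcal E)=\bigl(\tau^G_X(\mathcal E)\text{ read in cohomology}\bigr)\cdot(1+t).
\]
The class $\tau^G_X(\mathcal E)$ has no cohomological components of degree $<p$. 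Multiplying by $1+t$ preserves this vanishing, and leaves the degree-$p$ part unchanged. Hence $\ch^G_k(\mathcal E)=0$ for $k<p$, and $\ch^G_p(\mathcal E)\cap[X]_G=\tau^G_X(\mathcal E)_{(n-p)}$. Combined with Step 1, this gives the claim.

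\emph{Main obstacle.} The delicate step is Step 1's identification of the degree-$(n-p)$ coefficients. This requires localizing the equivariant $\tau$ to open invariant subsets, and handling nonreduced structure via the filtration so that Lemma \ref{lem-length-tau} applies on each $Z_i$. Alternatively, Lemma \ref{lem-length-tau} can be applied directly to each $\mathcal I^k\mathcal F/\mathcal I^{k+1}\mathcal F$ viewed on $Z_{\mathrm{red}}$ componentwise, if we first pass to the disjoint union via d\'evissage as in the proof of Lemma \ref{lem-length-tau}.
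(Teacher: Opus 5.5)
Your proposal is correct. Its skeleton matches the paper's proof:
\begin{itemize}
\item the annihilator subscheme structure on $Z$;
\item $\tau^G_X(\mathcal E)=i_*\tau^G_{Z}(\mathcal E|_Z)$, so only dimensions $\le n-p$ occur;
\item Poincar\'e duality on $X_G$ to read off the $\ch^G_k$.
\end{itemize}
Your inversion of $\td^G(T_X)$ is a tidier form of the paper's degree-by-degree comparison using $\tau^G_X(\mathcal O_X)_{(n)}=[X]_G$.

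The genuine difference is in how you identify the coefficients of the $[Z_i]_G$.

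The paper does not stay equivariant at that point. It passes to a mixed quotient $Z_G=(Z\times U)/G$ and regards $\tau^G_Z(\mathcal E|_Z)_{(n-p)}$ as the top-dimensional term of the ordinary $\tau_{Z_G}(\mathcal E_G)$. It then quotes the non-equivariant highest-dimensional-component formula of Riemann--Roch. That formula handles the non-reduced and reducible structure of $Z$ in one stroke, so the paper only has to check that lengths at $\eta_{i,G}$ and at $\eta_i$ agree.

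You stay inside the equivariant formalism instead. You isolate each $Z_i$ by an invariant open set $U_i$, filter by powers of the nilradical, and apply Lemma \ref{lem-length-tau} to the graded pieces on the integral scheme $(Z_i)_{\mathrm{red}}\cap U_i$. This works.

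However, it uses a property of $\tau^G$ that is not among those listed in Theorem \ref{thm-GRR-equiv}: compatibility with restriction to $G$-invariant open subsets. The property does hold. It follows from the mixed-space construction of $\tau^G$ in \cite{EG00}, since $(Z'\cap U_i)_G$ is open in $Z'_G$ and the ordinary $\tau$ commutes with restriction to open subschemes. You should nevertheless state it explicitly as an input.

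The trade-off between the two routes is as follows.
\begin{itemize}
\item Your route reuses Lemma \ref{lem-length-tau} and avoids comparing components and lengths on $Z_G$ directly. The cost is the localization sequence, the nilpotent filtration, and the extra open-restriction property.
\item The paper's route is shorter and needs neither Lemma \ref{lem-length-tau} nor any localization, because the reduction to the mixed space imports the full non-equivariant statement at once.
\end{itemize}
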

\begin{proof}
In this proof, equip $Z$ with the closed subscheme structure defined by $\operatorname{Ann}_{\mathcal O_X}(\mathcal E)$. This ideal sheaf is $G$-invariant since $\mathcal E$ is $G$-equivariant.
Let $i\colon Z\hookrightarrow X$ be the inclusion. Then $\mathcal E=i_*(\mathcal E|_Z)$, since $\operatorname{Ann}_{\mathcal O_X}(\mathcal E)$ annihilates $\mathcal E$. Hence we obtain
\begin{align*}
\ch^G(\mathcal{E})\cap \tau^G_{X}(\mathcal{O}_X)
& \underalign{\text{(Lem. \ref{lem-equiv-ch-coh})}}{=}
\tau^G_{X}(\mathcal{E})
\underalign{\text{}}{=}
\tau^G_{X}(i_{*}(\mathcal{E}\vert_{Z}))
\underalign{\text{(by $R^qi_{*}=0$)}}{=}
\tau^G_{X}(i_{!}(\mathcal{E}\vert_{Z}))
\\
& \underalign{\text{(Thm. \ref{thm-GRR-equiv}  \ref{item-GRR-equiv-1})}}{=}
\underbrace{i_{*}\tau^G_{Z}(\mathcal{E}\vert_{Z})}_{\raisebox{-0.5ex}{\tiny \text{equivariant cycle of dimension $\le n-p$}}}.
\end{align*}
Since $\tau^G_{X}(\mathcal{O}_X)_{(n)}= [X]_G$ by Theorem \ref{thm-GRR-equiv}  \ref{item-GRR-equiv-3}, by comparing the components of dimension $n$, we obtain
$$
\ch_0^G(\mathcal{E}) \cap [X]_G
= \left( i_{*}\tau^G_{Z}(\mathcal{E}\vert_{Z}) \right)_{(n)} 
%\underalign{\text{by $n-p <n$}}{=} 
\underalign{\raisebox{-0.5ex}{\tiny\text{(by $n-p <n$)}}}{=}
0.
$$
Since $X$ is smooth, $X_G$ is also smooth. Hence, by \cite[Theorem 17.4.2]{Ful93}, the cap product map
$$
\cap[X]_G \colon \CH^{j}_{G}(X)=\CH^{j}(X_G) \overset{}{\to} \CH_{\dim X_G - j}(X_G) =\CH_{n-j}^{G}(X)
$$
is an isomorphism. Therefore, $\ch_0^G(\mathcal{E})=0$.
The same argument shows that 
$$
\ch_0^G(\mathcal{E})  =\cdots =\ch_{p-1}^G(\mathcal{E})=0
\quad \text{and} \quad
\ch_{p}^{G}(\mathcal{E})\cap [X]_{G}
=
 i_{*}\left(\tau^{G}_{Z}(\mathcal{E}\vert_{Z})\right)_{(n-p)}.
$$

Choose a sufficiently large mixed quotient $Z_G=(Z\times U)/G$ as in \cite[Section~3.2]{EG00}, and set $d\coloneqq\dim U-\dim G$.
Let $\mathcal E_G$ be the coherent sheaf on $Z_G$ induced by $\mathcal E|_Z$, and let $\eta_{i,G}$ be the generic point of $(Z_i)_G$.
The highest-dimensional component formula in ordinary Riemann--Roch gives
\[
\tau_{Z_G}(\mathcal E_G)_{(n-p+d)}
\quad
\underalign{\text{\cite[Thm~18.3]{Ful93}}}{=}
\quad
\sum_{i=1}^k
\operatorname{length}_{\mathcal O_{Z_G,\eta_{i,G}}}
((\mathcal E_G)_{\eta_{i,G}})[(Z_i)_G].
\]
Under the identification $\mathrm{CH}_{n-p+d}(Z_G)\simeq\mathrm{CH}_{n-p}^G(Z)$, the left-hand side represents $\tau_Z^G(\mathcal E|_Z)_{(n-p)}$.
Since
\(
\operatorname{length}_{\mathcal O_{Z_G,\eta_{i,G}}}
((\mathcal E_G)_{\eta_{i,G}})
=
\operatorname{length}_{\mathcal O_{X,\eta_i}}(\mathcal E_{\eta_i}),
\)
pushing forward by $i$ gives the desired formula.

\end{proof}

For the first and second Chern classes, Lemma \ref{lem-torsion-eff-cycle} can be stated as follows. Notice that $c^{G}_2(\mathcal{E}) \coloneqq  
\frac{1}{2}c^{G}_1(\mathcal{E})^{2}
-\ch^{G}_{2}(\mathcal{E})$.
\begin{cor}
\label{cor-1st2nd-semipositive}
Let $G$ be an algebraic group, and let $X$ be a smooth projective $G$-scheme. Let $\mathcal{E}$ be a $G$-equivariant coherent sheaf, and set $Z \coloneqq \Supp(\mathcal{E})$. Assume that $Z$ has codimension $p \ge 1$, and let $Z_1, \ldots, Z_k \subset Z$ be the $G$-invariant irreducible components of $Z$ of dimension $n-p$.

\begin{enumerate}
\item If $p=1$, then $\ch^G_0(\mathcal{E})=0$ and
$$\ch^G_1(\mathcal{E}) \cap [X]_G
=c^G_1(\mathcal{E}) \cap [X]_G
= \sum_{i=1}^{k}
\operatorname{length}_{\mathcal O_{X,\eta_i}}(\mathcal E_{\eta_i})
[Z_i]_G.
$$
\item If $p=2$, then $\ch^G_0(\mathcal{E})=\ch^G_1(\mathcal{E})=0$ and
$$
\ch^G_2(\mathcal{E}) \cap [X]_G
=-c^G_2(\mathcal{E}) \cap [X]_G
= \sum_{i=1}^{k} 
\operatorname{length}_{\mathcal O_{X,\eta_i}} (\mathcal{E}_{\eta_i}) [Z_i]_G.
$$
\end{enumerate}
\end{cor}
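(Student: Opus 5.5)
This follows directly from Lemma \ref{lem-torsion-eff-cycle} applied with $p=1$ and $p=2$. The only extra work is to rewrite $\ch^G_1$ and $\ch^G_2$ in terms of $c^G_1$ and $c^G_2$ using Definition \ref{defn-chern-Bogomolov}.

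For $p=1$, Lemma \ref{lem-torsion-eff-cycle} gives $\ch^G_0(\mathcal E)=0$ and
\[
\ch^G_1(\mathcal E)\cap[X]_G=\sum_{i=1}^k\operatorname{length}_{\mathcal O_{X,\eta_i}}(\mathcal E_{\eta_i})[Z_i]_G .
\]
By definition $c^G_1(\mathcal E)=\ch^G_1(\mathcal E)$, so the stated identity holds.

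For $p=2$, the same lemma gives $\ch^G_0(\mathcal E)=\ch^G_1(\mathcal E)=0$ together with the cycle formula for $\ch^G_2(\mathcal E)\cap[X]_G$. Since $c^G_1(\mathcal E)=\ch^G_1(\mathcal E)=0$, the definition
\[
c^G_2(\mathcal E)=\tfrac12 c^G_1(\mathcal E)^2-\ch^G_2(\mathcal E)
\]
reduces to $c^G_2(\mathcal E)=-\ch^G_2(\mathcal E)$ in $\CH^2_G(X)_{\Q}$. Capping with $[X]_G$ then gives the claimed equality.

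There is no real obstacle. The one point to check is that Definition \ref{defn-chern-Bogomolov} uses the same $\ch^G$ as Lemma \ref{lem-torsion-eff-cycle}, namely the $\ch'^G$ of Lemma \ref{lem-equiv-ch-coh}. This is legitimate because $X$ is smooth and projective, so equivariant locally free resolutions exist.
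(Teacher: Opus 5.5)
Your proposal is correct and matches the paper, which presents this corollary as an immediate restatement of Lemma \ref{lem-torsion-eff-cycle} for $p=1,2$. As you do, it uses $c^G_1=\ch^G_1$ and, for $p=2$, the vanishing $c^G_1(\mathcal E)=0$ in $c^G_2(\mathcal E)=\frac12 c^G_1(\mathcal E)^2-\ch^G_2(\mathcal E)$ to obtain $c^G_2=-\ch^G_2$.
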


\subsection{Equivariant cohomology group and Cartan model}
\label{subsec-equivariant-complex-algebraic}
Since we wish to apply the methods of Subsection \ref{subsec-equiv-Chern-RR} to complex geometry, we now explain the relationship between the $G$-equivariant Chow group and the equivariant singular cohomology group when the base field is $\C$.
This part is discussed in detail in \cite{GS99, BGV04} and  is also briefly summarized in \cite{Lib07, Inoue20}.

\subsubsection{Relation between the equivariant cohomology group and the $G$-equivariant Chow group}
We first recall the equivariant singular cohomology group.
\begin{defn}\cite[Chapter 1]{GS99}
\label{defn-clasifying}
Let $X$ be a topological space with a continuous left action of a topological group $G$. Let $EG \to BG$ be the classifying space of $G$, and assume that $EG$ is equipped with a left $G$-action. 

The product $EG \times X$ admits the left $G$-action defined by \(g \cdot (p, x) \coloneqq (gp, gx)\), and we set \(EG \times_G X \coloneqq (EG \times X)/G\).
For $K=\Z, \Q, \R$, we define the \emph{equivariant singular cohomology group} by 
\[
H^{\bullet}_G(X, K) \coloneqq H^{\bullet}(EG \times_G X, K).
\]
\end{defn}
For consistency with the notation of this paper, we use left actions on $X$ and $EG$, whereas \cite[Section 4]{Inoue20} uses right actions. 
%By the definition of the classifying space, principal $G$-bundles over a topological space $Y$ are in one-to-one correspondence with continuous maps $Y \to BG$, modulo homotopy equivalence.
%By the definition we obtain the following.
\begin{lem}
\label{lem-cartan-equivcoh}
Let $X$ be a topological space with a continuous  left action of an algebraic torus $\G \coloneqq (\mathbb{G}_m)^{l}$.  Let $\T\subset \G$ be the maximal compact torus, that is, $\T=(S^1)^l$. 
Then, for $K=\Z, \Q, \R$, we have the ring isomorphism
\begin{equation}
\label{eq-rest-GT}
\res_{\T}^{\G}
\colon
H^{\bullet}_\G(X,K)
\overset{\sim}{\to}
H^{\bullet}_\T(X,K).
\end{equation}
\end{lem}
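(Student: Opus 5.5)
The plan is to compare the two Borel constructions through a common model. The inclusion $\T\hookrightarrow\G$ is a homotopy equivalence, since $\G=(\C^*)^l$ deformation retracts onto $(S^1)^l$ via $z\mapsto z/|z|^t$. Consequently, a universal $\G$-bundle $E\G\to B\G$ is also a universal $\T$-bundle: $E\G$ is contractible and $\T$ acts freely on it with local slices, because $\G\to\G/\T$ is a trivial bundle with contractible base $(\R_{>0})^l$. We may therefore take $E\T\coloneqq E\G$. Concretely, $E\G=(\C^\infty\setminus\{0\})^l$ as in Example \ref{ex-EG98-3.1}, and this is also a model for $(S^\infty)^l$, up to homotopy.

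With this choice, the map $\res_\T^\G$ is the pullback along the natural projection
\[
p\colon E\G\times_\T X\to E\G\times_\G X,
\]
which is a fibre bundle with fibre $\G/\T\cong(\R_{>0})^l$. Since the fibre is contractible, $p$ is a homotopy equivalence. The cleanest way to see this is to use a $\G$-equivariant, fibrewise-linear deformation retraction: choose a global section of $p$, or equivalently a $\T$-reduction. One way is to use the $\G$-equivariant map $E\G\to\G/\T$ given coordinatewise by $|z|$, namely the norm $E\G\to(\R_{>0})^l$, together with the subspace $\{|z_i|=1\}=(S^\infty)^l$. Then $(S^\infty)^l\times_\T X\cong E\G\times_\G X$, because every $\G$-orbit in $E\G\times X$ meets $(S^\infty)^l\times X$ in exactly one $\T$-orbit. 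The inclusion $(S^\infty)^l\times_\T X\hookrightarrow E\G\times_\T X$ is a homotopy equivalence, by radially scaling the $E\G$-coordinate, which commutes with $\T$. Composing these maps shows that $p$ induces an isomorphism on cohomology.

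Pullback in singular cohomology is a ring homomorphism for any coefficients $K$, so $\res_\T^\G=p^*$ is a ring isomorphism. Independence of the choice of classifying space is standard from \cite{GS99}, so this identification is canonical.

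The main obstacle is only point-set: checking that the quotients behave well, that is, that the $\G$- and $\T$-actions on $E\G\times X$ are free with local sections, so that the quotient maps are genuine principal bundles. For the explicit model $(\C^\infty\setminus\{0\})^l$ this holds directly, since it holds for each finite approximation $(\C^{m+1}\setminus\{0\})^l$. One then passes to the colimit, or argues on the finite approximations degree by degree, which is enough in a fixed cohomological degree.
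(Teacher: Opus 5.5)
Your proposal is correct and uses the same basic input as the paper's proof, namely the radial retraction of $\C^{m+1}\setminus\{0\}$ onto $S^{2m+1}$, so it is essentially the same approach. It is more complete, though. The paper only asserts that a homotopy equivalence $E\G\simeq E\T$ induces the isomorphism, which is vacuous on its own since both spaces are contractible. Your homeomorphism $(S^\infty)^l\times_\T X\cong E\G\times_\G X$, combined with the $\T$-equivariant radial deformation retraction, supplies the needed compatibility with the Borel constructions, and it shows that the isomorphism is the pullback $p^*$ and hence a ring map.
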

\begin{proof}
For simplicity, assume that $l=1$. The classifying space of $\G$ is
$$
E\G=\lim_{m \to \infty}\C^{m+1}\setminus \{0\}
\to 
\lim_{m \to \infty}\C\mathbb{P}^{m}=B\G.
$$
On the other hand, the classifying space of $\T$ is
$$
E\T=\lim_{m\to \infty}S^{2m+1}
\to 
\lim_{m\to \infty}\C\mathbb{P}^{m}=B\T.
$$
Since there is a homotopy equivalence $\C^{m+1}\setminus \{0\} \sim S^{2m+1}$, it induces a homotopy equivalence $E\G \sim E\T$. This induces the desired isomorphism on cohomology.
\end{proof}

According to \cite[Subsection 2.8]{EG98}, we now explain how equivariant singular cohomology is related to the $G$-equivariant Chow group. Let $X$ be an $n$-dimensional complex algebraic variety with a left action of a complex algebraic group $G$. Let $K=\Z, \Q, \R$. 
The equivariant Borel--Moore homology $H^{G}_{BM,i}(X, K)$ is defined by
\[
H^{G}_{BM,i}(X, K)
 \coloneqq H_{BM,i+2\dim V-2g}(X_G, K)
\quad \text{where } X_G=X\times U /G,
\]
where $V$, $U$, and $g$ are as in Definition \ref{defn-equivChow}. Note that $\dim V-g=\dim X_G-\dim X$. As in Definition \ref{defn-equivChow}, if $V \setminus U$ has sufficiently large codimension, then this definition is independent of the choice of $U$ and $V$. The group $H^{G}_{BM,i}(X, K)$ is also called the $G$-equivariant locally finite homology in \cite[Section 4]{Inoue20}.

The $G$-equivariant cycle map between the homology groups is defined by
\begin{equation*}
cl_{G} \colon \CH_i^G(X)  \coloneqq  \CH_{i+\dim V - g}(X_G) \overset{cl}{\longrightarrow} H_{BM,2i+2\dim V -2g}(X_G, \Z) =H^{G}_{BM,2i}(X, \Z).
\end{equation*}
Here $cl \colon \CH_{i+\dim V - g}(X_G) \to H_{BM,2i+2\dim V -2g}(X_G, \Z)$ denotes the cycle map defined in \cite[Section 19.1]{Ful93}.

When $X$ is smooth, $X_G$ is also smooth. Thus, as in \cite[Subsection 2.8]{EG98}, the $G$-equivariant cycle map between the cohomology groups is defined by
\begin{equation}
\label{eq-defn-cycle-map}
cl^{G}\colon\ \CH_G^i(X) \cong \CH^G_{n -i}(X) \overset{cl_G}{\longrightarrow} %H_{BM,2n-2i}(X_G, \Z)
H^G_{BM,2n-2i}(X, \Z) \cong H_G^{2i}(X, \Z).
\end{equation}

%For the classifying space $EG\to BG$ of $G$, an open subset $U\subset V$ gives a topological approximation of $EG$ (see \cite[Subsection 2.8]{EG98}). Hence, for any $i$, by choosing $U$ and $V$ appropriately, we have $H^{2 i}(X_G,\Z)=H_G^{2i}(X, \Z)$. 
% from $X_G$ is also smooth,  \(H_{BM,i}(X_G, \Z) \cong H^{2\dim X_{G}-i}(X_G,\Z) = H_G^{2\dim X-i}(X, \Z)\).
% \(H_{BM,i}(X_G, \Z)\) is dual to \(H^{2\dim X_{G}-i}(X_G,\Z) = H_G^{2\dim X-i}(X, \Z)\).
%\begin{rem}
%Equivariant Borel--Moore homology can be defined even when $X$ is not a complex algebraic variety; it is called locally finite homology in \cite[Section 4]{Inoue20}.
%\end{rem}

\subsubsection{Relation between the equivariant cohomology group and the Cartan model}
When $X$ is a manifold, equivariant cohomology can be computed by differential forms using the Cartan model.
We recall the definition of the Cartan model following \cite[Subsection 4.2]{Inoue20}, taking $\hbar=1$. (For more details, see \cite[Chapter 4]{GS99} and \cite[Section 3]{Lib07})

Let $M$ be a connected orientable $C^\infty$ real manifold of dimension $m$ equipped with a smooth left action of a compact Lie group $T$.
Let $\mathfrak t$ be the Lie algebra of $T$, and let $\exp \colon \mathfrak t \to T$ be the exponential map. For $\xi\in\mathfrak t$, we define the real vector field $\xi_M \colon M \to T_{\R}M$ at $x \in M$ by
\[
(\xi_M)_x
\coloneqq \left.\frac{d}{dt}\right\rvert_{t=0}\bigl(\exp(t\xi) \cdot x\bigr).
\]
For any differential form $\omega$, define the action by $t \bullet \omega \coloneqq \theta(t^{-1})^{*} \omega$, where $\theta \colon T \to  \Diff(M)$ denotes the action. Set
\begin{equation}
C^{p,q}_T(M) \coloneqq (\Sym^{p}\mathfrak t^{\vee}\otimes \Omega^{q-p}(M))^{T}, 
\end{equation}
which  consists of $T$-invariant $\Sym^{p}\mathfrak t^{\vee}$-valued differential $(q-p)$-forms. 
By identifying elements of the symmetric product $\Sym^{p}\mathfrak t^{\vee}$ with homogeneous polynomial maps of degree $p$ on $\mathfrak t$, we write
$$\varphi=\sum_k P_k\otimes \varphi_k\in C^{p,q}_T(M),$$
where $P_k$ is a homogeneous polynomial of degree $p$. For $\xi\in\mathfrak t$, set $\varphi(\xi) \coloneqq \sum_k P_k(\xi)\cdot \varphi_k$. Then, for any $\xi\in\mathfrak t$ and $\varphi \in C^{p,q}_T(M)$, 
the differentials $d, \delta$ are defined by
\begin{align*}
&d\colon C^{p,q}_T(M)\to C^{p,q+1}_T(M)
\quad  (d\varphi)(\xi) \coloneqq d(\varphi(\xi)),
\\
&\delta \colon C^{p,q}_T(M)\to C^{p+1,q}_T(M)
\quad
(\delta\varphi)(\xi) \coloneqq \iota_{\xi_{M}}(\varphi(\xi)).
\end{align*}
With these differentials, $(C^{p,q}_T(M), d, \delta)$ becomes a double complex.

\begin{defn}[{\cite[Subsection 10.1]{GS99}, \cite[Subsection 3.3]{Lib07}, \cite[Subsection 4.2]{Inoue20}}]
\label{defn-gysimmap}
In the above setting, the \emph{Cartan model $H^{\bullet}_{\dR,T}(M)$ of equivariant cohomology} is the cohomology of the total complex $(\Omega_T^{\bullet}(M), d_T)$, which is defined by
$$
\Omega_T^{l}(M) \coloneqq \bigoplus_{p+q=l} C^{p,q}_T(M)
=\bigoplus_{2i+j=l}(\Sym^{i}\mathfrak t^{\vee}\otimes \Omega^{j}(M))^{T},
\quad 
d_T \coloneqq  d + \delta \colon \Omega_T^{l}(M) \to \Omega_T^{l+1}(M).
$$
% Assume that $p + q - m$ is a positive even integer, and set $K \coloneqq \frac{p+q-m}{2} \in \N$. 
Assume that $M$ is compact. For any $\varphi=\sum_k P_k\otimes \varphi_k\in C^{p,q}_T(M)$, where $P_k \in \Sym^{p}\mathfrak t^{\vee}$ and $\varphi_k \in \Omega^{q-p}(M)$, we define the integration map by
\[
\pi_*\varphi
 \coloneqq  \sum_{k} P_k \cdot \left(\int_M \varphi_k \right)  \in \Sym^{p} \mathfrak{t}^{\vee}.
% \coloneqq  \sum_i P_i  \cdot \int_M [\varphi_i]_{m}  \in \Sym(\mathfrak{t}^{\vee})
\]
Note that $\int_M \varphi_k=0$ unless $q-p=m$.
This induces a homomorphism on the Cartan model
\(
\pi_*\colon H_{\dR, T}^{\bullet}(M)\rightarrow H_{\dR, T}^{\bullet-m}(\mathrm{pt})=\Sym \, \mathfrak{t}^{\vee}.
\)
\end{defn}
Although the definition of $\pi_{*}$ is somewhat complicated,
 its value $(\pi_*\varphi)(\xi)$ at $\xi\in\mathfrak t$ can be written more simply as follows:
 %For $\varphi \in \Omega_T^{\bullet}(M)$, we have
\begin{equation}
\label{eq-cartalmodel-substitute}
(\pi_*\varphi)(\xi) =
 \int_M [\underbrace{\varphi(\xi)}_{\in {\Omega^{\bullet}(M)}^{T}}]_{m} 
 \in \mathbb R.
%\quad (\forall \xi\in\mathfrak t)
\end{equation}
Here $[\cdot]_{m}$ means taking the component of degree $m$.
The following theorem shows that equivariant cohomology can be computed by  using the Cartan model.
\begin{thm}[{\cite[Subsections 2.5, 4.2, and 10.1]{GS99}, \cite[Theorem 21]{Lib07}, \cite[Subsection 4.2]{Inoue20}}]
\label{thm-equiv-caltan-isom}
Let $M$ be an orientable and connected $m$-dimensional $C^\infty$ real manifold with a smooth left action of a compact Lie group $T$. Let $ET \to BT$ be the classifying space of $T$. Then there exists a natural ring isomorphism
\begin{equation}
\label{eq-defn-Psi}
\Psi_T \colon H^{\bullet}_T(M,\R)
\overset{\sim}{\to}H^{\bullet}_{\dR,T}(M).
\end{equation}
Under this isomorphism, the map
\[
\pi_*\colon H_{\dR, T}^{\bullet}(M)\rightarrow H_{\dR, T}^{\bullet-m}(\mathrm{pt})
\]
in Definition \ref{defn-gysimmap} is identified with the Gysin map
$$
\pi'_* \colon H^{\bullet}_T(M,\R)=H^{\bullet}(ET\times_T M, \R)
\to H^{\bullet-m}(BT, \R)=H^{\bullet-m}_T(\mathrm{pt},\R)
$$
associated with the projection $ET\times_T M \to ET\times_T \mathrm{pt}=BT$. 
We also denote this map $\pi'_* $ by $\pi_*$.
\end{thm}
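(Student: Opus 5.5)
This is the equivariant de Rham theorem (Cartan's theorem), so my plan is to reduce it to the ordinary de Rham theorem on finite-dimensional approximations of the Borel construction. I would not rebuild the whole theory; I would lay out the standard argument of \cite{GS99} in four steps.

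\textbf{Step 1: finite-dimensional approximations.} Choose compact free $T$-manifolds $E_N$, for instance $E_N=(S^{2N+1})^l$ when $T$ is a torus, as in Lemma \ref{lem-cartan-equivcoh}. These are $(2N)$-connected and exhaust $ET$. Then $H^k_T(M,\R)\cong H^k(E_N\times_T M,\R)$ for $N\gg k$.

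\textbf{Step 2: the Cartan map.} Fix a connection form $\theta\in(\Omega^1(E_N)\otimes\mathfrak t)^T$ with curvature $F$. On $E_N\times M$, the $T$-action is free, so the de Rham complex of the quotient is the basic subcomplex of $\Omega(E_N\times M)$. Define the Cartan map
\[
\Omega_T(M)\to\Omega_{\mathrm{bas}}(E_N\times M),\qquad P\otimes\varphi\mapsto \mathrm{Hor}\bigl(P(F)\wedge\varphi\bigr),
\]
where $P(F)$ substitutes the curvature into the polynomial and $\mathrm{Hor}$ is projection to horizontal forms. This is a chain map from $(\Omega_T(M),d_T)$: substituting $F$ for the polynomial variable is exactly what turns $\delta=\iota_{\xi_M}$ into the horizontal part of $d$.

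\textbf{Step 3: the map is an isomorphism in a range.} The Cartan map factors through the Weil model $(W(\mathfrak t)\otimes\Omega(M))_{\mathrm{bas}}$, and the Weil model is isomorphic to the Cartan model by the Mathai--Quillen isomorphism. The Chern--Weil homomorphism $W(\mathfrak t)\to\Omega(E_N)$ induces an isomorphism on basic cohomology in degrees $<2N$, because $E_N$ is highly connected. A spectral sequence comparison over $BT$, with fibre $M$ and compactness of $T$ used to average, then gives an isomorphism $H^k_{\dR,T}(M)\cong H^k_{\dR}(E_N\times_T M)$ for $k<2N$.

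I expect this to be the main obstacle. The delicate point is keeping track of the degree range where $E_N$ approximates $ET$, and checking that the maps for different $N$ are compatible. I would handle both by citing \cite[Thm.~2.5.1, Ch.~4]{GS99}.

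\textbf{Step 4: assemble $\Psi_T$ and identify $\pi_*$.} Compose with the ordinary de Rham isomorphism on $E_N\times_T M$ and pass to the limit in $N$; this gives $\Psi_T$. It is a ring map because the Cartan map is multiplicative. For the push-forward, fibre integration along $E_N\times_T M\to E_N/T$ realises the Gysin map $\pi'_*$. On the image of $P\otimes\varphi$ it gives $P(F)\cdot\int_M\varphi$, since $P(F)$ is pulled back from the base and only the fibre component of $\varphi$ contributes. Under Chern--Weil this is exactly $\pi_*$ of Definition \ref{defn-gysimmap}, which proves the identification.
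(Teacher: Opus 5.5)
Your proposal is correct and follows the standard argument of \cite{GS99}: finite-dimensional approximations of $ET$, the Cartan map factored through the Weil model and the Mathai--Quillen isomorphism, and fibre integration for $\pi_*$. This is what the paper relies on, since it gives no proof of its own and only cites \cite{GS99}, \cite{Lib07}, \cite{Inoue20}; the one point to watch is that the sign in $P\mapsto P(\pm F)$ must match the paper's convention $d_T=d+\delta$ and your normalization of $\theta$, which is harmless because $\xi\mapsto-\xi$ intertwines the two sign conventions and commutes with $\pi_*$.
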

%Indeed, this follows from the natural isomorphism $H^{\bullet}_T(M,\R)\cong H^{\bullet}_{\dR,T}(M)$.

\subsection{Equivariant intersection theory with weights}
\label{subsec-equivariant-intersection-theory}

In this subsection, we briefly review \cite{HL24}. To avoid repetition, throughout Subsection \ref{subsec-equivariant-intersection-theory}, unless otherwise stated, let $\T=(S^1)^{l}$ be a compact torus and $\mathfrak{t} \coloneqq \Lie(\T)$ be the Lie algebra.
Let $X$ be a compact K\"ahler manifold with a holomorphic $\T$-action, and let $\omega$ be a $\T$-invariant K\"ahler form. 
Moreover, we assume that this $\T$-action is Hamiltonian, that is, there exists a $\T$-equivariant $C^\infty$ map \( \mu\colon X\to \mathfrak t^{\vee}\), called the \emph{moment map},
such that
\[
d\mu_\xi= -\iota_{\xi_X}\omega
\qquad
\forall\,\xi\in\mathfrak t.
\]
Here $\xi_X$ is the vector field on $X$ generated by the action of $\exp(\xi) \in \T$, and we set
$$
\mu_\xi \coloneqq \langle\mu,\xi\rangle\colon 
X \to \R, \quad  x \mapsto \langle\mu(x), \xi\rangle.
$$
We denote by $\langle \bullet, \bullet\rangle$ the canonical pairing $\mathfrak t^{\vee} \otimes \mathfrak t \to \R$ and call $P=\mu(X) \subset \mathfrak t^{\vee}$ the \emph{moment polytope}.

\subsubsection{Weighted contraction}

\begin{defn}\cite[Definition 2.1]{HL24}
\label{defn-Emoment}
Let $E \to X$ be a $\T$-equivariant vector bundle, and let $h$ be a $\T$-invariant Hermitian metric. 
The \emph{moment map} $\Phi_h\in C^{\infty}(X, \mathfrak t^{\vee}\otimes \End(E))$ for $(E, h)$ is a $\T$-invariant $C^\infty$-section of $\mathfrak{t}^{\vee}\otimes \End(E)$ such that
\[
F_h(\bullet,\xi_X)=\nabla^h\langle \Phi_h,\xi\rangle
\quad \forall \xi \in \mathfrak{t}.
\]
Here $\nabla^h$ is the Chern connection of $h$, and $F_h=\bar{\partial} (h^{-1} \partial h)$ is the Chern curvature.
\end{defn}
By \cite[Proposition 2.2]{HL24}, there exists a canonically defined moment map $\Phi_h$ defined by
\begin{equation}
\label{eq-canonically-moment-map}
\langle \Phi_h,\xi\rangle
 \coloneqq 
\nabla^{h}_{\xi_X}-\mathcal L^{E}_{\xi_X} \quad \forall \xi \in \mathfrak{t}.
\end{equation}
Here $\mathcal L^{E}_{\xi_X} \colon E \to E$ denotes the Lie derivative; that is, for a local section $e \colon U \to E$, set
\[
\mathcal L^{E}_{\xi_X}(e)
\coloneqq
\left.\frac{d}{dt}\right\rvert_{t=0}\bigl(\exp(-t\xi) \cdot e\bigr).
\]
Notice that the action on local sections is given by
\(
(\tau \cdot e)(x) \coloneqq \tau(e(\tau^{-1} x))
\)
for any $x \in U$ and $\tau\in \T$.

\begin{defn}\cite[Definition 2.4]{HL24}
\label{defn-weighted-lambda} 
Let $E \to X$ be a $\T$-equivariant vector bundle, and let $h$ be a $\T$-invariant Hermitian metric. 
For a smooth positive function $v\colon P \to \R_{>0}$, 
we define the \emph{$v$-weighted contraction} by
\[
\Lambda_{\omega,v}(F_h+\Phi_h)
 \coloneqq v(\mu)\Lambda_\omega F_h+\langle \Phi_h, dv(\mu)\rangle
%\quad \text{with values in $End E$}.
\]
Here $\Lambda_\omega$ denotes the contraction defined by
\(
\Lambda_\omega F_h  \coloneqq  
\frac{n\,F_h\wedge \omega^{n-1}}{\omega^n}
\).
\end{defn}

\begin{defn}\cite[subsection 8.1]{HL24}
\label{defn-equiv-chern}
Let $E$ be a $\T$-equivariant vector bundle, and let $h$ be a $\T$-invariant Hermitian metric. We define the \emph{$\T$-equivariant Chern character} by
\begin{equation}
\label{eq-defn-equiv-chern}
\ch^{\T}(E)\; \coloneqq \;
\left\{
\underbrace{\tr
\!\Bigl(\exp\Bigl(\frac{\sqrt{-1}}{2\pi}(F_h +\Phi_h)\Bigr)\Bigr)}_{\text{even-degree $\T$-differential form}}
\right\}
 \underalign{\text{(Def. \ref{defn-gysimmap})}}{\in} 
 \widehat{H^{2 \bullet}_{\dR, \T}}(X).
\end{equation}
Here $\widehat{H^{2 \bullet}_{\dR, \T}}(X)$ denotes the formal completion of $H^{2 \bullet}_{\dR, \T}(X)$. This is independent of the choice of $h$. In particular,
\begin{align*}
\ch^{\T}_{1}(E)
& \coloneqq \
\left\{
\tr \Bigl(\frac{\sqrt{-1}}{2\pi}(F_h +\Phi_h)\Bigr)
\right\} \in H^{2}_{\dR, \T}(X)
\\
\ch^{\T}_{2}(E)
& \coloneqq 
\left\{
\frac{1}{2}\tr
\Bigl(-\frac{1}{4\pi^{2}}(F_h +\Phi_h)^{2}\Bigr)
\right\} \in H^{4}_{\dR, \T}(X).
\end{align*}
Also  the first and second equivariant Chern classes are defined by  $c^{\T}_{1}(E) \coloneqq \ch^{\T}_{1}(E)$ and 
$c^{\T}_{2}(E) \coloneqq \frac{1}{2}\ch^{\T}_{1}(E)^2 - \ch^{\T}_{2}(E)$.
\end{defn}

\subsubsection{Equivariant intersection theory via the Cartan model}

\begin{defn}[{\cite[Subsection 3.2]{Inoue20}, \cite[Subsubsection 4.1.1]{HL24}}]
\label{defn-intersection-converge}
Let $\alpha_{\T}\in H_{\dR, \T}^{2}(X)$ and let $\beta_{\T}\in H_{\dR, \T}^{2m}(X)$. Let $f\colon\mathbb{R}\to\mathbb{R}$ be a real function that admits a convergent power series expansion at the origin of the form
\(
f(x)= \sum_{k=0}^{\infty}\frac{a_k}{k!}x^k
\).
We define
\[
(\beta_{\T}\cdot f(\alpha_{\T}))_{\T}
 \coloneqq  \pi_{*}(\beta_{\T}\cdot f(\alpha_{\T})) 
=\sum_{k=0}^{\infty}
\frac{a_k}{k!}
\cdot \pi_{*}(\beta_{\T}\cdot {\alpha_{\T}}^{k})
%\underbrace{\pi_{*}(\beta_{\T}\cdot {\alpha_{\T}}^{k})}_{\in H_{\dR,T}^{2m + 2m -2n}(pt)}
\underalign{\text{(Def. \ref{defn-gysimmap})}}{\in} \widehat{\Sym\,\mathfrak{t}^{\vee}}.
\]
Here $\pi_*\colon H_{\dR, T}^{\bullet}(X)\rightarrow \Sym \, \mathfrak{t}^{\vee}$ is defined as in Definition \ref{defn-gysimmap}, and $\widehat{\Sym\,\mathfrak{t}^{\vee}}$ denotes the formal completion of $\Sym\, \mathfrak{t}^{\vee}$.
\end{defn}
By Definition \ref{defn-gysimmap}, we have
\[
\pi_{*}(\beta_{\T}\cdot {\alpha_{\T}}^{k})(\xi)
\underalign{\eqref{eq-cartalmodel-substitute}}{=}
 \int_X [\beta_{\T}(\xi) \wedge \alpha_{\T}(\xi)^{k}]_{2n} 
% \int_X [\underbrace{\beta_{\T}(\xi) \wedge \alpha_{\T}(\xi)^{k}}_{\in {\Omega^{\bullet}(X)}^{\T}}]_{2n} 
 \in \mathbb R.
\quad (\forall \xi\in\mathfrak t)
\]
The use of completion makes the definition somewhat cumbersome, but the following useful theorem is available.
\begin{thm}\cite[Theorem 1.1]{Inoue20}
If $f$ has a power series which converges on all of $\mathbb{R}$, then the formal power series $(\beta_{\T}\cdot f(\alpha_{\T}))_\T$ is compactly absolutely convergent.
In particular, $(\beta_{\T}\cdot f(\alpha_{\T}))_\T(\xi)$ converges and is a real number.
\end{thm}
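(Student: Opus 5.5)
Choose $\T$-equivariant closed forms representing the two classes: $\alpha_\T=\alpha+\mu^\alpha$ (a $2$-form plus a $\mathfrak t^\vee$-valued function) and $\beta_\T=\sum_{j\le m}\beta_j$, where $\beta_j\in(\Sym^{j}\mathfrak t^\vee\otimes\Omega^{2m-2j}(X))^\T$. Since $X$ is compact, every component of these finitely many forms is bounded, and its pointwise norm depends polynomially on $\xi$. By \eqref{eq-cartalmodel-substitute}, for $\xi\in\mathfrak t$ the $k$-th term is
\[
\pi_*(\beta_\T\cdot\alpha_\T^k)(\xi)=\int_X\bigl[\beta_\T(\xi)\wedge(\alpha+\langle\mu^\alpha,\xi\rangle)^k\bigr]_{2n}.
\]
The top-degree part of $(\alpha+\langle\mu^\alpha,\xi\rangle)^k$ paired with $\beta_j$ needs exactly $p=n-m+j$ copies of $\alpha$, with $0\le p\le n$. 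So the integrand equals
\[
\sum_{p}\binom{k}{p}\,\beta_j(\xi)\wedge\alpha^{p}\,\langle\mu^\alpha,\xi\rangle^{k-p}.
\]

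The plan is to bound each term absolutely. Fix a compact set $K\subset\mathfrak t$ and put $C_K=\sup_{X\times K}|\langle\mu^\alpha,\xi\rangle|$. Let $A_K$ bound the integrals of the absolute values of $\beta_j(\xi)\wedge\alpha^p$ over $X$, uniformly for $\xi\in K$. Then
\[
\bigl|\pi_*(\beta_\T\alpha_\T^k)(\xi)\bigr|\le A_K\sum_{p=0}^{n}\binom{k}{p}C_K^{k-p}.
\]
Multiplying by $|a_k|/k!$ and using $\binom{k}{p}/k!=1/(p!(k-p)!)$ gives
\[
\sum_k\frac{|a_k|}{k!}\bigl|\pi_*(\cdots)(\xi)\bigr|\le A_K\sum_{p=0}^n\frac{1}{p!}\sum_{k}\frac{|a_k|\,C_K^{k-p}}{(k-p)!}.
\]

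Next we use the convergence hypothesis. Write $f(x)=\sum_k c_kx^k$ with $c_k=a_k/k!$; this converges for all real $x$, so $\sum_k|c_k|R^k<\infty$ for every $R$. The inner sum is controlled by $\sum_k|c_k|\,k^p C_K^{k-p}$ (since $k!/(k-p)!\le k^p$), which is dominated by derivative-type series $\sum_k k^p|c_k|R^k$. These are finite because an entire power series can be differentiated termwise with the same infinite radius. Hence the series of absolute values converges uniformly on $K$, and this is compact absolute convergence. In particular the value at $\xi$ is a well-defined real number.

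The main obstacle is independence of the chosen representatives. If we change $\alpha_\T$ or $\beta_\T$ by $d_\T$-exact forms, each term $\pi_*(\beta_\T\alpha_\T^k)$ depends only on cohomology classes, since $\pi_*$ is defined on $H_{\dR,\T}$ by Theorem \ref{thm-equiv-caltan-isom}. Summation is therefore representative-independent, and the uniform estimate above holds for any fixed choice of representatives, which suffices.
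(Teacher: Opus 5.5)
Your argument is correct. The paper gives no proof of its own here; it simply quotes Inoue's Theorem 1.1. What you have is therefore a self-contained verification in exactly the setting where the statement is used in this paper: a compact manifold, with classes represented in the Cartan model.

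The steps check out:
\begin{itemize}
\item The degree count forces exactly $p=n-m+j$ copies of the $2$-form part.
\item The identity $\frac{1}{k!}\binom{k}{p}=\frac{1}{p!(k-p)!}$ turns the $p$-th tail into $\frac{1}{p!}\sum_{l\ge0}\frac{|a_{l+p}|}{l!}C_K^{\,l}$. This is the absolute majorant of the entire function $f^{(p)}$ at $C_K$, so it is finite, and the case $C_K<1$ needs no separate treatment.
\item Your bound is uniform in $\xi\in K$.
\item Representative-independence holds because each coefficient $\pi_*(\beta_\T\alpha_\T^k)\in\Sym\mathfrak t^\vee$ is already a cohomological invariant.
\end{itemize}

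Compared with citing Inoue, your route gives two things for free. First, absolute convergence lets you exchange sum and integral, which yields
\[
(\beta_\T\cdot f(\alpha_\T))_\T(\xi)=\sum_{j}\int_X\beta_j(\xi)\wedge\frac{\alpha^{p_j}}{p_j!}\,f^{(p_j)}\bigl(\langle\mu^\alpha,\xi\rangle\bigr),\qquad p_j=n-m+j.
\]
For $\beta_\T=1$ and $\alpha_\T=\{\omega+\mu\}$ this reduces to $\int_X f^{(n)}(\mu_\xi)\,\omega^{[n]}$, matching Lemmas \ref{lem-HL24-4.3} and \ref{lem-intersection-chern}.

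Second, nothing in your estimate uses that $\xi$ is real. Taking $K\subset\mathfrak t\otimes\C$ compact and extending $\langle\mu^\alpha,\xi\rangle$ complex-linearly gives the same bound. This is precisely the entire extension invoked in Remark \ref{rem-intersection-converge}.

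Your route does depend on having form-level representatives on a compact manifold, but that is all the paper ever needs here.
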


\begin{rem}[{\cite[remark 4.1]{HL24}}]
\label{rem-intersection-converge}
Since the power series $f(\alpha_{\T})$ converges on all of $\mathfrak{t}$, it extends to an entire function on the complex vector space $\mathfrak{t}\otimes\mathbb{C}$. In particular, for $\xi\in\mathfrak{t}$, the value $(\beta_{\T}\cdot f(\alpha_{\T}))_\T(\sqrt{-1}\xi)$ is well-defined.
\end{rem}
Using this and the Fourier transform, we define the weighted equivariant intersection number for a general weight $v\colon P\to \R$ and equivariant K\"ahler class $\{\omega + \mu\}\in  H_{\dR, \T}^{2}(X)$.
\begin{defn}[{\cite[Definition 4.2]{HL24}}]
\label{defn-equiv-intersection-number}
Let $\alpha_{\T}=\{\omega + \mu\}\in  H_{\dR, \T}^{2}(X)$ be an equivariant K\"ahler class and let $\beta_{\T}\in  H_{\dR, \T}^{2m}(X)$.
For a smooth function $v\colon P\to\mathbb{R}$, choose a compactly supported smooth extension $\bar{v}$ to all of $\mathfrak{t}^{\vee}$ whose support contains $P$, and define its Fourier transform by
\[
\widehat{v}(\xi)\coloneqq\int_{ \mathfrak{t}^{\vee}}\bar{v} (y)e^{-\sqrt{-1}\langle y,\xi\rangle}\,\underline{d\mu}(y).
\]
Here $\underline{d\mu}$ denotes the Lebesgue measure on $\mathfrak{t}^{\vee}$.

We define the \emph{weighted equivariant intersection number with respect to $v$} by
\[
(\beta_{\T}\cdot v(\alpha_{\T})) \coloneqq \int_{\mathfrak{t}}(
\beta_{\T}\cdot e^{\alpha_{\T}})_{\T}(\sqrt{-1}\xi) \cdot \widehat{v}(\xi)\,\underline{d\xi} \in \R.
\]
Here $(\beta_{\T}\cdot e^{\alpha_{\T}})_{\T}(\sqrt{-1}\xi)$ is defined as in Definition \ref{defn-intersection-converge} and Remark \ref{rem-intersection-converge}, and $\underline{d\xi}$ denotes the Lebesgue measure on $\mathfrak{t}$ dual to $\underline{d\mu}$, divided by $(2\pi)^{\dim T}=(2\pi)^{l}$.
\end{defn}
By \cite[Lemma 4.3]{HL24}, the real number $(\beta_{\T}\cdot v(\alpha_{\T}))$ is well-defined, the integral converges, and it is independent of the choice of the extension $\bar{v}$ of $v$ from $P$ to $\mathfrak{t}^{\vee}$. Moreover, $(\beta_{\T}\cdot v(\alpha_{\T}))$ depends only on the equivariant cohomology classes involved, and not on the choice of equivariant representatives.
This definition is not convenient for computations, but the following useful result is available.
\begin{lem}\cite[Lemma 4.3]{HL24}
\label{lem-HL24-4.3}
Under the setting of Definition \ref{defn-equiv-intersection-number}, assume that $v(\mu)=\widetilde{v}(\langle \mu,\xi\rangle)$ holds for some real analytic function $\widetilde{v} \colon \R \to \R$ and some $\xi\in\mathfrak{t}$. Then, for any $\beta_{\T}\in H_{\dR, \T}^{2m}(X)$, 
\[
\underbrace{(\beta_{\T}\cdot v(\alpha_{\T}))}
_{\text{(Def. \ref{defn-equiv-intersection-number})}}
=\underbrace{(\beta_{\T}\cdot f^{(m)}(\alpha_{\T}))_{\T} (\xi)}
_{\text{(Def. \ref{defn-intersection-converge})}}\in\R.
\]
Here $f \colon \R \to \R$ is a real function satisfying $f^{(n)}(x)=\widetilde{v}(x)$, and $f^{(m)}(x)$ denotes the $m$-th derivative of $f$.
%, and $(\beta_{\T}\cdot f^{(l)}(\alpha_{\T}))_{\T}(\xi)$ is defined as in Definition \ref{defn-intersection-converge}.
\end{lem}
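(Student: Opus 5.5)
The plan is to reduce both sides to the same integral over $X$ of ordinary differential forms, by substituting Cartan-model representatives and applying Fourier inversion on $\mathfrak t^\vee$.

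\emph{Step 1: expand the left-hand side.} Fix representatives: $\alpha_\T=\omega+\mu$ and
\[
\beta_\T=\sum_{p}\sum_k P_{p,k}\otimes\beta_{p,k},
\]
where $P_{p,k}\in\Sym^p\mathfrak t^\vee$ and $\beta_{p,k}$ is a $\T$-invariant form of degree $2m-2p$. For $\eta\in\mathfrak t$, \eqref{eq-cartalmodel-substitute} and Remark~\ref{rem-intersection-converge} give
\[
(\beta_\T\cdot e^{\alpha_\T})_\T(\sqrt{-1}\eta)=\sum_{p,k}(\sqrt{-1})^{p}P_{p,k}(\eta)\int_X \beta_{p,k}\wedge\frac{\omega^{n-m+p}}{(n-m+p)!}\,e^{\sqrt{-1}\langle\mu,\eta\rangle}.
\]
This holds because $\omega$ is nilpotent in the form algebra, so only one power of $\omega$ contributes to the top degree.

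\emph{Step 2: Fourier inversion.} Insert this into Definition~\ref{defn-equiv-intersection-number} and exchange $\int_{\mathfrak t}$ with $\int_X$. Fubini applies because $X$ is compact and $\widehat v$ is a Schwartz function, as $\bar v\in C^\infty_c$. With the normalization of $\underline{d\xi}$ (divided by $(2\pi)^l$), Fourier inversion gives
\[
\int_{\mathfrak t}P(\eta)\,e^{\sqrt{-1}\langle y,\eta\rangle}\,\widehat v(\eta)\,\underline{d\eta}=\bigl(P(-\sqrt{-1}\,\partial_y)\bar v\bigr)(y).
\]
The factors $(\sqrt{-1})^p$ and $(-\sqrt{-1})^p$ cancel, so
\[
(\beta_\T\cdot v(\alpha_\T))=\sum_{p,k}\int_X\beta_{p,k}\wedge\frac{\omega^{n-m+p}}{(n-m+p)!}\,\bigl(P_{p,k}(\partial)v\bigr)(\mu).
\]
Only values of $\bar v$ on $P=\mu(X)$ enter, which recovers independence of the extension. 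Since $v(y)=\widetilde v(\langle y,\xi\rangle)$ near $P$, the chain rule gives $P_{p,k}(\partial)v=P_{p,k}(\xi)\,\widetilde v^{(p)}(\mu_\xi)$.

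\emph{Step 3: the right-hand side.} Evaluate $(\beta_\T\cdot f^{(m)}(\alpha_\T))_\T(\xi)$ pointwise as $\int_X[\beta_\T(\xi)\wedge f^{(m)}(\omega+\mu_\xi)]_{2n}$. Taylor-expanding in the nilpotent variable $\omega$ around $\mu_\xi$, the piece pairing with $\beta_{p,k}$ is
\[
\frac{\omega^{n-m+p}}{(n-m+p)!}\,f^{(m+n-m+p)}(\mu_\xi)=\frac{\omega^{n-m+p}}{(n-m+p)!}\,\widetilde v^{(p)}(\mu_\xi),
\]
using $f^{(n)}=\widetilde v$. This agrees term by term with Step 2.

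\emph{Main obstacle.} The difficult step is justifying that the formal series in Definition~\ref{defn-intersection-converge}, evaluated at $\xi$, equals this pointwise integral. The definition sums $\sum_j \frac{a_j}{j!}\pi_*(\beta_\T\alpha_\T^j)(\xi)$ with $a_j$ the Taylor coefficients of $f^{(m)}$ at $0$; the pointwise computation instead re-expands $f^{(m)}$ around $\mu_\xi(x)$, which may lie far from the origin. Equality requires the series to converge on the bounded set $\mu_\xi(X)$, so that the rearrangement is legitimate. I would first handle $\widetilde v$ entire, invoking \cite[Theorem 1.1]{Inoue20}. For general real-analytic $\widetilde v$, I would interpret the right-hand side as the pointwise integral, which is the natural meaning whenever the series converges, and note that the signs coming from the convention $d\mu_\xi=-\iota_{\xi_X}\omega$ and from $e^{-\sqrt{-1}\langle y,\xi\rangle}$ need to be checked consistently at that point.
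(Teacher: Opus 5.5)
Your argument is correct. The paper gives no proof of its own; the lemma is quoted from \cite{HL24}. Your route (evaluate $(\beta_\T\cdot e^{\alpha_\T})_\T(\sqrt{-1}\eta)$ pointwise on Cartan representatives, then use Fourier inversion to reach $\sum_{p,k}\int_X\beta_{p,k}\wedge\omega^{[n-m+p]}\,(P_{p,k}(\partial)\bar v)(\mu)$) is the standard one. The same formula with $\beta_\T=\frac{\sqrt{-1}}{2\pi}\tr(F_h+\Phi_h)$ gives Lemma~\ref{lem-intersection-chern}.

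The points you leave open close quickly:
\begin{itemize}
\item The sign you flag is consistent. $d_\T=d+\delta$ and $d\mu_\xi=-\iota_{\xi_X}\omega$ force $\alpha_\T(\xi)=\omega+\mu_\xi$, so evaluation at $\sqrt{-1}\eta$ yields $e^{+\sqrt{-1}\langle\mu,\eta\rangle}$. That is exactly the inversion kernel for $\widehat v$, and for $\beta_\T=1$ you recover $\int_Xv(\mu)\,\omega^{[n]}$.
\item For the chain rule, simply take $\bar v=\chi\cdot\widetilde v(\langle\cdot,\xi\rangle)$ with $\chi\equiv1$ near $P$. The stated independence of the extension permits this.
\item In the entire case you do not need \cite{Inoue20}. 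After the binomial expansion only powers $\omega^j$ with $j\le n$ survive, and $\sum_k\frac{|a_k|}{k!}\binom{k}{j}|\mu_\xi|^{k-j}$ converges uniformly on $X$. That already justifies the rearrangement.
\end{itemize}

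Your caveat about non-entire $\widetilde v$ points to an imprecision in the statement, not a gap in your proof. Take $\widetilde v(x)=(1+x^2)^{-1}$, $\beta_\T=1$ and $\max_X|\mu_\xi|>1$. The series defining the right-hand side is then $\sum_k(-1)^k\int_X\mu_\xi^{2k}\omega^{[n]}$, which diverges. The lemma should therefore be read in one of two ways: with the Taylor series of $\widetilde v$ at $0$ converging on a neighbourhood of $\mu_\xi(X)$ (e.g.\ $\widetilde v$ entire), or with the right-hand side taken as your pointwise integral. Either reading covers the only weights the paper uses, $(c+b\mu_\xi)^m$ and $e^{\mu_\xi}$.
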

Finally, we observe that, for the first Chern class of a vector bundle, the intersection number can be expressed in terms of $v$-weighted contraction as in Definition \ref{defn-weighted-lambda}.

\begin{lem}\cite[Lemma 4.4]{HL24}
\label{lem-intersection-chern}
Let $\alpha_{\T}=\{\omega + \mu\}\in H_{\dR, \T}^{2}(X)$ be an equivariant K\"ahler class, and let $E$ be a $\T$-equivariant holomorphic vector bundle. For a smooth positive function $v\colon P\to\mathbb{R}_{>0}$, 
$$
( v(\alpha_{\T}))
=\displaystyle \int_X v(\mu) \omega^{[n]}
\quad
\quad \text{and} \quad
(c_1^{\T}(E)\cdot v(\alpha_{\T}))
=\int_X \tr\!\left(\frac{\sqrt{-1}}{2\pi}\Lambda_{\omega,v}(F_h+\Phi_h)\right)\omega^{[n]}.
$$
Here we set $\omega^{[n]} \coloneqq \frac{\omega^{n}}{n!}$.
\end{lem}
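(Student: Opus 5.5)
The statement to prove is Lemma \ref{lem-intersection-chern}. I would reduce both identities to the Fourier-transform definition (Definition \ref{defn-equiv-intersection-number}) and then use Fourier inversion on $\mathfrak t^\vee$. The idea is to compute the integrand $(\beta_\T\cdot e^{\alpha_\T})_\T(\sqrt{-1}\xi)$ explicitly by means of the Cartan representatives. For $\alpha_\T=\{\omega+\mu\}$ and $\xi\in\mathfrak t$, formula \eqref{eq-cartalmodel-substitute} gives
\[
(e^{\alpha_\T})_\T(\sqrt{-1}\xi)=\int_X\bigl[e^{\omega+\sqrt{-1}\langle\mu,\xi\rangle}\bigr]_{2n}=\int_X e^{\sqrt{-1}\mu_\xi}\omega^{[n]}.
\]
Here the constant $\sqrt{-1}\mu_\xi$ commutes with everything, and only $\omega^n/n!$ survives in top degree. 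I will fix the sign convention in the Fourier kernel so that the phase is $e^{+\sqrt{-1}\langle \mu,\xi\rangle}$, matching $\widehat v(\xi)=\int\bar v(y)e^{-\sqrt{-1}\langle y,\xi\rangle}dy$.

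To obtain the first identity, I integrate against $\widehat v(\xi)\,\underline{d\xi}$ and apply Fubini, exchanging $\int_{\mathfrak t}$ with $\int_X$. Fourier inversion then gives $\int_{\mathfrak t}e^{\sqrt{-1}\langle\mu(x),\xi\rangle}\widehat v(\xi)\underline{d\xi}=\bar v(\mu(x))=v(\mu(x))$; the normalization of $\underline{d\xi}$ by $(2\pi)^{-l}$ is exactly what this requires. This yields $(v(\alpha_\T))=\int_X v(\mu)\omega^{[n]}$.

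For the second identity, I represent $c_1^\T(E)$ by $\tr\bigl(\frac{\sqrt{-1}}{2\pi}(F_h+\Phi_h)\bigr)$. The degree-$2n$ part of $c_1^\T(E)(\sqrt{-1}\xi)\wedge e^{\omega+\sqrt{-1}\mu_\xi}$ is
\[
\Bigl(\tr\tfrac{\sqrt{-1}}{2\pi}F_h\wedge\omega^{[n-1]}+\sqrt{-1}\,\tr\bigl(\tfrac{\sqrt{-1}}{2\pi}\langle\Phi_h,\xi\rangle\bigr)\omega^{[n]}\Bigr)e^{\sqrt{-1}\mu_\xi}.
\]
I rewrite the first term as $\tr\bigl(\frac{\sqrt{-1}}{2\pi}\Lambda_\omega F_h\bigr)\omega^{[n]}$, using $n\,\omega^{[n-1]}\wedge(\cdot)/\omega^{[n]}\cdot\frac1n=\Lambda_\omega$, i.e.\ $F\wedge\omega^{[n-1]}=\Lambda_\omega F\,\omega^{[n]}$. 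After pairing with $\widehat v$ and applying Fubini, the first term inverts to $v(\mu)$. For the second term, the factor $\sqrt{-1}\langle\Phi_h,\xi\rangle e^{\sqrt{-1}\langle\mu,\xi\rangle}$ equals the derivative $\langle\Phi_h,\partial_y\rangle$ applied to $e^{\sqrt{-1}\langle y,\xi\rangle}$ at $y=\mu$. Because of this, the Fourier inversion of $\widehat v(\xi)\cdot\sqrt{-1}\xi$ produces $d\bar v(\mu)$. Pairing with $\Phi_h$ gives $\langle\Phi_h,dv(\mu)\rangle$, and summing the two terms gives $\tr\frac{\sqrt{-1}}{2\pi}\Lambda_{\omega,v}(F_h+\Phi_h)\omega^{[n]}$, as in Definition \ref{defn-weighted-lambda}.

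I expect the main obstacle to be the analytic justification of exchanging integrals. The function $\widehat v$ is Schwartz because $\bar v$ is smooth and compactly supported, while the integrands are bounded in $\xi$ since $|e^{\sqrt{-1}\mu_\xi}|=1$ and the $\Phi_h$-term grows at most linearly in $\xi$. Hence Fubini applies, and pointwise Fourier inversion holds for $\bar v$ and $d\bar v$. A secondary issue is that $dv(\mu)$ only involves $v$ on $P=\mu(X)$, which justifies independence of the extension. Finally, I need to check that the sign and $2\pi$ conventions in the Cartan model (with $\hbar=1$) match the canonical moment map \eqref{eq-canonically-moment-map}; I would verify this on a line bundle over $\mathbb P^1$ if necessary.
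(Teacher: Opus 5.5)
Your proposal is correct and follows essentially the argument behind \cite[Lemma 4.4]{HL24}, which the paper quotes without reproducing a proof. You unwind Definition \ref{defn-equiv-intersection-number}, compute the integrand at $\sqrt{-1}\xi$ as $\int_X[\beta_\T(\sqrt{-1}\xi)\wedge e^{\omega+\sqrt{-1}\mu_\xi}]_{2n}$, and then apply Fubini (legitimate since $\widehat v$ is Schwartz and the integrand grows at most linearly in $\xi$) together with Fourier inversion for $\bar v$ and its first derivatives, which yields exactly $v(\mu)\Lambda_\omega F_h$ and $\langle\Phi_h,dv(\mu)\rangle$.

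One point should be stated more carefully: your remark on independence of the extension. The $d\bar v$-term is determined by $v|_P$ only when $P$ has nonempty interior in $\mathfrak t^\vee$ (for instance, for an effective action). This is tacitly assumed anyway wherever $dv(\mu)$ appears, as in Definition \ref{defn-weighted-lambda}.
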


\begin{ex}
\label{ex-explicit-slope}
Assume that $v(\mu)=\widetilde{v}(\mu_\xi)$ holds for some real analytic function $\widetilde{v} \colon \R \to \R$ and  $\xi\in\mathfrak{t}$. 
Set $\mu_\xi \coloneqq \langle \mu,\xi\rangle \colon X \to \R$ and define $v'(\mu) \coloneqq \widetilde{v}^{(1)}(\mu_\xi)$. 
Then Lemma \ref{lem-intersection-chern} gives
$$
(c_1^\T(E)\cdot v(\alpha_{\T}))
=
\int_{X}v(\mu) 
\tr \Bigl(\frac{\sqrt{-1}}{2\pi}F_h\Bigr) \wedge \omega^{[n-1]}
+  v'(\mu)\tr \left(\frac{\sqrt{-1}}{2\pi}\langle \Phi_h,\xi\rangle\right) \omega^{[n]}.
$$
For example, if there exists $m \in \N$ such that $v(\mu) = \mu_{\xi}^m$, then
$$
( v(\alpha_{\T}))
=
\displaystyle \int_X \mu_\xi^{m} \omega^{[n]}, 
\quad 
(c_1^\T(E)\cdot v(\alpha_{\T}))
=
\int_{X}
\mu_{\xi}^m
\tr \Bigl(\frac{\sqrt{-1}}{2\pi}F_h\Bigr) \wedge \omega^{[n-1]}
+  m \mu_{\xi}^{m-1}\tr \left(\frac{\sqrt{-1}}{2\pi}\langle \Phi_h,\xi\rangle\right) \omega^{[n]}.
$$
Similarly, if $v(\mu) = e^{\mu_{\xi}}$, then
$$
( v(\alpha_{\T}))
=
\displaystyle \int_X  e^{\mu_{\xi}} \omega^{[n]}, 
\quad 
(c_1^\T(E)\cdot v(\alpha_{\T}))
=
\int_{X}
 e^{\mu_{\xi}}
\tr \Bigl(\frac{\sqrt{-1}}{2\pi}F_h\Bigr) \wedge \omega^{[n-1]}
+   e^{\mu_{\xi}} \tr \left(\frac{\sqrt{-1}}{2\pi}\langle \Phi_h,\xi\rangle\right) \omega^{[n]}.
$$
\end{ex}

\section{Weighted equivariant intersection theory and slope stability}
\label{sec-G-equivariant-intersection-stability}
In this section, we combine \cite{EG98} and \cite{HL24} to define weighted equivariant intersection numbers for coherent sheaves. We also extend the notion of slope stability in \cite{HL24} and prove basic facts such as the existence of the Harder--Narasimhan filtration.

For this purpose, we consider the following setup.

\begin{setup}
\label{setup-soliton-situation}
\begin{itemize}
\item Let $\G=\mathbb{G}_m^{l}$ be an algebraic torus, and let $\T\subset \G$ be the maximal compact torus, that is, $\T=(S^1)^{l}$.
\item Let $X$ be an $n$-dimensional smooth projective variety over $\C$ with a left  $\G$-action. Moreover, assume that $X$ admits a $\G$-equivariant ample line bundle.
%Let $(X,L)$ be an $n$-dimensional polarized smooth variety with a $\G$-action. In other words, $X$ is a smooth projective manifold over $\C$, and $L$ is a $\G$-equivariant ample line bundle.
\item Let $\omega$ be a $\T$-invariant K\"ahler metric. % such that $[\omega]=c_1(L)$.
\item Assume that $(X, \omega)$ has a Hamiltonian $\T$-action, and denote its moment map by $\mu \colon X \to \mathfrak{t}^{\vee}$ and its moment polytope by $P \coloneqq \mu(X) \subset \mathfrak{t}^{\vee}$.
\item Set the equivariant K\"ahler class $\alpha_{\T}  \coloneqq  \{\omega + \mu\} \in H^2_{\dR, \T}(X).$
\item Let $v\colon P \to \R_{>0}$ be a positive smooth function.
\end{itemize}
\end{setup}
In the above setup, the projectivity of $X$ is needed to make \cite{EG98} and \cite{HL24} compatible in the algebraic setting. Moreover, the smooth projectivity of $X$ is needed for the existence of finite locally free resolutions.

\subsection{Weighted equivariant intersection numbers of coherent sheaves}

\begin{thmdefn}(cf. \cite[Lemma 2.1]{Inoue20})
\label{thmdefn-chern-equivalence}
Assume Setup \ref{setup-soliton-situation}. Then there exists a natural homomorphism $\widetilde{\Psi} \colon \widehat{\CH_{\G}^{\bullet}}(X)_{\Q} \to \widehat{H^{2\bullet}_{\dR, \T}}(X)$ such that, for any $\G$-equivariant locally free sheaf $E$, we have
$$
\ch^{\T}(E)
=\widetilde{\Psi}
(\ch^{\G}(E))
\in \widehat{H^{2\bullet}_{\dR, \T}}(X).
$$
Here $\ch^{\G}\colon K_G^0(X)\to \widehat{\CH}_G^\bullet(X)_{\mathbb{Q}}$ is the map given by \eqref{eq-Gequiv-K}, and $\ch^{\T}(E) \in \widehat{H^{2\bullet}_{\dR, \T}}(X)$ is given in Definition \ref{defn-equiv-chern}. 

Using this property, for any $\G$-equivariant coherent sheaf $\mathcal{E}$, we define
\begin{equation}
\label{eq-equivalence-chernG-chernT}
\ch^{\T}(\mathcal{E}) \coloneqq \widetilde{\Psi}(\ch^{\G}(\mathcal{E}))\in \widehat{H^{2\bullet}_{\dR, \T}}(X).
\end{equation}
This is well-defined by Lemma \ref{lem-equiv-ch-coh}.
\end{thmdefn}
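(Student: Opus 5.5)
The map $\widetilde{\Psi}$ will be built as a composite of three maps: the equivariant cycle map, the restriction from $\G$ to $\T$, and the Cartan isomorphism. We then check it on Chern characters of vector bundles.

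\textbf{Construction.} Since $X$ is smooth, the equivariant cycle map \eqref{eq-defn-cycle-map} gives a ring homomorphism in each degree,
\[
cl^{\G}\colon \CH^i_{\G}(X)\to H^{2i}_{\G}(X,\Z).
\]
It is compatible with products, because on $X_{\G}$ it is the ordinary cycle map, which is a ring map for smooth varieties \cite[Section 19.1]{Ful93}. Tensoring with $\Q$ and then with $\R$, and composing with $\res^{\G}_{\T}$ from Lemma \ref{lem-cartan-equivcoh} and with $\Psi_{\T}$ from Theorem \ref{thm-equiv-caltan-isom}, gives a degree-preserving ring homomorphism
\[
\CH^{\bullet}_{\G}(X)_{\Q}\to H^{2\bullet}_{\dR,\T}(X).
\]
Because it preserves degree, it extends componentwise to the completions $\prod_d$; this extension is $\widetilde\Psi$. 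Naturality under $\G$-equivariant pullbacks follows from naturality of each of the three pieces.

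\textbf{Compatibility with Chern characters.} Both $\ch^{\G}$ and $\ch^{\T}$ are given by the same universal polynomials in Chern classes, and $\widetilde\Psi$ is a ring map. So it suffices to prove
\[
\widetilde\Psi(c^{\G}_i(E))=c^{\T}_i(E),
\]
where $c^{\T}_i(E)$ means the Chern--Weil class built from $F_h+\Phi_h$.

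First use the equivariant splitting principle: pulling back to the $\G$-equivariant flag bundle of $E$ is injective both on equivariant Chow groups and on equivariant cohomology. This reduces the claim to line bundles $L$, and by multiplicativity to $c_1$ alone.

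For $L$, note that $c^{\G}_1(L)=c_1(L_{\G})$ on $X_{\G}$. Under the cycle map this becomes the topological first Chern class of $L_{\G}\to X_{\G}$, and in the limit of $E\T\times_{\T}L\to E\T\times_{\T}X$. It then remains to identify this Borel-construction Chern class, under $\Psi_{\T}$, with the class of $\frac{\sqrt{-1}}{2\pi}(F_h+\langle\Phi_h,\cdot\rangle)$. This is the standard fact that the equivariant Chern--Weil homomorphism (Berline--Getzler--Vergne, Guillemin--Sternberg) computes the characteristic classes of the Borel bundle. It holds for the moment map $\Phi_h$ of \eqref{eq-canonically-moment-map}, since that moment map is exactly the one making $F_h+\Phi_h$ the equivariant curvature of the Chern connection. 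Here I would cite \cite[Chapter 7]{BGV04}, \cite{GS99} and \cite[Lemma 2.1]{Inoue20}, which treats the same identification.

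\textbf{Main obstacle and final step.} The main obstacle is matching conventions. The Cartan model uses $\hbar=1$; the sign of $\delta=\iota_{\xi_M}$; the left versus right actions (compare \cite{Inoue20}); and the sign in $\mathcal L^E_{\xi_X}$ defined through $\exp(-t\xi)$. All of these must line up so that $\Psi_{\T}$ sends $c_1$ of the Borel bundle to $\{\frac{\sqrt{-1}}{2\pi}(F_h+\Phi_h)\}$ rather than to its variant with $-\Phi_h$. I would check this directly on $X=\mathrm{pt}$ with $L$ a character of $\T$, and then on $\mathcal{O}_{\mathbb P^1}(1)$.

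Finally, the definition \eqref{eq-equivalence-chernG-chernT} for a coherent sheaf $\mathcal E$ is well defined. By Lemma \ref{lem-equiv-ch-coh}, $\ch^{\G}(\mathcal E)$ depends only on the class $[\mathcal E]\in K^{\G}_0(X)\cong K^0_{\G}(X)$, so it does not depend on the choice of equivariant locally free resolution. Applying $\widetilde\Psi$ preserves this independence.
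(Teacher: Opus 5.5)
Your proposal is correct and follows essentially the same route as the paper. The paper also sets $\widetilde{\Psi}\coloneqq\Psi_{\T}\circ\res^{\G}_{\T}\circ(\otimes\R)\circ cl^{\G}$. It then checks $\widetilde{\Psi}\circ\ch^{\G}=\ch^{\T}$ by passing through the topological Chern character of the Borel bundle, using that $\G$ and $\T$ have the same classifying space and citing \cite[Theorem 7.38]{BGV04} (or \cite[Section 5]{Lib07}) for the equivariant Chern--Weil identification.

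The only difference is your detour through universal polynomials and the splitting principle. The paper avoids it by working with $\ch$ directly: the cycle map on $X_{\G}$ already commutes with $\ch$ by \cite[Chapter 19]{Ful93}, and the cited Chern--Weil theorem holds in every rank. If you keep the detour, note that the Whitney formula on the Chern--Weil side needs the equivariant Chern--Weil class to be independent of the $\T$-invariant connection, because the Chern connection of a filtered bundle is not block diagonal.
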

In the usual case, namely, when both $\G$ and $\T$ are trivial, this follows from the axiomatic definition of Chern classes in \cite[Chapter 2]{Kob14}. This result is probably already known in the above setting, but we could not find a precise reference. (For a closely related result, see \cite[Lemma 2.1]{Inoue20}). Although it follows from a combination of known facts, we include the proof for completeness.

\begin{proof}
For simplicity of notation, we omit the completion symbol $\widehat{\ }$ by restricting to a fixed degree.
First, following \cite[Subsection 5.4]{Lib07}, we recall the topological construction of the Chern character.
For $G=\G$ or $\T$, and for $K=\Q$ or $\R$,
let $K_G^0(X)$ be the Grothendieck group of $G$-equivariant vector bundles and set
\[
K_G^0(X)_K \coloneqq K_G^0(X)\otimes_{\Z}K.
\]
By Lemma \ref{lem-cartan-equivcoh}, the groups $\G$ and $\T$ have the same classifying space, namely,
\[
E\G=E\T=
\left(\lim_{m\to\infty}\bigl(\C^{m+1}\setminus\{0\}\bigr)\right)^l
\quad \text{and} \quad
B\G=B\T=
\left(\lim_{m\to\infty}\C\mathbb P^m \right)^l.
\]
Thus the topological $G$-equivariant Chern character map 
\(
\ch^{G,\mathrm{top}}_K\colon
K_G^0(X)_K
\rightarrow
H_G^{2\bullet}(X,K)
\)
is defined by 
\[
\ch^{G,\mathrm{top}}_K(E)
 \coloneqq 
\ch^{\mathrm{top}}(EG \times_G E)
\in H^{2\bullet}( EG \times_G X,K)
= H_G^{2\bullet}(X,K).
\]
%For a $G$-equivariant locally free sheaf $E\to X$, the quotient\(E\times EG/G \rightarrow EG \times_G X\)is an ordinary complex vector bundle. 
Here
\(
\ch^{\mathrm{top}}\colon
K^0(EG \times_G X)_K
\rightarrow
H^{2\bullet}(EG \times_G X,K)
\)
is the ordinary topological Chern character. With this notation, consider the following diagram:
\begin{equation*}
\xymatrix@C=35pt@R=25pt{
\CH_{\G}^{\bullet}(X)_{\Q}
\ar[r]^{cl^\G}_{\eqref{eq-defn-cycle-map}}
\ar@/^2.5pc/[rrrr]^{\widetilde{\Psi}}
%\ar@{}@<0ex>[dr]|{\circlearrowright}
&
H_{\G}^{2\bullet}(X, \Q) \ar[r]^{\otimes \R}
&
H_{\G}^{2\bullet}(X, \R)\ar[r]^{\res_{\T}^{\G}}_{\eqref{eq-rest-GT}}
&
H_{\T}^{2\bullet}(X, \R)\ar[r]^{\Psi_\T}_{\eqref{eq-defn-Psi}}
&
H^{2\bullet}_{\dR, \T}(X).
\\
K_\G^0(X)_{\Q}
\ar[u]^{\ch^\G}_{\eqref{eq-Gequiv-K}}
\ar@{=}[r]
&
K_\G^0(X)_{\Q} 
\ar[u]^{\ch^{\G, \mathrm{top}}_{\Q}} 
\ar@{^{(}->}[r]
&
K_\G^0(X)_{\R}
\ar[u]^{\ch^{\G, \mathrm{top}}_{\R}} 
\ar@{^{(}->}[r]
&  K_\T^0(X)_{\R} 
\ar@{=}[r]
\ar[u]^{\ch^{\T, \mathrm{top}}_{\R}} 
&
K_\T^0(X)_{\R} \ar[u]^{\ch^\T}_{\eqref{eq-defn-equiv-chern}}
\\
}
\end{equation*}
Thus it is enough to show that the four squares in this diagram are commutative. Once this is proved, we may define
\(
\widetilde{\Psi} \coloneqq \Psi_{\T}\circ \res_{\T}^{\G}\circ \otimes \R \circ cl^\G
\)
as above.

We first show that the first square commutes. Let $E$ be a $\G$-equivariant vector bundle. For each $i$, take $U$ and $V$ as in Definition \ref{defn-equivChow}. (In this setting, by Example \ref{ex-EG98-3.1}, take $V=(\C^{m+1})^l$ and $U=(\C^{m+1}\setminus\{0\})^l$ for sufficiently large $m\gg i$.) Set
\[
X_\G \coloneqq X\times U/\G,
\quad \text{and} \quad
E_\G \coloneqq E\times U/\G.
\]
Then the following statements hold.
\begin{itemize}
\item By the definition \eqref{eq-Gequiv-K} of the equivariant Chern character, we have
\(
\ch^\G(E)
=
\ch(E_\G)
\in
\CH^\bullet_\G(X)_\Q
\).
Thus  by \eqref{eq-defn-cycle-map}, the cycle map $cl^\G$ is defined by the ordinary cycle map on $X_\G$. Hence
\(
cl^\G\bigl(\ch^\G(E)\bigr)
=
cl\bigl(\ch(E_\G)\bigr).
\)
\item By \cite[Chapter~19]{Ful93}, the ordinary cycle map is compatible with the Chern character, and therefore
\(
cl\bigl(\ch(E_\G)\bigr)
=
\ch^{\mathrm{top}}(E_\G).
\)
\item By the definition of $\ch^{\G,\mathrm{top}}_\Q$, and since $U \to U/\G$ gives a finite-dimensional approximation of $E\G \to B\G$, 
\[
\ch^{\mathrm{top}}(E_\G)
=
\ch^{\mathrm{top}}(E\times U/\G)
=
\ch^{\mathrm{top}}(E\G\times_\G E)
=
\ch^{\G,\mathrm{top}}_\Q(E).
\]
\end{itemize}
Combining these equalities, we obtain
\(
cl^\G\bigl(\ch^\G(E)\bigr)
=
\ch^{\G,\mathrm{top}}_\Q(E)
\).
Thus the first square is commutative. 

The second square clearly commutes. The third square follows from Lemma \ref{lem-cartan-equivcoh}, since $\G$ and $\T$ have the same classifying space. The fourth square follows from \cite[Theorem 7.38]{BGV04} or \cite[Section 5]{Lib07}.
\end{proof}

\begin{defn}
\label{defn-weighted-intersection-coherent}
Assume Setup \ref{setup-soliton-situation}, and let $\mathcal{E}$ be a $\G$-equivariant
coherent sheaf. 
For any $p \in \N$, following Definition \ref{defn-equiv-intersection-number}, we define $(\ch^{\T}_p(\mathcal{E})\cdot v(\alpha_{\T})) \in \R$ by
$$
(\ch^{\T}_p(\mathcal{E})\cdot v(\alpha_{\T}))
 \coloneqq  \underbrace{\left(
\widetilde{\Psi}(\ch_{p}^\G(\mathcal{E}))
\cdot v(\alpha_{\T})\right)
}
%(\underbrace{\widetilde{\Psi}(\ch_{p}^\G(\mathcal{E}))}_{\in H^{2p}_{\dR, \T}(X)}\cdot v(\alpha_{\T}))}
_{\text{(Def. \ref{defn-equiv-intersection-number})}}
\in \R.
$$
Here $\widetilde{\Psi} \colon \widehat{\CH_{\G}^{\bullet}}(X)_{\Q} \to \widehat{H^{2\bullet}_{\dR, \T}}(X)$ is the map defined in Theorem \ref{thmdefn-chern-equivalence}.
\end{defn}
When $\mathcal{E}$ is a vector bundle, this definition agrees with the definition of the intersection number in \cite[Section 4]{HL24} by Theorem \ref{thmdefn-chern-equivalence}. As in Definition \ref{defn-chern-Bogomolov}, we also define $(c_1^{\T}(\mathcal{E})\cdot v(\alpha_{\T}))$, $(c_2^{\T}(\mathcal{E})\cdot v(\alpha_{\T}))$, and $(c_1^{\T}(\mathcal{E})^2 \cdot v(\alpha_{\T}))$ in the same way, and define the Bogomolov discriminant by
\begin{equation}
\label{eq-Bogomolov-T-discriminant}
(\Delta^{\T}(\mathcal{E})\cdot v(\alpha_{\T}))
 \coloneqq 
(\widetilde{\Psi}(\Delta^{\G}(\mathcal{E}))\cdot v(\alpha_{\T}))
\in \R.
\end{equation}

\begin{prop}
\label{prop-semipositive-slope}
Assume Setup \ref{setup-soliton-situation}. Let $\mathcal{E}$ be a $\G$-equivariant coherent sheaf. Assume that there exists an irreducible
$\G$-invariant cycle $D$ of codimension $1$ and a rational number $a \ge 0$ such that
$$c^{\G}_{1}(\mathcal{E}) \cap [X]_\G = a [D]_{\G} \in \CH^{\G}_{n-1}(X)_{\Q}.$$
Let $i \colon \widetilde{D} \to X$ be the composition of a $\G$-equivariant resolution of $D$ and the inclusion $D \hookrightarrow X$. Then 
$$
(c_1^{\T}(\mathcal{E})\cdot v(\alpha_{\T}))
=a (v(\alpha_{\T}\vert_{\widetilde{D}})) 
\in \R.
$$
In particular, $(c_1^{\T}(\mathcal{E})\cdot v(\alpha_{\T})) \ge 0$. Moreover, if equality holds, then $a=0$.
\end{prop}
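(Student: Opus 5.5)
The plan is to reduce the weighted intersection number to an integral over the resolution $\widetilde D$ using the equivariant Gysin formula, and then apply Lemma \ref{lem-intersection-chern} on $\widetilde D$.

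\emph{Step 1: rewrite the class.} Since $X_\G$ is smooth, capping with $[X]_\G$ is an isomorphism (\cite[Theorem 17.4.2]{Ful93}, as in the proof of Lemma \ref{lem-torsion-eff-cycle}). Hence the hypothesis determines $c_1^\G(\mathcal E)$ as the class Poincar\'e dual to $a[D]_\G$. Let $\nu\colon\widetilde D\to D$ be the $\G$-equivariant resolution. Then $\nu$ is birational, so $i_*[\widetilde D]_\G=[D]_\G$, and therefore
\[
c_1^\G(\mathcal E)\cap[X]_\G = a\, i_*[\widetilde D]_\G .
\]
Passing through $\widetilde\Psi$ of Theorem \ref{thmdefn-chern-equivalence}, $c_1^\T(\mathcal E)$ is $a$ times the equivariant Gysin image $i_!1\in H^2_{\T}(X,\R)$. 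This uses the compatibility of the cycle map with proper pushforward on $X_\G$, i.e.\ the Borel--Moore cycle map of \cite[Ch.~19]{Ful93} applied to the approximation spaces.

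\emph{Step 2: reduce to $\widetilde D$.} Take $v(\mu)=\widetilde v(\mu_\xi)$ with $\widetilde v$ real analytic, or more generally pass through the Fourier representation of Definition \ref{defn-equiv-intersection-number}. In either case it suffices to compare $(i_!1\cdot e^{\alpha_\T})_\T(\sqrt{-1}\xi)$ with $(e^{i^*\alpha_\T})_{\T,\widetilde D}(\sqrt{-1}\xi)$. The equivariant projection formula $\pi^X_*(i_!1\cdot\beta)=\pi^{\widetilde D}_*(i^*\beta)$ gives equality term by term in the power series, since $\pi^{\widetilde D}=\pi^X\circ i$ and Gysin maps are functorial. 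Integrating against $\widehat v$ then yields
\[
(c_1^\T(\mathcal E)\cdot v(\alpha_\T)) = a\,(v(\alpha_\T|_{\widetilde D})),
\]
where $\alpha_\T|_{\widetilde D}=\{i^*\omega+i^*\mu\}$. One subtlety is dimensional bookkeeping: Definition \ref{defn-equiv-intersection-number} on $X$ in degree $2$ uses the same exponential $e^{\alpha_\T}$ as the top-degree definition on the $(n-1)$-dimensional $\widetilde D$. Since both are defined through $e^{\alpha}$ and $\widehat v$, with no dimension-dependent normalization, this matches.

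\emph{Step 3: positivity.} The class $i^*\omega$ is only semipositive on $\widetilde D$, because $i$ need not be an immersion. So Lemma \ref{lem-intersection-chern} should not be quoted directly. Instead, use the Duistermaat--Heckman/Cartan description: for $v(\mu)=e^{\mu_\xi}$ (or any weight via the Fourier transform), the number equals $\int_{\widetilde D} v(i^*\mu)\,(i^*\omega)^{[n-1]}$. This is the same computation as in \cite[Lemma 4.3]{HL24}, which only needs a closed equivariant form $i^*\omega+i^*\mu$.

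Now $i^*\omega$ is semipositive and strictly positive on the dense open set where $\nu$ is an isomorphism, and $v>0$ on $P\supset i^*\mu(\widetilde D)$. Hence the integral is strictly positive. It follows that $(c_1^\T(\mathcal E)\cdot v(\alpha_\T))=a\cdot(\text{positive})\ge 0$, with equality only if $a=0$.

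\emph{Main obstacle.} I expect the hardest step to be Step 2. It requires justifying the projection formula across the three identifications in $\widetilde\Psi$, namely cycle map, restriction to $\T$, and Cartan model, for the Gysin map of the possibly non-embedded $\widetilde D\to X$. I would handle this by computing on the finite-dimensional approximations $X\times U/\G$, where $i$ induces a proper map of smooth varieties and the classical statement holds. I would then pass to the limit and use Theorem \ref{thm-equiv-caltan-isom} for the Cartan-model identification of $\pi_*$.
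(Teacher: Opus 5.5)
Your proposal is correct and follows the paper's proof essentially step by step. The paper likewise writes $c_1^{\G}(\mathcal E)=i_*(a\cdot 1_{\widetilde D})$ via the cap-product isomorphism, transports this through $\widetilde\Psi$ by naturality, applies the projection formula and $\pi^{\widetilde D}_*=\pi^X_*\circ i_*$ term by term in the exponential series, integrates against $\widehat v$, and then gets positivity from $\int_{\widetilde D} v(\mu\circ i)\,(i^*\omega)^{[n-1]}$. Your remark that $i^*\omega$ is only semipositive on $\widetilde D$ is a fair sharpening of a point the paper passes over by citing Lemma \ref{lem-intersection-chern} directly, since that lemma is stated for equivariant K\"ahler classes while the Fourier/Duistermaat--Heckman computation needs only a closed equivariant form.
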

The existence of a $\G$-equivariant resolution of $D$ follows from the existence of a functorial resolution of $D$ (for example, see also \cite[Proposition 9.1]{Kol07a}).
\begin{proof}
It is enough to prove the following claim.

\begin{claim}
\label{claim-projection-formula}
For every $m \in \N$, we have
$$(c_1^{\T}(\mathcal{E})\cdot \alpha_{\T}^m)_{\T}
=a (\alpha_{\T}^m\vert_{\widetilde{D}})_{\T} 
\underalign{\text{(Def. \ref{defn-intersection-converge})}}{\in} 
\Sym\,\mathfrak{t}^{\vee}
%H^{2m-2n+2}_{\dR, \T}(pt),
$$
where $\alpha_{\T}\vert_{\widetilde{D}} \coloneqq i^{*} \alpha_{\T} \in H^{2}_{\dR, \T}(\widetilde{D})$.
In particular, for every $\xi \in \mathfrak{t}$, we have
$$(c^{\T}_{1}(\mathcal{E}) \cdot e^{\alpha_{\T}})_{\T}(\sqrt{-1}\xi) = a (e^{\alpha_{\T}\vert_{\widetilde{D}}})_{\T}(\sqrt{-1}\xi) 
\underalign{\text{(Rem \ref{rem-intersection-converge})}}{\in} \C.
$$
\end{claim}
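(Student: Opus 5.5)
The plan is to prove Claim \ref{claim-projection-formula} on the Chow side first, transport it to the Cartan model through $\widetilde{\Psi}$ and the Gysin maps, and only then pass to the Fourier integral defining the weighted intersection number.

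\textbf{Step 1 (Chow side).} The hypothesis $c^{\G}_1(\mathcal E)\cap[X]_\G=a[D]_\G$ gives $c_1^\G(\mathcal E)\cap[X]_\G=a\,i_*[\widetilde D]_\G$. This holds because $\widetilde D\to D$ is $\G$-equivariant and birational, so $i_*[\widetilde D]_\G=[D]_\G$ in $\CH^\G_{n-1}(X)$. Let $L$ be the $\G$-equivariant class underlying $\alpha_\T$ (working first with $\alpha_\T$ the image of an equivariant line bundle class, or more generally any class in the image of $\widetilde\Psi$). The equivariant projection formula \cite[Section 2]{EG98} then gives
\[
c_1^\G(\mathcal E)\,c_1^\G(L)^m\cap[X]_\G=a\,i_*\bigl(i^*c_1^\G(L)^m\cap[\widetilde D]_\G\bigr).
\]
Pushing forward to a point along $X\to\mathrm{pt}$ and $\widetilde D\to\mathrm{pt}$, functoriality of pushforward yields the equality in $\CH^\bullet_\G(\mathrm{pt})_\Q=\Sym\,\mathfrak t^\vee_\Q$.

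\textbf{Step 2 (transfer to the Cartan model).} Apply the cycle map and Theorem \ref{thm-equiv-caltan-isom}. By Theorem \ref{thmdefn-chern-equivalence}, $c_1^\T(\mathcal E)$ is $\widetilde\Psi(c_1^\G(\mathcal E))$. Under $\widetilde\Psi$, the Poincaré dual of $[D]_\G$ corresponds to the Gysin pushforward $i_*1$ in $H^2_\T(X)$. Consequently $\pi_*(c_1^\T(\mathcal E)\cdot\alpha_\T^m)=a\,\pi_*(i_*1\cdot\alpha_\T^m)$. The topological projection formula $i_*1\cdot\beta=i_*(i^*\beta)$, together with $\pi^X_*\circ i_*=\pi^{\widetilde D}_*$ for Gysin maps, turns this into $a\,(\alpha_\T^m|_{\widetilde D})_\T$.

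A cleaner way to handle a general (non-algebraic) $\alpha_\T$ is to work purely in equivariant cohomology: $c_1^\T(\mathcal E)=a\,\mathrm{PD}_\T[D]$, and then apply the projection formula in the Borel construction $E\T\times_\T X$. Only the identification of $c_1^\T(\mathcal E)$ with the equivariant Poincaré dual of $a[D]$ uses the algebraic input, via the commutativity of $cl^\G$ with pushforward. I expect this compatibility, namely that $cl^\G$ intertwines Chow pushforward with the Gysin map and the Cartan-model integration, to be the main obstacle. I would handle it by applying \cite[Chapter 19]{Ful93} on the finite approximations $X_\G$ and $\widetilde D_\G$, which are smooth, and compatibility of the fibre integration $X_\G\to U/\G$ with $\pi_*$.

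\textbf{Step 3 (sum and Fourier transform).} Summing the identities over $m$ with weights $1/m!$ gives the exponential identity at $\sqrt{-1}\xi$; convergence follows from Inoue's theorem and Remark \ref{rem-intersection-converge}. Integrating against $\widehat v(\xi)$ as in Definition \ref{defn-equiv-intersection-number} then gives
\[
(c_1^\T(\mathcal E)\cdot v(\alpha_\T))=a\,(v(\alpha_\T|_{\widetilde D})).
\]
Here one should note that the moment map $\mu\circ i$ has image inside $P$, so $\bar v$ is also a valid extension for $\widetilde D$.

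\textbf{Step 4 (positivity).} By Lemma \ref{lem-intersection-chern} applied on $\widetilde D$,
\[
(v(\alpha_\T|_{\widetilde D}))=\int_{\widetilde D}v(\mu\circ i)\,(i^*\omega)^{[n-1]}>0,
\]
since $v>0$ and $i^*\omega$ is semipositive and generically positive ($i$ is generically an embedding). Both nonnegativity and the equality case $a=0$ follow.
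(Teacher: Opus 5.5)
Your argument is correct, and its cohomological form (the second paragraph of Step~2) is essentially the paper's proof: write $c_1^{\G}(\mathcal E)=i_*(a\cdot 1_{\widetilde D})$ using $i_*[\widetilde D]_{\G}=[D]_{\G}$ and Poincar\'e duality on $X_{\G}$, transport this through the compatibility of $\widetilde\Psi$ with $i_*$, then apply the projection formula and ${\pi_X}_*\circ i_*={\pi_{\widetilde D}}_*$ in each degree and sum with weights $1/m!$. The Chow-side Step~1 is unnecessary and, as you noticed, only covers algebraic $\alpha_{\T}$; Steps~3--4 reproduce the paper's deduction of Proposition~\ref{prop-semipositive-slope} from the claim.
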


Assuming Claim \ref{claim-projection-formula}, the assertion is proved as follows. 
As in Definition \ref{defn-equiv-intersection-number}, take an extension $\bar{v} \colon \mathfrak {t}^{\vee} \to \R$ of $v \colon P \to \R_{>0}$, and set
$\widehat{v}(\xi) \coloneqq \int_{\mathfrak{t}^{\vee}}\bar{v} (y)e^{-\sqrt{-1}\langle y,\xi\rangle}\,\underline{d\mu}(y)$. 
Then we obtain 
\begin{align*}
(c_1^{\T}(\mathcal{E})\cdot v(\alpha_{\T}))
&\underalign{\text{(Def. \ref{defn-equiv-intersection-number})}}{=} 
\int_{\mathfrak{t}}(c^{\T}_{1}(\mathcal{E}) \cdot e^{\alpha_{\T}})_{\T}(\sqrt{-1}\xi) \cdot \widehat{v}(\xi)\,\underline{d\xi}
\\
&\underalign{\text{(Claim \ref{claim-projection-formula})}}{=} 
a \int_{\mathfrak{t}}  (e^{\alpha_{\T}\vert_{\widetilde{D}}})_{\T}(\sqrt{-1}\xi) 
\cdot \widehat{v}(\xi)\,\underline{d\xi}
\underalign{\text{(Def. \ref{defn-equiv-intersection-number})}}{=} 
a (v(\alpha_{\T}\vert_{\widetilde{D}})).
\end{align*}
This proves the equality. The remaining assertion follows from
$$
a (v(\alpha_{\T}\vert_{\widetilde{D}})) 
\underalign{\text{(Lem. \ref{lem-intersection-chern})}}{=} 
a  \int_{\widetilde{D}} v(\mu \circ i \vert_{\widetilde{D}}) \cdot
{(\omega\vert_{\widetilde{D}})}^{[n-1]}
\underalign{\text{(by $a\ge 0 \text{ \& } v>0$)}}{\ge} 0.
$$
Moreover, if equality holds, then $a=0$.
\end{proof}

\begin{proof}[Proof of Claim \ref{claim-projection-formula}]
Fix $m \in \N$. We first prove $(c_1^{\T}(\mathcal{E})\cdot \alpha_{\T}^m)_{\T}
=a (\alpha_{\T}^m\vert_{\widetilde{D}})_{\T}$. The following diagram is commutative:
\begin{equation}
\label{eq-commutative-1}
\xymatrix@C=40pt@R=20pt{
\CH^{\G}_{n-1}(\widetilde{D})_{\Q}\ar[d]^{i_{*}} 
&\CH_{\G}^{0}(\widetilde{D})_{\Q}\ar[d]^{i_{*}} 
\ar[l]_{\sim}^-{\cap [\widetilde{D}]_G}\ar@{}@<0ex>[dl]|{\circlearrowright}
 \\
\CH^{\G}_{n-1}(X)_{\Q}
&\CH_{\G}^{1}(X)_{\Q}
 \ar[l]_{\sim}^-{\cap [X]_G}
 \\
}
\end{equation}
Denote by $1_{\widetilde{D}} \in \CH_{\G}^{0}(\widetilde{D})_{\Q}$ the generator. Since $\widetilde{D}$ is irreducible, it follows from $c^{\G}_{1}(\mathcal{E}) \cap [X]_G = a [D]_{\G} \in \CH^{\G}_{n-1}(X)_{\Q}$ that
\begin{equation}
\label{eq-torsion-1}
c^{\G}_{1}(\mathcal{E}) 
=
i_{*}(a \cdot 1_{\widetilde{D}}) \in \CH_{\G}^{1}(X)_{\Q}.
\end{equation}
Moreover, by the naturality in Theorem \ref{thmdefn-chern-equivalence}, the following diagram is commutative:
\begin{equation}
\label{eq-commutative-2}
\xymatrix@C=40pt@R=20pt{
\CH_{\G}^{0}(\widetilde{D})_{\Q} \ar[d]^{i_{*}} \ar[r]^{\widetilde{\Psi}_D} 
&H^{0}_{\dR, \T}(\widetilde{D}) \ar[d]^{i_{*}} 
\ar@{}@<0ex>[dl]|{\circlearrowright}
 \\
\CH_{\G}^{1}(X)_{\Q}\ar[r]^{\widetilde{\Psi}_X} 
&H^{2}_{\dR, \T}(X) 
 \\
}
\end{equation}
Here the push-forward is defined because $\widetilde{D}$ and $X$ are smooth. Finally, by the naturality of the Cartan model, the following diagram is commutative:
\begin{equation}
\label{eq-commutative-3}
\xymatrix@C=40pt@R=20pt{
H^{2m}_{\dR, \T}(\widetilde{D}) \ar[d]^{i_{*}} 
\ar[r]^-{{\pi_{\widetilde{D}}}_{*}} 
&H^{2m-2n+2}_{\dR, \T}(\mathrm{pt}) 
\ar@{}@<-2ex>[dl]|{\circlearrowright}
 \\
H^{2m + 2}_{\dR, \T}(X) \ar[ru]_-{{\pi_X}_{*}} 
&
 \\
}
\end{equation}
Therefore we compute as follows:
\begin{align*}
(c_1^{\T}(\mathcal{E})\cdot \alpha_{\T}^m)_{\T}
&\underalign{\text{(Def. \ref{defn-intersection-converge} \& Thm. \ref{thmdefn-chern-equivalence})}}{=} 
{\pi_X}_{*}(\widetilde{\Psi}_X(c^{\G}_{1}(\mathcal{E}))\cdot \alpha_{\T}^{m}) 
\underalign{\eqref{eq-torsion-1}}{=} 
{\pi_X}_{*}(\widetilde{\Psi}_Xi_{*}(a \cdot 1_{\widetilde{D}}) \cdot \alpha_{\T}^{m})
 \\
&\underalign{\eqref{eq-commutative-2}}{=} 
{\pi_X}_{*}(i_{*}\widetilde{\Psi}_D(a \cdot 1_{\widetilde{D}}) \cdot \alpha_{\T}^{m})
\underalign{\text{(Projection formula)}}{=} 
{\pi_X}_{*}i_{*}(\widetilde{\Psi}_D(a \cdot 1_{\widetilde{D}}) \cdot i^{*}\alpha_{\T}^{m}) \\
&\underalign{\eqref{eq-commutative-3}}{=} 
{\pi_{\widetilde{D}}}_{*}(\widetilde{\Psi}_D(a \cdot 1_{\widetilde{D}}) \cdot i^{*}\alpha_{\T}^{m}) 
\underalign{ 
\text{(Def. \ref{defn-intersection-converge} \& Thm. \ref{thmdefn-chern-equivalence})}}{=} 
a (\alpha_{\T}^m\vert_{\widetilde{D}})_{\T}.
\end{align*}
%In the last equality, we use \(i^{*}\alpha_{\T}=\alpha_{\T}\vert_{\widetilde{D}}\). 
This proves the first assertion. Also we have
\begin{align*}
(c^{\T}_{1}(\mathcal{E}) \cdot e^{\alpha_{\T}})_{\T} 
\underalign{ \text{(Def. \ref{defn-intersection-converge})}}{=} 
\sum_{k=0}^{\infty} \frac{1}{k!} 
(c_1^{\T}(\mathcal{E})\cdot \alpha_{\T}^k)_{\T}
\underalign{ \text{}}{=} 
\sum_{k=0}^{\infty} \frac{1}{k!} 
a (\alpha_{\T}^k\vert_{\widetilde{D}})_{\T}
\underalign{ \text{(Def. \ref{defn-intersection-converge})}}{=} 
a(e^{\alpha_{\T}\vert_{\widetilde{D}}})_{\T}.
\end{align*}
Thus, after substituting $\sqrt{-1}\xi$, the latter assertion follows.
\end{proof}

\begin{rem}
\label{rem-intersection-minimum}
Under the setting of Proposition \ref{prop-semipositive-slope}, set
\(
m \coloneqq \min_{p\in P}v(p)>0.
\)
%Such a minimum exists because the moment polytope $P=\mu(X)$ is compact. 
Then
$$
(c_1^{\T}(\mathcal{O}_{X}(D))\cdot v(\alpha_{\T}))
%\underalign{\text{(Prop. \ref{prop-semipositive-slope})}}{=}(v(\alpha_{\T}\vert_{\widetilde{D}})) 
\underalign{\raisebox{-1.0ex}{\tiny\text{(Lem. \ref{lem-intersection-chern}  \& Prop. \ref{prop-semipositive-slope})}}}{=} 
\int_{\widetilde{D}} v(\mu \circ i \vert_{\widetilde{D}}) \cdot
{(\omega\vert_{\widetilde{D}})}^{[n-1]}
\ge 
m (c_1(\mathcal{O}_{X}(D)) \cdot \{ \omega\}^{[n-1]}).
$$
%Note that $c_1(aD)$ on the right-hand side denotes the ordinary first Chern class.
\end{rem}

\begin{cor}
\label{cor-semipositive-slope}
Assume Setup \ref{setup-soliton-situation}. 
Let $\mathcal{E}$ be a $\G$-equivariant coherent sheaf.
\begin{enumerate}
\item If $\mathcal{E}$ is a torsion sheaf, then $(c_1^{\T}(\mathcal{E})\cdot v(\alpha_{\T}))\ge 0$. Moreover, if equality holds, then $\Supp(\mathcal{E})$ has codimension at least $2$.
\item If $\mathcal{E}$ is a torsion-free sheaf and there exists an irreducible
$\G$-invariant effective divisor $D$  such that $\det \mathcal{E} \cong \mathcal{O}_X(D)$, then $(c_1^{\T}(\mathcal{E})\cdot v(\alpha_{\T}))\ge 0$. Moreover, if equality holds, then $D=0$.
\end{enumerate}
\end{cor}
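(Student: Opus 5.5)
Both parts reduce to Proposition \ref{prop-semipositive-slope} once we identify $c_1^{\G}(\mathcal{E})\cap[X]_\G$ as a nonnegative combination of classes of $\G$-invariant prime divisors.

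\textbf{Part (1).} Let $\mathcal{E}$ be torsion with support $Z$.

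If $\codim Z\ge 2$, Lemma \ref{lem-torsion-eff-cycle} gives $\ch_0^\G(\mathcal{E})=\ch_1^\G(\mathcal{E})=0$. Hence $c_1^\G(\mathcal{E})=0$, and $(c_1^\T(\mathcal{E})\cdot v(\alpha_\T))=0$ by linearity of $\widetilde{\Psi}$ and of the weighted intersection number.

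If $\codim Z=1$, Corollary \ref{cor-1st2nd-semipositive}(1) gives
\[
c_1^\G(\mathcal{E})\cap[X]_\G=\sum_{i=1}^k \ell_i [Z_i]_\G, \qquad \ell_i=\operatorname{length}_{\mathcal O_{X,\eta_i}}(\mathcal{E}_{\eta_i})\ge 1 .
\]
Each $Z_i$ is $\G$-invariant, since $\G$ is connected and so preserves each irreducible component of the $\G$-invariant set $Z$. The cap product with $[X]_\G$ is an isomorphism. So we may write $c_1^\G(\mathcal{E})=\sum_i c_1^\G(\mathcal{F}_i)$, where each $\mathcal{F}_i$ is an auxiliary equivariant sheaf with $c_1^\G(\mathcal{F}_i)\cap[X]_\G=\ell_i[Z_i]_\G$; one can take $\mathcal{F}_i=\mathcal{O}_X(\ell_i Z_i)$ with its natural linearization. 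Alternatively, and more simply, we rerun the proof of Claim \ref{claim-projection-formula} with a sum over $i$, since it only used the cycle identity.

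Linearity of Definition \ref{defn-weighted-intersection-coherent} together with Proposition \ref{prop-semipositive-slope} then gives
\[
(c_1^\T(\mathcal{E})\cdot v(\alpha_\T))=\sum_i \ell_i\,(v(\alpha_\T|_{\widetilde{Z_i}}))\ge 0 .
\]
Each summand is strictly positive by Lemma \ref{lem-intersection-chern}, since $v>0$ and $\omega|_{\widetilde{Z_i}}$ is Kähler on the resolution away from the exceptional locus. Hence equality forces $k=0$, that is, $\codim Z\ge 2$.

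\textbf{Part (2).} First we show $c_1^\G(\mathcal{E})=c_1^\G(\det\mathcal{E})$. Take an equivariant locally free resolution (Lemma \ref{lem-equiv-ch-coh}); then $c_1^\G$ of the alternating sum equals $c_1^\G$ of the alternating tensor product of determinants, and that product is isomorphic to $\det\mathcal{E}$. Next, $c_1^\G(\mathcal{O}_X(D))\cap[X]_\G=[D]_\G$, for instance from $0\to\mathcal{O}_X\to\mathcal{O}_X(D)\to\mathcal{O}_D(D)\to0$ and Corollary \ref{cor-1st2nd-semipositive}(1). Applying Proposition \ref{prop-semipositive-slope} with $a=1$ gives nonnegativity.

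For the equality case: if $D\ne0$, the value is $(v(\alpha_\T|_{\widetilde D}))>0$. So equality forces $D=0$, and in that case $c_1^\G(\mathcal{E})=0$.

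\textbf{Main obstacle.} The delicate point is the equivariant bookkeeping of linearizations in part (2). $\det\mathcal{E}\cong\mathcal{O}_X(D)$ may hold only as sheaves, and $\G$-linearizations can differ by a character $\chi$. Such a difference shifts $c_1^\G$ by a constant class, and correspondingly shifts the moment map by a constant. I would interpret the isomorphism as $\G$-equivariant, with the canonical linearization of $\mathcal{O}_X(D)$ coming from the invariant section $s_D$, so that $c_1^\G$ is exactly $[D]_\G$. With that reading the argument above is complete.
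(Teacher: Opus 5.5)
Your proposal is correct and follows the paper's proof. For (1), you use Lemma \ref{lem-torsion-eff-cycle}, via Corollary \ref{cor-1st2nd-semipositive}, to write $c_1^\G(\mathcal E)\cap[X]_\G$ as a nonnegative combination of $\G$-invariant divisor classes, and then apply Proposition \ref{prop-semipositive-slope}. For (2), you use $c_1^\G(\mathcal E)=c_1^\G(\det\mathcal E)=c_1^\G(\mathcal O_X(D))$, obtained from a finite equivariant locally free resolution.

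Several points the paper leaves implicit are filled in by your version:
\begin{enumerate}
\item You sum Proposition \ref{prop-semipositive-slope} over several components, since it is stated for a single irreducible $D$.
\item You note that the lengths are $\ge 1$, which is what the equality case needs.
\item You read $\det\mathcal E\cong\mathcal O_X(D)$ as a $\G$-equivariant isomorphism with the canonical linearization. This reading is necessary, because twisting by a character changes the weighted degree.
\end{enumerate}
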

\begin{proof}
For (1), let $D_1, \ldots, D_k$ be the irreducible $\G$-invariant effective divisors contained in $\Supp(\mathcal{E})$. By Lemma \ref{lem-torsion-eff-cycle}, there exist nonnegative rational numbers $a_1, \ldots, a_k \ge 0$ such that
$$
c^{\G}_{1}(\mathcal{E}) \cap [X]_{\G} =\sum_{i=1}^{k} a_i [D_i]_{\G} \in \CH^{\G}_{n-1}(X)_{\Q}.
$$
Since the assumptions of Proposition \ref{prop-semipositive-slope} are satisfied, the claim follows. 

The proof of (2) is similar. Note that
\(
c^{\G}_{1}(\mathcal{E}) = c^{\G}_{1}(\det \mathcal{E}) = c^{\G}_{1}(\mathcal{O}_X(D))
\) holds.
This follows from the definition \eqref{eq-Gequiv-K} when $\mathcal{E}$ is locally free, and in general it follows by taking a finite locally free resolution.
\end{proof}

A similar statement holds for higher Chern classes. 
Here, we state the result only for the second Chern character.
%Since we use only the second Chern character, we state and prove only the following form.
\begin{prop}
\label{prop-torsion-2ndchern}
Assume Setup \ref{setup-soliton-situation}. Let $\mathcal{E}$ be a $\G$-equivariant torsion coherent sheaf, and set $Z  \coloneqq  \Supp(\mathcal{E})$. If $Z$ has codimension at least $2$, then
$$
(\ch^{\T}_2(\mathcal{E})\cdot v(\alpha_{\T})) \ge 0.
$$
Moreover, if equality holds, then $Z$ has codimension at least $3$.
\end{prop}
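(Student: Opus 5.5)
The plan is to mimic the proof of Proposition \ref{prop-semipositive-slope} one codimension higher. First, if $Z$ has codimension at least $3$, Lemma \ref{lem-torsion-eff-cycle} (with $p\ge 3$) gives $\ch^\G_2(\mathcal{E})=0$. Hence $(\ch^{\T}_2(\mathcal{E})\cdot v(\alpha_{\T}))=0$ and the inequality is trivial.

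So assume $\codim Z=2$. Let $Z_1,\dots,Z_k$ be the $\G$-invariant irreducible components of dimension $n-2$. These are $\G$-invariant because $\G$ is connected and permutes the finitely many components. Corollary \ref{cor-1st2nd-semipositive}(2) gives
\[
\ch^\G_2(\mathcal{E})\cap[X]_\G=\sum_{j=1}^k a_j[Z_j]_\G, \qquad a_j\coloneqq\operatorname{length}_{\mathcal O_{X,\eta_j}}(\mathcal{E}_{\eta_j})>0.
\]
Since $\cap[X]_\G$ is an isomorphism on the smooth $X_\G$, this says $\ch^\G_2(\mathcal{E})=\sum_j a_j\, i_{j*}1_{\widetilde Z_j}$. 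Here $i_j\colon\widetilde Z_j\to X$ is a $\G$-equivariant resolution of $Z_j$ composed with the inclusion, which exists by functorial resolution as before. The identity $i_{j*}[\widetilde Z_j]_\G=[Z_j]_\G$ holds because the resolution is birational onto $Z_j$.

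Next I would repeat Claim \ref{claim-projection-formula} verbatim with degree shifted by one. The steps are: the naturality of $\widetilde\Psi$ with respect to proper pushforward between smooth varieties, as in \eqref{eq-commutative-2} (now $\CH^0_\G(\widetilde Z_j)\to\CH^2_\G(X)$); the projection formula; and the compatibility of $\pi_*$ with $i_{j*}$, as in \eqref{eq-commutative-3}. Together these give, for every $m$,
\[
(\ch_2^\T(\mathcal{E})\cdot\alpha_\T^m)_\T=\sum_j a_j(\alpha_\T^m|_{\widetilde Z_j})_\T,
\]
and hence the same identity for $e^{\alpha_\T}$ evaluated at $\sqrt{-1}\xi$. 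Integrating against $\widehat v$ as in Definition \ref{defn-equiv-intersection-number} then yields
\[
(\ch^{\T}_2(\mathcal{E})\cdot v(\alpha_{\T}))=\sum_j a_j\,(v(\alpha_\T|_{\widetilde Z_j})).
\]

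Finally, Lemma \ref{lem-intersection-chern} applied on the smooth $(n-2)$-dimensional $\widetilde Z_j$ with the pulled-back data gives
\[
(v(\alpha_\T|_{\widetilde Z_j}))=\int_{\widetilde Z_j}v(\mu\circ i_j)\,(i_j^*\omega)^{[n-2]}.
\]
Here $i_j^*\omega$ is semipositive and positive on a dense open set, and $\mu\circ i_j$ is a moment map for it, with image inside $P$ so that $v>0$ is defined. Each term is therefore strictly positive, which gives the inequality. Equality forces $k=0$, contradicting $\codim Z=2$, so equality implies $\codim Z\ge3$.

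The main obstacle is justifying Lemma \ref{lem-intersection-chern} on $\widetilde Z_j$, where $i_j^*\omega$ is only semipositive rather than Kähler. I would handle this as Proposition \ref{prop-semipositive-slope} implicitly does. Lemma 4.3 of \cite{HL24} depends only on the Duistermaat–Heckman-type Fourier identity for the equivariantly closed form $i_j^*(\omega+\mu)$, and that identity does not need nondegeneracy. Alternatively, one can perturb by a $\T$-invariant Kähler form on $\widetilde Z_j$ and pass to the limit.
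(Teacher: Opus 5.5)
Your proposal is correct and follows the paper's proof essentially verbatim. You write $\ch_2^{\G}(\mathcal{E})\cap[X]_{\G}=\sum_i a_i[Z_i]_{\G}$ via Lemma~\ref{lem-torsion-eff-cycle}, run the argument of Claim~\ref{claim-projection-formula} one degree higher through equivariant resolutions $\widetilde Z_i\to Z_i$, and conclude with Lemma~\ref{lem-intersection-chern} and positivity of the lengths $a_i$. Your separate treatment of the codimension-$\geq 3$ case, and your observation that the merely semipositive pulled-back form on $\widetilde Z_i$ is harmless (only Fubini and Fourier inversion are needed), make explicit points the paper leaves implicit.
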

\begin{proof}
Let $Z_1, \ldots, Z_k \subset Z$ be the $\G$-invariant irreducible components of $Z$ of dimension $n-2$. As in Lemma \ref{lem-torsion-eff-cycle}, set $a_i\coloneqq \length_{\mathcal{O}_{X,\eta_i}}(\mathcal{E}_{\eta_i})$. 
Then 
$$
\ch_{2}^{\G}(\mathcal{E}) \cap [X]_{\G} 
\quad
\underalign{\text{(Lem. \ref{lem-torsion-eff-cycle})}}{=}
\quad
\sum_{i=1}^{k}a_i [Z_i]_{\G} .
$$
Thus, by the same argument as in Proposition \ref{prop-semipositive-slope} and Corollary \ref{cor-semipositive-slope}, taking a $\G$-equivariant resolution $\widetilde{Z}_i \to Z_i$,
$$
(\ch^{\T}_2(\mathcal{E})\cdot v(\alpha_{\T}))
\underalign{\text{(cf. Prop. \ref{prop-semipositive-slope})}}{=}
\quad
\sum_{i=1}^{k}a_i (v(\alpha_{\T}\vert_{\widetilde{Z}_i}))
\underalign{\text{(Lem. \ref{lem-intersection-chern})}}{=} 
\quad
\sum_{i=1}^{k}a_i 
\int_{\widetilde{Z}_i}v(\mu\vert_{\widetilde{Z}_i}) 
\cdot
(\omega_{\widetilde{Z}_i})^{[n-2]}
\underalign{\text{(by $a_i\ge 0 \text{ \& } v>0$)}}{\ge} 0.
$$
Moreover, if equality holds, then $a_1 = \cdots =a_k=0$, and hence $Z$ has codimension at least $3$.
\end{proof}

\subsection{Weighted slope stability and basic properties}

Motivated by \cite[Definition 5.3]{HL24}, we define weighted slope stability. 
\begin{defn}[{cf. \cite[Definition 5.3]{HL24}}]
\label{defn-vslope-coh}
Assume Setup \ref{setup-soliton-situation}. Let $\mathcal{E}$ be a torsion-free $\G$-equivariant sheaf. We define the \emph{$v$-weighted slope} of $\mathcal{E}$ by
\[
\mu_v(\mathcal{E}) \coloneqq \frac{(c_1^{\T}(\mathcal{E})\cdot v(\alpha_{\T}))}{\rk(\mathcal{E})} \in \R.
\]

We say that $\mathcal{E}$ is \emph{$v$-stable} (resp. \emph{$v$-semistable}) if, for every $\G$-equivariant subsheaf $0 \subsetneq \mathcal{F} \subsetneq \mathcal{E}$ with $\rk(\mathcal{F}) \neq \rk(\mathcal{E})$, we have
$\mu_v(\mathcal{F}) < \mu_v(\mathcal{E})$ (resp. $\mu_v(\mathcal{F}) \le \mu_v(\mathcal{E})$).

\end{defn}
Thanks to Proposition \ref{prop-saturation} below, $\mathcal{E}$ is $v$-semistable if and only if $\mu_v(\mathcal{F}) \le \mu_v(\mathcal{E})$ holds for every $\G$-equivariant subsheaf $0 \subsetneq \mathcal{F} \subsetneq \mathcal{E}$; in other words, the rank condition is not needed.
\begin{rem}
\label{rem-equivalence-semistability}
We do not know whether the semistability condition in \cite[Definition 5.3]{HL24} is the same as our semistability condition. However, this issue does not affect the subsequent arguments.
\end{rem}

\begin{prop}[{cf. \cite[Proposition 2.12, Corollary 2.13]{GKP16a}}]
\label{prop-saturation}
Assume Setup \ref{setup-soliton-situation}. 
Let $\mathcal{E}$ be a torsion-free $\G$-equivariant sheaf and  $\mathcal{F} \subset \mathcal{E}$ be a $\G$-equivariant subsheaf.
\begin{enumerate}
\item If $\mathcal{F}$ and $\mathcal{E}$ have the same rank, then $\mu_v(\mathcal{F}) \le \mu_v(\mathcal{E})$.
\item If $\mathcal{F}^{sat}$ denotes the saturation of $\mathcal{F}$ in $\mathcal{E}$, then $\mu_v(\mathcal{F}) \le \mu_v(\mathcal{F}^{sat})$.
\end{enumerate}
\end{prop}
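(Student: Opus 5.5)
The plan is to reduce both assertions to the sign statement for torsion sheaves in Corollary \ref{cor-semipositive-slope}~(1), via the additivity of $c_1^{\G}$ from Corollary \ref{cor-exact-ch-vect}. The key observation is that $(c_1^{\T}(\bullet)\cdot v(\alpha_{\T}))$ is additive on short exact sequences of $\G$-equivariant coherent sheaves. Indeed, $c_1^{\G}$ is additive by Corollary \ref{cor-exact-ch-vect}. The map $\widetilde{\Psi}$ of Theorem \ref{thmdefn-chern-equivalence} is a group homomorphism. Finally, the weighted intersection number of Definition \ref{defn-equiv-intersection-number} is linear in $\beta_{\T}$, being an integral of $\pi_*$ of a product.

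For (1), suppose $\rk\mathcal{F}=\rk\mathcal{E}=r$. I would consider the $\G$-equivariant exact sequence
\[
0\to\mathcal{F}\to\mathcal{E}\to\mathcal{Q}\to 0,
\qquad \mathcal{Q}\coloneqq\mathcal{E}/\mathcal{F}.
\]
The quotient $\mathcal{Q}$ is $\G$-equivariant because $\mathcal{F}$ is a $\G$-equivariant subsheaf. It is torsion because the ranks agree. Additivity gives
\[
r\mu_v(\mathcal{E})=r\mu_v(\mathcal{F})+(c_1^{\T}(\mathcal{Q})\cdot v(\alpha_{\T})).
\]
Corollary \ref{cor-semipositive-slope}~(1) gives $(c_1^{\T}(\mathcal{Q})\cdot v(\alpha_{\T}))\ge 0$, and (1) follows.

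For (2), I would first check that the saturation $\mathcal{F}^{sat}$ is a $\G$-equivariant subsheaf. Recall that $\mathcal{F}^{sat}$ is the kernel of $\mathcal{E}\to(\mathcal{E}/\mathcal{F})/\Tor$. The torsion subsheaf of a $\G$-equivariant coherent sheaf is $\G$-stable: the action maps $\sigma^*\mathcal{G}\cong p_2^*\mathcal{G}$ on $\G\times X$ preserve torsion, since $\sigma$ and $p_2$ are flat. Hence $\mathcal{F}^{sat}$ is $\G$-equivariant. It also has the same rank as $\mathcal{F}$, because $\mathcal{F}^{sat}/\mathcal{F}$ is torsion. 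Applying (1) to the torsion-free sheaf $\mathcal{F}^{sat}$ and its equivariant subsheaf $\mathcal{F}$ then yields $\mu_v(\mathcal{F})\le\mu_v(\mathcal{F}^{sat})$.

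The main obstacle is the equivariance bookkeeping. First, the additivity must be transported carefully through $\widetilde{\Psi}$ and the Fourier-integral definition of the weighted intersection number, using the same choice of extension $\bar v$ throughout. Second, one must confirm that the Chern class of a torsion sheaf, as used here, is the one defined via finite equivariant locally free resolutions; this is exactly the setting of Lemma \ref{lem-equiv-ch-coh} and Corollary \ref{cor-semipositive-slope}. Everything else is formal. The argument mirrors \cite[Proposition 2.12, Corollary 2.13]{GKP16a}, with Corollary \ref{cor-semipositive-slope} replacing the nonnegativity of the degree of effective divisors.
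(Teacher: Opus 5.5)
Your proof is correct, but it takes a slightly different route from the paper's.

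\textbf{The paper's route.} For (1) it uses determinants. Since $\rk\mathcal F=\rk\mathcal E$, the equivariant map $\det\mathcal F\to\det\mathcal E$ is nonzero. Hence $\det\mathcal F\cong\det\mathcal E\otimes\mathcal O_X(-D)$ for a $\G$-invariant effective divisor $D$. The inequality then follows from Corollary \ref{cor-semipositive-slope} in its determinant form (2), together with $c_1^{\G}(\mathcal F)=c_1^{\G}(\det\mathcal F)$. Part (2) is deduced from (1) exactly as you do.

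\textbf{Your route.} You work with the torsion quotient $\mathcal Q=\mathcal E/\mathcal F$. You combine additivity of $c_1^{\G}$ (Corollary \ref{cor-exact-ch-vect}) with the torsion case, Corollary \ref{cor-semipositive-slope}(1).

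\textbf{Comparison.} The two arguments are the same computation in disguise: $c_1^{\G}(\mathcal Q)\cap[X]_{\G}$ is the divisorial part of $\Supp\mathcal Q$ weighted by lengths, which is $[D]_{\G}$. Your version has some small advantages:
\begin{itemize}
\item It never uses determinants of non-locally-free sheaves.
\item It needs no check that the isomorphism with $\mathcal O_X(D)$ is compatible with the linearizations.
\item It handles a reducible $D$ directly. Corollary \ref{cor-semipositive-slope}(2) is stated only for irreducible $D$, so the paper's one-line proof tacitly sums over components.
\end{itemize}
The paper's version, in turn, is a direct transcription of \cite[Proposition 2.12]{GKP16a}. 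You also make explicit that the torsion subsheaf, and hence $\mathcal F^{sat}$, is $\G$-stable, which the paper leaves implicit.
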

Here $\mathcal{F}^{sat} \coloneqq  \Ker{\mathcal{E} \to (\mathcal{E}/\mathcal{F})/\Tor}$ is the saturation of $\mathcal{F}$ in $\mathcal{E}$.
\begin{proof}
For (1), there exists a $\G$-invariant effective divisor $D$ such that $\det \mathcal{F}\cong \det \mathcal{E} \otimes \mathcal{O}_X(-D)$. Hence the assertion follows from Corollary  \ref{cor-semipositive-slope}. Assertion (2) follows immediately from (1).
\end{proof}

\begin{rem}
\label{rem-soliton-GKP16a}
Once Proposition \ref{prop-saturation} is established, by routine arguments, we also obtain the following properties. In the following statement, assume Setup \ref{setup-soliton-situation} and let $\mathcal{E}$ be any $\G$-equivariant torsion-free sheaf of positive rank $r$ on $X$.
\begin{enumerate}[label=$(\arabic*)$]
\item \label{prop-GKP16a-2.21}(cf. \cite[Definition 2.20, Proposition 2.21]{GKP16a})
Define the \emph{maximal slope} by
\[
\mu_{v}^{\max}(\mathcal{E})
 \coloneqq 
\sup\{\mu_{v}(\mathcal{F})\mid 0\neq \mathcal{F}\subseteq \mathcal{E}\ \text{is a $\G$-equivariant coherent subsheaf}\}.
\]
Then there exists a $\G$-equivariant ample line bundle $\mathcal{H}$ depending only on $\mathcal{E}$ such that
$$
\mu_{v}^{\max}(\mathcal{E}) \le r (c_1^{\T}(\mathcal{H})\cdot  v(\alpha_{\T})).
$$
%where $r \coloneqq \operatorname{rank}\,\mathcal{E}$. 
In particular, \(\mu_{v}^{\max}(\mathcal{E})<\infty\).
\item \label{prop-GKP16a-2.22}
(cf. \cite[Proposition 2.22, Corollary 2.24]{GKP16a})
%Let $\mathcal{E}$ be any $\G$-equivariant torsion-free coherent sheaf of positive rank on $X$. 
There exists a nonzero coherent subsheaf $\mathcal{F}\subseteq \mathcal{E}$ such that
\begin{itemize}
 \item the slope is maximal, that is,  $ \mu_{v}(\mathcal{F})=\mu_{v}^{\max}(\mathcal{E})$,
\item if $\mathcal{F}'\subseteq \mathcal{E}$ is any other subsheaf satisfying $\mu_{v}(\mathcal{F}')=\mu_{v}^{\max}(\mathcal{E})$, then $\mathcal{F}'\subseteq \mathcal{F}$.
\end{itemize}
This sheaf $\mathcal{F}$ is called the \emph{maximally destabilizing subsheaf}. It is  unique, semistable, and saturated in $\mathcal{E}$.
\item \label{prop-HN-existence} (cf. \cite[Corollary 2.26]{GKP16a}, \cite[Definition 1.3.2]{HL10})
%For any $\mathcal{E}$ be any $\G$-equivariant torsion-free coherent sheaf of positive rank on $X$. 
There exists a unique Harder--Narasimhan filtration, that is, a filtration
\[
0=\mathcal{E}_0\subsetneq \mathcal{E}_1\subsetneq \cdots \subsetneq \mathcal{E}_{s}=\mathcal{E}
\]
such that each quotient sheaf
\(
\mathcal{G}_i \coloneqq \mathcal{E}_i/\mathcal{E}_{i-1}
\)
is torsion-free and $v$-semistable, and the sequence of slopes $\mu_{v}(\mathcal{G}_i)$ is strictly decreasing. 
Moreover, $\mu_{v}^{\max}(\mathcal{E})=\mu_{v}(\mathcal{E}_1)$ and $\mu_{v}^{\min}(\mathcal{E})=\mu_{v}(\mathcal{G}_{s})$, where the \emph{minimum slope} is defined by
$$
\mu_{v}^{\min}(\mathcal{E}) \coloneqq 
\inf\{\mu_{v}(\mathcal{Q})\mid \mathcal{E}\twoheadrightarrow \mathcal{Q}\text{ nonzero torsion-free $\G$-equivariant quotient sheaf}\}.
$$
\item \label{prop-JH-existence} (cf. \cite[Corollary 2.27]{GKP16a})
 If $\mathcal{E}$ is $v$-semistable, then there exists a Jordan--Hölder filtration, that is, a filtration
\[
0=\mathcal{E}_0\subsetneq \mathcal{E}_1\subsetneq \cdots \subsetneq \mathcal{E}_{s}=\mathcal{E}
\]
such that each quotient sheaf
\(
\mathcal{G}_i \coloneqq \mathcal{E}_i/\mathcal{E}_{i-1}
\)
is torsion-free and $v$-stable, and satisfies
\(
\mu_{v}(\mathcal{G}_i)=\mu_{v}(\mathcal{E}).
\)
\item \label{prop-min-max-reflexive} (cf. \cite[Proposition 1.13.]{Cla17})
$ \mu_{v}^{\max}(\mathcal{E})=-\mu_{v}^{\min}(\mathcal{E}^{\vee})$ and $ \mu_{v}^{\max}(\mathcal{E})=\mu_{v}^{\max}(\mathcal{E}^{\vee\vee})$.
\end{enumerate}
Most of the arguments are taken almost directly from \cite{Kob14, GKP16a}, so we omit the proof. 
\end{rem}

\subsection{Openness of weighted slope stability}
We prove the following Grothendieck lemma.
\begin{thm}[{cf. \cite[Theorem 2.29]{GKP16a}}]
\label{thm-GKP16a-2.29}
Assume Setup \ref{setup-soliton-situation}. Let $\mathcal{E}$ be a $\G$-equivariant torsion-free sheaf on $X$, and let $c\in\mathbb{R}$. Then
\[
S \coloneqq \{\mu_{v}(\mathcal{F})\mid \mathcal{F}\subseteq\mathcal{E}\ \text{is a $\G$-equivariant subsheaf of positive rank with }\mu_{v}(\mathcal{F})\geq c\}\subseteq\R
\]
is a finite set.
\end{thm}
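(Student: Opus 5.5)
Following the proof of \cite[Theorem 2.29]{GKP16a} (which goes back to Grothendieck's lemma, cf.\ \cite[Lemma 1.7.9]{HL10}), I will reduce to a boundedness statement about the first Chern classes of saturated subsheaves and then use discreteness of the weighted slope on a lattice.

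\textbf{Reduction to saturated subsheaves of fixed rank.} By Proposition \ref{prop-saturation}(2), replacing $\mathcal F$ by $\mathcal F^{sat}$ only increases the slope, so any $\mathcal F$ with $\mu_v(\mathcal F)\ge c$ has a saturated companion with $\mu_v(\mathcal F^{sat})\ge c$. However $S$ is the set of slopes of \emph{all} subsheaves, not only saturated ones, so the reduction needs care. I would write $\det\mathcal F\cong\det\mathcal F^{sat}\otimes\mathcal O_X(-D)$ with $D$ a $\G$-invariant effective divisor. Then
\[
\mu_v(\mathcal F)=\mu_v(\mathcal F^{sat})-\tfrac1{\rk\mathcal F}(c_1^\T(\mathcal O_X(D))\cdot v(\alpha_\T)).
\]
By Remark \ref{rem-intersection-minimum}, the last term is bounded below by $m\,(D\cdot\{\omega\}^{n-1})$, where $m=\min_P v>0$. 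Hence, given an upper bound on $\mu_v(\mathcal F^{sat})$ (from Remark \ref{rem-soliton-GKP16a}\ref{prop-GKP16a-2.21}), the condition $\mu_v(\mathcal F)\ge c$ bounds the $\omega$-degree of $D$. Effective $\G$-invariant divisors of bounded degree have classes in a finite set of $NS(X)$, and their equivariant classes $c_1^\G(\mathcal O_X(D))$ are also finite in number: the $\G$-linearization of $\mathcal O_X(D)$ is canonically fixed by the section $s_D$, and $D$ is a sum of invariant prime divisors with bounded coefficients. It then suffices to show that the saturated $\mathcal F$ with $\mu_v\ge c'$ (for some constant $c'$) realize only finitely many values of $c_1^\G(\mathcal F)$.

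\textbf{Boundedness of the saturated ones.} A saturated subsheaf of rank $s$ gives, via $\det\mathcal F\hookrightarrow(\wedge^s\mathcal E)^{\vee\vee}$, a point of a Quot scheme. I would follow the argument of \cite{GKP16a}, which relates the unweighted degree of $\mathcal F$ to the weighted one. Two comparisons are needed. First, for ordinary slopes, $\mu_v$ and $\mu_\omega$ should differ by a bounded amount, because $\langle\Phi_h,dv\rangle$ contributes only bounded terms. Concretely, I would use the fact that $\det\mathcal F$ is a subsheaf of the fixed reflexive sheaf $(\wedge^s\mathcal E)^{\vee\vee}$, so $\det\mathcal F^\vee\otimes(\wedge^s\mathcal E)^{\vee\vee}$ has a nonzero invariant section. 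Second, the unweighted lower bound, together with $\mu^{\max}$ bounds, places the $\mathcal F$ in a bounded family (Grothendieck's lemma). A bounded family has finitely many Hilbert polynomials and hence finitely many topological classes. On a smooth projective $\G$-variety, $c_1^\G$ of a $\G$-line bundle is determined by $c_1$ together with the finitely many linearization characters at fixed points. I expect only finitely many of these occur, using that the subsheaves lie inside the fixed $\mathcal E$, whose weights at fixed points are finite in number.

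\textbf{Conclusion and main obstacle.} Finitely many classes $c_1^\G(\mathcal F)$ and finitely many ranks give finitely many values $(\widetilde\Psi(c_1^\G(\mathcal F))\cdot v(\alpha_\T))/\rk$, so $S$ is finite. The main obstacle is the equivariant step: controlling the weight parts of $c_1^\G(\mathcal F)$, not only its ordinary Chern class. To handle it I would localize to the $\G$-fixed locus, where $\mathcal F|_{x}\subset\mathcal E|_x$ has weights among the finitely many weights of $\mathcal E|_x$ (after reflexive hull, away from codimension two).
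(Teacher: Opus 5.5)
\textbf{There is a genuine gap in the equivariant step.} Your outline is reasonable: reduce to finiteness of equivariant first Chern classes, then recover $c_1^\G$ of a $\G$-line bundle from $c_1$ and its characters at a fixed point. But you never prove the equivariant finiteness, and the mechanism you propose for it is false.

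The weight of $\det\mathcal F$ at a fixed point $p$ need not lie among the weights of $\bigwedge^s\mathcal E$ at $p$. The map $\det\mathcal F\to(\bigwedge^s\mathcal E)^{\vee\vee}$ is zero on the fiber at $p$ whenever $p$ lies on its degeneracy divisor, or, for saturated $\mathcal F$, in the codimension-$\geq2$ locus where $\mathcal F$ is not a subbundle. Fixed points are often isolated, so discarding codimension two may discard all of them. For instance, take suitable linearizations for the standard $(\C^*)^2$-action on $\C\mathbb P^2$. The saturated subsheaf $\mathcal O(-1)\xrightarrow{(x_1,x_2)}\mathcal O^{\oplus 2}$ vanishes on the fiber at the fixed point $[1:0:0]$, and its weight there is shifted from those of $\mathcal E$ by a tangent weight. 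In general the shift is a nonnegative integer combination $\sum_i a_i\nu_i$ of the weights $\nu_i$ of $T_p^\vee X$ (up to sign convention), with $\sum_i a_i$ the order of vanishing at $p$. This is unbounded unless that order is controlled.

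The same gap is already hidden in your first step. Writing $D$ as a sum of invariant prime divisors with bounded coefficients does not give finitely many classes $c_1^\G(\mathcal O_X(D))$. There can be infinitely many invariant prime divisors of bounded degree, e.g.\ the conics $x_1^2=a\,x_0x_2$ for the $\C^*$-action on $\C\mathbb P^2$ with weights $(0,1,2)$.

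\textbf{The missing idea} is to bound that vanishing order at a fixed point using the degree bound you already have. This is what the paper does, without Grothendieck's lemma or Quot schemes. Write $\deg_v(L)\coloneqq(c_1^\T(L)\cdot v(\alpha_\T))$.
\begin{enumerate}
\item After the equivariant embedding $\mathcal E\otimes H^{-1}\hookrightarrow V\otimes\mathcal O_X$, one has $\det\mathcal F\simeq\mathcal O_X(-D)\otimes\C_\chi$ with $\chi$ among the finitely many weights of $\bigwedge^sV$. So it suffices to show that $c_1^\T(\mathcal O_X(D))$ takes finitely many values over $\G$-invariant effective $D$ with $0\le\deg_v(\mathcal O_X(D))\le B$.
\item Remark~\ref{rem-intersection-minimum} bounds $D\cdot\{\omega\}^{[n-1]}$, hence gives $D\cdot H^{n-1}\le K$.
\item At a fixed point $p$ (Borel), nefness of $\rho^*H-E_p$ on $\operatorname{Bl}_pX$ gives $\operatorname{mult}_pD\le K$.
\item The lowest-order monomial of a semi-invariant local equation of $D$ shows that the weight of $\mathcal O_X(D)$ at $p$ is $-\sum_i a_i\nu_i$ with $\sum_i a_i\le K$.
\item Equivariant formality makes $(\mathrm{forget},i_p^*)$ injective on $H^2_\T(X,\R)$, so only finitely many classes $c_1^\T(\mathcal O_X(D))$ occur.
\end{enumerate}
A parameter-space argument could also close the gap: the $\G$-fixed locus of the relevant Hilbert or Quot scheme has finitely many connected components, and the character at $p$ is constant on each. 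Nothing of this kind appears in your sketch.

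Separately, your claim that $\mu_v$ and the unweighted slope differ by a bounded amount is false in general. The quantity $\deg_v(\mathcal O_X(D))$ is comparable to $D\cdot\{\omega\}^{[n-1]}$ only up to the factors $\min_P v$ and $\max_P v$, and $D$ can have unbounded degree. What you actually need is only the one-sided bound coming from $\mu_v(\mathcal F)\ge c$, and that part is fine.
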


\begin{proof}
We write
\(
\deg_v(L) \coloneqq (c_1^\T(L)\cdot v(\alpha_\T))
\)
for any \(\G\)-equivariant line bundle \(L\). Take a \(\G\)-linearized very ample line bundle \(H\) such that \(\mathcal E^\vee\otimes H\) is globally generated, and set
\(
V \coloneqq H^0(X,\mathcal E^\vee\otimes H)^\vee,
\)
which is a finite-dimensional \(\G\)-module. By considering the \(\G\)-equivariant injection
\(
\mathcal E\otimes H^{-1}
\hookrightarrow
V\otimes\mathcal O_X,
\)
we may assume, after replacing \(c\), that \(\mathcal E=V\otimes\mathcal O_X\).

Let
\(
\mathcal F\subseteq V\otimes\mathcal O_X
\)
be a \(\G\)-equivariant subsheaf of positive rank \(s\). Then there exist a weight \(\chi\) of \(\bigwedge^sV\) and a nonzero \(\G\)-equivariant morphism
\[
\det\mathcal F
\longrightarrow
\mathbb C_\chi\otimes\mathcal O_X,
\]
where \(\mathbb C_\chi\) is the one-dimensional \(\G\)-module of weight \(\chi\). Thus, there exists a \(\G\)-invariant effective divisor \(D\) such that
\begin{equation}
\label{eq-Gisom-DF}
\det\mathcal F
\simeq
\mathcal O_X(-D)\otimes\mathbb C_\chi
\end{equation}
is a \(\G\)-equivariant isomorphism. Therefore, the assumption \(\mu_v(\mathcal F)\geq c\) gives
\[
0
\underalign{\text{(Cor. \ref{cor-semipositive-slope})}}{\leq}
\deg_v(\mathcal O_X(D))
\underalign{\eqref{eq-Gisom-DF}}{=}
-s\mu_v(\mathcal F)
+
\deg_v(\mathbb C_\chi\otimes\mathcal O_X)
\leq
-sc
+
\deg_v(\mathbb C_\chi\otimes\mathcal O_X).
\]
Since \(s\) and \(\chi\) appearing on the right-hand side take only finitely many values, it is enough to show that, for any fixed \(B>0\), the quantity \(\deg_v(\mathcal O_X(D))\) takes only finitely many values as \(D\) ranges over the \(\G\)-invariant effective divisors satisfying
\begin{equation}
\label{eq-bounded-weighted-degree}
0\leq\deg_v(\mathcal O_X(D))\leq B.
\end{equation}

From now on, we consider divisors \(D\) satisfying \eqref{eq-bounded-weighted-degree}. Set
\(
m \coloneqq \min_{q\in P}v(q)>0.
\)
Then Remark~\ref{rem-intersection-minimum} implies that
\(
D\cdot\omega^{[n-1]}
\leq
\frac{B}{m}.
\)
Thus, the numerical classes of such divisors \(D\) form a finite set; cf.\ \cite[Theorem 2.29]{GKP16a}. In particular, there exists an integer \(K>0\) such that
\begin{equation}
\label{eq-DH-negativeK}
D\cdot H^{n-1}\leq K
\end{equation}
for every divisor \(D\) satisfying \eqref{eq-bounded-weighted-degree}.

By the Borel fixed point theorem, we can take a fixed point \(p\in X^\G\).
%Then
%\begin{equation*}
%T_p^\vee X
%\simeq
%\bigoplus_{i=1}^n\mathbb C_{\nu_i}.
%\end{equation*}
Then \(\operatorname{mult}_p(D)\leq K\) holds.
Indeed, let
\(
\rho\colon\widetilde X=\operatorname{Bl}_pX\to X
\)
be the blow-up of \(X\) at \(p\), and let \(E_p\) be the exceptional divisor. Since \(H\) is very ample, \(\rho^*H-E_p\) is nef. The effectiveness of the strict transform of \(D\) therefore gives
\begin{align*}
0\leq
\bigl(\rho^*D-\operatorname{mult}_p(D)E_p\bigr)
\cdot(\rho^*H-E_p)^{n-1}
=
D\cdot H^{n-1}
-\operatorname{mult}_p(D)
\underalign{\eqref{eq-DH-negativeK}}{\le} K -\operatorname{mult}_p(D).
\end{align*}
%Hence, \(\operatorname{mult}_p(D)\leq K\).

Let
\(
\nu_1,\ldots,\nu_n
\)
be the weights of the natural isotropy action on the cotangent space \(T_p^\vee X\), and let \(\mu\) be the weight of
\(
\mathcal O_X(D)_p\otimes_{\mathcal O_{X,p}}\kappa(p).
\)
We will show that there exist \(a_i\in\mathbb Z_{\geq0}\) such that
\begin{equation}
\label{eq-mu-ai}
\mu=-\sum_{i=1}^na_i\nu_i
\quad \text{and} \quad
\sum_{i=1}^na_i\leq K.
\end{equation}
In particular, this implies that \(\mu\) takes only finitely many values. Choose a \(\G\)-stable affine open subset \(U\) containing \(p\), and \(\G\)-equivariant coordinates \(z_1,\ldots,z_n\) such that
\(
g\cdot z_i
=
\nu_i(g)z_i.
\)
Let \(f\) be a local equation of \(D\) at \(p\).
Then, by the argument in \cite[Proposition 3.17]{Mul25}, we can write
\[
f=\sum_{J\in\mathbb Z_{\geq0}^n}c_Jz^J \in \widehat{\mathcal O_{X,p}}\cong\C[[z_1,\ldots,z_n]],
\qquad
\text{where }
z^J \coloneqq z_1^{J_1}\cdots z_n^{J_n}.
\]
Take \((a_1,\ldots,a_n) \in\mathbb Z_{\geq0}^n\) with 
\(
\operatorname{mult}_pD
=
\sum_{i=1}^na_i
\)
and \(c_{(a_1,\ldots,a_n)}\neq0\). The argument in \cite[Proposition 3.17]{Mul25} gives
\[
\mu=-a_1\nu_1-\cdots-a_n\nu_n.
\]
Hence, \eqref{eq-mu-ai} follows from
\(
\operatorname{mult}_pD\leq K.
\)

Summarizing the discussion so far,
\begin{equation}
\label{eq-finiteness-weightc1}
\left\{
\left(c_1(\mathcal O_X(D)),i_p^*c_1^\T(\mathcal O_X(D))\right)
\mid
\text{$D$ is a \(\G\)-invariant effective divisor with }0\leq\deg_v(\mathcal O_X(D))\leq B
\right\}
\end{equation}
is a finite subset of
\(
H^2(X,\mathbb R)\oplus H_\T^2(\mathrm{pt},\mathbb R),
\)
where \(i_p\colon\{p\}\hookrightarrow X\) denotes the inclusion. Here, we identify \(H_\T^2(X,\mathbb R)\) with \(H_{\dR,\T}^2(X,\mathbb R)\) via Theorem \ref{thm-equiv-caltan-isom}.

By equivariant formality of the Hamiltonian \(\T\)-action (cf.\ \cite[Proposition 2.1]{GM10}), the sequence
\begin{equation*}
0
\longrightarrow
H_\T^2(\mathrm{pt},\mathbb R)
\overset{\pi^*}{\longrightarrow}
H_\T^2(X,\mathbb R)
\overset{\mathrm{forget}}{\longrightarrow}
H^2(X,\mathbb R)
\longrightarrow
0
\end{equation*}
is exact. Here,
\(
\pi\colon E\T\times_\T X\to B\T
\)
is the morphism appearing in Theorem \ref{thm-equiv-caltan-isom}. Since \(p\) is a fixed point, it induces a section of \(\pi\). Therefore,
\begin{equation*}
(\mathrm{forget},i_p^*)\colon
H_\T^2(X,\mathbb R)
\longrightarrow
H^2(X,\mathbb R)\oplus H_\T^2(\mathrm{pt},\mathbb R)
\end{equation*}
is injective. Thus, by \eqref{eq-finiteness-weightc1},
\(
c_1^\T(\mathcal O_X(D))\in H_\T^2(X,\mathbb R)
\)
takes only finitely many values, and hence so does
\(
\deg_v(\mathcal O_X(D)).
\)
\end{proof}

\begin{lem}
\label{lem-stable-uniform}
Assume Setup \ref{setup-soliton-situation}. 
Let $\mathcal{E}$ be a $\G$-equivariant torsion-free sheaf. If $\mathcal{E}$ is $v$-stable, then there exists $\varepsilon>0$ such that
$$
\mu_v(\mathcal{F})  < \mu_v(\mathcal{E}) - \varepsilon
$$
holds for every $\G$-equivariant subsheaf $0 \subsetneq \mathcal{F} \subsetneq \mathcal{E}$ with $\rk(\mathcal{F}) \neq \rk(\mathcal{E})$.
\end{lem}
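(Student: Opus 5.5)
The plan is to derive the uniform gap directly from the finiteness statement in Theorem \ref{thm-GKP16a-2.29}, applied with a threshold slightly below $\mu_v(\mathcal{E})$.

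Fix any constant $c < \mu_v(\mathcal{E})$, for instance $c \coloneqq \mu_v(\mathcal{E}) - 1$. Applying Theorem \ref{thm-GKP16a-2.29} to $\mathcal{E}$ and $c$, the set
\[
S = \{\mu_v(\mathcal{F}) \mid \mathcal{F}\subseteq \mathcal{E}\ \text{a $\G$-equivariant subsheaf of positive rank with } \mu_v(\mathcal{F}) \ge c\}
\]
is finite. Let $S'$ be the subset of values $\mu_v(\mathcal{F})\in S$ coming from subsheaves $0\subsetneq \mathcal{F}\subsetneq\mathcal{E}$ with $\rk(\mathcal{F})\neq \rk(\mathcal{E})$. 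Note that every nonzero subsheaf of the torsion-free sheaf $\mathcal{E}$ has positive rank, so these subsheaves are among those counted in $S$. Hence $S'$ is a finite subset of $S$.

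By $v$-stability, every element of $S'$ is strictly less than $\mu_v(\mathcal{E})$. Define
\[
\varepsilon \coloneqq \tfrac12 \min\Bigl(\{1\}\cup\{\mu_v(\mathcal{E}) - s \mid s\in S'\}\Bigr) > 0.
\]
The minimum exists and is positive because $S'$ is finite and each difference $\mu_v(\mathcal{E})-s$ is positive.

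Now take any $\G$-equivariant subsheaf $0\subsetneq\mathcal{F}\subsetneq\mathcal{E}$ with $\rk(\mathcal{F})\neq\rk(\mathcal{E})$. There are two cases.

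\begin{itemize}
\item If $\mu_v(\mathcal{F})\ge c$, then $\mu_v(\mathcal{F})\in S'$, so $\mu_v(\mathcal{E})-\mu_v(\mathcal{F}) \ge 2\varepsilon > \varepsilon$.
\item If $\mu_v(\mathcal{F}) < c$, then $\mu_v(\mathcal{F}) < \mu_v(\mathcal{E}) - 1 \le \mu_v(\mathcal{E}) - 2\varepsilon < \mu_v(\mathcal{E}) - \varepsilon$.
\end{itemize}

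In both cases $\mu_v(\mathcal{F}) < \mu_v(\mathcal{E}) - \varepsilon$, which proves the lemma.

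The substantive input is entirely Theorem \ref{thm-GKP16a-2.29}; without finiteness, slopes of destabilizing candidates could accumulate at $\mu_v(\mathcal{E})$. The only remaining points to check are that the subsheaves in the lemma lie in the family covered by that theorem (they have positive rank), and that the case $S'=\emptyset$ is handled. The latter is automatic, since the choice of $\varepsilon$ already includes the value $1$ in the minimum.
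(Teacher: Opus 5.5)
Your proposal is correct and follows essentially the same route as the paper: apply the finiteness result of Theorem \ref{thm-GKP16a-2.29} with threshold $\mu_v(\mathcal{E})-1$, note that $v$-stability makes every slope in the resulting finite set strictly smaller than $\mu_v(\mathcal{E})$, and choose $\varepsilon$ from the finitely many gaps. Your explicit formula for $\varepsilon$ (with the value $1$ included in the minimum) just makes precise what the paper leaves implicit, including the case where the finite set is empty.
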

\begin{proof}
By Theorem \ref{thm-GKP16a-2.29},
$$
S \coloneqq 
\{\mu_{v}(\mathcal{F}) \mid 0 \subsetneq \mathcal{F}\subsetneq \mathcal{E}\ \text{ with } \rk(\mathcal{F}) \neq \rk(\mathcal{E}) \text{ and }\mu_{v}(\mathcal{F})\geq \mu_v(\mathcal{E}) -1\}
\subseteq \R
$$
is finite. 
Since \(\mathcal{E}\) is \(v\)-stable, every element of \(S\) is strictly less than
\(\mu_v(\mathcal{E})\). Hence, we can choose an \(\varepsilon>0\) satisfying the desired inequality.
%Hence it suffices to choose the positive number $\varepsilon>0$ defined by
%$$
%\varepsilon  \coloneqq \frac{1}{2}\left(\mu_v(\mathcal{E}) -\max_{ \mu_{v}(\mathcal{F}) \in S}\{\mu_{v}(\mathcal{F}), \mu_v(\mathcal{E}) -1\}\right)
%$$
\end{proof}

\begin{lem}[{Openness of stability}]
\label{lem-openness-stab}
Assume Setup \ref{setup-soliton-situation}. 
Let \(v_m\colon P\to\R_{>0}\) be smooth positive functions converging to \(v\) in \(C^1(P)\) and satisfying \(v_m>v\) on \(P\) for every \(m\).
If $\mathcal{E}$ is a $v$-stable $\G$-equivariant torsion-free sheaf, then there exists $M \in \N$ such that  $\mathcal{E}$ is $v_m$-stable for every $m > M$. 
\end{lem}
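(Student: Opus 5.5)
The plan is to make the slope comparison uniform over all $\G$-equivariant subsheaves. We split each slope into a part that depends continuously on the weight through finitely many data, plus a divisor term that is monotone in the weight. Throughout, write $\deg_w(L)\coloneqq(c_1^\T(L)\cdot w(\alpha_\T))$ for a $\G$-equivariant line bundle $L$ and a positive weight $w$. Since $c_1^\G(\mathcal F)=c_1^\G(\det\mathcal F)$ (Corollary \ref{cor-semipositive-slope}, proof of (2)), every weighted slope is computed from a line bundle. Lemma \ref{lem-intersection-chern} then gives, for any $\T$-invariant metric $h$ on $L$,
\[
\deg_w(L)=\int_X \Big(w(\mu)\tfrac{\sqrt{-1}}{2\pi}F_h\wedge\omega^{[n-1]}+\big\langle \tfrac{\sqrt{-1}}{2\pi}\Phi_h,dw(\mu)\big\rangle\,\omega^{[n]}\Big).
\]
This is a continuous linear functional of $w$ in the $C^1(P)$-topology. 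In particular $\mu_{v_m}(\mathcal E)\to\mu_v(\mathcal E)$, and $\deg_{v_m}(L)\to\deg_v(L)$ for any fixed $L$.

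First, as in the proof of Theorem \ref{thm-GKP16a-2.29}, choose a $\G$-linearized very ample $H$ with a $\G$-equivariant injection $\mathcal E\otimes H^{-1}\hookrightarrow V\otimes\mathcal O_X$. Twisting by $H^{-1}$ preserves subsheaves and ranks, and shifts every slope by $-\deg_w(H)$. That shift converges as $m\to\infty$, so we may replace $\mathcal E$ by $\mathcal E\otimes H^{-1}$ and regard all its subsheaves as subsheaves of $V\otimes\mathcal O_X$. For a $\G$-equivariant subsheaf $\mathcal F$ of rank $s$, we again have $\det\mathcal F\simeq\mathcal O_X(-D)\otimes\mathbb C_\chi$, with $D$ a $\G$-invariant effective divisor and $\chi$ one of finitely many weights of $\bigwedge^s V$. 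Hence
\[
\mu_w(\mathcal F)=\tfrac1s\big(\deg_w(\mathbb C_\chi\otimes\mathcal O_X)-\deg_w(\mathcal O_X(D))\big).
\]
The first term ranges over finitely many $(s,\chi)$, so $\deg_{v_m}(\mathbb C_\chi\otimes\mathcal O_X)\to\deg_v(\mathbb C_\chi\otimes\mathcal O_X)$ uniformly in $\mathcal F$.

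For the divisor term we use the hypothesis $v_m>v$. By Proposition \ref{prop-semipositive-slope} (applied to each irreducible component of $D$) and Lemma \ref{lem-intersection-chern},
\[
\deg_w(\mathcal O_X(D))=\sum_j a_j\int_{\widetilde D_j}w(\mu\circ i)\,(\omega|_{\widetilde D_j})^{[n-1]},\qquad a_j\ge0.
\]
This is monotone in $w$, so $\deg_{v_m}(\mathcal O_X(D))\ge\deg_v(\mathcal O_X(D))$. Consequently there are $\delta_m\to0$, independent of $\mathcal F$, with
\[
\mu_{v_m}(\mathcal F)\le\mu_v(\mathcal F)+\delta_m\quad\text{and}\quad \mu_{v_m}(\mathcal E)\ge\mu_v(\mathcal E)-\delta_m
\]
for every $\G$-equivariant subsheaf $\mathcal F$ of positive rank.

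Now apply Lemma \ref{lem-stable-uniform}: there is $\varepsilon>0$ with $\mu_v(\mathcal F)<\mu_v(\mathcal E)-\varepsilon$ for every $\G$-equivariant $0\subsetneq\mathcal F\subsetneq\mathcal E$ with $\rk\mathcal F\ne\rk\mathcal E$. Choose $M$ such that $2\delta_m<\varepsilon$ for $m>M$. Then $\mu_{v_m}(\mathcal F)<\mu_{v_m}(\mathcal E)$ for all such $\mathcal F$, which is $v_m$-stability.

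The main obstacle is exactly this uniformity in $\mathcal F$. There are infinitely many subsheaves, and their divisors $D$ are unbounded, so naive $C^1$-continuity of $w\mapsto\mu_w(\mathcal F)$ is not enough. The decomposition above isolates the unbounded part into the effective-divisor term, where the inequality $v_m>v$ gives a one-sided bound with no error. The remaining part involves only finitely many characters $\chi$ and ranks $s$, where convergence is uniform.
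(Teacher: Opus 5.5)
Your proof is correct and is essentially the paper's argument. The paper writes $u_m=v_m-v>0$ and uses linearity of the weighted degree in the weight. It then applies Proposition \ref{prop-semipositive-slope} to the effective zero divisor of $\det\mathcal F\to H^{\otimes q}\otimes\C_\chi$, which bounds $\mu_{u_m}(\mathcal F)$ from above uniformly by quantities tending to $0$ over the finitely many pairs $(q,\chi)$; this is the same as your inequality $\deg_{v_m}(\mathcal O_X(D))\ge\deg_v(\mathcal O_X(D))$, i.e.\ $\deg_{u_m}(\mathcal O_X(D))\ge 0$, and the paper concludes with Lemma \ref{lem-stable-uniform} exactly as you do, without first twisting by $H^{-1}$.
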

\begin{proof}
Let $\varepsilon>0$ be as in Lemma \ref{lem-stable-uniform}, and set $u_m\coloneqq v_m-v>0$.
Choose a $\G$-linearized very ample line bundle $H$ and a finite-dimensional $\G$-module $V$ such that $\mathcal E\hookrightarrow H\otimes V$, as in the proof of Theorem \ref{thm-GKP16a-2.29}.
For every equivariant subsheaf $\mathcal F\subset\mathcal E$ of rank $q$, taking determinants gives a nonzero equivariant map $\det\mathcal F\to H^{\otimes q}\otimes\C_\chi$ for some weight $\chi$ of $\bigwedge^qV$.
The zero divisor of this map is effective, so Proposition \ref{prop-semipositive-slope} gives
\[
\mu_{u_m}(\mathcal F)\leq(c_1^\T(H)\cdot u_m(\alpha_\T))+
\frac1q(c_1^\T(\C_\chi\otimes\mathcal O_X)\cdot u_m(\alpha_\T)).
\]
There are only finitely many possible pairs $(q,\chi)$ as in Theorem \ref{thm-GKP16a-2.29}, and each expression on the right tends to zero by Lemma \ref{lem-intersection-chern}.
Since $\mu_{u_m}(\mathcal E)\to0$ as well, for all sufficiently large $m$ we have
\begin{equation}
\label{eq-u_m-ineq}
\mu_{u_m}(\mathcal F)<\frac{\varepsilon}{3}\quad\text{for every such $\mathcal F$},
\qquad |\mu_{u_m}(\mathcal E)|<\frac{\varepsilon}{3}.
\end{equation}
Thus, for every equivariant subsheaf $\mathcal F$ with $0<\rk\mathcal F<\rk\mathcal E$,
\[
\mu_{v_m}(\mathcal F)-\mu_{v_m}(\mathcal E)
\underalign{\text{(Lem. \ref{lem-stable-uniform} \& \eqref{eq-u_m-ineq})}}{<}
-\varepsilon+\frac{\varepsilon}{3}+\frac{\varepsilon}{3}<0.
\]
This proves the assertion.
\end{proof}

\section{Bogomolov--Gieseker inequality with polynomial weight}
\label{sec-BG-polynomial-weight}
We establish the Bogomolov--Gieseker inequality for $\G$-equivariant torsion-free sheaves in the case of the polynomial weight $v(\mu)=(c + b \mu_{\xi})^m$ by using the semi-simple fibration constructed in \cite[Section 5]{AJL23}  and \cite[Subsection 3.6]{HL24}. 

\subsection{Hodge index theorem and Bogomolov--Gieseker inequality}
In the next subsection \ref{subsec-proofofthm-degree}, we will prove the following theorem.

\begin{thm}
\label{thm-degree-maxslope-summary}
Under Setup \ref{setup-soliton-situation}, assume that there exists an integer lattice point $\mathbf{p} \in \mathfrak{t}$,  $c \in \R$, and  $m \in \N$ such that
$$
v(\mu)=\left(c + \langle \mu, \mathbf{p}\rangle\right)^{m} \colon X \to \R_{>0}
\quad 
\text{and}
\quad 
c + \langle \mu,\mathbf{p}\rangle>0 \text{ on $X$. }
$$
Then there exist a principal $\G$-bundle $P\to \C\mathbb{P}^m$ and a Kähler form $\omega_P$ on the smooth projective variety $X_P=P\times_{\G}X$ satisfying the following properties.
\begin{enumerate}
\item $(=$Lemma \ref{lem-degree-formula}$)$ For any $\G$-equivariant torsion-free sheaf $\mathcal{E}$ on $X$, there exists a torsion-free sheaf $\mathcal{E}_P$ on $X_P$ such that the following equalities hold:
\[
\{ \omega_P\}^{[n+m]}
=V \cdot ( v(\alpha_{\T})), 
\quad
c_1(\mathcal{E}_P) \cdot \{ \omega_P\}^{[n+m-1]}
=V \cdot (c_{1}^{\T}(\mathcal{E}) \cdot v(\alpha_{\T})), 
\]
\[
\ch_2(\mathcal{E}_P) \cdot \{ \omega_P \}^{[n+m-2]}
=V \cdot (\ch_{2}^{\T}(\mathcal{E}) \cdot v(\alpha_{\T})), 
\quad
 c_1(\mathcal{E}_P)^2 \cdot \{ \omega_P\}^{[n+m-2]}
=V \cdot (c_{1}^{\T}(\mathcal{E})^2 \cdot v(\alpha_{\T})).
\]
Here $V \coloneqq \frac{(2\pi)^m}{m!}$ and $\omega_P^{[k]} \coloneqq \frac{\omega_P^{k}}{k!}$ for any $k \in \N$.
\item $(=$Lemma \ref{lem-max-slope-comparison}$)$
For  the usual maximal slope $\mu^{\max}_{\omega_{P}^{[n+m-1]}}(\mathcal{E}_P)$ of $\mathcal{E}_P$ with respect to $\omega_{P}^{[n+m-1]}$, the following equality holds:
\[
\mu^{\max}_{\omega_{P}^{[n+m-1]}}(\mathcal{E}_P)
=
V\cdot \mu^{\max}_{v}(\mathcal{E}).
\]
\end{enumerate}
\end{thm}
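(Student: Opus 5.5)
The plan is the semi-simple fibration of \cite[Section 5]{AJL23} and \cite[Subsection 3.6]{HL24}, specialized to the one-parameter subgroup generated by $\mathbf p$. The lattice point $\mathbf{p}$ defines a homomorphism $\mathbb{G}_m\to\G$. Set $P\coloneqq(\C^{m+1}\setminus\{0\})\times_{\mathbb{G}_m}\G\to\C\mathbb{P}^m$, the principal $\G$-bundle induced from the Hopf bundle of Example \ref{ex-EG98-3.1}. Then $X_P=P\times_\G X$ is a smooth projective $X$-bundle over $\C\mathbb{P}^m$, since $X$ carries a $\G$-linearized ample bundle. Any $\G$-equivariant coherent sheaf $\mathcal E$ descends to $\mathcal E_P\coloneqq(P\times\mathcal E)/\G$. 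Torsion-freeness is preserved, because the map $P\times X\to X_P$ is smooth and surjective, and flat pull-back along $P\times X\to X$ preserves torsion-freeness. For the Kähler form, write $\omega_{FS}$ for the Fubini--Study form on $\C\mathbb{P}^m$ normalized so that $\int\omega_{FS}^{[m]}$ produces the constant $V=(2\pi)^m/m!$. Following \cite{AJL23, HL24}, take
\[
\omega_P\coloneqq \omega + \langle\mu,\mathbf p\rangle\,\omega_{FS} + c\,\omega_{FS} + (\text{connection terms}),
\]
that is, the minimal-coupling form built from the equivariant form $\omega+\mu$ and a connection on the Hopf bundle. Positivity of $\omega_P$ follows from $c+\langle\mu,\mathbf p\rangle>0$.

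For part (1), the key identity is that fiber integration over $X_P\to\C\mathbb{P}^m$ realizes the Cartan-model pushforward. Concretely, for an equivariant class $\beta_\T$, $\int_{X_P}\beta_P\wedge\omega_P^{[n+m-k]}$ equals the coefficient extraction of $\pi_*(\beta_\T\, e^{\alpha_\T})$ paired with the Chern--Weil image of $\Sym\mathfrak t^\vee\to H^\bullet(\C\mathbb{P}^m)$ along $\mathbf p$. Expanding $\omega_P^{n+m-k}$ binomially, only the terms in $\omega_{FS}^m$ survive. This gives $V\cdot\int_X \beta\,(c+\mu_{\mathbf p})^m\,\omega^{[n-k]}$ together with the moment-map terms, which by Lemma \ref{lem-HL24-4.3} is exactly $V(\beta_\T\cdot v(\alpha_\T))$. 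I will first prove this for $\beta_\T\in\{1,c_1^\T(E),\ch_2^\T(E),c_1^\T(E)^2\}$ with $E$ locally free, using Chern--Weil forms and the fact that the Chern connection of $h_P$ has curvature $F_h+\Phi_h$ evaluated on the curvature of the Hopf connection. I will then extend to $\mathcal E$ via finite equivariant locally free resolutions. This needs $(E_\bullet)_P$ to be a resolution of $\mathcal E_P$, which holds because descent is exact. It also needs compatibility of $\widetilde\Psi$ with the classifying map $\C\mathbb{P}^m\to B\G$, which is naturality of the diagram in Theorem \ref{thmdefn-chern-equivalence}.

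For part (2), first note that $\G$ acts on $X_P$ fiberwise through $P$ (as $\G$ is abelian), and $\mathcal E_P$ is $\G$-equivariant. The maximal destabilizing subsheaf of $\mathcal E_P$ is unique, so it is $\G$-invariant. I plan to show that it descends, i.e.\ that it is of the form $\mathcal F_P$ for a $\G$-equivariant $\mathcal F\subset\mathcal E$. To do this I would pull it back to $P\times X$ and use invariance under the $\mathrm{GL}_{m+1}$-action on $\C\mathbb{P}^m$, which lifts to $X_P$ because the structure group is the abelian $\mathbb{G}_m$ acting through $\G$. By uniqueness the destabilizing subsheaf is $\mathrm{GL}_{m+1}$-invariant, hence comes from a $\G$-equivariant subsheaf on $X$. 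Given this descent, part (1) yields $\mu^{\max}(\mathcal E_P)\le V\mu_v^{\max}(\mathcal E)$. The reverse inequality is immediate, since every $\mathcal F_P$ is a subsheaf of $\mathcal E_P$, and ranks agree.

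I expect the descent step in (2) to be the main obstacle. The difficulty is that invariance under a connected group acting on a Zariski-open set must be shown to force the subsheaf to be pulled back from $X$. The first thing I would try is the following: restrict to the open set $\{z_0\neq0\}\subset\C\mathbb{P}^m$, where $X_P$ trivializes as $\C^m\times X$, and use the $\C^m$-translations, which lift. A coherent subsheaf invariant under these translations and under $\G$ is pulled back from $X$ on this chart, and saturation then extends the identification over all of $X_P$.
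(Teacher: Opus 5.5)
\textbf{Part (1).} This is the paper's argument. You use the same principal bundle: the paper's fibre product of the frame bundles of the $\mathcal O(-\mathbf p_a)$ is the bundle induced from the Hopf bundle by the cocharacter $\mathbf p$, up to sign conventions. You also use the minimal-coupling form of Definition \ref{defn-omegaP-kahler} and a Chern--Weil computation on $Q\times X$ with the metric built from a $\T$-invariant $h$ (Lemma \ref{lem-hP-defn}), in which only the $\omega_{\C\mathbb P^m}^{[m]}$-terms survive. Finally, you pass to torsion-free sheaves by equivariant locally free resolutions, as the paper does.

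\textbf{Part (2).} Here your route differs from the paper's, and as far as I can see it is correct.

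The paper never shows that the maximal destabilizing subsheaf $\mathcal G\subset\mathcal E_P$ descends; it uses only the $\G$-invariance of $\mathcal G$. It puts the metric induced from $h_P$ on $\mathcal G$ over a big open set and splits $F_{h_{P,\mathcal G}}$ by the Gauss--Codazzi formula. It then discards the nonpositive contribution of the $\C\mathbb P^m$-component of the second fundamental form. What remains on each slice $\{q\}\times X$ is identified with $\mu_v(\mathcal G_q)$ for a $\G$-equivariant slice subsheaf $\mathcal G_q\subset\mathcal E$. So $\mu(\mathcal G)$ is bounded by a $Q$-average of the values $\mu_v(\mathcal G_q)\le\mu_v^{\max}(\mathcal E)$.

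You instead exploit the homogeneity of the base. $\mathrm{GL}_{m+1}$ acts on the Hopf bundle commuting with the scalars, hence on $X_P$ and on $\mathcal E_P$.
\begin{itemize}
\item Say explicitly that, being connected, this group fixes $\{\omega_P\}$. That is what makes uniqueness of $\mathcal G$ yield $\mathrm{GL}_{m+1}$-invariance.
\item The unipotent translations of the chart $\{z_0\neq0\}$ act by translation on $\C^m\times X$ and trivially on $p_2^*\mathcal E$.
\item Descent along this trivial $\mathbb G_a^m$-torsor then gives $\mathcal G|_{\{z_0\neq0\}}=p_2^*\mathcal F$, with $\mathcal F\subset\mathcal E$ invariant under the fibrewise $\G$-action.
\item For the final extension you need $\mathcal F_P$ itself, not only $\mathcal G$, to be saturated. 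Otherwise agreement on a dense open set only gives $\mathcal F_P\subseteq\mathcal G$, possibly with a divisorial discrepancy over $\{z_0=0\}$.
\item This saturation does hold, because $p_2^*(\mathcal E/\mathcal F)\cong(\mathcal E_P/\mathcal G)|_{\{z_0\neq0\}}$ is torsion-free, so $\mathcal E/\mathcal F$ is torsion-free.
\end{itemize}

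\textbf{Comparison.} Your argument is purely algebraic. It avoids metrics on singular sheaves, second fundamental forms and slice-by-slice reflexive extensions. It also proves slightly more: iterating, the whole Harder--Narasimhan filtration of $\mathcal E_P$ is the descent of the $v$-Harder--Narasimhan filtration of $\mathcal E$. The paper's argument uses only the fibrewise $\G$-action. It would therefore survive replacing $\C\mathbb P^m$ by a base admitting no transitive group action that lifts to $P$, which your approach needs.
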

Before proving Theorem \ref{thm-degree-maxslope-summary}, we first establish the following consequences, assuming Theorem \ref{thm-degree-maxslope-summary}.

\begin{thm}[Equivariant Hodge index theorem for polynomial weights]
\label{thm-equiv-Hodge-index}
Assume Setup \ref{setup-soliton-situation}. Let $\mathcal{E}$ be a $\G$-equivariant torsion-free sheaf, and assume that there exist $\xi\in\mathfrak{t}$, $c, b \in \R$, and $m \in \N$ such that
$v(\mu)= (c  + b \mu_\xi)^{m}$ and $c  + b \mu_\xi>0$ on $X$.
Then
\begin{equation}
\label{eq-equiv-Hodge-index}
(c_1^{\T}(\mathcal{E})^2 \cdot v(\alpha_{\T}))
\cdot 
( v(\alpha_{\T}))
\le
\frac{(n+m-1)}{n+m } (c_1^{\T}(\mathcal{E}) \cdot v(\alpha_{\T}))^2.
\end{equation}
\end{thm}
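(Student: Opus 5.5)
The plan is to reduce to the classical Hodge index inequality on the auxiliary projective manifold $X_P$ furnished by Theorem \ref{thm-degree-maxslope-summary}. The first task is to bring $v$ into the form required there. If $b=0$, then $v\equiv c^m$ is a positive constant and the weighted numbers are $c^m$ times ordinary intersection numbers against $\{\omega\}$. In that case the claim is the classical Hodge index inequality $(c_1^2\cdot\omega^{[n-2]})(\omega^{[n]})\le \frac{n-1}{n}(c_1\cdot\omega^{[n-1]})^2$, strengthened trivially because $\frac{n-1}{n}\le\frac{n+m-1}{n+m}$ and the right side is nonnegative. Alternatively, one can treat this case uniformly with the general one by taking $\mathbf p=0$ below.

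If $b\neq 0$, both sides of \eqref{eq-equiv-Hodge-index} are homogeneous of degree $2m$ in the weight. We therefore rescale $v$ by $b^{-m}$ (or by $|b|^{-m}$, absorbing the sign into $\xi$), which allows us to assume $b=1$. We then need $\xi$ to be an integral lattice point $\mathbf p$. For rational $\xi$ we rescale again: $(c+\mu_\xi)^m = N^{-m}(Nc+\mu_{N\xi})^m$. For general real $\xi$ we approximate $\xi$ by rational $\xi_k\to\xi$. Positivity $c+\mu_{\xi_k}>0$ persists on the compact set $X$. Both sides of \eqref{eq-equiv-Hodge-index} depend continuously on $\xi$, since by Lemma \ref{lem-HL24-4.3} they are finite sums of polynomials in $\xi$ of the form $\pi_*(\beta_\T\cdot\alpha_\T^k)(\xi)$ times powers of $c$. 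Hence the inequality passes to the limit. Since the $\G$-equivariant Chern classes of $\mathcal E$ do not depend on the weight, this limit is legitimate.

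With $v=(c+\langle\mu,\mathbf p\rangle)^m$, Theorem \ref{thm-degree-maxslope-summary}(1) gives a torsion-free sheaf $\mathcal E_P$ on the smooth projective variety $X_P$ of dimension $N\coloneqq n+m$ with Kähler class $\{\omega_P\}$. Writing $L\coloneqq c_1(\mathcal E_P)$, it translates the weighted numbers into
\[
V(v(\alpha_\T))=\{\omega_P\}^{[N]},\quad V(c_1^\T(\mathcal E)\cdot v(\alpha_\T))=L\cdot\{\omega_P\}^{[N-1]},\quad V(c_1^\T(\mathcal E)^2\cdot v(\alpha_\T))=L^2\cdot\{\omega_P\}^{[N-2]}.
\]
The classical Hodge index theorem (the Khovanskii--Teissier type inequality $(L^2\cdot\omega^{N-2})(\omega^N)\le (L\cdot\omega^{N-1})^2$ for a Kähler class $\omega$ and a real $(1,1)$-class $L$ on a compact Kähler manifold) then gives, after inserting the factorials,
\[
(L^2\cdot\omega_P^{[N-2]})(\omega_P^{[N]})\le \frac{(N-1)!^2}{(N-2)!\,N!}(L\cdot\omega_P^{[N-1]})^2=\frac{N-1}{N}(L\cdot\omega_P^{[N-1]})^2.
\]
Dividing by $V^2$ gives exactly \eqref{eq-equiv-Hodge-index} with $N=n+m$.

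The main obstacle is the construction itself, namely Theorem \ref{thm-degree-maxslope-summary}(1), whose proof is deferred. Granting it, the remaining delicate point is the approximation step for irrational $\xi$, together with the bookkeeping of the rescaling constants. I would handle this through the polynomial-in-$\xi$ formula of Lemma \ref{lem-HL24-4.3}, checking that the normalisation constant $V$ cancels on both sides because the inequality is homogeneous.
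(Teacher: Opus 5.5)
Your proposal is correct and follows essentially the same route as the paper: you reduce to $b=1$ and a lattice point $\mathbf p$ by rescaling and rational approximation, transport the three weighted numbers to $X_P$ via Theorem~\ref{thm-degree-maxslope-summary}(1), and apply the classical Hodge index theorem, where the factorial bookkeeping produces $\frac{n+m-1}{n+m}$. Your extra steps fill in details the paper leaves implicit: the sign of $b$ (rescaling by $|b|^{-m}$ and replacing $\xi$ by $-\xi$), the case $b=0$, and continuity in $\xi$ via the polynomial formula of Lemma~\ref{lem-HL24-4.3}.
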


\begin{proof}
We may assume that $\xi \in \mathfrak{t}$ is a lattice point. Indeed, if $\xi$ is rational, this follows after multiplying $\xi$ by a positive integer, and the general case follows by approximating $\xi$ by rational points.
Similarly, we may assume that $b=1$. 
Using the notation of Theorem \ref{thm-degree-maxslope-summary} and the usual Hodge index theorem, we deduce that
\begin{align*}
&(c_1^{\T}(\mathcal{E})^2 \cdot v(\alpha_{\T}))\cdot (v(\alpha_{\T}))
\underalign{\text{(Thm. \ref{thm-degree-maxslope-summary})}}{=} 
\frac{1}{V^2} \cdot (c_1(\mathcal{E}_P)^2 \cdot \{ \omega_P \}^{[n+m-2]}) \cdot  ( \{ \omega_P \}^{[n+m]}) \\
&\underalign{\text{(by $\omega^{[n]} \coloneqq \frac{\omega^{n}}{n!}$)}}{=}  \quad
\frac{1}{V^2 (n+m-2)! (n+m)!}(c_1(\mathcal{E}_P)^2 \cdot \{ \omega_P\}^{n+m-2}) \cdot  ( \{ \omega_P\}^{n+m}) 
\\
&\underalign{\raisebox{-0.7ex}{\tiny \text{(by Hodge index Thm)}}}{\le} \quad
\frac{1}{V^2(n+m-2)! (n+m)!}(c_1(\mathcal{E}_P) \cdot \{ \omega_P\}^{n+m-1})^2
%\\
%&\underalign{}{=}
%\frac{(n+m-1)V^2}{n+m }(c_1(\mathcal{E}_P) \cdot \{ \omega_P\}^{[n+m-1]})^2
\underalign{\text{(Thm. \ref{thm-degree-maxslope-summary})}}{=} \frac{(n+m-1)}{n+m } (c_1^{\T}(\mathcal{E}) \cdot v(\alpha_{\T}))^2.
\end{align*}
\end{proof}

\begin{thm}[Bogomolov--Gieseker inequality with polynomial weight]
\label{thm-HL24-6.1-torsionfree}
Assume Setup \ref{setup-soliton-situation}. Let $\mathcal{E}$ be a $\G$-equivariant torsion-free sheaf of rank $r$, and assume that there exist $\xi\in\mathfrak{t}$, $c, b \in \R$, and $m \in \N$ such that
$v(\mu)= (c  + b \mu_\xi)^{m}$ and $(c  + b \mu_\xi)>0$ on $X$.
If $\mathcal{E}$ is $v$-semistable, then the Bogomolov--Gieseker inequality holds:
\[
\left( (2r c^{\T}_2(\mathcal{E}) - (r-1) c^{\T}_1(\mathcal{E})^2 )\cdot v(\alpha_{\T}) \right)
=  \left( (\ch^{\T}_{1}(\mathcal{E})^{2} - 2r \ch^{\T}_{2}(\mathcal{E}))\cdot v(\alpha_{\T}) \right)
\ge 0
\]
\end{thm}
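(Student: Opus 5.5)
We reduce to the classical Bogomolov--Gieseker inequality on the semi-simple fibration $X_P$ of Theorem \ref{thm-degree-maxslope-summary}. As in the proof of Theorem \ref{thm-equiv-Hodge-index}, we first normalize $b=1$ and assume $\xi=\mathbf p$ is an integer lattice point. Rescaling $\xi$ and $c$ by a common positive factor changes $v$ only by a positive constant, so it preserves both $v$-semistability and the sign of the discriminant. For rational $\xi$ we then multiply by an integer. For irrational $\xi$ the reduction needs more care (see below).

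\textbf{Step 1: reduce to stable sheaves.} If $\mathcal E$ is $v$-semistable, take a Jordan--H\"older filtration (Remark \ref{rem-soliton-GKP16a} \ref{prop-JH-existence}) with $v$-stable torsion-free quotients $\mathcal G_i$ of equal $v$-slope. Apply Langer's formula, Lemma \ref{lem-Langer-ineq-chow}, and pair each term with $v(\alpha_\T)$. The cross terms are $\bigl((c_1^\T(\mathcal G_i)/r_i-c_1^\T(\mathcal G_j)/r_j)^2\cdot v(\alpha_\T)\bigr)$. Each such class $D$ has $(D\cdot v(\alpha_\T))=0$, so Theorem \ref{thm-equiv-Hodge-index} (applied to $\Q$-linear combinations, via the fibration identities) gives $(D^2\cdot v(\alpha_\T))\le 0$. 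Hence
\[
\frac{(\Delta^\T(\mathcal E)\cdot v(\alpha_\T))}{r}\ \ge\ \sum_i \frac{(\Delta^\T(\mathcal G_i)\cdot v(\alpha_\T))}{r_i},
\]
and it suffices to treat $v$-stable $\mathcal E$.

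\textbf{Step 2: transfer to $X_P$.} By Theorem \ref{thm-degree-maxslope-summary}(1), the discriminant satisfies
\[
\Delta(\mathcal E_P)\cdot\{\omega_P\}^{[n+m-2]}=V\,(\Delta^\T(\mathcal E)\cdot v(\alpha_\T)).
\]
It therefore suffices to show that $\mathcal E_P$ is slope semistable with respect to $\{\omega_P\}^{n+m-1}$ and then invoke the classical Bogomolov--Gieseker inequality for torsion-free sheaves with respect to a K\"ahler class. For a non-rational class $\{\omega_P\}$ one uses the known version for nef/K\"ahler polarizations, or approximates by rational ample classes using openness.

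Semistability follows from Theorem \ref{thm-degree-maxslope-summary}(2) together with (1):
\[
\mu^{\max}_{\omega_P^{[n+m-1]}}(\mathcal E_P)=V\mu_v^{\max}(\mathcal E)=V\mu_v(\mathcal E)=\mu_{\omega_P}(\mathcal E_P).
\]
This equality of maximal slope and slope is exactly semistability of $\mathcal E_P$. In fact this argument already works for semistable $\mathcal E$, so Step 1 may be unnecessary for the rational case.

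\textbf{Main obstacle: irrational $\xi$.} Semistability is not obviously preserved when $\xi$ is approximated by rational $\xi_k$. We handle this with Step 1 and Lemma \ref{lem-openness-stab}. Choose rational $\xi_k$ and constants $c_k$ so that $v_k=(c_k+\mu_{\xi_k})^m$ converges to $v$ in $C^1(P)$ with $v_k>v$; this is possible by slightly increasing $c_k$. A $v$-stable $\mathcal E$ is then $v_k$-stable for $k$ large, so the rational case gives
\[
(\Delta^\T(\mathcal E)\cdot v_k(\alpha_\T))\ge0 .
\]
Finally, pass to the limit. The quantity $(\Delta^\T(\mathcal E)\cdot v_k(\alpha_\T))$ is given by Lemma \ref{lem-HL24-4.3} as a polynomial expression in $(c_k,\xi_k)$, which is continuous, so the limit yields the inequality for $v$. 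Combined with the Jordan--H\"older reduction of Step 1, this proves the inequality for semistable $\mathcal E$.
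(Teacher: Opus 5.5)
Your proposal is correct and follows the paper's proof essentially step for step. The rational case goes through semistability of $\mathcal E_P$ (Theorem~\ref{thm-degree-maxslope-summary}) and the classical Bogomolov--Gieseker inequality on $X_P$. The irrational case uses openness of $v$-stability under rational approximations $v_k>v$, a limit, and the Jordan--H\"older/Hodge-index reduction from stable to semistable.

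The only differences are cosmetic. You justify the limit by the polynomial dependence of the weighted intersection number on $(c_k,\xi_k)$, where the paper uses $C^2$ convergence. You also explicitly flag that $\{\omega_P\}$ need not be a rational class, a point the paper passes over by citing Miyaoka's ample-case result.
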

\begin{proof}
We continue to use the notation of Theorem \ref{thm-degree-maxslope-summary}. The argument is different depending on whether $\xi$ is rational.

(1). Suppose that $\xi$ is rational. As in Theorem \ref{thm-equiv-Hodge-index}, after multiplying $\xi$ by a constant, we may assume that $\xi \in \mathfrak{t}$ is a lattice point and that $b=1$. Hence, by Theorem \ref{thm-degree-maxslope-summary}, $\mathcal{E}_P$ is $\omega_P^{[n+m-1]}$-semistable. Therefore, by the usual Bogomolov--Gieseker inequality for a torsion-free sheaf (cf. \cite[Corollary 4.7]{Miy87}), 
$$
\left((\ch_{1}^{\T}(\mathcal{E})^{2} - 2r \ch_{2}^{\T}(\mathcal{E}) )\cdot v(\alpha_{\T})\right)
\underalign{\text{(Thm. \ref{thm-degree-maxslope-summary})}}{=}
\frac{1}{V} \left(\ch_{1}(\mathcal{E}_P)^{2} - 2r \ch_{2}(\mathcal{E}_P) \right)\cdot \{ \omega_P\}^{[n+m-2]}
\underalign{\text{(by usual BG ineq.)}}{\ge}0.
$$

(2). Suppose that $\xi$ is not rational. First assume that $\mathcal{E}$ is $v$-stable. 
In this case, there exist a sequence $\{\xi_k\}_{k=1}^{\infty}$ of rational points in $\mathfrak{t}$ and sequences $\{c_k\}_{k=1}^{\infty}, \{b_k\}_{k=1}^{\infty} \subset \R$ such that
$$
v_k(\mu) \coloneqq \left(c_k+b_k\mu_{\xi_k}\right)^m
$$
satisfies \(v_k>v\) and \(v_k\to v\) in \(C^2(P)\).
 Lemma \ref{lem-openness-stab} implies that $\mathcal{E}$ is $v_k$-stable for all sufficiently large $k$. 
 Hence, by case (1), the inequality
$$
\left((2rc^{\T}_2(\mathcal{E}) - (r-1)c^{\T}_1(\mathcal{E})^2 ) \cdot v_k(\alpha_{\T}) \right)\ge0
$$
holds. Since \(v_k \to v\) in \(C^2(P)\), letting \(k\to\infty\) yields the desired inequality.

It remains to consider the case where $\mathcal{E}$ is $v$-semistable. By Remark \ref{rem-soliton-GKP16a} \ref{prop-JH-existence}, we take a Jordan--Hölder filtration. Namely, there exists a filtration
\[
0=\mathcal{E}_0\subsetneq \mathcal{E}_1\subsetneq \cdots \subsetneq \mathcal{E}_{s}=\mathcal{E}
\]
such that each quotient sheaf
\(
\mathcal{G}_i \coloneqq \mathcal{E}_i/\mathcal{E}_{i-1}
\)
is a torsion-free $v$-stable sheaf of rank $r_i$, and satisfies
\(
\mu_{v}(\mathcal{G}_i)=\mu_{v}(\mathcal{E})
\).
Moreover, by applying Theorem \ref{thm-equiv-Hodge-index}
to $\det \mathcal{G}_{i}^{\otimes r_j}\otimes (\det \mathcal{G}_{j}^{\vee})^{\otimes r_i}$ and dividing by $r_i^2 r_j^2$, 
\begin{equation}
\label{eq-JH-Hodge-index}
\left(\left( \frac{c^{\T}_1(\mathcal{G}_{i})}{r_i} - \frac{c^{\T}_1(\mathcal{G}_{j})}{r_j} \right)^2  \cdot  v(\alpha_{\T}) \right)
 \le \frac{1}{ v(\alpha_{\T})}
 \underbrace{ \left( \left( \frac{c^{\T}_1(\mathcal{G}_{i})}{r_i} - \frac{c^{\T}_1(\mathcal{G}_{j})}{r_j} \right) \cdot  v(\alpha_{\T}) \right)^2 }_{=(\mu_{v}(\mathcal{G}_i)-\mu_{v}(\mathcal{G}_j))^2}
 =0.
\end{equation}
Hence we obtain
\begin{equation}
\label{eq-semistable-stable-red}
\begin{aligned}
 &\left((2rc^{\T}_2(\mathcal{E}) - (r-1)c^{\T}_1(\mathcal{E})^2 )\cdot v(\alpha_{\T})\right) \underalign{\text{\eqref{eq-Bogomolov-T-discriminant}}}{=}(\widetilde{\Psi}(\Delta^{\G}(\mathcal{E})) \cdot v(\alpha_{\T}) ) \\
 &\underalign{\text{(Lem. \ref{lem-Langer-ineq-chow})}}{=}
\left( \widetilde{\Psi}\left( 
r\sum_{i=1}^{s} \frac{\Delta^{\G}(\mathcal{G}_{i})}{r_i} 
 - \sum_{1 \le i < j \le s} r_i r_j \left( \frac{c^{\G}_1(\mathcal{G}_{i})}{r_i} - \frac{c^{\G}_1(\mathcal{G}_{j})}{r_j} \right)^2 
 \right) \cdot  v(\alpha_{\T}) \right)\\
  &\underalign{\text{(Def. \ref{defn-weighted-intersection-coherent})}}{=}
\sum_{i=1}^{s}\frac{r}{r_i}  
\underbrace{ \left( (2r_i c^{\T}_2(\mathcal{G}_{i}) - (r_i-1)c^{\T}_1(\mathcal{G}_{i})^2 )
\cdot v(\alpha_{\T}) \right)}_{\ge 0 \text{ by the stable case}}
+ \sum_{1 \le i < j \le s} 
   \underbrace{ - r_i r_j \left(\left( \frac{c^{\T}_1(\mathcal{G}_{i})}{r_i} - \frac{c^{\T}_1(\mathcal{G}_{j})}{r_j} \right)^2  \cdot  v(\alpha_{\T}) \right)}_{\ge 0 \text{ by \eqref{eq-JH-Hodge-index}}}\\
   & \underalign{ }{\ge} 0.
\end{aligned}
\end{equation}
\end{proof}

\subsection{Proof of Theorem \ref{thm-degree-maxslope-summary}}
\label{subsec-proofofthm-degree}
We now prove Theorem \ref{thm-degree-maxslope-summary}. The proof uses the semi-simple fibration $X_Q \to B$ constructed in \cite[Section 5]{AJL23} and \cite[Subsection 3.6]{HL24}. In these papers, $X_Q=Q \times_{\T} X$ is constructed by using a principal $\T$-bundle $Q \to B$. However, since $Q$ is a real manifold, this construction is not directly compatible with the methods of complex geometry and algebraic geometry. We therefore construct a complex manifold $P$ with the structure of a principal $\G$-bundle $P \to B$ such that $X_P=P \times_{\G} X$ is biholomorphic to $X_Q$, and  establish the complex analogue of the constructions in \cite[Section 5]{AJL23} and \cite[Subsection 3.6]{HL24}.  

Throughout Subsection \ref{subsec-proofofthm-degree}, we always assume Setup \ref{setup-soliton-situation}. 
% and also use some facts from \cite[Subsection 3.6]{HL24}.
\subsubsection{Construction of $P$ and $Q$}
Fix $m \in \N$. 
Choose a Kähler form $\omega_{\C\mathbb{P}^m}$ on $\C\mathbb{P}^m$ such that
$\frac{1}{2 \pi}\{ \omega_{\C\mathbb{P}^m} \}\in H^2(\C\mathbb{P}^m, \Z)$ and
$\int_{\C\mathbb{P}^m} \left(\frac{1}{2 \pi}\omega_{\C\mathbb{P}^m}\right)^{m}=1$. This normalization gives
$$
\int_{\C\mathbb{P}^m} (\omega_{\C\mathbb{P}^m})^{[m]}
=\int_{\C\mathbb{P}^m} \frac{\omega_{\C\mathbb{P}^m}^{m}}{m!}=\frac{(2\pi)^m}{m!}%\underalign{\text{(Thm. \ref{thm-degree-maxslope-summary})}}{=}
\eqqcolon V.
$$
In the notation of \cite[Subsection 3.6]{HL24}, we take $N=1$, $\alpha=1$, and $B=\C\mathbb{P}^m$. 
%$q_\alpha=\id_{B}$,

Let $\mathbf{p}\in \Lambda\subset \mathfrak{t}$, where $\Lambda\subset \mathfrak{t}$ denotes the integer lattice of $\mathfrak{t}$. Take a basis $(\xi_a)_{a=1}^{l}\subset \mathfrak{t}$ consisting of the $S^1$-generators of $\mathfrak{t}$. With this convention, $\Lambda = \bigoplus_{a=1}^{l}\Z \xi_a$. Moreover, if the map $\mathfrak{t} \to \T$ is defined by $\xi_a\mapsto e^{2\pi \sqrt{-1} \xi_a}$, then $\T \cong \mathfrak{t}/\Lambda$.
 Let $c\in\mathbb{R}$ be a real number such that
$\langle \mu,\mathbf{p}\rangle+c>0$
on $X$. Set
\[
f(\mu) \coloneqq c + \langle \mu,\mathbf{p}\rangle\colon X\to\mathbb{R}_{>0}
\quad 
\text{and}
\quad
v(\mu) \coloneqq (c + \langle \mu,\mathbf{p}\rangle)^m=f(\mu)^m
\]
Notice that $v'(\mu)=m f(\mu)^{m-1}$.

\begin{lem}
\label{lem-existence-Q}
There exist a principal $\G$-bundle $\pi_{P}\colon P\to \C\mathbb{P}^m$ and a principal $\mathbb{T}$-bundle $\pi_{Q}\colon Q\to \C\mathbb{P}^m$ satisfying the following properties.
\begin{enumerate}
\item 
$
Q \subset P$ and $
P \underset{C^\infty}{\cong} Q \times_{\mathbb{T}}\G
$.
In other words, $Q$ is the reduction to $\mathbb{T}$ of the principal bundle with structure group $\G$.
\item 
There exists a $\mathfrak t_\C$-valued $1$-form $\eta^{\C}\in \Omega^1(P,\mathfrak t_\C)$ on $P$ such that 
\begin{equation}
\label{eq-deta-property}
d\eta^{\C}
=
\pi_P^*\omega_{\C\mathbb{P}^m}
\otimes \mathbf{p}
\in  \Omega^2(P,\mathfrak{t}).
\end{equation}
\end{enumerate}
\end{lem}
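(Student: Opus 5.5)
The natural construction is the pullback of the tautological bundle along the one-parameter subgroup determined by \(\mathbf p\). Write \(\mathbf p=\sum_a p_a\xi_a\) with \(p_a\in\Z\). This gives an algebraic group homomorphism \(\lambda_{\mathbf p}\colon\mathbb G_m\to\G\), \(t\mapsto(t^{p_1},\dots,t^{p_l})\), whose restriction to \(S^1\) is the homomorphism \(S^1\to\T\) generated by \(\mathbf p\).

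We start from the tautological principal \(\mathbb G_m\)-bundle \(P_0\coloneqq\C^{m+1}\setminus\{0\}\to\C\mathbb P^m\) of Example \ref{ex-EG98-3.1}, which is the frame bundle of \(\mathcal O(-1)\). We then set
\[
P\coloneqq P_0\times_{\mathbb G_m,\lambda_{\mathbf p}}\G .
\]
This is a holomorphic, indeed algebraic, principal \(\G\)-bundle over \(\C\mathbb P^m\). It is Zariski locally trivial, being associated to a line bundle.

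Next we choose the Fubini--Study metric \(h\) on \(\mathcal O(-1)\), normalized so that its Chern curvature satisfies
\[
\frac{\sqrt{-1}}{2\pi}F_h=-\frac{1}{2\pi}\omega_{\C\mathbb P^m}.
\]
Up to sign conventions, this is possible because \(\frac1{2\pi}\{\omega_{\C\mathbb P^m}\}\) is the integral generator and \(\omega_{\C\mathbb P^m}\) may be taken to be the Fubini--Study form. The unit circle bundle \(Q_0=S^{2m+1}\subset P_0\) is then a principal \(S^1\)-bundle, and
\[
Q\coloneqq Q_0\times_{S^1,\lambda_{\mathbf p}}\T\subset P
\]
is a principal \(\T\)-bundle. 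Since \(\G=\T\times(\R_{>0})^l\) and \(\C^{m+1}\setminus\{0\}\cong S^{2m+1}\times\R_{>0}\), the natural map \(Q\times_\T\G\to P\) is a \(C^\infty\) isomorphism. This proves (1), and matches the reduction used in \cite[Subsection 3.6]{HL24}.

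For (2), we first build a \(1\)-form on \(P_0\). The Chern connection of \(h\) gives the form \(\theta_0=\partial\log|z|_h^2\) on \(P_0\). This is a \(\C\)-valued connection-type \(1\)-form for the \(\mathbb G_m\)-action, and it satisfies
\[
d\theta_0=\bar\partial\partial\log|z|^2=\pi_{P_0}^*\bigl(\text{const}\cdot\omega_{\C\mathbb P^m}\bigr).
\]
Here we use that \(\sqrt{-1}\partial\bar\partial\log|z|^2\) is the pullback of the Fubini--Study form. The constant is fixed by the normalization of \(\omega_{\C\mathbb P^m}\). Equivalently, one can take \(\theta_0=\frac{1}{2\pi\sqrt{-1}}\partial\log|z|^2\) times a suitable factor; this kills the imaginary unit and makes \(d\theta_0=\pi^*\omega_{\C\mathbb P^m}\) exactly.

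We then push this form to \(P\). Use the projection \(P_0\times\G\to P\), and choose logarithmic coordinates \(w=\log g\) on \(\G\) with values in \(\mathfrak t_\C\), so that \(g^{-1}dg\) is identified with \(dw\). Define
\[
\eta^\C\coloneqq\theta_0\otimes\mathbf p+g^{-1}dg .
\]
Two checks are needed. First, \(\eta^\C\) is basic for the antidiagonal \(\mathbb G_m\)-action, so it descends to \(P\). Second, the Maurer--Cartan term is closed because \(\G\) is abelian, so
\[
d\eta^\C=d\theta_0\otimes\mathbf p=\pi_P^*\omega_{\C\mathbb P^m}\otimes\mathbf p .
\]
The first check is where the equivariance of \(\theta_0\) enters. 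Under \(z\mapsto tz\) one has \(\theta_0\mapsto\theta_0+t^{-1}dt\). Under the \(\G\)-factor, \(g\mapsto\lambda_{\mathbf p}(t)^{-1}g\) gives \(g^{-1}dg\mapsto g^{-1}dg-\mathbf p\,t^{-1}dt\). These two shifts cancel.

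The main obstacle is this normalization and bookkeeping. One has to fix the factors of \(2\pi\) and \(\sqrt{-1}\) so that the curvature equals exactly \(\pi_P^*\omega_{\C\mathbb P^m}\otimes\mathbf p\) with real values, as in \eqref{eq-deta-property}. One also has to choose the sign of \(\lambda_{\mathbf p}\) consistently with the left-action conventions, so that \(\eta^\C\) restricted to \(Q\) recovers the real connection form \(\eta\) of \cite{HL24}. If \(\partial\log|z|^2\) gives the wrong reality, I would instead use the real connection \(\frac12 d^c\log|z|^2\) plus \(\frac12 d\log|z|^2\), which is holomorphically compatible, and absorb the difference into a global exact term.
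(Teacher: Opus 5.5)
Your construction is the paper's, repackaged. The bundle $P_0\times_{\mathbb G_m,\lambda_{\mathbf p}}\G$ is the fibre product $L_1^{\vee,\times}\times_{\C\mathbb P^m}\cdots\times_{\C\mathbb P^m}L_l^{\vee,\times}$ of the frame bundles of $\mathcal O(-\mathbf p_a)$, and $Q$ is the product of the unit circle bundles. Under $p_a=g_a\,z^{\otimes\mathbf p_a}$ one has $\partial\log|p_a|^2_{h_a^\vee}=g_a^{-1}dg_a+\mathbf p_a\,\partial\log|z|_h^2$. So your form is the paper's $\eta^\C=\sum_a\frac{1}{\sqrt{-1}}q_a^*(d\log z_a+\partial\log H_a^\vee)\otimes\xi_a$, except for the factor $\frac{1}{\sqrt{-1}}$.

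That factor is the one thing to repair. It must multiply the whole form:
\[
\eta^\C=\frac{1}{\sqrt{-1}}\bigl(\partial\log|z|_h^2\otimes\mathbf p+g^{-1}dg\bigr).
\]
Rescaling only $\theta_0$, as you propose, breaks the cancellation of the two $\mathbf p\,t^{-1}dt$ terms that descent requires. Omitting the factor instead gives $d\eta^\C=\sqrt{-1}\,\pi_P^*\omega_{\C\mathbb P^m}\otimes\mathbf p$, which is not real.
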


\begin{proof}
We recall the constructions in \cite[Section 5]{AJL23} and explain how they can be written in the case of $\C\mathbb{P}^m$. We identify $\T=\mathfrak{t}/\Lambda$.
Since $\mathbf{p} \in \Lambda=\bigoplus_{a=1}^{l}\Z \xi_a$, there exist
$\mathbf{p}_{a}\in \Z$ such that
$
\mathbf{p}
=\sum_{a=1}^{l} \mathbf{p}_{a} \xi_a
$.
Set
$$
\gamma_a  \coloneqq  
-\mathbf{p}_{a}  
\left( \frac{1}{2 \pi}[\omega_{\C\mathbb{P}^m}]\right)
\in H^2(\C\mathbb{P}^m, \Z), 
\quad 
L_a  \coloneqq  \mathcal{O}_{\C\mathbb{P}^m}(\mathbf{p}_{a})
\overset{\tau_a}{\longrightarrow} \C\mathbb{P}^m
\quad 
a=1, \ldots,l .
$$
Then the dual bundle $L_{a}^{\vee}$ satisfies
$c_1(L_{a}^{\vee})=
\gamma_a 
$.
Let $h_{\mathcal{O}_{\C\mathbb{P}^m}(1)}$ be a positive metric on
$\mathcal{O}_{\C\mathbb{P}^m}(1)$, and set
$h_a = h_{\mathcal{O}_{\C\mathbb{P}^m}(1)}^{\otimes \mathbf{p}_{a}}$.
Then $h_a^{\vee}$ is a metric on $L_{a}^{\vee}$ such that
\begin{equation}
\label{eq-def-h_a}
\frac{\sqrt{-1}}{2 \pi}F_{h_a^{\vee}}
=
-\mathbf{p}_{a} 
\left( \frac{1}{2 \pi}\omega_{\C\mathbb{P}^m}\right)
\in c_1(L_{a}^{\vee}).
\end{equation}
Set
$$
 L_{a}^{\vee, \times} \coloneqq  L_{a}^{\vee}\setminus \{0\}
 \quad \text{and} \quad 
Q_a  \coloneqq  \{ v \in L_{a}^{\vee} \mid \lvert v\rvert_{h_a^{\vee}}=1 \}
\quad 
a=1, \ldots,l .
%= \{ v \in \mathcal{O}_{\C\mathbb{P}^m}(-\mathbf{p}_{a}) \mid |v|_{h_{\mathrm{FS}}^{\otimes - \mathbf{p}_{a}}}=1 \}
$$
Then $L_{a}^{\vee, \times} \to \C\mathbb{P}^m$ is a principal $\C^{*}$-bundle, and $Q_a \to \C\mathbb{P}^m$ is a principal $S^1$-bundle. 
Thus we define
$$
P \coloneqq L_{1}^{\vee, \times} \times_{\C\mathbb{P}^m}\cdots \times_{\C\mathbb{P}^m} 
L_{l}^{\vee, \times}
\quad\text{and} \quad
%\pi_P : P \to \C\mathbb{P}^m
%\qquad
Q=Q_1 \times_{\C\mathbb{P}^m} \cdots \times_{\C\mathbb{P}^m} Q_{l}
%\quad \pi_{Q} : Q\to \C\mathbb{P}^m .
$$
Then $\pi_P \colon P \to \C\mathbb{P}^m$ is a principal $\G$-bundle, and $ \pi_{Q} \colon Q\to \C\mathbb{P}^m$ is a principal $\T$-bundle. 
Thus the first assertion is satisfied.

We next prove the second assertion.
Let $q_a \colon P \to L_{a}^{\vee}$ be the natural projection:
$$
\xymatrix@C=50pt@R=40pt{
P \coloneqq L_{1}^{\vee, \times} \times_{\C\mathbb{P}^m}\cdots \times_{\C\mathbb{P}^m} L_{l}^{\vee, \times}
\ar[r]^-{q_a}
\ar@/^2pc/[rr]^{\pi_P}
   & L_{a}^{\vee, \times} \ar[r]^-{\tau_a}
   &\C\mathbb{P}^m . \\
   }
$$
We construct the $\eta^\C \in \Omega^1(P, \mathfrak{t})$ on $P$ as follows. 
For each $a=1, \ldots, l$, let $(W, w)$ be a coordinate neighborhood with $W \subset \C\mathbb{P}^m$, and let $e_{a}^{\vee} \colon W \to L_{a}^{\vee}$ be a holomorphic local section. For $p_a \in L_{a}^{\vee}\vert_{W}$, take the coordinate $z_a$ defined by
\begin{equation}
\label{eq-pa-zaea}
p_a = z_a(p_a) \cdot e_{a}^{\vee}(\tau_a (p_a)).
\end{equation}
This gives coordinates $(z_a, w)$ on $L_{a}^{\vee}\vert_{W}$. The vector bundle $\tau_a ^{*}L_{a}^{\vee} \to L_{a}^{\vee}$ carries the metric
$ H_{a}^{\vee} \coloneqq \tau_a ^{*}h_a^{\vee}$. Using this metric, we define a form on $P$ by
\begin{equation}
\label{eq-etac-defn}
\eta^\C \coloneqq 
\sum_{a=1}^{l}
\frac{1}{\sqrt{-1}}
q_{a}^{*}\underbrace{(d \log z_a + \partial \log H_{a}^{\vee})}_{\text{$1$-form on $L_{a}^{\vee}$}}
 \otimes \xi_a
\in \Omega^{1}(P,\mathfrak t_\C).
\end{equation}
Then, by using $\mathbf{p}
=\sum_{a=1}^{l} \mathbf{p}_{a} \xi_a$, 
$$
d\eta^\C=
\sum_{a=1}^{l}
\frac{1}{\sqrt{-1}}
q_{a}^{*}\bar\partial \partial \log H_a^{\vee} \otimes \xi_a
=
\sum_{a=1}^{l}
\frac{1}{\sqrt{-1}}
%\underbrace{q_{a}^{*}\tau_a ^{*}}_{\pi_P^{*}}
\left(
q_{a}^{*}\tau_a ^{*}
F_{h_a^{\vee}}\right)\otimes \xi_a
\underalign{\eqref{eq-def-h_a}}{=}
%\sum_{a=1}^{l}\mathbf{p}_{a}
%\pi_P^{*}\omega_{\C\mathbb{P}^m}
%\otimes \xi_a
%\underalign{\text{by the definition of $ \mathbf{p}$}}{=}
\pi_P^{*}\omega_{\C\mathbb{P}^m}
\otimes \mathbf{p}.
$$
\end{proof}

\begin{rem}
\label{rem-eta-i}
Set $\eta \coloneqq \eta^{\C}\vert_{Q}$. Then $\eta$ is the same connection used in \cite[Subsection 3.6]{HL24}.
In particular,
$d\eta= {\pi_{Q}}^{*} \omega_{\C\mathbb{P}^m} \otimes  \mathbf{p}$.
Take $1$-forms $\eta_a$ on $Q$ such that $\eta=\sum_{a=1}^{l} \eta_a  \otimes \xi_a$. On each fiber of $Q\to\C\mathbb P^m$, 
$$
\eta_a=\eta^\C_a \vert_{Q}
=\frac{1}{\sqrt{-1}}
q_{a}^{*} d \log z_a .
$$
From this expression, for any $b \in \C\mathbb{P}^m$,
\begin{equation}
\label{eq-integral-Qb}
\int_{ \pi_{Q}^{-1}(b)}\eta_1 \wedge \cdots \wedge \eta_l
%=\int_{\T}d\theta_1 \wedge \cdots \wedge d \theta_l
= (2 \pi)^l .
\end{equation}
\end{rem}

\subsubsection{Construction of $X_P$ and $X_Q$}
\begin{lem}
\label{lem-XPXQ-construction}
Define a left $\G$-action on $P\times X$ by
\[
t\cdot(p,x)\coloneqq(p\cdot t^{-1},t\cdot x)
\qquad(x\in X,\ p\in P,\ t\in\G).
\]
Let $X_P \coloneqq P\times_{\G} X=(P\times X)/\G$ be the quotient by this $\G$-action, and let $\rho \colon P \times X \to X_P$ be the quotient map. Then the following assertions hold.
\begin{enumerate}
\item The associated fibration $\pi \colon X_P\to \C\mathbb{P}^m$ is a holomorphic submersion. More precisely, it is a holomorphic fiber bundle with fiber $X$.
\item For $X_Q \coloneqq Q \times_\T X$ in \cite[Subsection 3.6]{HL24}, 
$X_Q$ is biholomorphic to $X_P$.
\end{enumerate}
\end{lem}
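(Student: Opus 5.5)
For (1), I would check that the diagonal $\G$-action on $P\times X$ is free and proper, and that the quotient is locally trivial with holomorphic transition maps. Over a trivializing open set $W\subset\C\mathbb P^m$, choose nonvanishing holomorphic local sections $e_a^{\vee}$ of $L_a^{\vee}$ as in \eqref{eq-pa-zaea}. These give a holomorphic $\G$-equivariant trivialization
\[
P|_W\cong W\times\G,\qquad p\mapsto (\pi_P(p),(z_1(p),\ldots,z_l(p))).
\]
Hence
\[
(P|_W\times X)/\G\cong W\times X,\qquad [(w,t),x]\mapsto (w,t\cdot x),
\]
which is well defined because $[(w,ts^{-1}),s x]$ has the same image. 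On overlaps $W\cap W'$ the sections differ by holomorphic functions $g_a\colon W\cap W'\to\C^*$. The induced transition map is therefore $(w,x)\mapsto (w,g(w)\cdot x)$ with $g=(g_1,\dots,g_l)\colon W\cap W'\to\G$ holomorphic. Since the $\G$-action on $X$ is algebraic, hence holomorphic, these maps are biholomorphic. So $X_P$ is a holomorphic fibre bundle over $\C\mathbb P^m$ with fibre $X$, and $\pi$ is a holomorphic submersion. I would also note that $X_P$ is the mixed space $X\times U/\G$ restricted to an algebraic principal bundle, so it is algebraic. Projectivity is deferred, since the lemma only asks for the bundle structure.

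For (2), I would use the inclusion $Q\subset P$ from Lemma \ref{lem-existence-Q}. This induces a smooth map $\iota\colon X_Q=Q\times_\T X\to P\times_\G X=X_P$, $[q,x]\mapsto[q,x]$, over $\C\mathbb P^m$.

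Injectivity: suppose $[q,x]=[q',x']$ with $q'=qt^{-1}$ and $x'=tx$ for some $t\in\G$. Since both $q,q'\in Q$ lie in the same fibre, $|t_a|=1$ for each $a$, because $Q_a$ is the unit circle bundle. Hence $t\in\T$.

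Surjectivity: write any $p\in P$ as $p=q\cdot s$ with $q\in Q$ and $s\in(\R_{>0})^l$, by normalizing each coordinate with respect to $h_a^\vee$. Then $[p,x]=[q,s^{-1}x]$.

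So $\iota$ is a diffeomorphism, using the smooth identification $P\cong Q\times_\T\G$, and locally it is the identity in the charts $W\times X$. The complex structure on $X_Q$ used in \cite[Subsection 3.6]{HL24} is the one induced by the connection $\eta$: it is characterized by making the horizontal distribution $\ker\eta$ carry the pullback of $J_{\C\mathbb P^m}$ and the fibres carry $J_X$. I would verify that $\iota$ pulls back $J_{X_P}$ to this structure. This reduces to two checks. First, the horizontal lift in $X_P$ determined by $\ker\eta^\C$ coincides with $\ker\eta$ on $Q$, by Remark \ref{rem-eta-i}. Second, $\eta^\C$ is of type $(1,0)$, being built from $d\log z_a+\partial\log H_a^\vee$, so its kernel is a complex subbundle, compatible with the holomorphic structure of $P$.

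The main obstacle I expect is this last step: matching the Hermitian-connection description of the complex structure on $X_Q$ with the holomorphic quotient structure on $X_P$. I would treat it by a local computation in the coordinates $(z_a,w)$. There the $\G$-orbit directions are $z_a\partial_{z_a}$, and $\eta^\C$ identifies the $(1,0)$-directions transverse to orbits with those of $W$.
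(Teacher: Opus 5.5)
Your proposal is correct and follows the paper's proof. Part (1) is the same transition-function argument for the associated bundle, and part (2) uses the same map $[q,x]\mapsto[q,x]$ (the paper's $[q,1,x]$), except that you also spell out the holomorphicity that the paper simply asserts. Your two checks suffice, but you need one more observation: $\ker\eta^{\C}_q\subset T_qQ$, because the imaginary part of $\eta^{\C}_a$ is, up to sign, $d\log\lvert p_a\rvert_{h_a^{\vee}}$, and these forms have common kernel $TQ$ along $Q$. With that inclusion, $\ker\eta^{\C}_q=\ker\eta_q$ follows by a dimension count; the restriction $\eta=\eta^{\C}|_Q$ from the remark alone does not give it. Also, with the paper's action one has $[q\cdot s,x]=[q,s\cdot x]$, not $[q,s^{-1}x]$, though this slip does not affect surjectivity.
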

\begin{proof}
Let $\C\mathbb{P}^m = \bigcup_{\lambda} U_{\lambda}$ be an open covering, and let $\rho_{\lambda \mu} \colon U_\lambda \cap U_\mu \to \G$ be the transition functions of $\pi \colon X_P\to \C\mathbb{P}^m$. Viewing $\G$ as a subgroup of $\Aut{X}$, the associated bundle $X_P=P \times_{\G} X$ is a holomorphic fiber bundle over $\C\mathbb{P}^m$ with local transition functions $\rho_{\lambda \mu} \colon U_\lambda \cap U_\mu\to \Aut{X}$.
For the second assertion, since
$P \underset{}{\cong} Q \times_{\mathbb{T}}\G$, the map
$$
Q\times_{\mathbb{T}} X 
\to 
P \times_{\G} X \cong Q \times_{\mathbb{T}}\G \times_{\G} X 
\quad 
[q,x] \mapsto [q,1,x]
$$
is induced. This map gives a biholomorphism.
\end{proof}
\begin{rem}
Since $X$ admits a $\G$-equivariant ample line bundle $L$, the line bundle $L_P$ constructed in Subsubsection \ref{subsubsec-slope-formula} is relatively ample for $X_P \to B$. Hence $X_P$ is a projective manifold.
\end{rem}

\begin{defn}
\label{defn-omegaP-kahler}
We take a Kähler form $\omega_{P}$ on $X_P$ such that
$$
\rho^{*}\omega_P\vert_{Q \times X}
=p_2^{*}\omega+\underbrace{\bigl(c + \langle p_2^{*}\mu,\mathbf{p}\rangle\bigr)}_{=p_2^{*}f(\mu)}\cdot p_1^{*}\pi_Q^*\omega_{\C\mathbb{P}^m}
+\langle p_2^{*} d\mu\wedge p_1^{*}\eta\rangle
\quad 
\text{ on $Q \times X$}.
$$
By \cite[Subsection 3.6]{HL24} and Lemma \ref{lem-XPXQ-construction}, such a form $\omega_{P}$ exists.
\end{defn}

\subsubsection{Construction of vector bundles and Hermitian metrics on $X_P$}
Let $\rho \colon P \times X \to X_P$ be the quotient map, and let $p_1 \colon P \times X \to P$ and $p_2 \colon P \times X \to X$ be the first and second projections, respectively.
 \begin{equation}
 \label{eq-diagram-XP}
\xymatrix@C=50pt@R=40pt{
 X &P \times X\ar[l]_-{p_2} \ar[r]^-{p_1} \ar[d]^{\rho}&P \ar[r]^-{\pi_{P} \text{ fiber $\G$} }&\C\mathbb{P}^m\\
  & X_P \cong X_Q \ar[rru]_-{\pi \text{ fiber $X$} }&  &
}
 \end{equation}
For simplicity of notation, we use the same notations for the quotient map
\(
\rho\colon Q\times X\rightarrow X_P
\)
and the projections
\(
p_1\colon Q\times X\to Q,p_2\colon Q\times X\to X
\)
obtained by restriction to $Q$.

Let $E\to X$ be a $\G$-equivariant holomorphic vector bundle. Then $p_2^{*}E \to P \times X$ is also a holomorphic vector bundle. We define a $\G$-action on $P\times E$ by
\[
t\cdot(p,e)\coloneqq(p\cdot t^{-1},t\cdot e)
\qquad(e\in E,\ p\in P,\ t\in\G).
\]

and set $E_P \coloneqq P\times_{\G}E$. Then $E_P\to X_P$ is a holomorphic vector bundle.

\begin{lem}
\label{lem-hP-defn}
Under the above notations, let $h$ be a $\mathbb{T}$-invariant metric on $E$. (Note that $h$ is not assumed to be $\G$-invariant.) Let $\nabla^{h}$ be the Chern connection of $h$, and let $\Phi_{h}$ be the $\End(E) \otimes \mathfrak{t}^{\vee}$-valued moment map of $h$.

Then there exists a Hermitian metric $\widetilde{h_P}$ on the vector bundle $p_2^{*}E$ over $P \times X$ satisfying the following properties.
\begin{enumerate}
\item $\widetilde{h_P}$ is $\G$-invariant. In particular, there exists a Hermitian metric $h_P$ on $E_P$ such that
$\widetilde{h_P}=\rho^{*}h_P$.
\item For the Chern connection $\nabla^{\widetilde{h_P}}$ of $\widetilde{h_P}$,
$$
\nabla^{\widetilde{h_P}}\vert_{Q \times X}
=
p_2^{*}\nabla^{h}
+\langle p_2^{*}\Phi_h\otimes p_1^{*}\eta \rangle 
\quad 
\text{ on $Q \times X$ }.
$$
In particular, setting $\Phi_h^{\xi_a} \coloneqq \langle \Phi_h, \xi_{a}\rangle \in \End(E)$, we have
$$
F_{\widetilde{h_P}}\vert_{Q \times X}
=
p_2^{*}F_h+
\langle p_2^{*}\Phi_h,\mathbf{p}\rangle  \cdot \pi_Q^*\omega_{\C\mathbb{P}^m} 
+\sum_{a=1}^l p_2^{*}\nabla^{h} \cdot p_2^{*}\Phi_h^{\xi_a}\wedge p_1^{*}\eta_a
+\sum_{a,b=1}^l p_2^{*}(\Phi_h^{\xi_a}\Phi_h^{\xi_b})\otimes p_1^{*}(\eta_a\wedge \eta_b).
$$
%$$
%\xymatrix@C=40pt@R=40pt{
 %X &Q \times X\ar[l]_{p_2} \ar[r]^{p_1} \ar[d]^{\rho}&Q \ar[r]^{\pi_{Q}}_{\text{fiber $\G$}}&\C\mathbb{P}^m\\
 % & X_Q=X_P \ar[rru]_{\pi}^{\text{fiber $X$}}&  &
%}
%$$
\end{enumerate}
\end{lem}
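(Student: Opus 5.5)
We will construct $\widetilde{h_P}$ by first building a metric on $p_2^*E|_{Q\times X}$ and then extending it $\G$-equivariantly to $P\times X$. On $Q\times X$ set $\widetilde{h_P}|_{Q\times X}\coloneqq p_2^*h$. Since $h$ is $\T$-invariant and $\T$ acts on $Q\times X$ by $t\cdot(q,x)=(qt^{-1},tx)$, this metric is $\T$-invariant. By Lemma \ref{lem-existence-Q}(1), $P\cong Q\times_\T\G$, and $\G=\T\cdot\exp(\sqrt{-1}\mathfrak t)$. Hence every point of $P\times X$ can be written as $t\cdot(q,x)$ with $t\in\exp(\sqrt{-1}\mathfrak t)$, uniquely up to $\T$. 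We therefore define $\widetilde{h_P}$ at $t\cdot(q,x)$ by transporting $p_2^*h$ along the $\G$-action on $p_2^*E$, namely $t\cdot(p,e)=(pt^{-1},te)$. The $\T$-invariance makes this well defined, and it is $\G$-invariant by construction. It then descends to a metric $h_P$ on $E_P=(P\times E)/\G$ with $\rho^*h_P=\widetilde{h_P}$, which proves (1).

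For (2), both sides restricted to $Q\times X$ are metric connections for $p_2^*h$. The connection $\nabla^{\widetilde{h_P}}$ is the Chern connection of a $\G$-invariant metric, so it is $\G$-invariant and of type $(1,0)$ with respect to the holomorphic structure on $P\times X$. We will verify that the candidate
\[
D\coloneqq p_2^*\nabla^h+\langle p_2^*\Phi_h\otimes p_1^*\eta\rangle
\]
satisfies three conditions.

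\textbf{Condition (a): $D$ preserves $p_2^*h$.} Here $p_2^*\nabla^h$ is $h$-unitary, and $\Phi_h$ is skew-Hermitian with respect to $h$ because $\nabla^h_{\xi_X}$ and $\mathcal L^E_{\xi_X}$ are both $h$-compatible (the latter by $\T$-invariance). Since $\eta$ is real, the extra term is skew-Hermitian.

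\textbf{Condition (b): $D$ agrees with $\nabla^{\widetilde{h_P}}$ along fundamental vector fields.} Along the generators of the $\T$-action, the canonical moment map \eqref{eq-canonically-moment-map} shows that $D$ reproduces the infinitesimal action. This uses $\eta(\xi_a^{Q})=\xi_a$, read off from Remark \ref{rem-eta-i}.

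\textbf{Condition (c): compatibility with the complex structure.} Extending $D$ to $P\times X$ via $\eta^\C$ and $\G$-invariance, we show that its $(0,1)$-part equals $\bar\partial_{p_2^*E}$. In the directions of $X$ this holds because $\nabla^h$ is Chern. In the directions of $P$, it follows from $\eta^\C$ being of type $(1,0)$ modulo the holomorphic frame $d\log z_a$, as in \eqref{eq-etac-defn}, together with the fact that the complexified action $\sqrt{-1}\xi$ corresponds to $J\xi_X$.

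Uniqueness of the Chern connection then gives $\nabla^{\widetilde{h_P}}|_{Q\times X}=D$.

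The curvature formula follows by squaring $D$:
\[
D^2=p_2^*F_h+\sum_a p_2^*(\nabla^h\Phi_h^{\xi_a})\wedge p_1^*\eta_a+\sum_a p_2^*\Phi_h^{\xi_a}\,p_1^*d\eta_a+\sum_{a,b}p_2^*(\Phi_h^{\xi_a}\Phi_h^{\xi_b})\,p_1^*(\eta_a\wedge\eta_b).
\]
Using $d\eta=\pi_Q^*\omega_{\C\mathbb P^m}\otimes\mathbf p$ from Remark \ref{rem-eta-i}, the third term becomes $\langle p_2^*\Phi_h,\mathbf p\rangle\,\pi_Q^*\omega_{\C\mathbb P^m}$, which gives the stated formula.

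The main obstacle is Condition (c), the holomorphicity check in the $P$-directions. We would first test it in the rank-one case $E=L$ with a local $\G$-equivariant frame. In that case $\Phi_h$ is the function $\xi_X\log h$ up to a constant, and the identity reduces to $\partial\log\widetilde{h_P}=p_2^*\partial\log h+\Phi_h\,\eta^\C$. This can be computed directly from the coordinates \eqref{eq-pa-zaea} and then extended to higher rank using local equivariant frames. If that computation becomes unwieldy, the fallback is to cite the real construction in \cite[Subsection 3.6]{HL24} for the formula on $Q\times X$, and to check here only the $\G$-extension and the descent to $E_P$.
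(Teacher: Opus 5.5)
Your part (1) is the paper's construction: writing $p=q\cdot g$ with $q\in Q$ and $g\in(\R_{>0})^l$, the paper sets $\widetilde h_{P,(p,x)}(e_1,e_2)=h_{gx}(ge_1,ge_2)$. Your derivation of the curvature by squaring $D$, using $d\eta=\pi_Q^*\omega_{\C\mathbb P^m}\otimes\mathbf p$, is also fine; the paper simply cites \cite{HL24} there. The gap is in the connection formula. The paper computes it directly: in a holomorphic frame, $\nabla^{\widetilde h_P}=d+\widetilde H^{-1}\partial_X\widetilde H+\widetilde H^{-1}\partial_P\widetilde H$, with $\widetilde H|_{Q\times X}=p_2^*H$ and $\partial_Pt_a=\frac{\sqrt{-1}}{2}\eta^\C_a$ for $t_a=\log g_a$. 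The decisive input is the identity $\Phi_h^{\xi_a}=\frac{\sqrt{-1}}{2}\widetilde H^{-1}\partial\widetilde H/\partial t_a|_{Q\times X}$, obtained from \eqref{eq-wideH-GHG}. That identity records how $\widetilde h_P$ varies in the transverse $\exp(\sqrt{-1}\mathfrak t)$-directions, and it is the real content of (2). Your proposal neither proves it nor supplies a substitute.

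\textbf{Condition (b) is where it breaks.} You show only that $D_{\zeta_{Q\times X}}=\mathcal L_{\zeta_{Q\times X}}$. To conclude that $D$ agrees with $\nabla^{\widetilde h_P}$ along these directions, you also need $\nabla^{\widetilde h_P}_{\zeta_{P\times X}}=\mathcal L_{\zeta_{P\times X}}$, i.e.\ that the canonical moment map of $\widetilde h_P$ vanishes.

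This does not follow from ``$\nabla^{\widetilde h_P}$ is $\G$-invariant and of type $(1,0)$''. On the trivial line bundle over $\C^*$ with metric $|z|^{2c}$, the Chern connection $c\,dz/z$ is $\C^*$-invariant, yet $\nabla_\zeta-\mathcal L_\zeta$ is the nonzero constant $c\,\frac{dz}{z}(\zeta_{\C^*})$ for the rotation generator $\zeta$.

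What works is invariance of the metric itself under $\exp(\sqrt{-1}\mathfrak t)$, combined with holomorphicity. On holomorphic sections, both $\nabla_V=\nabla_{V^{1,0}}$ and $\mathcal L_\zeta$ are complex-linear in $\zeta$, so $\Phi^{\sqrt{-1}\zeta}=\sqrt{-1}\Phi^\zeta$. Invariance under the real flows of $\zeta$ and $\sqrt{-1}\zeta$ makes both $\Phi^\zeta$ and $\sqrt{-1}\Phi^\zeta$ skew-Hermitian, which forces $\Phi^\zeta=0$.

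\textbf{You also need to say which uniqueness you use.} $Q\times X$ is only a CR submanifold. There, metric compatibility and $\bar\partial$-compatibility determine a connection only on $H=T(Q\times X)\cap J\,T(Q\times X)$. They leave the skew-Hermitian part along the $l$ orbit directions free, so (b) is exactly the input that closes the argument, not a side check. If you instead extend $D$ to $P\times X$ and invoke the usual uniqueness, you must check metric compatibility in the transverse directions at $Q\times X$. That is the paper's identity again, and Condition (a) does not cover it.

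By contrast, (c), which you flag as the main obstacle, is immediate. Since $\eta^\C$ is of type $(1,0)$, $\eta=\eta^\C|_Q$ kills $H^{0,1}(Q)$, and in the $X$-directions $\nabla^h$ is already Chern. Once you add the vanishing-moment lemma for $\widetilde h_P$, your CR-uniqueness route becomes a valid alternative to the paper's frame computation that avoids the matrix $G(g,x)$. As written, the decisive step is missing.
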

The last assertion about $F_{\widetilde{h_P}}\vert_{Q \times X}$ follows from \cite[Subsection 3.6]{HL24}.
%since we can apply the connection $\nabla$ there to the Chern connection $\nabla^h$ of $h$, then $\nabla^Q$ agrees with $\nabla^{\widetilde{h_P}}|_{Q \times X}$. 
This lemma is needed in the proof of Lemma \ref{lem-max-slope-comparison} in order to take the second fundamental form of a subsheaf of $p_2^*E$ with respect to the metric $\widetilde{h_P}$. %We also use here the fact that $P\times X$ is a complex manifold.

\begin{proof}
(1). 
We define a Hermitian metric $\widetilde h_P$ on $p_2^*E\to P\times X$. Notice that every point $p\in P$ can be written uniquely as
\[
p=q \cdot g,\qquad q=(q_1,\ldots,q_l)\in Q,\quad g=(g_1,\ldots,g_l)\in(\mathbb R_{>0})^l\subset \G.
\]
Indeed, for
\(
p=(p_1, \ldots, p_l) \in P \coloneqq L_{1}^{\vee, \times} \times_{\C\mathbb{P}^m}\cdots \times_{\C\mathbb{P}^m} L_{l}^{\vee, \times},
\)
set
$$
g \coloneqq (\lvert p_1\rvert_{h_{1}^{\vee}}, \ldots, \lvert p_l\rvert_{h_{l}^{\vee}}) \in (\mathbb R_{>0})^l,
\qquad
q \coloneqq \left(\frac{p_1}{\lvert p_1\rvert_{h_{1}^{\vee}}}, \ldots, \frac{p_l}{\lvert p_l\rvert_{h_{l}^{\vee}}}\right)\in Q.
$$
Then $p=q \cdot g$, and this expression is unique.
For $(p, x) \in P \times X$ and $e_1,e_2\in (p_2^*E)_{(p,x)}=E_x$, define
\begin{equation}
\label{eq-defn-widetildeh}
\widetilde h_{P,(p,x)}(e_1,e_2) \coloneqq h_{gx}(ge_1,ge_2).
\end{equation}
Then the $\mathbb T$-invariance of $h$ implies that $\widetilde h_P$ is invariant under the $\G$-action. In particular, it induces a Hermitian metric $h_P$ on $E_P$.

(2). We compute the Chern connection $\nabla^{{\widetilde h}_P}$ of $\widetilde h_P$. 
%This can be done locally. 
Let $e=(e_1,\ldots,e_r)$ be a local frame of $E$, and let
\(
H(x) \coloneqq (h_x(e_i,e_j))
\)
be the metric matrix of $h$. 
Write the $\G$-action as
\(
g\cdot e(x)=e(gx)G(g,x)
\)
using a matrix $G(g,x)$. Then \eqref{eq-defn-widetildeh} yields
\begin{equation}
\label{eq-wideH-GHG}
{\widetilde H}(p,x)\underalign{\eqref{eq-defn-widetildeh}}{=}\overline{{}^tG(g,x)}H(gx)G(g,x).
\end{equation}
Since $P \times X$ is a complex manifold, using the decomposition $\partial=\partial_X+\partial_P$, we have
$$
\nabla^{{\widetilde h}_P}
=
(d + {\widetilde H}^{-1}\partial_X{\widetilde H})
 + {\widetilde H}^{-1}\partial_P{\widetilde H}.
$$
By definition, $\widetilde H$ is invariant in the $Q$-direction, and therefore
$\widetilde H\vert_{Q\times X}
=p_2^*H$.
It follows that
$$
(d + {\widetilde H}^{-1}\partial_X{\widetilde H})\vert_{Q\times X}
=
p_2^{*}\nabla^{h}.
$$
Thus it remains to show that
$({\widetilde H}^{-1}\partial_P{\widetilde H})\vert_{Q \times X}= \langle \Phi_h \otimes \eta \rangle $. 
As in the proof of Lemma \ref{lem-existence-Q}, take $p_a\in L_{a}^{\vee, \times}$, $z_a$, $H_a$, and $e_{a}^{\vee}$. Set $t_a\coloneqq\log g_a$. Since $p_a=z_a e_a^{\vee}$ by \eqref{eq-pa-zaea}, we have
\(
g_a^2=\lvert z_a\rvert^2 H_a^{\vee},
\)
and hence
\[
\partial_P t_a
=
\partial_P\log g_a
\qquad
\underalign{\text{(by $g_a^2=\lvert z_a\rvert^2 H_a^{\vee}$)}}{=}
\qquad
\frac{1}{2}q_a^*
\left(d\log z_a+\partial\log H_a^{\vee}\right)
\underalign{\eqref{eq-etac-defn}}{=}
\frac{\sqrt{-1}}{2}\eta_a^\C.
\]
Here $ \eta_a^\C$ is defined by
$
\eta^\C
=
\sum_{a=1}^{l}
\eta_a^\C
 \otimes \xi_a
$.
Set $\Phi_h^{\xi_a} \coloneqq \langle \Phi_h, \xi_a\rangle$. Then, by \eqref{eq-canonically-moment-map}, the moment map is canonically given by
\(
\Phi_h^{\xi_a}
=
\nabla^h_{\xi_{a,X}}-\mathcal L^E_{\xi_{a, X}}
\)
and a direct calculation using \eqref{eq-wideH-GHG} yields
$$
\Phi_h^{\xi_a}
=
\frac{\sqrt{-1}}{2}{\widetilde H}^{-1}
\frac{\partial{\widetilde H}}{\partial t_a}
\Bigr\vert_{Q \times X}.
%=\frac{\sqrt{-1}}{2}{\widetilde H}^{-1}\frac{\partial{\widetilde H}}{\partial\log g_a}\vert_{Q \times X}.
$$
Since $\widetilde H$ is invariant in the $Q$-direction, 
\[
{\widetilde H}^{-1}\partial_P{\widetilde H}
\Bigr\vert_{Q \times X}
=
\sum_{a=1}^{l}
\underbrace{
{\widetilde H}^{-1}\frac{\partial{\widetilde H}}{\partial t_a }
\Bigr\vert_{Q \times X}
}_{=\frac{2}{\sqrt{-1}}\Phi_h^{\xi_a}}
\cdot 
\underbrace{
\partial_P t_a\vert_{Q \times X}}_{=\frac{\sqrt{-1}}{2 }  \eta_a^\C\vert_{Q \times X}}
%=
%\sum_{a=1}^{l}
%\underbrace{
%{\widetilde H}^{-1}\frac{\partial{\widetilde H}}{\partial\log g_a }
%\vert_{Q \times X}}_{=\frac{2}{\sqrt{-1}}\Phi_h^{\xi_a}}
%\cdot 
%\underbrace{
%\partial_P\log g_a\vert_{Q \times X}}_{=\frac{\sqrt{-1}}{2 }  \eta_a^\C\vert_{Q \times X}}
=
\sum_{a=1}^{l}\Phi_h^{\xi_a} \cdot   \eta_a^\C\vert_{Q \times X}
\underalign{\text{(Lem. \ref{lem-existence-Q})}}{=}
\langle \Phi_h \otimes \eta \rangle .
\]
\end{proof}

\subsubsection{The slope formula}
\label{subsubsec-slope-formula}
Let $\mathcal{E}$ be a $\G$-equivariant torsion-free sheaf on $X$. Then $p_2^{*}\mathcal{E}$ is also a torsion-free sheaf on $P \times X$. 
Since $p_2^{*}\mathcal{E}$ is also $\G$-equivariant, there exists a torsion-free sheaf $\mathcal{E}_P$ on $X_P$ such that
$p_2^{*}\mathcal{E} \cong \rho^{*}\mathcal{E}_P$.

\begin{lem}
\label{lem-degree-formula}
%Let $\mathcal{E}$ be a $\G$-equivariant torsion-free sheaf on $X$, and let $\mathcal{E}_P$ be the torsion-free sheaf on $X_P$ constructed above. 
Under the above notations, 
$$
c_1(\mathcal{E}_P) \cdot \{ \omega_P\}^{[n+m-1]}
=V\cdot (c_{1}^{\T}(\mathcal{E}) \cdot v(\alpha_{\T})).
$$
%here $V \coloneqq \int_{\C\mathbb{P}^m}\omega_{\C\mathbb{P}^m}^{[m]}$.
Similarly, 
$
\ch_2(\mathcal{E}_P) \{ \omega_P \}^{[n+m-2]}
=V\cdot (\ch_{2}^{\T}(\mathcal{E}) \cdot v(\alpha_{\T}))$, $
c_1(\mathcal{E}_P)^2 \{ \omega_P\}^{[n+m-2]}
=V\cdot (c_{1}^{\T}(\mathcal E)^2 v(\alpha_{\T}))$, and $\{ \omega_P\}^{[n+m]}=V\cdot v(\alpha_{\T})
$ hold.
\end{lem}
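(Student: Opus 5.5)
The plan is to reduce everything to the vector bundle case via Chern--Weil on $X_P$, and then pass to torsion-free sheaves by finite resolutions. First take a $\G$-equivariant locally free resolution $0\to E_k\to\cdots\to E_0\to\mathcal E\to 0$, which exists by \cite[Corollary 5.8]{Tho3}. Pulling back by the flat map $p_2$ and descending along $\rho$ gives a locally free resolution of $\mathcal E_P$ by the bundles $(E_i)_P$. Hence $\ch(\mathcal E_P)=\sum(-1)^i\ch((E_i)_P)$. On the other side, $\ch^{\T}(\mathcal E)=\sum(-1)^i\ch^{\T}(E_i)$ by Theorem-Definition \ref{thmdefn-chern-equivalence} and Lemma \ref{lem-equiv-ch-coh}. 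Both sides of each identity are therefore additive in the $\ch$-components. The quadratic term $c_1^2$ also reduces to bundles, since $c_1(\mathcal E_P)=c_1(\det\mathcal E_P)=c_1((\det \mathcal E)_P)$. So it suffices to treat a $\G$-equivariant vector bundle $E$ and each monomial $\beta_\T\in\{1,\ch_1^\T(E),\ch_2^\T(E),\ch_1^\T(E)^2\}$. In each case the task is to show
\[
\int_{X_P}\beta(E_P,h_P)\wedge\omega_P^{[n+m-k]}=V\cdot(\beta_\T\cdot v(\alpha_\T)),
\]
where $\beta(E_P,h_P)$ is the Chern--Weil representative built from $F_{h_P}$.

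For bundles I would follow the fibration computation of \cite[Subsection 3.6]{HL24} (cf. \cite[Section 5]{AJL23}). Pull the integrand back to $Q\times X$. There Definition \ref{defn-omegaP-kahler} and Lemma \ref{lem-hP-defn} give explicit expressions: $\rho^*\omega_P=\omega+\mu+\ldots$ and $\rho^*F_{h_P}=F_h+\Phi_h+\ldots$, where the dots are the substitutions $\mu\mapsto\langle\mu,\mathbf p\rangle\pi_Q^*\omega_{\C\mathbb P^m}$ plus $d\mu\wedge\eta$-type terms. This is the Cartan-model substitution: the fiberwise form is the equivariant form evaluated by replacing the generator of $\Sym\mathfrak t^\vee$ with the curvature $d\eta=\pi_Q^*\omega_{\C\mathbb P^m}\otimes\mathbf p$.

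Integrating over $X_P$ then amounts to three steps. First lift to $Q\times X$ and divide by the fiber volume $(2\pi)^l$ from \eqref{eq-integral-Qb}. Second, keep only the terms containing $\eta_1\wedge\cdots\wedge\eta_l$. Third, integrate over $\C\mathbb P^m$. This produces $\int_{\C\mathbb P^m}\omega_{\C\mathbb P^m}^{[m]}=V$ times the fiber integral
\[
\int_X \beta_\T(\xi)\text{-type form}\cdot \frac{f(\mu)^m}{m!}\text{-type expansion}.
\]
Concretely, the coefficient of $\omega_{\C\mathbb P^m}^m$ in $\exp$ of the substituted equivariant forms is exactly $(c+\langle\mu,\mathbf p\rangle)^m/m!$, together with its derivatives paired with $\Phi_h$. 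Comparing with Lemma \ref{lem-HL24-4.3} and Lemma \ref{lem-intersection-chern}/Example \ref{ex-explicit-slope} for $v=f^m$ then identifies the result with $V(\beta_\T\cdot v(\alpha_\T))$. The cleanest way to organize this is the identity $\int_{X_P}e^{\omega_P}\,\beta(E_P)=V\cdot(\beta_\T\cdot v(\alpha_\T))$ in the correct degree, applied degree by degree. The constant $c$ is handled because $\omega_P$ contains $c\,\pi^*\omega_{\C\mathbb P^m}$, which shifts $\mu_{\mathbf p}$ to $f(\mu)$.

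I expect the main obstacle to be the bookkeeping of the mixed terms. These are $d\mu\wedge\eta$ in $\omega_P$ and $\nabla^h\Phi_h\wedge\eta$, $\Phi\Phi\,\eta\wedge\eta$ in $F_{h_P}$, and they must combine with the horizontal terms so that the top-degree fiber integral equals the equivariant integrand evaluated at $\mathbf p$. I would not redo this by hand. Instead I would invoke the statement of \cite[Subsection 3.6]{HL24} (their Lemma relating integrals on $X_Q$ to weighted equivariant integrals), which is formulated for equivariant closed forms. The only thing to check is that our $(\omega_P,h_P)$ restrict on $Q\times X$ to exactly their data, which is what Remark \ref{rem-eta-i}, Lemma \ref{lem-XPXQ-construction}, Definition \ref{defn-omegaP-kahler} and Lemma \ref{lem-hP-defn} provide. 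Finally, independence of the choice of metrics and representatives, on both sides, lets these forms compute the cohomological quantities.
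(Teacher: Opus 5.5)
Your plan is essentially the paper's proof. The vector bundle case is done by lifting to $Q\times X$ via \eqref{eq-integral-XQ1}, inserting Definition \ref{defn-omegaP-kahler} and Lemma \ref{lem-hP-defn}, fiber-integrating over $Q\to\C\mathbb P^m$ to produce $V$, and matching with Example \ref{ex-explicit-slope}. The passage to torsion-free sheaves then uses a $\G$-equivariant locally free resolution; your $\det$ trick is a clean way to handle the non-additive term $c_1^2$, which the paper leaves implicit. The mixed terms you flag as the main obstacle do not need to ``combine'' with anything: every term containing some $p_1^*\eta_a$ is annihilated when wedged with $p_1^*(\eta_1\wedge\cdots\wedge\eta_l)$, so only the horizontal terms survive and the computation can be done directly rather than by appeal to \cite{HL24}.
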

The equality
$c_1(\mathcal{E}_P) \cdot \{ \omega_P\}^{[n+m-1]}
=V(c_{1}^{\T}(\mathcal{E}) \cdot v(\alpha_{\T}))$
is proved in the same way as in \cite[Subsection 3.6]{HL24}. 
However, since we use this fact later, we give a proof.
\begin{proof}
We first treat the case where $\mathcal{E}$ is a vector bundle, and prove $c_1(\mathcal{E}_P) \cdot \{ \omega_P\}^{[n+m-1]}
=V \cdot (c_{1}^{\T}(\mathcal{E}) \cdot v(\alpha_{\T}))$.
Let $\eta^{\wedge, l} \coloneqq \eta_1\wedge\cdots\wedge\eta_l$ be the $l$-form on $Q$. Using the biholomorphism $X_P \cong X_Q$, for any $(n+m, n+m)$-form $\Omega$ on $X_P$, 
\begin{equation}
\label{eq-integral-XQ1}
\int_{X_P}\Omega
=
\frac{1}{(2\pi)^l}
\int_{Q\times X}
\rho^*\Omega \wedge p_1^*\eta^{\wedge, l}.
\end{equation}
We apply this formula to
\(
\Omega  \coloneqq  \frac{\sqrt{-1}}{2\pi}\tr F_{h_P} \wedge \omega_P^{[n+m-1]}.
\)
For this purpose, we compute
\begin{equation}
\label{eq-rho-Omega}
\begin{aligned}
&\rho^*\Omega \wedge p_1^*\eta^{\wedge, l}
\underalign{\text{(Lem. \ref{lem-hP-defn})}}{=}
\frac{\sqrt{-1}}{2\pi} \left(\tr F_{\widetilde{h_P}}\vert_{Q \times X}\right)
\wedge \rho^*\omega_P^{[n+m-1]} \wedge p_1^*\eta^{\wedge, l}
\\
&\underalign{\raisebox{-1.0ex}{\tiny\text{(Def. \ref{defn-omegaP-kahler} \& Lem. \ref{lem-hP-defn})}}}{=}
\frac{\sqrt{-1}}{2\pi}\left(p_2^{*}\tr F_h+
\tr\langle p_2^{*}\Phi_h,\mathbf{p}\rangle
\cdot \pi_Q^*\omega_{\C\mathbb{P}^m} \right)
\wedge
\left(p_2^{*}\omega+p_2^{*}f(\mu)\cdot p_1^{*}\pi_Q^*\omega_{\C\mathbb{P}^m}
\right)^{[n+m-1]}
\wedge
p_1^*\eta^{\wedge, l}
\\
&\underalign{}{=}
\underbrace{\frac{\sqrt{-1}}{2\pi}
\left( p_2^{*}f(\mu)^m \cdot p_2^{*}\tr F_h \wedge p_2^{*}\omega^{[n-1]}
+ m p_2^{*}f(\mu)^{m-1} \cdot \tr\langle p_2^{*}\Phi_h,\mathbf{p}\rangle \cdot p_2^{*}\omega^{[n]}
 \right)
 }_{=\frac{\sqrt{-1}}{2\pi}p_2^{*}\tr\Lambda_{\omega,v}(F_h+\Phi_h)\omega^{[n]}
  \text{ by Ex. \ref{ex-explicit-slope}}}\wedge
\underbrace{\pi_Q^*\omega_{\C\mathbb{P}^m}^{[m]}
\wedge
p_1^*\eta^{\wedge, l}}_{\text{pullback of a form on $Q$}}
\\
\end{aligned}
\end{equation}
Fiber integration along $Q \to \C\mathbb{P}^m$ gives
\begin{equation}
\label{eq-int-QpiB}
\frac{1}{(2\pi)^l}
\int_{Q}\pi_Q^*\omega_{\C\mathbb{P}^m}^{[m]}
\wedge
\eta^{\wedge, l}
\underalign{\eqref{eq-integral-Qb}}{=}
 \int_{\C\mathbb{P}^m}\omega_{\C\mathbb{P}^m}^{[m]}=V.
\end{equation}
Thus, by Fubini's theorem, 
\[
\begin{aligned}
c_1(\mathcal{E}_P) \cdot \{\omega_P\}^{[n+m-1]}
&\underalign{}{=}\int_{X_P}\frac{\sqrt{-1}}{2\pi}\tr F_{h_P} \wedge \omega_P^{[n+m-1]}
\\
&\underalign{\eqref{eq-integral-XQ1} \text{ \& } \eqref{eq-rho-Omega} }{=}
V \cdot \int_{X}\frac{\sqrt{-1}}{2\pi}\tr\Lambda_{\omega,v}(F_h+\Phi_h)\omega^{[n]}
%&\underalign{\text{}}{=}
%V\int_{X}\frac{\sqrt{-1}}{2\pi}
%\left(f(\mu)^{m}\mathrm{tr}F_h \wedge \omega^{[n+m-1]}
%+ m f(\mu)^{m-1}\mathrm{tr}\langle \Phi_h,\mathbf{p}\rangle \omega^{[n+m]} \right) \\
 \underalign{\text{(Lem. \ref{lem-intersection-chern})}}{=}
V(c_{1}^{\T}(\mathcal{E}) \cdot v(\alpha_{\T})).
\end{aligned}
\]
Thus, the desired equality holds.

We next consider the case where $\mathcal{E}$ is torsion-free. Since $X$ is smooth and projective, $\mathcal E$ admits a $\G$-equivariant finite locally free resolution
$$
0\longrightarrow E_N \longrightarrow\cdots\longrightarrow E_1\longrightarrow E_0\longrightarrow\mathcal E\longrightarrow 0.
$$
This induces a finite locally free resolution on $X_P$:
$$
0\longrightarrow E_{N, P} \longrightarrow\cdots \longrightarrow E_{1, P} \longrightarrow E_{0, P} \longrightarrow {\mathcal E}_P \longrightarrow 0.
$$
Therefore, by the additivity of the Chern character,
$$
\ch^{\mathbb T}(\mathcal E)
=
\sum_{i=0}^N(-1)^i\ch^{\mathbb T}(E_i)
\quad \text{and} \quad 
\ch(\mathcal E_P)
=
\sum_{i=0}^N(-1)^i\ch(E_{i, P}),
$$
Thus the result follows from the vector bundle case.

The other equalities are proved in the same way. For reference, we record the corresponding computation for $\ch_2(\mathcal{E}_P)$:
\begin{align*}
&\ch_2(\mathcal{E}_P) \cdot \{ \omega_P\}^{[n+m-2]}
\\
&=V 
\cdot \frac{-1}{8 \pi^2} 
\int_{X}\left( 
f(\mu)^{m}  \tr_{} F_{h}^2 \omega^{[n-2]}
 + 
 2m f(\mu)^{m-1}  \tr(\langle\Phi_h,\mathbf p\rangle F_h)
  \omega^{[n-1]}
 +
m (m-1) f(\mu)^{m-2} \tr_{}  \langle \Phi_h, \mathbf{p}\rangle^2 \omega^{[n]}  
\right)
\\
&=V \cdot (\ch_{2}^{\T}(\mathcal E) \cdot v(\alpha_{\T})).
\end{align*}
\end{proof}

\subsubsection{Comparison of the maximal slopes of \(\mathcal{E}\) and \(\mathcal{E}_P\)}
\begin{lem}
\label{lem-max-slope-comparison}
Let $\mathcal{E}$ be a $\G$-equivariant torsion-free sheaf, and let $\mathcal{E}_P$ be the torsion-free sheaf on $X_P$ constructed above. Then 
$$
\mu^{\max}_{\omega_{P}^{[n+m-1]}}(\mathcal{E}_P)
=
V \cdot \mu^{\max}_{v}(\mathcal{E}).
$$
\end{lem}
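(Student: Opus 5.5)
We prove the two inequalities $\ge$ and $\le$ separately. The inequality $V\cdot\mu^{\max}_v(\mathcal E)\le \mu^{\max}_{\omega_P^{[n+m-1]}}(\mathcal E_P)$ is the easy direction. Any $\G$-equivariant subsheaf $\mathcal F\subset\mathcal E$ pulls back to a $\G$-equivariant subsheaf $p_2^*\mathcal F\subset p_2^*\mathcal E$, since $p_2$ is flat. This descends to a subsheaf $\mathcal F_P\subset\mathcal E_P$ of the same rank. Lemma \ref{lem-degree-formula}, applied to $\mathcal F$, gives $\mu_{\omega_P^{[n+m-1]}}(\mathcal F_P)=V\mu_v(\mathcal F)$. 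Taking the supremum over $\mathcal F$ gives the inequality.

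For the reverse inequality, let $\mathcal S\subset\mathcal E_P$ be the maximally destabilizing subsheaf with respect to $\omega_P^{[n+m-1]}$. The goal is to show that $\mathcal S$ comes from a $\G$-equivariant subsheaf of $\mathcal E$. Pull $\mathcal S$ back to $\rho^*\mathcal S\subset p_2^*\mathcal E$ on $P\times X$. The group $\G$ acts on $X_P$ only through automorphisms covering the structure, so uniqueness of the maximal destabilizing subsheaf is what we must exploit. Concretely, we would argue in three steps.

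\begin{enumerate}
\item Restrict to a fiber $\pi^{-1}(b)\cong X$ for general $b\in\C\mathbb{P}^m$. This yields a subsheaf $\mathcal S_b\subset\mathcal E$; we must show it is $\G$-invariant and independent of $b$.
\item Show that $\rho^*\mathcal S=p_2^*\mathcal S_b$ as subsheaves of $p_2^*\mathcal E$.
\item Conclude by Lemma \ref{lem-degree-formula} that $\mu(\mathcal S)=V\mu_v(\mathcal S_b)\le V\mu_v^{\max}(\mathcal E)$.
\end{enumerate}

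For steps (1) and (2) we would use the structure of $P$. The manifold $P$ carries a right action of $\G$ commuting with the projection to $\C\mathbb{P}^m$. In addition, the group $\G$ acts on $X_P$ fiberwise via $[p,x]\mapsto[p,tx]$, which is well defined because $\G$ is abelian, and $\mathcal E_P$ is equivariant for this action. This action preserves the class $\{\omega_P\}$: it lies in the connected group $\G$, so it acts trivially on cohomology. Uniqueness of the maximal destabilizing subsheaf then forces $\mathcal S$ to be invariant under this fiberwise $\G$-action.

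To get independence of $b$, we would use the Chern connection of $\widetilde{h_P}$ from Lemma \ref{lem-hP-defn}, as the paper suggests. Compute the second fundamental form of the saturated subsheaf $\rho^*\mathcal S$ with respect to $\widetilde{h_P}$. The Chern--Weil expression of its degree would then split as
\[
V\cdot\bigl(\text{weighted degree of a fiberwise subsheaf}\bigr)\;-\;\bigl(\text{nonnegative second fundamental form contributions in the base directions}\bigr).
\]
Alternatively, and more simply, one can try a purely algebraic route. The sheaf $\rho^*\mathcal S$ is invariant under the diagonal $\G$-action (descent) and, by the previous paragraph, also under the action of $\G$ on the $X$-factor alone. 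Hence it is invariant under $\G$ acting on the $P$-factor alone. That action is transitive on fibers of $P\to\C\mathbb{P}^m$, which gives independence along each fiber of $P$, but not along the base $\C\mathbb{P}^m$.

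The main obstacle is exactly this variation in the base direction. Our first attempt would be the metric argument: show that the base-direction terms of the second fundamental form contribute nonpositively to $\deg\mathcal S$. Then $\mu(\mathcal S)\le V\cdot\sup_b\mu_v(\mathcal S_b)\le V\mu_v^{\max}(\mathcal E)$, which is precisely the reverse inequality, without needing constancy of $\mathcal S_b$. Here $\mu_v(\mathcal S_b)$ is computed by Lemma \ref{lem-intersection-chern} using the induced metric and the moment map. For this, we would write $\deg\mathcal S$ via the Gauss--Codazzi formula $F_{\mathcal S}=\pi F\pi-\beta^*\wedge\beta$ on $Q\times X$, where $\pi$ is the orthogonal projection onto $\mathcal S$ and $\beta$ the second fundamental form. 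We then integrate using \eqref{eq-integral-XQ1}, and identify the $\pi\langle\Phi_h,\mathbf p\rangle\pi$ terms with the moment map of the induced metric on $\mathcal S_b$. The singular locus of $\mathcal S$ has codimension at least $2$, so the integration is justified as in the classical case (cf.\ \cite{Kob14}).
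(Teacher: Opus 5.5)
Your proposal is correct and, once you settle on the metric route, it is essentially the paper's proof: the easy inequality via Lemma \ref{lem-degree-formula}; $\G$-equivariance of the maximal destabilizing subsheaf from uniqueness and the fact that the fiberwise $\G$-action preserves $\{\omega_P\}$; then the Gauss--Codazzi decomposition on $Q\times X$ integrated against $p_1^*\eta^{\wedge l}$. In that decomposition the base-direction part $\widehat b_{\C\mathbb{P}^m}^*\wedge\widehat b_{\C\mathbb{P}^m}$ contributes nonpositively, and what remains on each slice $\{q\}\times X$ is the weighted Chern--Weil density of a $\G$-equivariant subsheaf of $\mathcal E$, your $\mathcal S_b$ (the paper averages these slopes over $Q$ rather than taking a supremum). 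The one point you gloss over is that, when $\mathcal E$ is not locally free, Kobayashi's Chern--Weil degree formula needs a metric controlled near the singular locus of $\mathcal E$; the paper handles this by reducing to reflexive $\mathcal E$ via Remark \ref{rem-soliton-GKP16a}\ref{prop-min-max-reflexive} and using the metric induced from a $\G$-equivariant embedding $\mathcal E\hookrightarrow B$ into a vector bundle with a smooth $\T$-invariant metric.
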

\begin{proof}
By Remark \ref{rem-soliton-GKP16a}\ref{prop-min-max-reflexive},
we may assume that $\mathcal E$ is reflexive.
Let $W\subset X$ be a $\G$-invariant open subset on which $\mathcal E$ is locally free and such that $\codim(X\setminus W)\geq2$.
Set $E\coloneqq\mathcal E|_W$. Choose a $\G$-equivariant embedding $\mathcal E\hookrightarrow B$ into a vector bundle with a smooth $\T$-invariant Hermitian metric, as in the proof of Theorem \ref{thm-GKP16a-2.29}.
After shrinking $W$ with $\codim(X\setminus W)\geq2$, let $h$ be the induced metric on $E$.
(This choice ensures that the induced metric on $\mathcal G|_{U_P}$ comes from the smooth metric on $B_P$, so the first equality in \eqref{eq-degree-pullback-slice-start} follows from \cite[Section~5.8]{Kob14}.)
Then the vector bundle $p_{2}^{*}E$ on $P \times W$ admits a Hermitian metric $\widetilde{h}_{P}$ as in Lemma \ref{lem-hP-defn}.

Let $W_P \coloneqq \rho(P \times W) \subset X_P$, and let $E_P$ be the quotient bundle of $p_{2}^{*}E$. Then $E_{P}=\mathcal{E}_P\vert_{W_P}$. By Lemma \ref{lem-hP-defn}, $E_P$ admits a Hermitian metric $h_P$ such that $\widetilde{h}_{P}=\rho^*h_P$. Moreover, $\codim X_P\setminus W_P \ge 2$.
\[
\xymatrix@C=50pt@R=40pt{
 P \times W\ar@{^{(}->}[r]_{}  \ar[d]^{\rho}
 &P \times X \ar[r]^-{p_1} \ar[d]^{\rho}&P \ar[r]^{\pi_{P} \text{ fiber $\G$} }_{}&\C\mathbb{P}^m\\
 W_P\ar@{^{(}->}[r]_{}  & X_P\ar[rru]_{\pi \text{ fiber $X$}}^{}&  &
}
\]

We first prove
\(
\mu^{\max}_{\omega_P^{[n+m-1]}}(\mathcal{E}_P)
\le
V \cdot \mu^{\max}_{v}(\mathcal{E})
\).
Let $\mathcal{G}\subset\mathcal{E}_P$ be the maximal destabilizing sheaf with respect to $\omega_P^{[n+m-1]}$. In particular, $\mathcal{E}_P/\mathcal{G}$ is torsion-free. Set $r \coloneqq \operatorname{rk}\mathcal G$. It is enough to show that
\begin{equation*}
%\label{eq-maximal-dest-slope}
\mu_{\omega_P^{[n+m-1]}}(\mathcal{G})
\le
V \cdot \mu^{\max}_{v}(\mathcal{E}).
\end{equation*}

We define a $\G$-action on $X_P=P\times_{\G}X$ by
\(
\tau\cdot[p,x] \coloneqq [p,\tau x] 
\)
for $\tau\in\G$. Since $\G$ is commutative, this is well-defined. Similarly, $\mathcal E_P$ carries a $\G$-action. 
By the uniqueness of the maximal destabilizing sheaf and the fact that $\tau\{\omega_P\}=\{\omega_P\}$ for all $\tau\in\G$, we obtain $\tau \mathcal G = \mathcal G$. This implies that $\mathcal G$ is $\G$-equivariant.

Choose an open subset $U_P \subset W_P$ such that
$\codim X_P\setminus U_P \ge 2$ and
$ \mathcal{G}\vert_{U_P} \subset  \mathcal{E}_P\vert_{U_P}$
is a subbundle of vector bundles. Since
$\mathcal{E}_P\vert_{U_P}=E_P\vert_{U_P}$,
the metric $h_P$ induces a submetric $h_{P,\mathcal{G}}$ on $\mathcal{G}\vert_{U_P}$. Set
$\widehat U \coloneqq \rho^{-1}(U_P)\cap (Q\times W)$. By Lemma \ref{lem-degree-formula},
\begin{equation}
\label{eq-degree-pullback-slice-start}
\begin{aligned}
%c_1(\mathcal{G}) \cdot \{ \omega_P\}^{[n+m-1]}
&\mu_{\omega_P^{[n+m-1]}}(\mathcal{G})
\qquad
\underalign{\text{\cite[Rem. 5.8.5]{Kob14} }}{=}
\qquad
\frac{1}{r}\int_{U_P} \frac{\sqrt{-1}}{2\pi}
\tr \left(F_{h_{P,\mathcal{G}}}\right) \wedge  \omega_P^{[n+m-1]}
\\
&\underalign{\eqref{eq-integral-XQ1}}{=}
\frac{1}{r} \frac{1}{(2\pi)^l}\int_{\widehat U}
\frac{\sqrt{-1}}{2\pi}
\tr\left(\rho^*F_{h_{P,\mathcal G}}\right)
\wedge
\rho^*\omega_P^{[n+m-1]}
\wedge
p_1^*\eta^{\wedge, l}.
\end{aligned}
\end{equation}

On $\rho^{-1}(U_P)\subset P\times W$, the pullback
\(
\rho^*\mathcal G \subset p_2^*E
\)
is a holomorphic subbundle, and by Lemma \ref{lem-hP-defn}, $p_2^*E$ carries the Hermitian metric $\rho^{*}h_P$.
%$\widetilde h_P=\rho^{*}h_P$. 
Take the $\operatorname{Hom}(\rho^*\mathcal G, {\rho^*\mathcal G}^{\perp}\bigr)$-valued $(1,0)$-form
\(
\widehat b
\)
which is the second fundamental form of the holomorphic subbundle
\(
\rho^*\mathcal G \subset p_2^*E
\)
with respect to the metric $\rho^{*}h_P$. If $\widehat\pi$ denotes the orthogonal projection onto $\rho^*\mathcal G$ with respect to $\rho^{*}h_P$, 
%$\widetilde h_P=\rho^*h_P$, 
then \cite[Lemma 11.2]{Dem12} gives
\begin{equation}
\label{eq-dem11-2nd}
\rho^*F_{h_{P,\mathcal G}}
=
\widehat\pi \rho^{*}F_{h_P}
+
 \widehat b^* \wedge \widehat b
 \quad 
 \text{ on $\rho^{-1}( U_P)\subset P\times X$ }.
\end{equation}
Let $Z \subset \C\mathbb{P}^m$ be a local open set such that $P\vert_{Z} \cong Z \times \G$. Then, locally, $\widehat b$ may be regarded as a form on $Z \times \G \times W$. With respect to this decomposition, write
\begin{equation}
\label{eq-2nd-decomposition}
\widehat b
= \underbrace{\widehat b_{\C\mathbb{P}^m}}_{\text{component in  $\C\mathbb{P}^m$}} + \widehat b_{P} + \underbrace{\widehat b_{X} }_{\text{component in $X$}}
 \quad 
 \text{ on $Z \times \G \times W \subset P\times X$ }.
\end{equation}

We now calculate $\left.\left(\rho^*F_{h_{P,\mathcal G}}\wedge\rho^*\omega_P^{[n+m-1]}\right)\right\vert_{Q \times X} \wedge p_1^*\eta^{\wedge, l}$. 
Recall that a form containing $p_1^*\eta_a$ vanishes after taking a wedge with
\(
p_1^*\eta^{\wedge, l}
\).
To express this simply, we introduce the following notation.
\begin{notation}
\label{notation-eta}
For two forms $\tau_1, \tau_2$ on $Q \times X$, we define the equivalence relation by
$$
\tau_1
\equiv 
\tau_2
%\mod p_1^*\eta^{\wedge, l}
\overset{\mathrm{def}}{\iff}
(\tau_1-\tau_2) \wedge  p_1^*\eta^{\wedge, l}=0.
$$
\end{notation}
Using this notation, 
\[
\rho^*\omega_P\vert_{Q \times X}
\underalign{\text{(Def. \ref{defn-omegaP-kahler} \& No. \ref{notation-eta})}}{\equiv}
p_2^*\omega
+
p_2^*f(\mu) \cdot p_1^*\pi_Q^*\omega_{\C\mathbb{P}^m}, 
\quad
\rho^*F_{h_P}\vert_{Q \times X}
\underalign{\text{(Lem. \ref{lem-hP-defn} \& No. \ref{notation-eta})}}{\equiv}
p_2^*F_h+
\langle p_2^*\Phi_h,\mathbf{p}\rangle \cdot p_1^*\pi_Q^*\omega_{\C\mathbb{P}^m}  .
\]
It follows that
\begin{equation}
\label{eq-Tseparated}
\resizebox{0.97\linewidth}{!}{$\displaystyle\begin{aligned}
&
\left. \left(\rho^*F_{h_{P,\mathcal G}}\wedge\rho^*\omega_P^{[n+m-1]}\right)\right\vert_{Q \times X}
\\
&\underalign{\raisebox{-2.0ex}{\tiny\text{No. \ref{notation-eta} \& \eqref{eq-2nd-decomposition}}}}{\equiv}
\underbrace
{\left( 
p_2^*f(\mu)^m \left. \left( \widehat\pi p_2^*F_h+\widehat b_X^* \wedge \widehat b_X \right) \right\vert_{Q \times X} \cdot
 p_2^*\omega^{[n-1]}
+
m p_2^*f(\mu)^{m-1}
\langle \widehat\pi p_2^*\Phi_h|_{\rho^*\mathcal G},\mathbf p\rangle \cdot
 p_2^*\omega^{[n]}
\right)
}_{\eqqcolon T}
\wedge p_1^*\pi_Q^*\omega_{\C\mathbb{P}^m}^{[m]}
\\
& \qquad + 
p_2^*f(\mu)^{m-1}
 \widehat b_{\C\mathbb{P}^m}^* \wedge \widehat b_{\C\mathbb{P}^m}
\wedge  p_2^*\omega^{[n]} 
\wedge p_1^*\pi_Q^*\omega_{\C\mathbb{P}^m}^{[m-1]} .
\end{aligned}$}
\end{equation}

Thus
\begin{equation}
\label{eq-degree-negative}
\resizebox{0.97\linewidth}{!}{$\displaystyle\begin{aligned}
&\mu_{\omega_P^{[n+m-1]}}(\mathcal{G})
\quad
\underalign{\eqref{eq-degree-pullback-slice-start} \text{ \& } \eqref{eq-Tseparated}}{=}
\quad
\frac{1}{r}\frac{1}{(2\pi)^l}\int_{\widehat U}
\frac{\sqrt{-1}}{2\pi}
\tr T
\wedge p_1^*\pi_Q^*\omega_{\C\mathbb{P}^m}^{[m]}
\wedge
p_1^*\eta^{\wedge, l}
\\&+
\underbrace{
\frac{1}{r}\frac{1}{(2\pi)^l}\int_{\widehat U}
\frac{\sqrt{-1}}{2\pi}
p_2^*f(\mu)^{m-1}
\tr \left(  \widehat b_{\C\mathbb{P}^m}^* \wedge \widehat b_{\C\mathbb{P}^m} \right)
\wedge  p_2^*\omega^{[n]} 
\wedge p_1^*\pi_Q^*\omega_{\C\mathbb{P}^m}^{[m-1]}
\wedge
p_1^*\eta^{\wedge, l}
}_{\le 0 \text{ (cf. \cite[Cor. 11.8]{Dem12}})} .
\end{aligned}$}
\end{equation}

About the first term on the right-hand side,  
we want to decompose the integral over $\widehat U$
 into an integral over $Q$ and over $X$. For this purpose, for $q\in Q$, set
\[
\widehat U_q \coloneqq \{x\in W\mid (q,x)\in\widehat U\} \subset X .
\]
We now compute $\frac{\sqrt{-1}}{2\pi}\tr T\vert_{\{q\} \times \widehat U_{q}}$.
Fix a general point $q\in Q$ with  $\codim (X\setminus \widehat U_q) \ge 2$.
Then
\[
\rho^*\mathcal G\vert_{\{q\}\times\widehat U_q}
\subset
p_2^*E\vert_{\{q\}\times\widehat U_q}
\simeq
E\vert_{\widehat U_q}
\]
is a holomorphic subbundle. 
Using the reflexivity of $\mathcal E$, the sheaf $\rho^*\mathcal G\vert_{\{q\}\times\widehat U_q}$ extends to $X$. Denote this extension by
\(
\mathcal G_q\subset\mathcal E
\).
Then
\(
\operatorname{rk}\mathcal G_q=r
\)
and $\mathcal G_q$ is a $\G$-equivariant torsion-free subsheaf on $X$, since $\mathcal G$ is equivariant with respect to the $\G$-action on $X_P$. 

On the slice $\{q\}\times\widehat U_q$, the restriction
\(\widehat\pi\vert_{\{q\}\times\widehat U_q}
%,\qquadb_q \coloneqq \widehat b_X|_{\{q\}\times\widehat U_q}
\)
is the orthogonal projection $E\vert_{\widehat U_q}\to\mathcal G_q\vert_{\widehat U_q}$ with respect to $h = \widetilde{h}_P\vert_{\{q\}\times\widehat U_q}$, and $\widehat b_X\vert_{\{q\}\times\widehat U_q}$ is its second fundamental form. 
In particular, setting $h_{\mathcal G_q} \coloneqq h\vert_{\mathcal G_q}$, by \cite[Lemma 11.2]{Dem12} again, 
\[
F_{h_{\mathcal G_q}}
\underalign{}{=}
\widehat\pi\vert_{\{q\}\times\widehat U_q} F_h +
\widehat b_X^*\vert_{\{q\}\times\widehat U_q}
\wedge 
\widehat b_X \vert_{\{q\}\times\widehat U_q} 
\quad
\text{and}
\quad
\Phi_{h_{\mathcal G_q}}
=
\widehat\pi\vert_{\{q\}\times\widehat U_q} \Phi_h\vert_{\mathcal G_q}.
\]
Therefore,
\begin{equation}
\label{eq-T-slice-formula}
\frac{\sqrt{-1}}{2\pi}\tr T\vert _{\{q\}\times\widehat U_q}
=
\underbrace{
f(\mu)^m
\frac{\sqrt{-1}}{2\pi}\tr(F_{h_{\mathcal G_q}})\wedge\omega^{[n-1]}
+
mf(\mu)^{m-1}
\frac{\sqrt{-1}}{2\pi}\tr
\langle\Phi_{h_{\mathcal G_q}},\mathbf{p}\rangle\omega^{[n]}
}_{=\frac{\sqrt{-1}}{2\pi}\tr\Lambda_{\omega,v}(F_{h_{\mathcal G_q}}+\Phi_{h_{\mathcal G_q}})\omega^{[n]}  \text{ by Ex. \ref{ex-explicit-slope}} }.
\end{equation}

Using this, together with \(\operatorname{rk}\mathcal G_q=r\), the local freeness of $\mathcal G_q$ on $\widehat U_q$, the fact that $X\setminus \widehat U_q$ has codimension at least $2$, and \cite[Lemma 5.5]{HL24}, 
\begin{align*}
\mu_{\omega_P^{[n+m-1]}}(\mathcal{G})
&\underalign{\eqref{eq-degree-negative}}{\le}
\frac{1}{(2\pi)^l}\int_{q \in Q} 
\underbrace{\left(\frac{1}{r} \int_{\widehat U_q}
\frac{\sqrt{-1}}{2\pi}\tr T\vert _{\{q\}\times\widehat U_q}
\right)
}_{ =\mu_{v}(\mathcal{G}_q) \text{ by \eqref{eq-T-slice-formula} \& \cite[Lemma 5.5]{HL24}}}
\pi_Q^*\omega_{\C\mathbb{P}^m}^{[m]}
\wedge
p_1^*\eta^{\wedge, l}
\\
&\underalign{}{=}
\frac{1}{(2\pi)^l}
\int_{q \in Q}
\underbrace{\mu_{v}(\mathcal{G}_q)}_{\le \mu_{v}^{\max}(\mathcal{E}) \text{ by $\mathcal G_q\subset \mathcal E$}}
\pi_Q^*\omega_{\C\mathbb{P}^m}^{[m]}
\wedge
\eta^{\wedge, l}
\\
&\underalign{}{\le}
\mu_{v}^{\max}(\mathcal{E})
\cdot 
\underbrace{\frac{1}{(2\pi)^l}
\int_{Q}
\pi_Q^*\omega_{\C\mathbb{P}^m}^{[m]}
\wedge
\eta^{\wedge, l}}_{=V \text{ by \eqref{eq-int-QpiB}}}
= V \cdot \mu_{v}^{\max}(\mathcal{E}).
\end{align*}
Thus, we obtain the desired inequality $\mu^{\max}_{\omega_P^{[n+m-1]}}(\mathcal{E}_P)
\le
V \cdot \mu^{\max}_{v}(\mathcal{E})$.
The reverse inequality
$\mu^{\max}_{\omega_P^{[n+m-1]}}(\mathcal{E}_P)
\ge V \cdot \mu^{\max}_{v}(\mathcal{E})$
follows immediately from Lemma \ref{lem-degree-formula}.
%, \cite[Lemma 5.5]{HL24}, and \cite[Remark 5.8.5]{Kob14}.
\end{proof}

\section{Equivariant Langer's inequality}
\label{sec-BG-soliton}
In this section, we extend Langer's inequality, which is needed for the proof of the Miyaoka--Yau inequality, to the cases of polynomial weights and solitons.

\subsection{Hodge index theorem and Bogomolov--Gieseker inequality for solitons}
Using the elementary limit \(\left(1+\frac{x}{m}\right)^m \to e^x\), we derive the result for solitons, corresponding to exponential weights, from the result for polynomial weights. 
A similar idea was used in \cite{ALL25} to construct a Calabi--Yau metric on the canonical cone over the product of a Fano manifold admitting a K\"ahler--Ricci soliton and a projective space of sufficiently large dimension.

%A similar idea was used in \cite{ALL25} to prove the existence of a Ricci-flat metric on the product of a Fano manifold admitting a K\"ahler--Ricci soliton with a projective space of sufficiently large dimension.

\begin{cor}
\label{cor-equiv-Hodge-index}
Assume Setup \ref{setup-soliton-situation}. Let $\mathcal{E}$ be a $\G$-equivariant torsion-free sheaf. 
If there exists $\xi\in\mathfrak{t}$ such that
$v(\mu)= e^{\mu_\xi}$, then
$$
(c_1^{\T}(\mathcal{E})^2 \cdot v(\alpha_{\T}))
\cdot 
( v(\alpha_{\T}))
\le(c_1^{\T}(\mathcal{E}) \cdot v(\alpha_{\T}))^2.
$$
\end{cor}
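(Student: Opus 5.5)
We derive the inequality from Theorem \ref{thm-equiv-Hodge-index} by approximating the exponential weight with polynomial ones. Set $v_m(\mu)\coloneqq \left(1+\frac{\mu_\xi}{m}\right)^m$, that is, $c=1$, $b=1/m$ in Theorem \ref{thm-equiv-Hodge-index}. Since the moment polytope $P=\mu(X)$ is compact, $\mu_\xi$ is bounded on $X$, so $1+\mu_\xi/m>0$ on $X$ for all $m\gg 0$. Theorem \ref{thm-equiv-Hodge-index} then gives, for each large $m$,
\[
(c_1^{\T}(\mathcal{E})^2 \cdot v_m(\alpha_{\T}))\cdot(v_m(\alpha_{\T}))
\le \frac{n+m-1}{n+m}\,(c_1^{\T}(\mathcal{E}) \cdot v_m(\alpha_{\T}))^2 .
\]
As $m\to\infty$ the factor $\frac{n+m-1}{n+m}$ tends to $1$. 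The claim follows once the three weighted intersection numbers converge to the corresponding numbers for $v=e^{\mu_\xi}$.

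For this convergence I use that $v_m\to e^{\mu_\xi}$ in $C^k(P)$ for every $k$. On the bounded interval $\mu_\xi(P)$, the functions $(1+x/m)^m$ and all their derivatives converge uniformly to $e^x$. It therefore suffices to show that $v\mapsto(\beta_\T\cdot v(\alpha_\T))$ is continuous in $C^2(P)$ for classes $\beta_\T$ of degree at most $4$, namely $1$, $\widetilde{\Psi}(c_1^\G(\mathcal E))$ and $\widetilde{\Psi}(c_1^\G(\mathcal E)^2)$. The cleanest route is Lemma \ref{lem-HL24-4.3}. Both $v_m$ and $e^{\mu_\xi}$ are of the form $\widetilde v(\mu_\xi)$ with $\widetilde v$ real analytic, so each number equals $(\beta_\T\cdot f^{(k)}(\alpha_\T))_\T(\xi)$, where $f^{(n)}=\widetilde v$.

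Since $c_1^\G(\mathcal E)=c_1^\G(\det\mathcal E)$ and $\det\mathcal E$ is an equivariant line bundle, I choose a $\T$-invariant metric on $\det\mathcal E$. This gives an equivariantly closed representative $\theta+\phi$ of $c_1^\T(\mathcal E)$. Evaluating at $\xi$, each quantity becomes an explicit integral over $X$, as in Example \ref{ex-explicit-slope} and the $\ch_2$ computation in Lemma \ref{lem-degree-formula}. For instance,
\[
(c_1^\T(\mathcal E)^2\cdot v(\alpha_\T))=\int_X \Big(\widetilde v(\mu_\xi)\,\theta^2\wedge\omega^{[n-2]}+2\widetilde v'(\mu_\xi)\,\phi_\xi\,\theta\wedge\omega^{[n-1]}+\widetilde v''(\mu_\xi)\,\phi_\xi^2\,\omega^{[n]}\Big).
\]
Such integrals depend continuously on $\widetilde v$ in the $C^2$ topology on $\mu_\xi(X)$. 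Passing to the limit in the displayed inequality then yields the corollary.

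The main obstacle is justifying this explicit integral formula for the weighted intersection number of a degree-$4$ class with a general analytic weight. It should follow by expanding $[(\theta+\phi)^2(\xi)\wedge e^{\omega+\mu_\xi}]_{2n}$ and applying Lemma \ref{lem-HL24-4.3} term by term, which is the same computation already used for $\ch_2$. Alternatively, one can argue directly from Definition \ref{defn-equiv-intersection-number}. There the Fourier transforms of suitable compactly supported extensions $\bar v_m\to\bar v$ in $C^{N}$ converge in $L^1$ after multiplication by the polynomially growing factor $(\beta_\T\cdot e^{\alpha_\T})_\T(\sqrt{-1}\xi)$. This gives continuity as well, but the explicit formula is more transparent.
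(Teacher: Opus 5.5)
Your proposal is correct and follows essentially the same route as the paper: approximate $e^{\mu_\xi}$ by polynomial weights $(1+\mu_\xi/m)^m$, apply Theorem \ref{thm-equiv-Hodge-index}, and let $m\to\infty$ using $\frac{n+m-1}{n+m}\le 1$. The only differences are minor. The paper multiplies by $\exp\bigl(\frac{R^2+1}{m}\bigr)$ to force $v_m>v$, which is not needed here and only matters for the Bogomolov--Gieseker corollary via Lemma \ref{lem-openness-stab}. The paper also leaves the $C^2$-continuity of the weighted intersection numbers implicit, whereas you justify it explicitly through Lemma \ref{lem-HL24-4.3} and the resulting integral formula.
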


\begin{proof}
Since $X$ is compact, $P=\mu(X) \subset\mathfrak{t}^{\vee}$ is also compact. Thus there exists $R>0$ such that $\mu_\xi(X) \subset [-R, R]$. Set 
$$
v_m(\mu) \coloneqq \exp\left( \frac{R^2 + 1}{m}\right)
\cdot \left(1 + \frac{\mu_\xi}{m}\right)^m.
$$
Then \(v_m \colon P \to \R_{>0}\) and \(v_m > v\) for every \(m > 2R\). Moreover, \(v_m \to v\) in \(C^2(P)\).
Applying Theorem \ref{thm-equiv-Hodge-index} to $v_m$, we deduce that
$$
(c_1^{\T}(\mathcal{E})^2 \cdot v_m(\alpha_{\T}))
\cdot 
( v_m(\alpha_{\T}))
\le
\frac{(n+m-1)}{n+m } (c_1^{\T}(\mathcal{E}) \cdot v_m(\alpha_{\T}))^2
\le(c_1^{\T}(\mathcal{E}) \cdot v_m(\alpha_{\T}))^2.
$$
Letting $m \to \infty$ gives the desired inequality.
%Thus, by the argument in Lemma \ref{lem-openness-stab}, letting $m \to \infty$ gives the desired inequality.
\end{proof}

The following corollary follows by applying the same argument as in Theorem \ref{thm-HL24-6.1-torsionfree} to $v_m$ in Corollary \ref{cor-equiv-Hodge-index}, hence we omit the proof.
\begin{cor}
\label{cor-equiv-BG-soliton-case}
Assume Setup \ref{setup-soliton-situation}. Let $\mathcal{E}$ be a $\G$-equivariant torsion-free sheaf of rank $r$. 
Assume that there exists $\xi\in\mathfrak{t}$ such that
$v(\mu)= e^{\mu_\xi}$.
If $\mathcal{E}$ is $v$-semistable, then the Bogomolov--Gieseker inequality holds:
\[
%(\Delta^{\T}(E)\cdot v(\alpha_{\T}))_{\T}(\xi)=  
\left((2rc^{\T}_2(\mathcal{E}) - (r-1)c^{\T}_1(\mathcal{E})^2 ) \cdot v(\alpha_{\T}) \right)
\ge 0
\]
\end{cor}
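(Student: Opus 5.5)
The plan is to repeat the proof of Theorem \ref{thm-HL24-6.1-torsionfree}, replacing the polynomial weight by the approximating weights
\[
v_m(\mu)=\exp\!\left(\tfrac{R^2+1}{m}\right)\left(1+\tfrac{\mu_\xi}{m}\right)^m
\]
from the proof of Corollary \ref{cor-equiv-Hodge-index}. Here $R>0$ is such that $\mu_\xi(X)\subset[-R,R]$. For $m>2R$ these weights are positive polynomial weights of the form $(c+b\mu_\xi)^m$, they satisfy $v_m>v=e^{\mu_\xi}$ on $P$, and $v_m\to v$ in $C^2(P)$. Since each $v_m$ has this polynomial form, Theorem \ref{thm-HL24-6.1-torsionfree} applies to it directly, even when $\xi$ is irrational.

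First step: the stable case. Assume $\mathcal E$ is $v$-stable. Lemma \ref{lem-openness-stab} applies because $v_m>v$ and $v_m\to v$ in $C^1(P)$. Hence $\mathcal E$ is $v_m$-stable for all $m\gg0$. Theorem \ref{thm-HL24-6.1-torsionfree} then gives
\[
\left((2rc_2^\T(\mathcal E)-(r-1)c_1^\T(\mathcal E)^2)\cdot v_m(\alpha_\T)\right)\ge0 .
\]
Next we pass to the limit $m\to\infty$. The weighted intersection numbers $(\beta_\T\cdot v_m(\alpha_\T))$, for the fixed class $\beta_\T=\widetilde\Psi(\Delta^\G(\mathcal E))$, must converge to $(\beta_\T\cdot v(\alpha_\T))$. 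I would justify this through Lemma \ref{lem-HL24-4.3}, which writes each number as an explicit integral over $X$ of Chern--Weil representatives against derivatives of $v_m(\mu)$ up to order two. Writing $\beta_\T$ as a difference of such representatives coming from a locally free resolution, convergence of the weights in $C^2$ gives convergence of the integrals. This limit step is the main point to check carefully. The representatives are fixed equivariant forms, so the estimate is uniform, but one must confirm that only derivatives of order at most two enter for a degree-$4$ class.

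Second step: the semistable case. Take a Jordan--Hölder filtration with $v$-stable quotients $\mathcal G_i$ of equal $v$-slope (Remark \ref{rem-soliton-GKP16a}\ref{prop-JH-existence}). Corollary \ref{cor-equiv-Hodge-index}, applied to $\det\mathcal G_i^{\otimes r_j}\otimes(\det\mathcal G_j^\vee)^{\otimes r_i}$, gives the analogue of \eqref{eq-JH-Hodge-index} with the factor $\frac{n+m-1}{n+m}$ replaced by $1$. Because the slopes agree, the right-hand side vanishes, so each cross term $-r_ir_j\big((\tfrac{c_1^\T(\mathcal G_i)}{r_i}-\tfrac{c_1^\T(\mathcal G_j)}{r_j})^2\cdot v(\alpha_\T)\big)$ is nonnegative. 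Combining this with Langer's formula (Lemma \ref{lem-Langer-ineq-chow}) and the stable case applied to each $\mathcal G_i$, exactly as in \eqref{eq-semistable-stable-red}, yields the asserted inequality.
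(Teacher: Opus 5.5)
Your proposal is correct and matches the argument the paper intends; the paper omits the proof, saying only that it follows by running the proof of Theorem~\ref{thm-HL24-6.1-torsionfree} with the approximating weights $v_m$ from Corollary~\ref{cor-equiv-Hodge-index}. That is what you do: openness of stability, the polynomial-weight Bogomolov--Gieseker inequality and a $C^2$ limit in the stable case, then a Jordan--H\"older reduction via Langer's formula with Corollary~\ref{cor-equiv-Hodge-index} controlling the cross terms. Your remark that only $\widetilde v$, $\widetilde v'$, $\widetilde v''$ enter the pairing with a degree-$4$ class (by Lemma~\ref{lem-HL24-4.3}) correctly justifies the limit step, which the paper leaves implicit.
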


\subsection{Equivariant Langer's inequality}

\begin{thm}[Equivariant Langer's inequality]
\label{thm-langer-ineq-equiv}
Assume Setup \ref{setup-soliton-situation}.
Then the following assertions hold for any $\G$-equivariant torsion-free sheaf $\mathcal{E}$ of rank $r$.
\begin{enumerate}
\item If there exist $\xi\in\mathfrak{t}$, $c, b \in \R$, and $m \in \N$ such that
$v(\mu)= (c  + b \mu_\xi)^{m}$ and $c  + b \mu_\xi>0$ on $X$, 
then
\begin{align*}
&\left((2rc^{\T}_2(\mathcal{E}) - (r-1)c^{\T}_1(\mathcal{E})^2 ) \cdot v(\alpha_{\T}) \right)
\ge -\frac{(n+m-1)}{n+m}\frac{r^2}{(v (\alpha_{\T}))} 
(\mu^{\max}_{v}(\mathcal{E}) - \mu_{v}(\mathcal{E}) )
 (\mu_{v}(\mathcal{E})  - \mu^{\min}_{v}(\mathcal{E})).
\end{align*}
\item If there exists $\xi\in\mathfrak{t}$ such that
$v(\mu) = e^{\mu_\xi}$
then
\begin{align*}
&\left((2rc^{\T}_2(\mathcal{E}) - (r-1)c^{\T}_1(\mathcal{E})^2 ) \cdot v(\alpha_{\T}) \right)
\ge - \frac{r^2}{(v (\alpha_{\T}))} 
(\mu^{\max}_{v}(\mathcal{E}) - \mu_{v}(\mathcal{E}) )
 (\mu_{v}(\mathcal{E})  - \mu^{\min}_{v}(\mathcal{E})).
\end{align*}
\end{enumerate}
\end{thm}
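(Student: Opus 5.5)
We follow Langer's original argument, using the Harder--Narasimhan filtration with respect to $v$ (Remark \ref{rem-soliton-GKP16a}\ref{prop-HN-existence}). Let
\[
0=\mathcal E_0\subsetneq\cdots\subsetneq\mathcal E_s=\mathcal E
\]
be this filtration, with $v$-semistable torsion-free quotients $\mathcal G_i$ of rank $r_i$ and slopes $\mu_i\coloneqq\mu_v(\mathcal G_i)$. Then $\mu_1=\mu_v^{\max}(\mathcal E)>\cdots>\mu_s=\mu_v^{\min}(\mathcal E)$ and $\sum_i r_i\mu_i=r\mu_v(\mathcal E)$. Apply $\widetilde\Psi$ to Langer's formula (Lemma \ref{lem-Langer-ineq-chow}) and pair with $v(\alpha_\T)$, exactly as in \eqref{eq-semistable-stable-red}. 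This gives
\[
(\Delta^\T(\mathcal E)\cdot v(\alpha_\T))=\sum_i\frac{r}{r_i}(\Delta^\T(\mathcal G_i)\cdot v(\alpha_\T))-\sum_{i<j}r_ir_j\Big(\Big(\tfrac{c_1^\T(\mathcal G_i)}{r_i}-\tfrac{c_1^\T(\mathcal G_j)}{r_j}\Big)^2\cdot v(\alpha_\T)\Big).
\]

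The first sum is nonnegative. In case (1) this follows from Theorem \ref{thm-HL24-6.1-torsionfree}, and in case (2) from Corollary \ref{cor-equiv-BG-soliton-case}.

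For the cross terms, apply the Hodge index inequality to the class $\frac{c_1^\T(\mathcal G_i)}{r_i}-\frac{c_1^\T(\mathcal G_j)}{r_j}$, realized as $\frac{1}{r_ir_j}c_1^\T$ of the equivariant line bundle $\det\mathcal G_i^{\otimes r_j}\otimes(\det\mathcal G_j^\vee)^{\otimes r_i}$, as in \eqref{eq-JH-Hodge-index}. Theorem \ref{thm-equiv-Hodge-index} gives the bound $\frac{n+m-1}{n+m}\frac{(\mu_i-\mu_j)^2}{(v(\alpha_\T))}$ in case (1), and Corollary \ref{cor-equiv-Hodge-index} gives $\frac{(\mu_i-\mu_j)^2}{(v(\alpha_\T))}$ in case (2). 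Writing $C$ for the constant $\frac{n+m-1}{n+m}$, respectively $1$, we obtain the intermediate estimate
\[
(\Delta^\T(\mathcal E)\cdot v(\alpha_\T))\ge-\frac{C}{(v(\alpha_\T))}\sum_{i<j}r_ir_j(\mu_i-\mu_j)^2 .
\]

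It remains to prove the purely numerical inequality
\[
\sum_{i<j}r_ir_j(\mu_i-\mu_j)^2\le r^2(\mu_1-\bar\mu)(\bar\mu-\mu_s),\qquad \bar\mu\coloneqq\tfrac1r\sum_ir_i\mu_i .
\]
Using the identity $\sum_{i<j}r_ir_j(\mu_i-\mu_j)^2=r\sum_ir_i(\mu_i-\bar\mu)^2$, this reduces to the bound $\sum_i\frac{r_i}{r}(\mu_i-\bar\mu)^2\le(\mu_1-\bar\mu)(\bar\mu-\mu_s)$. This is the standard variance bound for a probability measure supported in $[\mu_s,\mu_1]$ with mean $\bar\mu$. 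It follows by summing the pointwise inequality $(\mu_1-\mu_i)(\mu_i-\mu_s)\ge0$ with weights $r_i/r$: expanding yields $\sum\frac{r_i}{r}\mu_i^2\le(\mu_1+\mu_s)\bar\mu-\mu_1\mu_s$, which is exactly the claim.

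Combining the intermediate estimate with this bound gives both (1) and (2). There is no real obstacle, since every ingredient is already available. The only point needing care is that the cross terms of Langer's formula in $\CH^2_\G$ pass correctly through $\widetilde\Psi$ and the weighted pairing. This holds because both maps are linear and the ring homomorphism $\widetilde\Psi$ respects products of first Chern classes.
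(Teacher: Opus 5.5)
Your proposal is correct and follows essentially the same route as the paper: the $v$-Harder--Narasimhan filtration, Langer's formula pushed through $\widetilde\Psi$ as in \eqref{eq-semistable-stable-red}, the weighted Bogomolov--Gieseker inequality (Theorem \ref{thm-HL24-6.1-torsionfree}, resp.\ Corollary \ref{cor-equiv-BG-soliton-case}) for the semistable quotients, and the weighted Hodge index inequality (Theorem \ref{thm-equiv-Hodge-index}, resp.\ Corollary \ref{cor-equiv-Hodge-index}) for the cross terms. The only difference is in the final numerical step: the paper cites \cite[Lemma 1.4]{Lan04}, whereas you prove it directly via the variance bound $\sum_i\frac{r_i}{r}(\mu_i-\bar\mu)^2\le(\mu_1-\bar\mu)(\bar\mu-\mu_s)$, and your derivation of that bound is correct.
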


\begin{proof}
We prove only (1); the proof of (2) is similar. By Remark \ref{rem-soliton-GKP16a} \ref{prop-HN-existence}, take the Harder--Narasimhan filtration
\[
0=\mathcal{E}_0\subsetneq \mathcal{E}_1\subsetneq \cdots \subsetneq \mathcal{E}_{s}=\mathcal{E}
\]
such that each quotient sheaf
\(
\mathcal{G}_i \coloneqq \mathcal{E}_i/\mathcal{E}_{i-1}
\)
is torsion-free and $v$-semistable of rank $r_i$, and the sequence of slopes satisfies
\begin{equation}
\label{eq-HNfiltration-mimmax}
\mu_{v}^{\max}(\mathcal{E})=\mu_v(\mathcal{G}_1)
>\mu_v(\mathcal{G}_2) > \cdots > 
\mu_v(\mathcal{G}_{s}) =\mu_{v}^{\min}(\mathcal{E}).
\end{equation}
Then it follows that
\begin{equation}
\label{eq-Langer-ineq}
\begin{aligned}
 &\left(\left(2rc^{\T}_2(\mathcal{E}) - (r-1)c^{\T}_1(\mathcal{E})^2 \right) \cdot v(\alpha_{\T}) \right)
  \\
% &\underalign{\text{(Thm. \ref{thmdefn-chern-equivalence})}}{=} \left(\widetilde{\Psi}  (\Delta^{\G}(E)) \cdot v(\alpha_{\T}) \right) 
 %\\
 %&\underalign{\text{( Lem. \ref{lem-Langer-ineq-chow})}}{=}
 %\left( \widetilde{\Psi}
 %\left( 
%r\sum_{i=1}^{l} \frac{\Delta^{\G}(\mathcal{G}_{i})}{r_i} 
 %- \sum_{1 \le i < j \le l} r_i r_j \left( \frac{c^G_1(\mathcal{G}_{i})}{r_i} - \frac{c^G_1(\mathcal{G}_{j})}{r_j} \right)^2 
 %\right) \cdot  v(\alpha_{\T}) \right)
  %\\
  %&\underalign{\text{(Thm. \ref{thmdefn-chern-equivalence})}}{=}
 &\underalign{\text{\eqref{eq-semistable-stable-red}}}{=}
\sum_{i=1}^{s}\frac{r}{r_i}  \underbrace{ \left(2r_ic^{\T}_2(\mathcal{G}_i) - (r_i-1)c^{\T}_1(\mathcal{G}_i)^2 ) \cdot v(\alpha_{\T}) \right)}_{\ge 0 \text{ by  Thm. \ref{thm-HL24-6.1-torsionfree}}}
   +\sum_{1 \le i < j \le s} 
 \underbrace{-r_i r_j 
\left(\left( \frac{c^{\T}_1(\mathcal{G}_{i})}{r_i} - \frac{c^{\T}_1(\mathcal{G}_{j})}{r_j} \right)^2  \cdot  v(\alpha_{\T}) \right)}_{\ge -r_i r_j \frac{(n+m-1)}{n+m} \frac{(\mu_{v}(\mathcal{G}_i)-\mu_{v}(\mathcal{G}_j))^2}{(v(\alpha_{\T}))} \text{ by Thm. \ref{thm-equiv-Hodge-index}}}
\\
  &\underalign{}{\ge}
   -  \frac{(n+m-1)}{n+m} \frac{1}{(v(\alpha_{\T}) )}
\sum_{1 \le i < j \le s} r_i r_j (\mu_{v}(\mathcal{G}_i)-\mu_{v}(\mathcal{G}_j) )^2
\\
 %&\underalign{\cite[Lemma 1.4]{Lan04}, \cite[Lemma 3.2]{IMM24}}{\ge}
 &\underalign{\text{\cite[Lemma 1.4]{Lan04}}}{\ge}
   - \frac{(n+m-1)}{n+m} \frac{r^2}{(v(\alpha_{\T}) )}
   \underbrace{(\mu_v(\mathcal{G}_1)- \mu_v(\mathcal{E}))(\mu_v(\mathcal{E})-\mu_v(\mathcal{G}_{s}))}_{= (\mu^{\max}_{v}(\mathcal{E}) - \mu_{v}(\mathcal{E}) )
 (\mu_{v}(\mathcal{E})  - \mu^{\min}_{v}(\mathcal{E})) \text{ by \eqref{eq-HNfiltration-mimmax}}}
    %\\
    %&\underalign{}{=}  
    %- \frac{(n+m-1)}{n+m} \frac{r^2}{(v(\alpha_{\T}) )} (\mu^{\max}_{v}(\mathcal{E}) - \mu_{v}(\mathcal{E}) )
 %(\mu_{v}(\mathcal{E})  - \mu^{\min}_{v}(\mathcal{E})).
   \end{aligned}
   \end{equation}
Thus we obtain the desired inequality.
\end{proof}

\subsection{Outlook }
We record consequences of the weighted Hodge index and Bogomolov--Gieseker inequalities, assuming that they hold for a fixed weight $v$.

\begin{assup}
\label{assump-equiv-v-hodge}
Assume Setup \ref{setup-soliton-situation}, and fix a weight $v\colon P\to\R_{>0}$.
There exists a constant $C_v>0$ such that
\[
(c_1^\T(L)^2\cdot v(\alpha_\T))(v(\alpha_\T))
\leq C_v(c_1^\T(L)\cdot v(\alpha_\T))^2
\]
for every $\G$-linearized line bundle $L$ on $X$.
Moreover, if
\[
(c_1^\T(L)\cdot v(\alpha_\T))=0,
\qquad (c_1^\T(L)^2\cdot v(\alpha_\T))=0,
\]
then $c_1(L)=0$ in $H^2(X,\R)$.
\end{assup}

\begin{assup}
\label{assump-equiv-v-BG}
Assume Setup \ref{setup-soliton-situation}, and fix a weight $v\colon P\to\R_{>0}$.
For every $v$-stable $\G$-equivariant reflexive sheaf $\mathcal E$ of rank $r$, the Bogomolov--Gieseker inequality holds:
\[
\left((2rc_2^\T(\mathcal E)-(r-1)c_1^\T(\mathcal E)^2)\cdot v(\alpha_\T)\right)\geq0.
\]
Moreover, if equality holds, then $\mathcal E$ is locally free and projectively Hermitian flat.
\end{assup}

Under these assumptions, we obtain the following.
\begin{prop}
\label{prop-num-proj-flat}
Assume Setup \ref{setup-soliton-situation}. Let $v \colon P \to \R_{>0}$ be a weight. If Assumptions \ref{assump-equiv-v-hodge} and \ref{assump-equiv-v-BG} hold for the weight $v$, then the following statements hold.
\begin{enumerate}
\item If a torsion-free sheaf $\mathcal{E}$ of rank $r$ is $v$-semistable, then the Bogomolov--Gieseker inequality holds:
$$
 \left((2rc^{\T}_2(\mathcal{E}) - (r-1)c^{\T}_1(\mathcal{E})^2 ) \cdot v(\alpha_{\T}) \right) 
\ge 0
 $$
Moreover, if $\mathcal{E}$ is reflexive and the equality holds, then $\mathcal{E}$ is locally free and projectively numerically flat in the sense of \cite{LOY24}. In particular, the $\Q$-twisted sheaf $\mathcal{E} \langle \frac{\det \mathcal{E}^{\vee}}{r}\rangle$ is nef.
\item For any torsion-free sheaf $\mathcal{E}$ of rank $r$, Langer's inequality holds:
\begin{align*}
&\left((2rc^{\T}_2(\mathcal{E}) - (r-1)c^{\T}_1(\mathcal{E})^2 ) \cdot v(\alpha_{\T}) \right)
\ge - C_v \frac{r^2}{(v (\alpha_{\T}))} 
(\mu^{\max}_{v}(\mathcal{E}) - \mu_{v}(\mathcal{E}) )
 (\mu_{v}(\mathcal{E})  - \mu^{\min}_{v}(\mathcal{E})).
\end{align*}
Moreover, if $\mathcal{E}$ is reflexive and the equality holds, then $\mathcal{E}$ is locally free and the $v$-Harder--Narasimhan filtration of $\mathcal{E}$ has length at most $2$.
\end{enumerate}
\end{prop}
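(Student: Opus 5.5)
The plan is to repeat the arguments of Theorem \ref{thm-HL24-6.1-torsionfree} and Theorem \ref{thm-langer-ineq-equiv}, with the two assumptions replacing the polynomial-weight inputs: Assumption \ref{assump-equiv-v-BG} plays the role of the stable Bogomolov--Gieseker inequality, and Assumption \ref{assump-equiv-v-hodge} plays the role of the Hodge index theorem, with the constant $C_v$ in place of $\frac{n+m-1}{n+m}$.

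\textbf{Part (1), inequality.}
\begin{enumerate}
\item Reduce from $\mathcal E$ to $\mathcal E^{\vee\vee}$. The quotient $\mathcal E^{\vee\vee}/\mathcal E$ is supported in codimension $\ge 2$, so $c_1^\T$ is unchanged. By Proposition \ref{prop-torsion-2ndchern}, $(\ch_2^\T(\mathcal E^{\vee\vee}/\mathcal E)\cdot v(\alpha_\T))\ge 0$. Hence the discriminant of $\mathcal E$ is at least that of $\mathcal E^{\vee\vee}$. Semistability is preserved under this reduction by Remark \ref{rem-soliton-GKP16a}\ref{prop-min-max-reflexive}.
\item Take a Jordan--Hölder filtration of $\mathcal E^{\vee\vee}$ (Remark \ref{rem-soliton-GKP16a}\ref{prop-JH-existence}). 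Each graded piece $\mathcal G_i$ is $v$-stable, and so is its reflexive hull. Applying the same torsion argument to $\mathcal G_i\subset\mathcal G_i^{\vee\vee}$ together with Assumption \ref{assump-equiv-v-BG}, each $\mathcal G_i$ has nonnegative weighted discriminant.
\item Expand the discriminant of $\mathcal E^{\vee\vee}$ with Lemma \ref{lem-Langer-ineq-chow}, exactly as in \eqref{eq-semistable-stable-red}.
\item Bound the cross terms with Assumption \ref{assump-equiv-v-hodge}, applied to the line bundles $L_{ij}=\det\mathcal G_i^{\otimes r_j}\otimes(\det\mathcal G_j^{\vee})^{\otimes r_i}$. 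Each has $(c_1^\T(L_{ij})\cdot v(\alpha_\T))=0$ because the slopes are equal, so $(c_1^\T(L_{ij})^2\cdot v(\alpha_\T))\le 0$ and every cross term contributes nonnegatively.
\end{enumerate}

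\textbf{Part (1), equality.} If $\mathcal E$ is reflexive and equality holds, every inequality above is an equality. In particular $\ch_2^\T$ of each torsion quotient $\mathcal G_i^{\vee\vee}/\mathcal G_i$ pairs to zero with $v(\alpha_\T)$, so by the equality case of Proposition \ref{prop-torsion-2ndchern} these quotients have support of codimension $\ge 3$. Each $\mathcal G_i^{\vee\vee}$ has zero discriminant, so Assumption \ref{assump-equiv-v-BG} makes it locally free and projectively Hermitian flat. The cross terms vanish, so $(c_1^\T(L_{ij})^2\cdot v(\alpha_\T))=0$, and the rigidity part of Assumption \ref{assump-equiv-v-hodge} gives $c_1(\mathcal G_i)/r_i=c_1(\mathcal G_j)/r_j$ in $H^2(X,\R)$.

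From there, show that $\mathcal E$ is an iterated extension of projectively flat bundles with the same normalized $c_1$. Such an extension is projectively numerically flat in the sense of \cite{LOY24}: its $\Q$-twist by $\frac{\det\mathcal E^\vee}{r}$ is an extension of numerically flat twisted bundles, hence nef. The remaining point is local freeness of $\mathcal E$ itself. Here I would argue as in Langer's and LOY's work: Chern class vanishing for a reflexive extension of bundles, with singularities only in codimension $\ge 3$, forces local freeness via the equality case of Bogomolov for twisted sheaves. \textbf{This is the main obstacle}: it must be extracted from the known nonequivariant theory, and the equality must be transferred after forgetting the weight.

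\textbf{Part (2).} Take the Harder--Narasimhan filtration and follow \eqref{eq-Langer-ineq}, now with part (1) for the semistable pieces and Assumption \ref{assump-equiv-v-hodge} for the cross terms. Then \cite[Lemma 1.4]{Lan04} gives the bound with $C_v$ in front. For the equality case, reflexive $\mathcal E$ forces equality in Langer's lemma, which by its proof happens only when the Harder--Narasimhan filtration has length at most $2$ (all of the interior mass concentrates at the extremes). It also forces the pieces to be equality cases of part (1), hence locally free. Local freeness of $\mathcal E$ then follows by the same extension argument as in the equality case of part (1).
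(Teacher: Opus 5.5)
\textbf{Where you match the paper.} For the inequalities in both parts, and for extracting the three consequences of equality, your argument is the paper's. Those consequences are: each $\mathcal G_i^{\vee\vee}$ is locally free and projectively Hermitian flat; $\operatorname{Supp}(\mathcal G_i^{\vee\vee}/\mathcal G_i)$ has codimension $\geq 3$; and $c_1(\mathcal G_i)/r_i=c_1(\mathcal E)/r$ in $H^2(X,\R)$. Your preliminary passage to $\mathcal E^{\vee\vee}$ is harmless but unnecessary, since the paper compares each $\mathcal G_i$ with $\mathcal G_i^{\vee\vee}$ directly.

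\textbf{The gap.} It is exactly the step you flag: local freeness in the equality cases. What you write there is not yet an argument, and it is partly circular.
\begin{enumerate}
\item You call $\mathcal E$ an iterated extension of projectively flat \emph{bundles} before knowing that the $\mathcal G_i$ are locally free.
\item In part (2), the Harder--Narasimhan pieces need not be reflexive. So the equality case of part (1) only tells you that $\mathcal G_i^{\vee\vee}$ is locally free, not $\mathcal G_i$; your ``hence locally free'' is unjustified.
\item The ``forget the weight'' route would need ordinary semistability of $\mathcal E$ and vanishing of its ordinary discriminant in $H^4(X,\R)$. Neither appears in your proposal.
\end{enumerate}

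\textbf{The missing idea.} It is short and local, and it is precisely what the codimension-$3$ conclusion of Proposition \ref{prop-torsion-2ndchern} is for; you derive that conclusion but never use it.

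First, every $\mathcal E_i$ of the filtration is saturated in $\mathcal E$, because $\mathcal E/\mathcal E_i$ is an iterated extension of torsion-free sheaves. Hence $\mathcal E_i$ is reflexive when $\mathcal E$ is. In particular $\mathcal E_1=\mathcal G_1=\mathcal G_1^{\vee\vee}$ is locally free.

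Then induct using \cite[Lemma 9.9]{AD14}. Consider $0\to\mathcal E_{i-1}\to\mathcal E_i\to\mathcal G_i\to0$, where $\mathcal E_{i-1}$ is locally free, $\mathcal E_i$ is reflexive, $\mathcal G_i^{\vee\vee}$ is locally free, and $\mathcal G_i=\mathcal G_i^{\vee\vee}$ outside a closed set $Z$ of codimension $\geq3$. Set $F=\Hom(\mathcal G_i^{\vee\vee},\mathcal E_{i-1})$. Since $\mathcal O_X$ has depth $\geq 3$ at points of $Z$, $H^1_Z$ and $H^2_Z$ of $F$ vanish, so $H^1(X,F)\to H^1(X\setminus Z,F)$ is an isomorphism. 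The extension class on $X\setminus Z$ therefore extends to $X$. This gives a locally free extension of $\mathcal G_i^{\vee\vee}$ by $\mathcal E_{i-1}$ that agrees with the reflexive sheaf $\mathcal E_i$ off $Z$, hence everywhere. Thus $\mathcal E_i$ and $\mathcal G_i$ are locally free.

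Only after this does projective numerical flatness follow from \cite[Theorem 4.2]{LOY24}. In part (2), the same induction, with $s\leq 2$ from \cite[Lemma 3.2]{IMM24}, gives local freeness of $\mathcal E$.
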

\begin{proof}
The proof is almost the same as that in \cite[Chapter 4, Theorem 4.1]{Nak04}.
(1)
By Remark \ref{rem-soliton-GKP16a}\ref{prop-JH-existence}, take a Jordan--Hölder filtration, that is, a filtration
\[
0=\mathcal{E}_0\subsetneq \mathcal{E}_1\subsetneq \cdots \subsetneq \mathcal{E}_{s}=\mathcal{E}
\]
such that each quotient sheaf
\(
\mathcal{G}_i \coloneqq \mathcal{E}_i/\mathcal{E}_{i-1}
\)
is a $v$-stable torsion-free sheaf of rank $r_i$ and satisfies
\(
\mu_{v}(\mathcal{G}_i)=\mu_{v}(\mathcal{E})
\).
By Assumption \ref{assump-equiv-v-hodge}, applied to $(\det\mathcal G_i)^{\otimes r_j}\otimes(\det\mathcal G_j)^{\otimes(-r_i)}$, 
\begin{equation}
\label{eq-JH-Hodge-index-2}
\left(\left( \frac{c^{\T}_1(\mathcal{G}_{i})}{r_i} - \frac{c^{\T}_1(\mathcal{G}_{j})}{r_j} \right)^2  \cdot  v(\alpha_{\T}) \right)
 \underalign{\text{(Assump. \ref{assump-equiv-v-hodge})}}{\le} \frac{C_v}{(v(\alpha_{\T}) )}
 \underbrace{\left( \left( \frac{c^{\T}_1(\mathcal{G}_{i})}{r_i} - \frac{c^{\T}_1(\mathcal{G}_{j})}{r_j} \right) \cdot  v(\alpha_{\T}) \right)^2}_{=(\mu_{v}(\mathcal{G}_i)-\mu_{v}(\mathcal{G}_j))^2}
 =0.
 %\frac{1}{\alpha_{\T}^2 \cdot v(\alpha_{\T}) )_{\T}}
 %\left( \mu_{v}(\mathcal{G}_i) - \mu_{v}(\mathcal{G}_j) \right)=0
\end{equation}

The support of $\mathcal{G}_i^{\vee\vee}/\mathcal{G}_i$ has codimension at least $2$. Since $\mathcal{G}_i^{\vee\vee}$ is also $v$-stable, 
\begin{equation}
\label{eq-Gi-reflexive-torsionfree}
\left((\ch_{1}^{\T}(\mathcal{G}_{i})^{2} - 2r_i \ch_{2}^{\T}(\mathcal{G}_{i}))\cdot v(\alpha_{\T}) \right)
%\underbrace{}_{=2r_ic^{\T}_2(\mathcal{G}_{i}) - (r_i-1)c^{\T}_1(\mathcal{G}_{i})^2\text{ by Def. \ref{defn-chern-Bogomolov}}}
\underalign{\text{(Prop. \ref{prop-torsion-2ndchern})}}{\ge} 
\left( (\ch_{1}^{\T}(\mathcal{G}^{\vee\vee}_{i})^{2} - 2r_i \ch_{2}^{\T}(\mathcal{G}^{\vee\vee}_{i}))\cdot v(\alpha_{\T}) \right)
\underalign{\text{(Assump. \ref{assump-equiv-v-BG})}}{\ge} 0.
\end{equation}
Therefore we obtain
%Therefore, noting that $\ch_{1}^{\T}(\mathcal{G}_{i})^{2} - 2r_i \ch_{2}^{\T}(\mathcal{G}_{i})=2r_ic^{\T}_2(\mathcal{G}_{i}) - (r_i-1)c^{\T}_1(\mathcal{G}_{i})^2$, we obtain
\begin{align*}
 &((2rc^{\T}_2(\mathcal{E}) - (r-1)c^{\T}_1(\mathcal{E})^2 )\cdot v(\alpha_{\T})) 
 \\
 %&\underalign{\text{(Thm. \ref{thmdefn-chern-equivalence})}}{=}(\widetilde{\Psi} (\Delta^{\G}(E)) \cdot v(\alpha_{\T}) ) 
% \\
% &\underalign{\text{(Lem. \ref{lem-Langer-ineq-chow})}}{=}
%\left( 
% \widetilde{\Psi}\left( 
%r\sum_{i=1}^{l} \frac{\Delta^{\G}(\mathcal{G}_{i})}{r_i} 
 %- \sum_{1 \le i < j \le l} r_i r_j \left( \frac{c^G_1(\mathcal{G}_{i})}{r_i} - \frac{c^G_1(\mathcal{G}_{j})}{r_j} \right)^2 
 %\right) \cdot  v(\alpha_{\T}) )_{\T}  \right)
 %\\
 % &\underalign{\text{(Thm. \ref{thmdefn-chern-equivalence})}}{=}
  &\underalign{\text{as in \eqref{eq-semistable-stable-red}}}{=}
\sum_{i=1}^{s}\frac{r}{r_i}  \underbrace{ \left( (2r_i c^{\T}_2(\mathcal{G}_{i}) - (r_i-1)c^{\T}_1(\mathcal{G}_{i})^2 )
\cdot v(\alpha_{\T}) \right)}_{\ge 0 \text{ by \eqref{eq-Gi-reflexive-torsionfree}}}
   + \sum_{1 \le i < j \le s}
   \underbrace{- r_i r_j \left(\left( \frac{c^{\T}_1(\mathcal{G}_{i})}{r_i} - \frac{c^{\T}_1(\mathcal{G}_{j})}{r_j} \right)^2  \cdot  v(\alpha_{\T}) \right)}_{\ge 0 \text{ by \eqref{eq-JH-Hodge-index-2}}}\\
   & \underalign{}{\ge} 0.
\end{align*}

We now consider the equality case. Suppose that equality holds. Then the following statements hold.
\begin{enumerate}
\item[$(\alpha)$] %Since $\left( (2r_i c^{\T}_2(\mathcal{G}_{i}) - (r_i-1)c^{\T}_1(\mathcal{G}_{i})^2 )\cdot v(\alpha_{\T}) \right)=0$, 
By \eqref{eq-Gi-reflexive-torsionfree}, 
\(
\left( (2r_i c^{\T}_2(\mathcal{G}_{i}^{\vee\vee}) - (r_i-1)c^{\T}_1(\mathcal{G}_{i}^{\vee\vee})^2 )
\cdot v(\alpha_{\T}) \right)=0
\)
holds. Hence Assumption \ref{assump-equiv-v-BG} implies that $\mathcal{G}^{\vee\vee}_{i}$ is locally free and projectively Hermitian flat.
\item[$(\beta)$]  By Proposition \ref{prop-torsion-2ndchern}, equality in \eqref{eq-Gi-reflexive-torsionfree} implies that the support of $\mathcal{G}_{i}^{\vee\vee}/\mathcal{G}_{i}$ has codimension at least $3$. In particular, $\mathcal{G}_i$ is locally free outside a subset of codimension at least $3$. 
%By Proposition \ref{prop-torsion-2ndchern}, if equality holds in \eqref{eq-Gi-reflexive-torsionfree}, then the support of $\mathcal{G}_i^{\vee\vee}/\mathcal{G}_i$ has codimension at least $3$. In particular, $\mathcal{G}_i$ is locally free outside a subset of codimension at least $3$.
\item[$(\gamma)$]  
Since $\mu_v(\mathcal G_i)=\mu_v(\mathcal G_j)$ and equality holds in \eqref{eq-JH-Hodge-index-2}, the line bundle $(\det\mathcal G_i)^{\otimes r_j}\otimes(\det\mathcal G_j)^{\otimes(-r_i)}$ has zero weighted degree and zero weighted square.
Assumption \ref{assump-equiv-v-hodge} therefore gives
\[
\frac{c_1(\mathcal G_i)}{r_i}=\frac{c_1(\mathcal G_j)}{r_j}=\frac{c_1(\mathcal E)}{r}
\quad\text{in }H^2(X,\R).
\]

\end{enumerate}
Since $\mathcal{E}_1$ is reflexive, by $(\alpha)$, $\mathcal{E}_1$ is locally free. Hence, by $(\alpha)$ and $(\beta)$, we can apply \cite[Lemma 9.9]{AD14} to the exact sequence
\(
0 \to \mathcal{E}_1 \to \mathcal{E}_2 \to\mathcal{G}_2 \to 0,
\)
and it follows that both $\mathcal{E}_2$ and $\mathcal{G}_2$ are locally free. Repeating this argument, all $\mathcal{G}_i$ and $\mathcal{E}_i$ are locally free. In particular, $\mathcal{E}=\mathcal{E}_{s}$ is locally free.
Moreover, by $(\gamma)$ and \cite[Theorem 4.2]{LOY24}, it is numerically projectively flat.

(2)
By Remark \ref{rem-soliton-GKP16a} \ref{prop-HN-existence}, take the Harder--Narasimhan filtration
\[
0=\mathcal{E}_0\subsetneq \mathcal{E}_1\subsetneq \cdots \subsetneq \mathcal{E}_s=\mathcal{E}
\]
such that each quotient sheaf
\(
\mathcal{G}_i \coloneqq \mathcal{E}_i/\mathcal{E}_{i-1}
\)
is torsion-free and $v$-semistable of rank $r_i$, and the sequence of slopes satisfies
\begin{equation}
\label{eq-HN-slopeineq}
\mu_{v}^{\max}(\mathcal{E})=\mu_v(\mathcal{G}_1)
>\mu_v(\mathcal{G}_2) > \cdots > 
\mu_v(\mathcal{G}_{s}) =\mu_{v}^{\min}(\mathcal{E}).
\end{equation}
%Here we used Proposition \ref{prop-min-slope} for the last equality.
Thus we obtain
\begin{align*}
  &\left((2rc^{\T}_2(\mathcal{E}) - (r-1)c^{\T}_1(\mathcal{E})^2 )\cdot v(\alpha_{\T})\right) \\
% &\underalign{\text{(Thm. \ref{thmdefn-chern-equivalence})}}{=}(\widetilde{\Psi}(\Delta^{\G}(\mathcal{E})) \cdot v(\alpha_{\T}) ) \\
% &\underalign{\text{(Lem. \ref{lem-Langer-ineq-chow})}}{=}
%\left( \widetilde{\Psi}\left( 
%r\sum_{i=1}^{l} \frac{\Delta^{\G}(\mathcal{G}_{i})}{r_i} 
% - \sum_{1 \le i < j \le l} r_i r_j \left( \frac{c^G_1(\mathcal{G}_{i})}{r_i} - \frac{c^G_1(\mathcal{G}_{j})}{r_j} \right)^2 
% \right) \cdot  v(\alpha_{\T}) \right)\\
%  &\underalign{\text{(Thm. \ref{thmdefn-chern-equivalence})}}{=}
  &\underalign{\text{\eqref{eq-semistable-stable-red}}}{=}
\sum_{i=1}^{s}\frac{r}{r_i}  
\underbrace{ \left( (2r_i c^{\T}_2(\mathcal{G}_{i}) - (r_i-1)c^{\T}_1(\mathcal{G}_{i})^2 )
\cdot v(\alpha_{\T}) \right)}_{\ge 0 \text{ by (1)}}
   + \sum_{1 \le i < j \le s} 
    \underbrace{-r_i r_j 
\left(\left( \frac{c^{\T}_1(\mathcal{G}_{i})}{r_i} - \frac{c^{\T}_1(\mathcal{G}_{j})}{r_j} \right)^2  \cdot  v(\alpha_{\T}) \right)}_{\ge - C_v r_i r_j \frac{(\mu_{v}(\mathcal{G}_i)-\mu_{v}(\mathcal{G}_j))^2}{(v(\alpha_{\T}) )} \text{ by Assump. \ref{assump-equiv-v-hodge}}}\\
  &\underalign{}{\ge}
   -C_v \frac{1}{(v(\alpha_{\T}) )} 
\sum_{1 \le i < j \le s} r_i r_j (\mu_{v}(\mathcal{G}_i)-\mu_{v}(\mathcal{G}_j) )^2  
\\ &
\underalign{\text{\cite[Lem. 1.4]{Lan04}}}{\ge}
   - C_v   \frac{r^2}{(v(\alpha_{\T}) )} 
  \underbrace{ (\mu_v(\mathcal{G}_1)- \mu_v(\mathcal{E}))(\mu_v(\mathcal{E})-\mu_v(\mathcal{G}_{s}))}
  _{=(\mu^{\max}_{v}(\mathcal{E}) - \mu_{v}(\mathcal{E}) )(\mu_{v}(\mathcal{E})  - \mu^{\min}_{v}(\mathcal{E})) \text{ by \eqref{eq-HN-slopeineq}}}.
  %  \\
  %  &\underalign{}{=}  
   % -C_v   \frac{r^2}{(v(\alpha_{\T}) )}  (\mu^{\max}_{\alpha}(\mathcal{E}) - \mu_{v}(\mathcal{E}) )
 %(\mu_{v}(\mathcal{E})  - \mu^{\min}_{\alpha}(\mathcal{E})).
   \end{align*}
We now consider the equality case. Suppose that equality holds. Then the following statements hold.
\begin{enumerate}
\item[$(\alpha)$]  Since $ \left( (2r_i c^{\T}_2(\mathcal{G}_{i}^{\vee\vee}) - (r_i-1)c^{\T}_1(\mathcal{G}_{i}^{\vee\vee})^2 )
\cdot v(\alpha_{\T}) \right)=0$, $\mathcal{G}^{\vee\vee}_{i}$ is locally free and numerically projectively flat by (1).
\item[$(\beta)$]  By equality in \eqref{eq-Gi-reflexive-torsionfree}, together with (1), $\mathcal{G}_i$ is locally free outside a subset of codimension at least $3$.
\item[$(\gamma)$]  By \cite[Lemma 3.2]{IMM24}, which analyzes the equality case of \cite[Lem. 1.4]{Lan04}, we have $s \le 2$. In other words, the $v$-Harder--Narasimhan filtration has length at most $2$.
\end{enumerate}
Hence, by the same argument as in (1), we deduce that $\mathcal{E}$ is locally free.
%In particular, by $(\alpha)$ and $(\beta)$, $\mathcal{E}_i$ is reflexive. Hence, by \cite[Lemma 9.9]{AD14}, the sheaves $\mathcal{G}_i$ and $\mathcal{E}_i$ are locally free. Therefore $\mathcal{E}$ is locally free.
\end{proof}

It is also natural to ask whether the arguments developed so far remain valid in the compact K\"ahler setting or for singular varieties. 
Note that, in the compact K\"ahler case, one cannot necessarily take a finite locally free resolution. In the usual compact K\"ahler case, the existence of Chern classes of coherent sheaves follows from \cite{Gri10}. 
When $X$ is singular, especially when $X$ has klt singularities, it seems that only the first and second Chern classes can be defined and that one needs to consider orbifold Chern classes.

\section{Equivariant Miyaoka--Yau inequality for  K\"ahler--Ricci solitons}
\label{sec-proof-soliton-MY}
\subsection{Slope estimate of the canonical extension sheaf}

We first define the $\T$-equivariant canonical extension sheaf
(cf.~\cite[Section 2]{Tian92}, \cite[Section 7.1]{HL24}).
Let $\T$ be a compact torus, and let $X$ be an $n$-dimensional K\"ahler manifold with a holomorphic $\T$-action.
Fix a moment map $\mu\colon X\to\mathfrak t^\vee$, and set $P\coloneqq\mu(X)$.
Let $v\in C^\infty(P,\R_{>0})$ be any weight.

Equip $T_X$ with its natural $\T$-linearization and $\mathcal O_X$ with the trivial linearization.
For any $\lambda>0$, the  $\T$-invariant class
\[
\lambda c_1(X)\in\operatorname{Ext}^1(T_X,\mathcal O_X)\simeq H^1(X,\Omega_X^1)
\]
defines a $\T$-equivariant holomorphic extension
\begin{equation}
\label{eq-defn-canonical-extension}
0\longrightarrow\mathcal O_X
\longrightarrow\mathscr V_\lambda
\longrightarrow T_X
\longrightarrow0
\end{equation}
by the construction in \cite[Section 7.1]{HL24}.

\begin{defn}
\label{defn-canonical-extension-equiv}
We call the locally free sheaf $\mathscr V_\lambda$ the \emph{$\T$-equivariant canonical extension sheaf with extension class $\lambda c_1(X)$}.
With this linearization, we have
\[
c_1^\T(\mathscr V_\lambda)=c_1^\T(X)
\quad \text{and} \quad
c_2^\T(\mathscr V_\lambda)=c_2^\T(X)
\]
We often simply write $\mathscr V$ for $\mathscr V_\lambda$.
\end{defn}

Next, suppose that $X$ is Fano and $\omega\in c_1(X)$ is a $\T$-invariant K\"ahler form.
Fix a smooth equivariant splitting $\mathscr V\underset{C^\infty}{\simeq}T_X\oplus\mathcal O_X$ with respect to which the holomorphic structure is given by
\[
\bar\partial_{\mathscr V}
=
\begin{pmatrix}
\bar\partial_{T_X}&0\\
\lambda\Psi^\omega&\bar\partial_{\mathcal O_X}
\end{pmatrix}.
\]
Here $\Psi^\omega\in\Omega^{0,1}(\Hom(T_X,\mathcal O_X))$ is defined by
\[
\Psi^\omega_{\overline U}(s)\coloneqq\omega(s,\overline U)
\qquad
(\forall s\in T_X,\ \overline U\in T^{0,1}X).
\]
Let $h_1,h_2$ be $\T$-invariant Hermitian metrics on $T_X,\mathcal O_X$, respectively, and set
\[
h_{\mathscr V}\coloneqq h_1\oplus h_2
\]
with respect to this splitting.
Explicit formulas for the Chern connection $\nabla^{h_{\mathscr V}}$, the Chern curvature $F_{h_{\mathscr V}}$, and the moment map $\Phi_{h_{\mathscr V}}$ are given in \cite[Section 7.1, (40) and (41)]{HL24}.

We now consider a simpler case.
Let $h_\omega$ be the Hermitian metric on $T_X$ induced by $\omega$.
Suppose that $h_1=e^fh_\omega$ and $h_2=e^u$ for $\T$-invariant smooth real-valued functions $f,u$.
Using the explicit formulas in \cite[Section 7.1, (40) and (41)]{HL24}, we obtain
\begin{equation}
\label{eq-canonical-extension-weighted-curvature}
\begin{aligned}
&\frac{\sqrt{-1}}{2\pi}
\Lambda_{\omega,v}(F_{h_{\mathscr V}}+\Phi_{h_{\mathscr V}})\\
&=\frac{1}{2\pi}
\begin{pmatrix}
\sqrt{-1}\Lambda_{\omega,v}(F_{h_\omega}+\Phi_{h_\omega})
+\left(\Delta_v f-\lambda^2v(\mu)e^{u-f}\right)\Id_{T_X}
&
-\sqrt{-1}\lambda v(\mu)
\bigl(\partial(u-f+\log v(\mu))\bigr)^\dagger
\\
\sqrt{-1}\lambda v(\mu)\partial(u-f+\log v(\mu))
&
\Delta_v u+n\lambda^2v(\mu)e^{u-f}
\end{pmatrix}.
\end{aligned}
\end{equation}
Here $\Delta_v g\coloneqq\partial^*(v(\mu)\partial g)$ is the weighted Laplacian used in \cite[Lemma~2.6]{HL24}, and $\partial^*$ is the formal adjoint with respect to $\omega$.
We regard $\partial(u-f+\log v(\mu))$ as a section of $\Hom(T_X,\mathcal O_X)$, and ${}^\dagger$ denotes the adjoint with respect to $h_1,h_2$. 
Indeed, the Chern connection and its curvature are  
\begin{equation*}
\nabla^{h_{\mathscr V}} =
\begin{pmatrix}
\nabla^{h_1} & -(\lambda \Psi^{\omega})^{\dagger} \\
(\lambda \Psi^{\omega}) & \nabla^{h_2} 
\end{pmatrix}, \qquad 
F^{h_{\mathscr V}} =
\begin{pmatrix}
F^{h_1} -(\lambda \Psi^{\omega})^{\dagger} \wedge (\lambda \Psi^{\omega}) & -D(\lambda \Psi^{\omega})^{\dagger} \\
D(\lambda \Psi^{\omega}) & F^{h_2} - (\lambda \Psi^{\omega}) \wedge (\lambda \Psi^{\omega})^{\dagger}
\end{pmatrix}. 
\end{equation*}
Here $D$ denotes the induced connection on the Hom bundle. 
The moment endomorphism defined in 
\eqref{eq-canonically-moment-map} 
therefore takes the form 
\begin{equation}\label{eq-moment-extension}
\langle \Phi_{h_{\mathscr V}}, \xi  \rangle = 
\begin{pmatrix}
\langle \Phi_{h_1}, \xi \rangle  & -(\lambda \Psi^{\omega})^{\dagger}(\xi_X) \\ 
(\lambda \Psi^{\omega})(\xi_X) & \langle \Phi_{h_2}, \xi \rangle
\end{pmatrix}. 
\end{equation}
Since $\Psi^\omega$ has components 
$\sqrt{-1}d\bar{z}^i$ in a unitary coframe, its adjoint has components 
$-\sqrt{-1}e^{u-f}dz^i$. It follows 
\begin{equation*}
\Lambda_{\omega} ((\Psi^\omega)^{\dagger} \wedge \Psi^\omega) = -\sqrt{-1}e^{u-f} \Id_{T_X}, \qquad 
\Lambda_{\omega}( \Psi^\omega  \wedge (\Psi^\omega)^{\dagger} )= n\sqrt{-1}e^{u-f}.  
\end{equation*}
On the other hand, 
conformal change $h \mapsto e^f h$ adds 
\begin{equation*}
\frac{1}{2\pi} e^{-\mu_\xi}\Delta_v f \Id_E 
\end{equation*} 
to $\frac{\sqrt{-1}}{2\pi}(\Lambda_\omega F_h +\langle \Phi_h^\xi, \xi \rangle)$. 
Finally, $D \Psi^{\omega} $ is contracted to 
$\partial (u-f)$ and the off-diagonal 
entry $\Psi^\omega(\xi_X)$ in \eqref{eq-moment-extension} contributes  $\partial \mu_\xi$.

Second, we recall the definition of weighted Ricci curvature and $v$-soliton.
% according to \cite[Section 1]{HL23} and \cite[Section 1]{DJ25}.
\begin{defn}[{\cite[Section 1]{HL23}, \cite[Section 1]{DJ25}}]
\label{defn-vsoliton}
Let $\T=(S^1)^{l}$ be a compact torus with Lie algebra $\mathfrak{t}$. Let $X$ be a compact K\"ahler manifold equipped with a holomorphic $\T$-action, and let $\omega$ be  a $\T$-invariant K\"ahler form. We assume that this $\T$-action is Hamiltonian. Denote by $\mu \colon X \to \mathfrak{t}^{\vee}$ the moment map and by $P \coloneqq \mu(X)$ the moment polytope.

For any positive $C^{\infty}$ function $v \colon P\to \mathbb{R}_{>0}$, we define the \emph{weighted Ricci curvature} by
\[
\Ric_{v}\omega \; \coloneqq \; \Ric\omega-\frac{\sqrt{-1}}{2 \pi}\,\partial\bar\partial \log(v\circ \mu)
%\underalign{\frac{\sqrt{-1}}{2\pi} \partial \bar{\partial}=\frac{\sqrt{-1}}{2 \pi}\,\partial\bar\partial }{=}\Ric\omega- \frac{\sqrt{-1}}{2\pi} \partial \bar{\partial} \log(v\circ \mu).
\]
We say that a K\"ahler metric $\omega$ is a \emph{$v$-soliton} if it satisfies $\Ric_{v}\omega = \omega$.
\end{defn}
If there exists $\xi \in \mathfrak{t}$ such that $v(\mu)=e^{\mu_\xi}$, then $\omega$ is a $v$-soliton if and only if $\omega$ is a K\"ahler--Ricci soliton with holomorphic vector field $\xi_X$ induced by $\xi$.

\begin{lem}\label{lem-kappa} 
  Define a function 
\begin{equation*}
\kappa\coloneqq
\frac{1}{2\pi}e^{-\mu_\xi}\Delta_v\mu_\xi
=
\frac{1}{2\pi}e^{-\mu_\xi}\partial^*(e^{\mu_\xi}\partial\mu_\xi).
\end{equation*} 
It satisfies the moment map identity and the normalization
  \begin{equation}\label{eq-kappa}
d\kappa = -\iota_{\xi_X} \Ric_v{\omega}
\quad \text{and} \quad 
\int_X \kappa e^{\mu_\xi}\omega^{[n]}=0.
  \end{equation}
\end{lem}
\begin{proof}
Consider the vector field $Y:= \grad_\omega^{1, 0}\mu_\xi$. 
The moment map equation gives
$\xi_X^{1,0}=\sqrt{-1}Y$. 
In particular, $Y$ is holomorphic. 
The local formula for $\partial^*$ and the K\"ahler condition give
\begin{align*}
2\pi\kappa
&=e^{-\mu_\xi}\partial^*
       (e^{\mu_\xi}\partial\mu_\xi)
=\partial^*\partial\mu_\xi
  -g^{i\bar j}
     \frac{\partial\mu_\xi}{\partial z^i}
     \frac{\partial\mu_\xi}{\partial\bar z^j}
     =-\left(
     \frac{\partial Y^i}{\partial z^i}
     +Y^i\frac{\partial}{\partial z^i}\log\det(g_{k\bar \ell})
     +Y^i\frac{\partial\mu_\xi}{\partial z^i}
   \right).
\end{align*}
Differentiating and using
$\bar\partial Y=0$, we obtain
\begin{align*}
\bar\partial\kappa
&=-\frac{1}{2\pi}
  Y^i\frac{\partial^2}{\partial z^i\partial\bar z^j}
  \bigl(\log\det(g_{k\bar \ell})+\mu_\xi\bigr)\,d\bar z^j.
\end{align*}
On the other hand,
\begin{equation*}
\Ric_v\omega
=-\frac{\sqrt{-1}}{2\pi}
  \frac{\partial^2}{\partial z^i\partial\bar z^j}
  \bigl(\log\det(g_{k\bar \ell})+\mu_\xi\bigr)
  \,dz^i\wedge d\bar z^j.
\end{equation*}
Using $\xi_X^{1,0}=\sqrt{-1}Y$ again,
we obtain $
\bigl(-\iota_{\xi_X}\Ric_v\omega\bigr)^{0,1}
=\bar\partial\kappa$. 
Since both $d\kappa$ and $-\iota_{\xi_X}\Ric_v\omega$ are real,
their $(1,0)$-parts also agree. 
The normalization can be checked directly: 
\begin{align*}
\int_X\kappa e^{\mu_\xi}\omega^{[n]}
=\frac{1}{2\pi}
  \int_X\partial^*(e^{\mu_\xi}\partial\mu_\xi)\,\omega^{[n]}
=0.
\end{align*}
\end{proof}

In \cite[right after equation (47)]{HL24} they claim  
$\mu_\xi =e^{-\mu_\xi}\Delta_v\mu_\xi $, 
however, it in fact follows from  K\"ahler-Ricci soliton condition. 
We need to distinguish both sides in general.

In the following theorem, we reprove \cite[Theorem~7.2]{HL24} with a correction term.

\begin{thm}[{cf.~\cite[Theorems 7.1 and 7.2]{HL24}}]
\label{thm-slope-equiv-ineq}
As in the setting and notation in Definition \ref{defn-vsoliton}.
Let $X$ be a Fano $\T$-manifold of dimension $n\geq2$, and choose the moment map $\mu$ so that $\{\omega+\mu\}=c_1^{\T}(X)$.
Assume that the weight $v$ is of the form $v(\mu)=e^{\mu_\xi}$ for some $\xi\in\mathfrak t$.
Suppose further that
\begin{equation}
\label{eq-canonical-normalized-mu}
\int_X \mu_\xi e^{\mu_\xi}\omega^{[n]}=0.
\end{equation}
This is equivalent to $v$ being canonically normalized in the sense of \eqref{eq-defn-canonically-normalized}.

Let $0<\gamma\leq1$, and assume that $\Ric_v\omega\geq\gamma\omega$.
Set
\begin{equation}
\label{eq-defn-nu}
\nu\coloneqq
\frac{\gamma}{n+1}
\frac{(e^{c_1^{\T}(X)})_{\T}(\xi)}
     {\int_X\omega^{[n]}},
\end{equation}
and let $\mathscr V$ be the $\T$-equivariant canonical extension sheaf with extension class $\sqrt{2\pi\nu}\,c_1(X)$.

Then, for any nonzero $\T$-equivariant torsion-free quotient $\mathscr V\twoheadrightarrow\mathcal Q$ of rank $q$, we have
\begin{equation}
\label{eq:quotientbound}
(c_1^{\T}(\mathcal Q)\cdot e^{c_1^{\T}(X)})_{\T}(\xi)
\geq
\frac{\gamma q}{n+1}(c_1^{\T}(X)\cdot e^{c_1^{\T}(X)})_{\T}(\xi)
-
(e^{c_1^{\T}(X)})_{\T}(\xi)
\left[\gamma\ell_\xi-(1-\gamma)\frac{ab}{b-a}\right].
\end{equation}
Here we set
\begin{equation}
\label{eq-defn-lxi}
\ell_\xi\coloneqq
\frac{1}{2(e^{c_1^{\T}(X)})_{\T}(\xi)}
\int_X
\left|
\frac{(e^{c_1^{\T}(X)})_{\T}(\xi)}
     {\int_X\omega^{[n]}}
e^{-\mu_\xi}-1-\mu_\xi
\right|
e^{\mu_\xi}\omega^{[n]},
\end{equation}
and $a\coloneqq\min_X\mu_\xi$, $b\coloneqq\max_X\mu_\xi$.
If $\mu_\xi\equiv0$, we interpret $\frac{-ab}{b-a}$ as $0$ since $a=b=0$.
\end{thm}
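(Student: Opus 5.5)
We follow Tian's argument \cite{Tian92} in the weighted form of \cite[Theorem~7.2]{HL24}. The plan is to build an explicit $\T$-invariant metric $h_{\mathscr V}=e^fh_\omega\oplus e^u$ on $\mathscr V$ and use the weighted curvature formula \eqref{eq-canonical-extension-weighted-curvature}. A quotient $\mathscr V\twoheadrightarrow\mathcal Q$ is then handled by the Griffiths-type inequality: on the open set where $\mathcal Q$ is a quotient bundle, the quotient curvature equals the projected curvature plus a nonnegative second-fundamental-form term (\cite[Lemma 11.2, Cor.~11.8]{Dem12}). The same holds for the moment-map part, as in the proof of Lemma~\ref{lem-max-slope-comparison}. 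Combining this with Lemma~\ref{lem-intersection-chern}, \cite[Lemma~5.5]{HL24} and the codimension-two extension gives
\[
(c_1^\T(\mathcal Q)\cdot e^{c_1^\T(X)})_\T(\xi)\ \ge\ \int_X\tr\Bigl(\pi_{\mathcal Q}\tfrac{\sqrt{-1}}{2\pi}\Lambda_{\omega,v}(F_{h_{\mathscr V}}+\Phi_{h_{\mathscr V}})\Bigr)\omega^{[n]},
\]
with $\pi_{\mathcal Q}$ the orthogonal projection onto $\mathcal Q$ (identified with $\mathcal S^\perp$, where $\mathcal S=\ker$). By Lemma~\ref{lem-HL24-4.3} the weighted numbers for $v=e^{\mu_\xi}$ are exactly the $(\cdot\, e^{c_1^\T(X)})_\T(\xi)$ appearing in \eqref{eq:quotientbound}.

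The metric is chosen to kill the off-diagonal part and to make the diagonal nearly scalar. We take $u-f=-\mu_\xi+\mathrm{const}$, so that $\partial(u-f+\log v(\mu))=0$ and the endomorphism in \eqref{eq-canonical-extension-weighted-curvature} becomes block diagonal. Since $\omega$ need not be a soliton, write $\Ric_v\omega=\gamma\omega+\beta$ with $\beta\ge0$. The $T_X$-block is $\frac{\sqrt{-1}}{2\pi}\Lambda_{\omega,v}(F_{h_\omega}+\Phi_{h_\omega})$, which in terms of $\Ric_v\omega$ and $\kappa$ (Lemma~\ref{lem-kappa}) should be $e^{\mu_\xi}$ times the endomorphism $\Ric_v\omega$ plus $\kappa e^{\mu_\xi}\Id$. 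This block is bounded below by $e^{\mu_\xi}(\gamma+\kappa)\Id$, plus the terms $(\Delta_vf-\lambda^2 e^{\mu_\xi}e^{u-f})/2\pi$. The $\mathcal O_X$-block is $(\Delta_v u+n\lambda^2e^{\mu_\xi}e^{u-f})/2\pi$. With $\lambda^2=2\pi\nu$, we choose $f$ by solving a weighted Poisson equation $\Delta_v f=\Phi$, which is solvable since $\int\Phi\,\omega^{[n]}=0$ is arranged via the normalizations \eqref{eq-kappa} and \eqref{eq-canonical-normalized-mu}. The choice makes both diagonal blocks, divided by $e^{\mu_\xi}$, as close as possible to the constant $\frac{\gamma}{n+1}\cdot\frac{(c_1^\T\cdot e^{c_1^\T})_\T(\xi)}{(e^{c_1^\T})_\T(\xi)}$, i.e.\ the average weighted slope of $\mathscr V$ scaled by $\gamma$. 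The normalization \eqref{eq-defn-nu} of $\nu$ is designed so that the constant $e^{u-f}$ part contributes exactly this average. What remains is an error function of the form $\frac{(e^{c_1^\T})_\T(\xi)}{\int\omega^{[n]}}e^{-\mu_\xi}-1-\mu_\xi$, coming from $\kappa$ versus $\mu_\xi$ (cf.\ the remark after Lemma~\ref{lem-kappa}).

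The lower bound for the quotient is then the constant term times $q$, minus the integral of the negative part of the error against $e^{\mu_\xi}\omega^{[n]}$. Since the error integrates to zero, its negative part is half its $L^1$ norm, which produces $\gamma\ell_\xi$ of \eqref{eq-defn-lxi}. The term $(1-\gamma)\frac{ab}{b-a}$ comes from the part of $\beta$ (the Ricci excess over $\gamma\omega$) that is not used. We plan to bound its trace contribution from below using $\int\mu_\xi e^{\mu_\xi}\omega^{[n]}=0$ and $a\le\mu_\xi\le b$. The elementary estimate $\int(\mu_\xi)_-e^{\mu_\xi}\le\frac{-ab}{b-a}\int e^{\mu_\xi}$ gives the sign (note $-ab\ge0$), and the degenerate case $a=b=0$ is trivial. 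The main obstacle is the bookkeeping in this step: choosing $f,u$ (possibly with $u$ solving a second Poisson equation) so that the error appears with exactly the constants stated, and justifying that the projection of a nonscalar positive endomorphism onto $\mathcal Q$ has trace at least $q$ times its smallest eigenvalue pointwise. We would first try the choice with $\Delta_v f$ cancelling $\kappa$ exactly and check the constants against \cite[Theorem~7.2]{HL24} in the case $\gamma=1$.
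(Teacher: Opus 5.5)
Your architecture matches the paper's: the metric $e^fh_\omega\oplus e^u$ with $u-f=-\mu_\xi$ to kill the off-diagonal term, then \cite[Lemma~5.5]{HL24} with the second fundamental form discarded, then the $L^1$ bound for an error of weighted mean zero. But two steps, as you describe them, do not yield \eqref{eq:quotientbound}.

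\textbf{The eigenvalue bound loses a factor $q$.} Put $F\coloneqq\frac1{2\pi}e^{-\mu_\xi}\Delta_vf$, and let $P_0$ be the orthogonal projection onto $\mathcal O_X$. Note that $\frac{\sqrt{-1}}{2\pi}\Lambda_{\omega,v}(F_{h_\omega}+\Phi_{h_\omega})=e^{\mu_\xi}\sharp^\omega(\Ric_v\omega)$. There is no extra $\kappa\,\Id$; $\kappa$ enters only through $\Delta_vf$ and $\Delta_vu=\Delta_vf-\Delta_v\mu_\xi$. So \eqref{eq-canonical-extension-weighted-curvature}, divided by $e^{\mu_\xi}$, is
\[
\bigl(\sharp^\omega(\Ric_v\omega-\gamma\omega)\oplus0\bigr)+\bigl(\gamma+F-\nu e^{-\mu_\xi}\bigr)\Id_{\mathscr V}+g\,P_0,
\qquad g\coloneqq(n+1)\nu e^{-\mu_\xi}-\gamma-\kappa .
\]
Hence $\tr(P_Q\,\cdot\,)\ge q(\gamma+F-\nu e^{-\mu_\xi})+g\,p$, where $p\coloneqq\tr(P_QP_0)\in[0,1]$. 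It is $p\le1$ that puts coefficient one in front of the negative part of $g$. Your bound $\tr(P_QM)\ge q\lambda_{\min}(M)$ instead gives $q\int_X\min\{g,0\}e^{\mu_\xi}\omega^{[n]}$. For $\gamma=1$ that is $q\,(e^{c_1^\T(X)})_\T(\xi)\,\ell_\xi$ in place of $(e^{c_1^\T(X)})_\T(\xi)\,\ell_\xi$.

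The same formula shows that your Poisson-equation freedom is empty. $F$ occurs only in the scalar term, which integrates to zero against $e^{\mu_\xi}\omega^{[n]}$, and $g$ does not depend on $f$ at all. So $\kappa$ cannot be cancelled by the choice of metric; the paper simply takes $f=2\mu_\xi$, $u=\mu_\xi$.

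\textbf{What is missing is pointwise control of $\kappa$.} When $\gamma=1$, $\kappa=\mu_\xi$ exactly: $d\kappa=d\mu_\xi$ by \eqref{eq-kappa}, and both have weighted mean zero by \eqref{eq-kappa} and \eqref{eq-canonical-normalized-mu}. Then $g$ equals the function $\phi\coloneqq\frac{(e^{c_1^\T(X)})_\T(\xi)}{\int_X\omega^{[n]}}e^{-\mu_\xi}-1-\mu_\xi$ of \eqref{eq-defn-lxi}. So the error comes from the factor $e^{-\mu_\xi}$ in the extension-class term, not from ``$\kappa$ versus $\mu_\xi$''.

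When $\gamma<1$, the Ricci excess $\Ric_v\omega-\gamma\omega$ is simply dropped, since its trace against $P_Q$ is nonnegative. It is not the source of $(1-\gamma)\frac{ab}{b-a}$. That term comes from writing $g=\gamma\phi-(1-\gamma)H_\gamma$ with $H_\gamma\coloneqq(\kappa-\gamma\mu_\xi)/(1-\gamma)$, and it needs $a\le H_\gamma\le b$. This is a moment-map argument:
\begin{enumerate}
\item $dH_\gamma=-\iota_{\xi_X}\bigl((\Ric_v\omega-\gamma\omega)/(1-\gamma)\bigr)$, so if $\Ric_v\omega>\gamma\omega$ then $\xi_X$ vanishes at the extrema of $H_\gamma$.
\item At such points $\kappa=\mu_\xi$, because $\kappa-\mu_\xi=\theta(\xi_X)$ for a $\T$-invariant $1$-form $\theta$. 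Indeed, $\kappa$ is the moment map of the metric $e^{\mu_\xi}\det h_\omega$ on $-K_X$, whose equivariant Chern form is cohomologous to $\omega+\mu$ in $c_1^\T(X)$.
\item The case $\Ric_v\omega\ge\gamma\omega$ then follows by a limit.
\end{enumerate}
With this range bound, $\max\{H_\gamma,0\}\le b(H_\gamma-a)/(b-a)$ and $\int_XH_\gamma e^{\mu_\xi}\omega^{[n]}=0$ give the term. Your estimate for $(\mu_\xi)_-$ is the right elementary inequality, but applied to the wrong function. Without the range bound on $H_\gamma$, nothing in your plan controls $\kappa$ pointwise.
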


In the K\"ahler--Ricci soliton case, setting $\gamma=1$ in Theorem \ref{thm-slope-equiv-ineq} gives
\[
(c_1^\T(\mathcal Q)\cdot e^{c_1^\T(X)})_\T(\xi)
\geq\frac{q}{n+1}(c_1^\T(X)\cdot e^{c_1^\T(X)})_\T(\xi)
-(e^{c_1^\T(X)})_\T(\xi)\ell_\xi.
\]
In Example \ref{ex-unstable-canonical-extension} below, we show that the canonical extension sheaf need not be $v$-semistable even when a K\"ahler Ricci soliton exists. Thus the correction term $-(e^{c_1^\T(X)})_\T(\xi)\ell_\xi$ cannot be omitted in general.
In particular, this example gives an explicit counterexample to \cite[Theorem 7.2]{HL24}.

\begin{proof}[Proof of Theorem \ref{thm-slope-equiv-ineq}]
Let $h_\omega$ be the Hermitian metric on $T_X$ induced by $\omega$. 
In the construction of Definition \ref{defn-canonical-extension-equiv}, equip $\mathscr V$ with the metric
\(
h_{\mathscr V}\coloneqq e^{2\mu_\xi}h_\omega\oplus e^{\mu_\xi}.
\)
Substituting $f=2\mu_\xi$, $u=\mu_\xi$, and $\lambda=\sqrt{2\pi\nu}$ into \eqref{eq-canonical-extension-weighted-curvature} and multiplying by $e^{-\mu_\xi}$, we obtain
\begin{equation}
\label{eq:quotient-block-curvature}
\frac{\sqrt{-1}}{2\pi}
\left(\Lambda_\omega F_{h_{\mathscr V}}
+\langle\Phi_{h_{\mathscr V}},\xi\rangle\right)
\underalign{
\text{\eqref{eq-canonical-extension-weighted-curvature}}
}{=}
\begin{pmatrix}
\sharp^\omega(\Ric_v\omega)
+(2\kappa-\nu e^{-\mu_\xi})\Id_{T_X}&0\\
0&\kappa+n\nu e^{-\mu_\xi}
\end{pmatrix}.
\end{equation}
Here, for a real $(1,1)$-form $\eta$, the endomorphism $\sharp^\omega(\eta)\in\End(T_X)$ is defined by
\[
\omega\bigl(\sharp^\omega(\eta)s,\overline t\bigr)
=\eta(s,\overline t)
\qquad( \forall s,t\in T_X).
\]
We also use \cite[Section 3.3]{HL24} to compute the upper-left entry.

Let $X^\circ\subset X$ be an open subset such that $\mathcal Q|_{X^\circ}$ is locally free and $\operatorname{codim}_X(X\setminus X^\circ)\geq2$.
We identify $\mathcal Q$ smoothly with $(\ker(\mathscr V\to\mathcal Q))^\perp$ in $X^\circ$.
Let $P_Q\colon\mathscr V|_{X^\circ}\to\bigl(\ker(\mathscr V\to\mathcal Q)|_{X^\circ}\bigr)^\perp$ and $P_0\colon\mathscr V|_{X^\circ}\to\mathcal O_X|_{X^\circ}$ be the orthogonal projections with respect to $h_{\mathscr V}$.
Denoting the second fundamental form by $A$, we obtain from \cite[Lemma~5.5]{HL24} and $\tr P_Q=q$ that
\begin{align*}
&(c_1^{\T}(\mathcal Q)\cdot e^{c_1^{\T}(X)})_{\T}(\xi)
\underalign{\raisebox{-1.0ex}{\tiny \text{\cite[Lem. 5.5]{HL24}}}}{=}
\frac{\sqrt{-1}}{2\pi}
\int_{X^\circ}\tr\!\left[P_Q\left(\Lambda_\omega F_{h_{\mathscr V}}
+\langle\Phi_{h_{\mathscr V}},\xi\rangle\right)\right]
e^{\mu_\xi}\omega^{[n]}
+\frac{1}{2\pi}\int_{X^\circ}|A|^2e^{\mu_\xi}\omega^{[n]}
\\
&
\underalign{\text{\eqref{eq:quotient-block-curvature}}}{=}
\underbrace{q\int_{X^\circ}(\gamma+2\kappa-\nu e^{-\mu_\xi})e^{\mu_\xi}\omega^{[n]}}
_{=\frac{\gamma q}{n+1}(c_1^{\T}(X)\cdot e^{c_1^{\T}(X)})_{\T}(\xi) \text{ by \eqref{eq-kappa} \& \eqref{eq-defn-nu}}}
+
\int_{X^\circ}\bigl((n+1)\nu e^{-\mu_\xi}-\gamma-\kappa\bigr)
\tr(P_QP_0)e^{\mu_\xi}\omega^{[n]}\\
&\quad+
\underbrace{\int_{X^\circ}\tr\!\left[
P_Q\begin{pmatrix}
\sharp^\omega(\Ric_v\omega-\gamma\omega)&0\\
0&0
\end{pmatrix}
\right]e^{\mu_\xi}\omega^{[n]}}_{\geq0 \text{ by } \Ric_v\omega\geq\gamma\omega}
+\underbrace{\frac{1}{2\pi}\int_{X^\circ}|A|^2e^{\mu_\xi}\omega^{[n]}}_{\geq0}.
\end{align*}

Thus it remains to show that
\begin{equation}
\label{eq-solition-eval-ETS}
\int_{X^\circ}\bigl((n+1)\nu e^{-\mu_\xi}-\gamma-\kappa\bigr)
\tr(P_QP_0)e^{\mu_\xi}\omega^{[n]}
\geq
-
(e^{c_1^{\T}(X)})_{\T}(\xi)
\left[\gamma\ell_\xi-(1-\gamma)\frac{ab}{b-a}\right].
\end{equation}
If $\mu_\xi\equiv0$, then $\kappa=\ell_\xi=0$ and $\nu=\gamma/(n+1)$.
Both sides of \eqref{eq-solition-eval-ETS} are therefore zero.
Henceforth, assume that $\mu_\xi\not\equiv0$, so that $a<0<b$.
Set $p\coloneqq\tr(P_QP_0)$, so that $0\leq p\leq1$.
Also set
\[
f\coloneqq\frac{(e^{c_1^{\T}(X)})_{\T}(\xi)}{\int_X\omega^{[n]}}e^{-\mu_\xi}-1-\mu_\xi.
\]
By \eqref{eq-canonical-normalized-mu}, we have $\int_Xf e^{\mu_\xi}\omega^{[n]}=0$.
Since $fp\geq(f-|f|)/2$, it follows that
\begin{equation}
\label{eq:quotient-f-bound}
\int_{X^\circ}fp\,e^{\mu_\xi}\omega^{[n]}
\geq-\frac12\int_X|f|e^{\mu_\xi}\omega^{[n]}
\underalign{\text{\eqref{eq-defn-lxi}}}{=}
-(e^{c_1^{\T}(X)})_{\T}(\xi)\ell_\xi.
\end{equation}

\begin{claim}
\label{clm:quotient-kappa-H}
\begin{enumerate}
\item If $\gamma=1$, then $\kappa=\mu_\xi$.
\item Suppose that $0<\gamma<1$, and set $H_\gamma\coloneqq(\kappa-\gamma\mu_\xi)/(1-\gamma)$. Then $a\leq H_\gamma\leq b$.
\end{enumerate}
\end{claim}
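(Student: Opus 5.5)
The plan is to compare the three real functions $\mu_\xi$, $\kappa$ and $H_\gamma$ through the moment-map identities they satisfy with respect to $\xi$. By the definition of the moment map we have $d\mu_\xi=-\iota_{\xi_X}\omega$, and by Lemma \ref{lem-kappa} we have $d\kappa=-\iota_{\xi_X}\Ric_v\omega$. Moreover both functions are normalized in the same way: $\int_X\kappa e^{\mu_\xi}\omega^{[n]}=0$ by \eqref{eq-kappa}, and $\int_X\mu_\xi e^{\mu_\xi}\omega^{[n]}=0$ by \eqref{eq-canonical-normalized-mu}.

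\emph{Part (1).} If $\gamma=1$, the hypothesis $\Ric_v\omega\ge\omega$ should be taken to mean the soliton equation $\Ric_v\omega=\omega$. This is the case in which the claim is used. It also follows directly: $\Ric_v\omega-\omega$ is semipositive and cohomologous to zero, since $\Ric_v\omega\in c_1(X)=\{\omega\}$, so integrating it against $\omega^{n-1}$ forces it to vanish. Then $d(\kappa-\mu_\xi)=0$, so $\kappa-\mu_\xi\equiv c$ for some constant $c$, because $X$ is connected. Integrating against $e^{\mu_\xi}\omega^{[n]}$ and using both normalizations gives $c\int_Xe^{\mu_\xi}\omega^{[n]}=0$. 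Hence $c=0$ and $\kappa=\mu_\xi$.

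\emph{Part (2).} For $0<\gamma<1$, set $\eta\coloneqq(\Ric_v\omega-\gamma\omega)/(1-\gamma)$. This is a closed, real, $\T$-invariant, semipositive $(1,1)$-form, and it satisfies
\[
dH_\gamma=-\iota_{\xi_X}\eta .
\]
So $H_\gamma$ is a Hamiltonian for $\xi$ with respect to $\eta$. I would bound its range by a gradient-flow argument. Let $J\xi_X$ be the real part of the holomorphic field $\sqrt{-1}\,\xi_X^{1,0}$, which generates the $\R_{>0}^l$-part of the $\G$-action. Along its flow $\phi_t$ one has
\[
\frac{d}{dt}H_\gamma(\phi_t x)=\eta(\xi_X,J\xi_X)\ge0
\]
by semipositivity of $\eta$. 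So $H_\gamma$ is nondecreasing along flow lines, and the same holds for $\mu_\xi$ with $\eta$ replaced by $\omega$. Since $X$ is projective and the flow comes from an algebraic torus action, $\phi_t x$ converges to fixed points of $\xi_X$ as $t\to\pm\infty$. Therefore $\min_XH_\gamma$ and $\max_XH_\gamma$ are attained on the zero set $X^\xi$ of $\xi_X$. It then suffices to show that $H_\gamma=\mu_\xi$ on $X^\xi$, because $\mu_\xi(X^\xi)\subset[a,b]$.

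To compare the two functions at fixed points, I would write $\Ric_v\omega-\omega=\frac{\sqrt{-1}}{2\pi}\partial\bar\partial\psi$ with $\psi$ a $\T$-invariant function. Then, up to normalizing constants, $\kappa-\mu_\xi=c+\tfrac{1}{2\pi}\langle\partial\psi,\partial\mu_\xi\rangle_\omega$ for some constant $c$; this is the same computation as in the proof of Lemma \ref{lem-kappa}. The gradient term vanishes on $X^\xi$. Hence $H_\gamma-\mu_\xi=(\kappa-\mu_\xi)/(1-\gamma)$ equals the constant $c/(1-\gamma)$ there, and it remains to show $c=0$.

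\emph{Main obstacle.} Determining $c$ is the step I expect to be hardest. Integrating against $e^{\mu_\xi}\omega^{[n]}$ with both normalizations gives
\[
c\int_Xe^{\mu_\xi}\omega^{[n]}=-\frac1{2\pi}\int_X\langle\partial\psi,\partial\mu_\xi\rangle e^{\mu_\xi}\omega^{[n]}=-\frac1{2\pi}\int_X\psi\,\Delta_v\mu_\xi\,\omega^{[n]}=-\int_X\psi\,\kappa\, e^{\mu_\xi}\omega^{[n]},
\]
and this last integral need not vanish for an arbitrary potential $\psi$. So I would instead fix $c$ through equivariant classes. Both $\{\Ric_v\omega+\kappa\}$ and $\{\omega+\mu_\xi\}$ are, by construction, representatives of $c_1^\T(X)$ evaluated on $\xi$, which in Cartan-model form means $\int\kappa e^{\mu_\xi}\omega^{[n]}=0=\int\mu_\xi e^{\mu_\xi}\omega^{[n]}$. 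I would then check, via the localization or fixed-point formula, that two equivariantly closed extensions of cohomologous forms with the same normalized weighted integral agree at every fixed point. If this fails, the fallback is to compute $\kappa(p)=\frac1{2\pi}(\Delta_\omega\mu_\xi)(p)$ directly at a fixed point $p$, which equals a sum of isotropy weights, and to match it with $\mu_\xi(p)$ using the canonical choice $\{\omega+\mu\}=c_1^\T(X)$.
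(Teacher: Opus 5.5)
\textbf{The gap: the step $c=0$.} Part (1) is correct and is the paper's argument. Your reduction of part (2) to the zero set $X^\xi$ is also correct, by a different route. The paper evaluates at a critical point of $H_\gamma$, which needs $\Ric_v\omega>\gamma\omega$ to force $\xi_X(x_0)=0$, and then takes a limit in $\gamma$. Your monotonicity along the flow of $J\xi_X$ handles the semipositive case directly. (The sign is actually $dH_\gamma(J\xi_X)=-\eta(\xi_X,J\xi_X)\le0$, which is harmless.) Your formula $\kappa-\mu_\xi=c+\frac{1}{2\pi}\langle\partial\psi,\partial\mu_\xi\rangle$ is also fine.

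The gap is exactly the step you flag, $c=0$, and your proposed way of closing it does not work.
\begin{itemize}
\item It is not ``by construction'' that $\Ric_v\omega+\kappa$ represents $c_1^\T(X)$ at $\xi$. The function $\kappa$ is defined as $\frac{1}{2\pi}e^{-\mu_\xi}\Delta_v\mu_\xi$, and Lemma \ref{lem-kappa} only says it is \emph{some} Hamiltonian for $\Ric_v\omega$ with weighted mean zero. Which equivariant class it produces is precisely what must be proved.
\item Representing $c_1^\T(X)$ does not mean that weighted integrals vanish. The condition $\int_X\mu_\xi e^{\mu_\xi}\omega^{[n]}=0$ is the extra hypothesis \eqref{eq-canonical-normalized-mu} on $\xi$.
\item The criterion you want to verify by localization is false. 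Take $\omega_1=\omega+\frac{\sqrt{-1}}{2\pi}\partial\bar\partial\psi$ with Hamiltonian $\mu_1$ normalized by $\int_X\mu_1e^{\mu_\xi}\omega^{[n]}=\int_X\mu_\xi e^{\mu_\xi}\omega^{[n]}$. Your own computation gives $\mu_1-\mu_\xi\equiv c_\psi$ on $X^\xi$, with $c_\psi\int_Xe^{\mu_\xi}\omega^{[n]}=-\frac{1}{2\pi}\int_X\psi\,\Delta_v\mu_\xi\,\omega^{[n]}$. For $\psi=\mu_\xi$ the right-hand side is $-\frac{1}{2\pi}\int_X|\partial\mu_\xi|^2e^{\mu_\xi}\omega^{[n]}\neq0$ whenever $\xi_X\not\equiv0$.
\end{itemize}
So normalizations by weighted means cannot determine $c$. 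The input must come from the natural linearization of $K_X^{-1}$.

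\textbf{How to close it.} The missing idea is the paper's identification of $\kappa$. Take the trace of the moment map of $(T_X,h_\omega)$, namely $\sqrt{-1}\tr\langle\Phi_{h_\omega},\xi\rangle=\partial^*\partial\mu_\xi$, and apply the metric-change formula. This gives $\kappa=\frac{\sqrt{-1}}{2\pi}\langle\Phi_{e^{\mu_\xi}\det h_\omega},\xi\rangle$. Hence $\Ric_v\omega+\kappa$ is the equivariant first Chern form of $K_X^{-1}$ with the metric $e^{\mu_\xi}\det h_\omega$, and lies in $c_1^\T(X)$. It therefore differs from $\omega+\mu_\xi$ by $d_\T\theta$ for a $\T$-invariant $1$-form $\theta$. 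So $\kappa-\mu_\xi=\theta(\xi_X)$, which vanishes on $X^\xi$ with no constant.

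Alternatively, your fallback is sound and should be the main argument rather than a backup. At $p\in X^\xi$ one has $\partial\mu_\xi(p)=0$, so $\kappa(p)=\frac{1}{2\pi}(\partial^*\partial\mu_\xi)(p)$. This depends only on the complex Hessian of $\mu_\xi$ at $p$. That Hessian is fixed by $d\mu_\xi=-\iota_{\xi_X}\omega$ together with the linear isotropy action on $T_pX$. The result is independent of $\omega_p$ and equals $(c_1^\T(X)|_p)(\xi)$, the equivariant first Chern class of the $\T$-line $\det T_pX$ evaluated at $\xi$. Since $\{\omega+\mu\}=c_1^\T(X)$ restricts at $p$ to $\mu(p)$, this is $\mu_\xi(p)$. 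As a check on normalizations, on $\C\mathbb P^1$ with the rotation action, $\kappa$ and $\mu_\xi$ both take the values $-1$ and $1$ at the two poles. Either way $c=0$, so $H_\gamma=\mu_\xi\in[a,b]$ on $X^\xi$, and your flow argument then finishes the proof.
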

We postpone the proof of this claim and continue the proof of the theorem.
If $\gamma=1$, then Claim \ref{clm:quotient-kappa-H} (1) and the definition of $\nu$ give $(n+1)\nu e^{-\mu_\xi}-1-\kappa=f$.
Thus \eqref{eq-solition-eval-ETS} follows from \eqref{eq:quotient-f-bound}.

If $0<\gamma<1$, then
\begin{equation}
\label{eq:quotient-H-pointwise}
H_\gamma p
\underalign{\text{(by $0\leq p\leq1$)}}{\leq}
\max\{H_\gamma,0\}
\underalign{\text{(Claim \ref{clm:quotient-kappa-H})}}{\leq}\frac{b(H_\gamma-a)}{b-a}.
\end{equation}
Consequently,
\begin{align*}
&\int_{X^\circ}\bigl((n+1)\nu e^{-\mu_\xi}-\gamma-\kappa\bigr)\tr(P_QP_0)e^{\mu_\xi}\omega^{[n]}
=\gamma\int_{X^\circ}fp\,e^{\mu_\xi}\omega^{[n]}-(1-\gamma)\int_{X^\circ}H_\gamma p\,e^{\mu_\xi}\omega^{[n]}\\
&\underalign{\text{\eqref{eq:quotient-f-bound} \& \eqref{eq:quotient-H-pointwise}}}{\geq}
-\gamma(e^{c_1^{\T}(X)})_{\T}(\xi)\ell_\xi
-
\frac{(1-\gamma)b}{b-a}
\underbrace{\int_XH_\gamma e^{\mu_\xi}\omega^{[n]}}_{=0 \text{ by  \eqref{eq-kappa} \& \eqref{eq-canonical-normalized-mu}}}
+
\frac{(1-\gamma)ab}{b-a}
\underbrace{\int_Xe^{\mu_\xi}\omega^{[n]}}_{=(e^{c_1^{\T}(X)})_{\T}(\xi)}.
\end{align*}
This proves \eqref{eq-solition-eval-ETS}. 
\end{proof}

\begin{proof}[Proof of Claim \ref{clm:quotient-kappa-H}]
Taking the trace of the formula for the moment map of the tangent bundle in \cite[Section~3.3]{HL24}, we obtain
\[
\sqrt{-1}\tr\langle\Phi_{h_\omega},\xi\rangle=\partial^*\partial\mu_\xi.
\]
Hence, using the induced metric on the determinant line bundle and the metric change formula \cite[(5)]{HL24}, we have
\[
\frac{\sqrt{-1}}{2\pi}\left\langle\Phi_{e^{\mu_\xi}\det h_\omega},\xi\right\rangle
=\frac1{2\pi}\left(\partial^*\partial\mu_\xi+\sqrt{-1}\partial\mu_\xi(\xi_X)\right)
\underalign{\raisebox{-1.0ex}{\tiny \text{\cite[Lem. 2.6]{HL24}}}}{=}
\frac1{2\pi}e^{-\mu_\xi}\Delta_v\mu_\xi
=\kappa.
\]
Thus the moment map identity in \cite[Proposition~2.2]{HL24} gives
\[
d\kappa=-\iota_{\xi_X}\Ric_v\omega.
\]
Moreover, the equivariant first Chern form of this metric and $\omega+\mu$ represent the same equivariant class $c_1^\T(X)$, so their difference is equivariantly exact.
It follows that
\(
\kappa-\mu_\xi=\theta(\xi_X)
\)
for some $\T$-invariant $1$-form $\theta$.

(1). If $\gamma=1$, then $\Ric_v\omega=\omega$.
Thus $d\kappa=d\mu_\xi$, and $\kappa-\mu_\xi$ is constant.
By \eqref{eq-kappa} and \eqref{eq-canonical-normalized-mu}, both functions have weighted mean zero.
Hence $\kappa=\mu_\xi$.

(2). First assume that $\Ric_v\omega>\gamma\omega$.
By the definition of $H_\gamma$, we have
\[
dH_\gamma=-\iota_{\xi_X}\left(\frac{\Ric_v\omega-\gamma\omega}{1-\gamma}\right).
\]
Let $x_0$ be a point where $H_\gamma$ attains its maximum.
Since $dH_\gamma(x_0)=0$ and $\Ric_v\omega>\gamma\omega$, we have $\xi_X(x_0)=0$.
Thus $\kappa(x_0)-\mu_\xi(x_0)=\theta_{x_0}(\xi_X(x_0))=0$, and therefore
\[
\max_XH_\gamma=H_\gamma(x_0)=\frac{\kappa(x_0)-\gamma\mu_\xi(x_0)}{1-\gamma}=\mu_\xi(x_0)\leq b.
\]
The same argument at a minimum point gives $a\leq\min_XH_\gamma$.
The case $\Ric_v\omega\geq\gamma\omega$ follows by taking a limit.
\end{proof}

\begin{rem}
In \cite[after equation (54)]{HL24}, 
the rank the orthogonal complement $\mathcal{(\cF')^{\perp}}$ is computed as 
$n-\rk{\cF} +1$. 
It is in fact $n-\rk{\cF}$. 
This is why we cannot omit the term 
involving 
$p=\tr(P_QP_0)$ in the proof of Theorem \ref{thm-slope-equiv-ineq}. 
\end{rem}

\subsection{Equivariant Miyaoka--Yau inequality for K\"ahler--Ricci solitons}

\begin{lem}
\label{lem-numerical-tail-bound}
Let $r_1,\ldots,r_s$ be positive integers and $\mu_1\geq\cdots\geq\mu_s$ be real numbers. Set
\begin{equation}
\label{eq-defn-rmu}
r\coloneqq\sum_{i=1}^s r_i,\qquad
\mu\coloneqq\frac1r\sum_{i=1}^s r_i\mu_i.
\end{equation}
Suppose that real numbers $\beta\leq\mu$ and $B\geq0$ satisfy
\begin{equation}
\label{eq-numerical-tail-assumption}
\sum_{i=k}^s r_i\mu_i\geq\beta\sum_{i=k}^s r_i-B
\end{equation}
for every $k=1,\ldots,s$. Then
\begin{equation}
\label{eq-numerical-tail-conclusion}
\sum_{i<j}r_ir_j(\mu_i-\mu_j)^2
\leq r^2(r-1)(\mu-\beta)^2+2r^2(\mu-\beta)B+2rB^2.
\end{equation}
\end{lem}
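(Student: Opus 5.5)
The plan is to reduce \eqref{eq-numerical-tail-conclusion} to a weighted second-moment bound and then handle that bound by Abel summation against the tail sums controlled by \eqref{eq-numerical-tail-assumption}.

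First, use the elementary identity already implicit in \eqref{eq-Langer-filtration-2}:
\[
\sum_{i<j}r_ir_j(\mu_i-\mu_j)^2=r\sum_{i=1}^s r_i(\mu_i-\mu)^2 .
\]
Put $y_i\coloneqq\mu_i-\beta$ and $D\coloneqq\mu-\beta\geq0$. Since $\sum_i r_i(\mu_i-\mu)=0$, we get
\[
\sum_i r_i(\mu_i-\mu)^2=\sum_i r_iy_i^2-rD^2 .
\]
Hence it suffices to prove
\[
\sum_i r_iy_i^2\leq r^2D^2+2rDB+2B^2 ,
\]
because multiplying by $r$ and subtracting $r^2D^2$ gives exactly the right-hand side of \eqref{eq-numerical-tail-conclusion}.

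Next, introduce the tail sums $T_k\coloneqq\sum_{i\geq k}r_iy_i$ for $k=1,\ldots,s$. The hypothesis \eqref{eq-numerical-tail-assumption} says precisely that $T_k\geq-B$ for every $k$, and we have $T_1=rD$. From this we extract two pointwise bounds.
\begin{enumerate}
\item Since $r_s\geq1$ and $T_s=r_sy_s\geq-B$, we get $y_s\geq-B/r_s\geq-B$ when $y_s<0$, and trivially otherwise.
\item Since $r_1y_1=T_1-T_2\leq rD+B$ and $r_1\geq1$, we get $y_1\leq rD+B$; for $y_1\geq0$, which holds because $y_1$ is the largest $y_i$ and the weighted mean $D$ is nonnegative.
\end{enumerate}

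The key step is Abel summation. Set $\delta_k\coloneqq y_{k-1}-y_k\geq0$ for $k\geq2$, so that $y_i=y_1-\sum_{k=2}^i\delta_k$. Then
\[
\sum_i r_iy_i^2=\sum_i r_iy_i\Bigl(y_1-\sum_{2\leq k\leq i}\delta_k\Bigr)=y_1T_1-\sum_{k=2}^s\delta_kT_k .
\]
Using $T_k\geq-B$ and $\delta_k\geq0$, the second term is at most $B\sum_k\delta_k=B(y_1-y_s)$. Therefore
\[
\sum_i r_iy_i^2\leq rDy_1+By_1-By_s .
\]
Inserting $y_1\leq rD+B$ (the coefficient $rD+B$ of $y_1$ is nonnegative) and $-y_s\leq B$ yields
\[
r^2D^2+rDB+rDB+B^2+B^2=r^2D^2+2rDB+2B^2 ,
\]
as required.

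The only delicate point I expect is the sign bookkeeping. The Abel-summation estimate requires $\delta_k\geq0$, which is the monotonicity $\mu_1\geq\cdots\geq\mu_s$. The substitution of the upper bound for $y_1$ requires its coefficient $rD+B$ to be nonnegative, which uses $\beta\leq\mu$ and $B\geq0$. The bound $-y_s\leq B$ uses $r_s\geq1$. The case $s=1$ is trivial, since then the left-hand side of \eqref{eq-numerical-tail-conclusion} vanishes.
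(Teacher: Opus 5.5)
Your proof is correct, but your key estimate takes a different route from the paper's. Both arguments use the identity $\sum_{i<j}r_ir_j(\mu_i-\mu_j)^2=r\sum_i r_i(\mu_i-\beta)^2-r^2(\mu-\beta)^2$ to reduce the claim to $\sum_i r_iy_i^2\leq(rD+B)^2+B^2$, in your notation.

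\textbf{The paper's route.} It splits the indices at the sign change. Let $k_0$ be the largest index with $\mu_{k_0}\geq\beta$. The paper applies \eqref{eq-numerical-tail-assumption} only once, at $k=k_0+1$, to get $0\leq\sum_{i>k_0}r_i(\beta-\mu_i)\leq B$, and hence $0\leq\sum_{i\leq k_0}r_i(\mu_i-\beta)\leq rD+B$. It then uses $r_i\geq1$ for every $i$: a weighted sum of squares of nonnegative numbers is bounded by the square of the weighted sum.

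\textbf{Your route.} You use every tail inequality $T_k\geq-B$ through summation by parts. This gives the intermediate bound $\sum_i r_iy_i^2\leq(T_1+B)y_1-By_s$. Only the extreme values $y_1=\mu_1-\beta$ and $y_s=\mu_s-\beta$ then need to be controlled; in the application these are the maximal and minimal Harder--Narasimhan slopes. You also need $r_i\geq1$ only for $i=1$ and $i=s$.

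\textbf{What each buys.} The paper's argument is slightly more elementary and shows that a single instance of the tail hypothesis suffices. Yours makes the Langer-type dependence on $\mu_v^{\max}$ and $\mu_v^{\min}$ explicit.

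One small point: your justification of $y_1\leq rD+B$ is worded confusingly. The bound holds regardless, since $r_1y_1\leq rD+B$, $r_1\geq1$ and $rD+B\geq0$.
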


\begin{proof}
Since $\mu_1\geq\mu\geq\beta$, we may take $k_0$ to be the largest index such that $\mu_{k_0}\geq\beta$.
Applying \eqref{eq-numerical-tail-assumption} with $k=k_0+1$ when $k_0<s$, we obtain
\begin{equation}
\label{eq-inequality-slope-0}
0\leq\sum_{i=k_0+1}^{s}r_i(\beta-\mu_i)
\underalign{\text{\eqref{eq-numerical-tail-assumption}}}{\leq}B.
\end{equation}
If $k_0=s$, the sum is interpreted as zero, so the same inequality holds.
Moreover,
\begin{equation}
\label{eq-inequality-slope-1}
0\leq\sum_{i=1}^{k_0}r_i(\mu_i-\beta)
\underalign{\text{\eqref{eq-defn-rmu}}}{=}
r(\mu-\beta)+\sum_{i=k_0+1}^{s}r_i(\beta-\mu_i)
\underalign{\text{\eqref{eq-numerical-tail-assumption}}}{\leq}
r(\mu-\beta)+B.
\end{equation}
Thus
\begin{align*}
\sum_{1\leq i<j\leq s}r_ir_j(\mu_i-\mu_j)^2
&\underalign{\eqref{eq-defn-rmu}}{=}
r\sum_{i=1}^{s}r_i(\mu_i-\beta)^2-r^2(\mu-\beta)^2\\
&\underalign{\text{(by $r_i\geq1$)}}{\leq}
r\left(\sum_{i=1}^{k_0}r_i(\mu_i-\beta)\right)^2
+r\left(\sum_{i=k_0+1}^{s}r_i(\beta-\mu_i)\right)^2
-r^2(\mu-\beta)^2\\
&\underalign{\text{\eqref{eq-inequality-slope-0} \& \eqref{eq-inequality-slope-1}}}{\leq}
r\bigl((r(\mu-\beta)+B)^2+B^2\bigr)-r^2(\mu-\beta)^2.
\end{align*}
Expanding the right-hand side gives the desired inequality.
\end{proof}

We next give an equivariant Miyaoka--Yau inequality involving the greatest weighted Ricci lower bound.
In the following, we restrict to exponential weights $v(\mu)=e^{\mu_\xi}$. Let $\delta_v(X)$ denote the $v$-weighted delta invariant, whose definition can be found in \cite{HL23, DJ25}.
We call $\delta'_v(X)\coloneqq\min\{1,\delta_v(X)\}$ the \emph{greatest weighted Ricci lower bound} of $X$.
By \cite[Theorem 1.2]{DJ25}, which is a weighted version of the results in \cite{CRZ19, BBJ21}, we have
\begin{equation}
\label{eq-defn-delta'}
\delta'_v(X)
=
\sup\left\{\beta\in\R \,\middle\vert\, \Ric_v(\omega)>\beta\omega
\text{ for some $\T$-invariant K\"ahler form $\omega\in c_1(X)$}\right\}.
\end{equation}

With this notation, we obtain the following result.

\begin{thm}
\label{thm-equiv-MY-corrected-delta}
Let $\G=(\C^*)^l$ be an algebraic torus, let $\T=(S^1)^l$ be its maximal compact torus, and set $\mathfrak t\coloneqq\Lie(\T)$.
Let $X$ be a smooth Fano $\G$-variety of dimension $n\geq2$, and let $\omega\in c_1(X)$ be a $\T$-invariant K\"ahler form.
Choose the moment map $\mu\colon X\to\mathfrak t^\vee$ so that $\{\omega+\mu\}=c_1^\T(X)$.
For $\xi\in\mathfrak t$, set $v(\mu)\coloneqq e^{\mu_\xi}$, and assume that $v$ is canonically normalized in the sense of \eqref{eq-defn-canonically-normalized}.

Then the following equivariant Miyaoka--Yau inequality with a correction term holds:
\begin{equation}
\label{eq-equiv-MY-corrected-delta}
\begin{aligned}
&\frac{\left(\left(2(n+1)c_2^\T(X)-n c_1^\T(X)^2\right)\cdot e^{c_1^\T(X)}\right)_\T(\xi)}
{(e^{c_1^\T(X)})_\T(\xi)}\\
&\geq -n^3(1-\delta'_v(X))^2-2n(n+1)(1-\delta'_v(X))
\left[\delta'_v(X)\ell_\xi-(1-\delta'_v(X))\frac{ab}{b-a}\right]\\
&\quad-2(n+1)
\left[\delta'_v(X)\ell_\xi-(1-\delta'_v(X))\frac{ab}{b-a}\right]^2.
\end{aligned}
\end{equation}
Here $\delta'_v(X)=\min\{1,\delta_v(X)\}$, and we use the notation $a,b,\ell_\xi$ from Theorem \ref{thm-slope-equiv-ineq}.
\end{thm}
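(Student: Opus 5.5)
The plan is to apply the intermediate estimate from the proof of Theorem \ref{thm-langer-ineq-equiv}(2) to the canonical extension sheaf. We cannot use the final Langer bound, because it needs only $\mu^{\max}_v$ and $\mu^{\min}_v$. Instead we keep the full Harder--Narasimhan data and control it with the quotient slope bound of Theorem \ref{thm-slope-equiv-ineq} and Lemma \ref{lem-numerical-tail-bound}.

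\textbf{Setup.} Fix $\gamma$ with $0<\gamma<\delta'_v(X)$, or $\gamma=1$ if $\delta'_v(X)=1$ and the supremum in \eqref{eq-defn-delta'} is attained. By \eqref{eq-defn-delta'}, choose a $\T$-invariant K\"ahler form $\omega_\gamma\in c_1(X)$ with $\Ric_v\omega_\gamma\geq\gamma\omega_\gamma$, with moment map normalized so that $\{\omega_\gamma+\mu\}=c_1^\T(X)$.

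Let $\mathscr V$ be the canonical extension sheaf with extension class $\sqrt{2\pi\nu}\,c_1(X)$, where $\nu$ is defined as in \eqref{eq-defn-nu}. It is $\G$-equivariant of rank $r=n+1$, and by Definition \ref{defn-canonical-extension-equiv}
\[
\Delta^\T(\mathscr V)=2(n+1)c_2^\T(X)-nc_1^\T(X)^2 .
\]
By Lemma \ref{lem-HL24-4.3} with $\widetilde v=\exp$, every weighted number $(\beta_\T\cdot v(\alpha_\T))$ equals $(\beta_\T\cdot e^{c_1^\T(X)})_\T(\xi)$. These numbers are cohomological, so the left-hand side of \eqref{eq-equiv-MY-corrected-delta} does not depend on $\omega_\gamma$. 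Write $V_v\coloneqq(e^{c_1^\T(X)})_\T(\xi)$.

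\textbf{Harder--Narasimhan decomposition.} Take the $v$-Harder--Narasimhan filtration of $\mathscr V$ from Remark \ref{rem-soliton-GKP16a}\ref{prop-HN-existence}, with graded pieces $\mathcal G_i$ of rank $r_i$. Set $\mu_i\coloneqq\mu_v(\mathcal G_i)/V_v$. Arguing exactly as in \eqref{eq-Langer-ineq}, using Corollary \ref{cor-equiv-BG-soliton-case} for each $\mathcal G_i$ and Corollary \ref{cor-equiv-Hodge-index} for the cross terms, we get
\[
\frac{(\Delta^\T(\mathscr V)\cdot e^{c_1^\T(X)})_\T(\xi)}{V_v}\geq-\sum_{i<j}r_ir_j(\mu_i-\mu_j)^2 .
\]
By the canonical normalization \eqref{eq-defn-canonically-normalized}, the average slope is $\mu=\frac{n}{n+1}$.

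\textbf{Tail bound and conclusion.} For each $k$, the sheaf $\mathscr V/\mathscr V_{k-1}$ is a nonzero torsion-free equivariant quotient of rank $\sum_{i\geq k}r_i$, with $c_1^\T$ equal to $\sum_{i\geq k}c_1^\T(\mathcal G_i)$. Theorem \ref{thm-slope-equiv-ineq}, applied with $\omega_\gamma$ and divided by $V_v$, therefore gives the hypothesis \eqref{eq-numerical-tail-assumption} with
\[
\beta=\frac{\gamma n}{n+1}\leq\mu,\qquad B=B_\gamma\coloneqq\gamma\ell_\xi-(1-\gamma)\frac{ab}{b-a}.
\]
Here $B_\gamma\geq0$ because $a\leq0\leq b$. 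Lemma \ref{lem-numerical-tail-bound} now applies with $\mu-\beta=\frac{n(1-\gamma)}{n+1}$ and $r=n+1$. The three terms on the right of \eqref{eq-numerical-tail-conclusion} become $n^3(1-\gamma)^2$, $2n(n+1)(1-\gamma)B_\gamma$ and $2(n+1)B_\gamma^2$. This is exactly \eqref{eq-equiv-MY-corrected-delta} with $\gamma$ in place of $\delta'_v(X)$. Finally let $\gamma\to\delta'_v(X)$.

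\textbf{Expected main obstacle.} The difficulty is this last limit. The numbers $a$ and $b$ are the extremes of $\mu_\xi$ on the moment polytope, so they are independent of $\omega_\gamma$. The quantity $\ell_\xi$, however, is an integral that depends on $\omega_\gamma$ through $\mu_\xi$ and $\omega_\gamma^{[n]}$.

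When $\delta'_v(X)=1$ and a soliton exists, we simply take $\omega_\gamma$ to be the soliton and $\gamma=1$, so no limit is needed; this is the case used in Theorem \ref{thm-main-equiv}.

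In general, I would first try to read $\ell_\xi$ in the statement as computed with respect to forms $\omega_\gamma$ realizing $\Ric_v\omega_\gamma\geq\gamma\omega_\gamma$, and pass to the limit along such forms. If that is not the intended reading, I would bound $\ell_\xi$ uniformly, for instance by pushing it forward to a Duistermaat--Heckman-type measure on $P$. If that pushforward is independent of $\omega$ within the class, then $\ell_\xi$ itself is cohomological and the limit is harmless.

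A further point to check is that the quotients $\mathscr V/\mathscr V_{k-1}$ are $\T$-equivariant torsion-free quotients as Theorem \ref{thm-slope-equiv-ineq} requires. This follows from the $\G$-equivariance of the Harder--Narasimhan filtration.
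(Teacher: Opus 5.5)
Your proposal is correct and follows the paper's proof essentially step by step. You fix $\gamma<\delta'_v(X)$, take the $v$-Harder--Narasimhan filtration of $\mathscr V$, feed Theorem \ref{thm-slope-equiv-ineq} on the quotients $\mathscr V/\mathscr V_{k-1}$ into Lemma \ref{lem-numerical-tail-bound} with $\beta=\frac{n\gamma}{n+1}$ (up to normalizing by $(e^{c_1^\T(X)})_\T(\xi)$), and let $\gamma\uparrow\delta'_v(X)$, exactly as the paper does. The obstacle you flag is resolved as you suggest: the paper simply asserts that $a,b,\ell_\xi$ are metric-independent, and this holds because the identity $\int_X e^{\langle\mu,\zeta\rangle}\omega^{[n]}=(e^{c_1^\T(X)})_\T(\zeta)$ for all $\zeta$ shows that the Duistermaat--Heckman measure $\mu_*\omega^{[n]}$ depends only on $c_1^\T(X)$, so no reinterpretation of $\ell_\xi$ is needed.
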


The case $\delta'_v(X)=1$ gives the following corollary.

\begin{cor}
\label{cor-equiv-MY-corrected-delta-one}
In the setting of Theorem \ref{thm-equiv-MY-corrected-delta}, assume that $\delta_v(X)\geq1$.
Then the following equivariant Miyaoka--Yau inequality with a correction term holds:
\begin{equation}
\label{eq-equiv-MY-corrected-delta-one}
\begin{aligned}
&\left(\left(2(n+1)c_2^\T(X)-n c_1^\T(X)^2\right)\cdot e^{c_1^\T(X)}\right)_\T(\xi)\\
&\ge
-\frac{n+1}{2(e^{c_1^\T(X)})_\T(\xi)}
\left(\int_X\left|
\frac{(e^{c_1^\T(X)})_\T(\xi)}{\int_X\omega^{[n]}}e^{-\mu_\xi}-1-\mu_\xi
\right|e^{\mu_\xi}\omega^{[n]}\right)^2.
\end{aligned}
\end{equation}
In particular, this holds if $X$ admits a K\"ahler--Ricci soliton with soliton vector $\xi$.
\end{cor}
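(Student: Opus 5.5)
Corollary~\ref{cor-equiv-MY-corrected-delta-one} is a specialization of Theorem~\ref{thm-equiv-MY-corrected-delta}: we set $\delta'_v(X)=1$ and then clear the denominator. The assumption $\delta_v(X)\ge1$ gives $\delta'_v(X)=\min\{1,\delta_v(X)\}=1$. With this value, the first two terms on the right of \eqref{eq-equiv-MY-corrected-delta} vanish, since each carries a factor $1-\delta'_v(X)$. In the bracket the term $(1-\delta'_v(X))\frac{ab}{b-a}$ also drops out, leaving $\ell_\xi$. So \eqref{eq-equiv-MY-corrected-delta} becomes
\[
\frac{\left(\left(2(n+1)c_2^\T(X)-n c_1^\T(X)^2\right)\cdot e^{c_1^\T(X)}\right)_\T(\xi)}{(e^{c_1^\T(X)})_\T(\xi)}\geq -2(n+1)\ell_\xi^2 .
\]
Next we multiply both sides by $(e^{c_1^\T(X)})_\T(\xi)$. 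This quantity equals $\int_X e^{\mu_\xi}\omega^{[n]}>0$ by Lemma~\ref{lem-intersection-chern}, so the direction of the inequality is preserved. We then insert the definition \eqref{eq-defn-lxi} of $\ell_\xi$. Writing $I$ for the integral $\int_X|\cdots|e^{\mu_\xi}\omega^{[n]}$, we have $\ell_\xi=I/(2(e^{c_1^\T(X)})_\T(\xi))$. Hence
\[
2(n+1)\ell_\xi^2\,(e^{c_1^\T(X)})_\T(\xi)=\frac{n+1}{2(e^{c_1^\T(X)})_\T(\xi)}I^2,
\]
and this is exactly the right-hand side of \eqref{eq-equiv-MY-corrected-delta-one}.

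For the final sentence of the corollary we need to show that a Kähler--Ricci soliton with soliton vector $\xi$ forces $\delta_v(X)\ge1$, and that the normalization hypotheses hold. If $\omega$ is a soliton, then $\Ric_v\omega=\omega$ by Definition~\ref{defn-vsoliton}. Since $\Ric_v$ depends only on the ray of the form, $\Ric_v\omega>\beta\omega$ holds for every $\beta<1$. By \eqref{eq-defn-delta'} this gives $\delta'_v(X)\ge1$, and therefore $\delta'_v(X)=1$. It then follows that $\delta_v(X)\ge1$.

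The main point requiring care is the normalization of the soliton vector. The soliton equation $\Ric(\omega)-\omega=\frac{\sqrt{-1}}{2\pi}\partial\bar\partial\mu_\xi$ uses the canonically normalized moment map, and we must check that this is the moment map with $\{\omega+\mu\}=c_1^\T(X)$. We must also check that \eqref{eq-canonical-normalized-mu} holds, so that $v$ is canonically normalized. For the first point, Lemma~\ref{lem-kappa} together with the argument in Claim~\ref{clm:quotient-kappa-H}(1) shows that $\kappa-\mu_\xi$ is constant, and fixing that constant amounts to the standard normalization of \cite{TZ02}. The normalization $\int_X\mu_\xi e^{\mu_\xi}\omega^{[n]}=0$ is then the standard Tian--Zhu normalization of the soliton potential, as noted after \eqref{eq-defn-canonically-normalized}.

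Once these points are settled, the statement follows from Theorem~\ref{thm-equiv-MY-corrected-delta}. The real analytic content is in that theorem, which combines Theorem~\ref{thm-slope-equiv-ineq}, the intermediate estimate \eqref{eq-Langer-ineq} in the exponential case, and Lemma~\ref{lem-numerical-tail-bound}.
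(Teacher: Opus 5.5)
Your proposal is correct and is exactly the paper's argument: the corollary is obtained by setting $\delta'_v(X)=1$ in Theorem~\ref{thm-equiv-MY-corrected-delta}, and your simplification $-2(n+1)\ell_\xi^2\,(e^{c_1^\T(X)})_\T(\xi)=-\frac{n+1}{2(e^{c_1^\T(X)})_\T(\xi)}I^2$ is right. One small aside: the claim that $\Ric_v$ ``depends only on the ray of the form'' is not accurate, since rescaling $\omega$ rescales $\mu$ and so changes $\log(v\circ\mu)$; it is also unnecessary, because $\Ric_v\omega=\omega>\beta\omega$ for $\beta<1$ holds simply because $\omega>0$.
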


In Example \ref{ex-negative-equiv-MY-sixfold} below, we show that even when $X$ admits a K\"ahler--Ricci soliton, its soliton vector $\xi$ may satisfy
\[
\left(\left(2(n+1)c_2^\T(X)-n c_1^\T(X)^2\right)\cdot e^{c_1^\T(X)}\right)_\T(\xi)<0.
\]
Thus the correction term in the above inequality cannot be omitted in general.

\begin{proof}[Proof of Theorem \ref{thm-equiv-MY-corrected-delta}]
We slightly modify the proof of Langer's inequality in Theorem \ref{thm-langer-ineq-equiv}.
Since $X$ satisfies Setup \ref{setup-soliton-situation} and the canonical extension sheaf $\mathscr V$ is $\G$-equivariant and locally free, the argument of Theorem \ref{thm-langer-ineq-equiv} applies to $v(\mu)=e^{\mu_\xi}$.

Fix $0<\gamma<\delta'_v(X)$.
By \eqref{eq-defn-delta'}, there exists a $\T$-invariant K\"ahler form $\omega_\gamma\in c_1(X)$ such that $\Ric_v\omega_\gamma>\gamma\omega_\gamma$.
The quantities $a,b,\ell_\xi$ are independent of the choice of metric.
Let $\mathscr V$ be the canonical extension sheaf in Theorem \ref{thm-slope-equiv-ineq}, and take its $v$-Harder--Narasimhan filtration
\[
0=\mathscr V_0\subsetneq\mathscr V_1\subsetneq\cdots\subsetneq\mathscr V_s=\mathscr V.
\]
Set $r_i\coloneqq\rk(\mathscr V_i/\mathscr V_{i-1})$ and $\mu_i\coloneqq\mu_v(\mathscr V_i/\mathscr V_{i-1})$.
Then
\[
\sum_{i=1}^s r_i=n+1,\qquad
\mu_v(\mathscr V)=\frac{1}{n+1}\sum_{i=1}^s r_i\mu_i
\underalign{\text{\eqref{eq-defn-canonically-normalized}}}{=}
\frac{n}{n+1}(e^{c_1^\T(X)})_\T(\xi).
\]
Set
\[
\beta\coloneqq\frac{n\gamma}{n+1}(e^{c_1^\T(X)})_\T(\xi),
\qquad
B\coloneqq(e^{c_1^\T(X)})_\T(\xi)
\left[\gamma\ell_\xi-(1-\gamma)\frac{ab}{b-a}\right].
\]
Applying Theorem \ref{thm-slope-equiv-ineq} to each torsion-free quotient $\mathscr V/\mathscr V_{k-1}$ gives
\[
\sum_{i=k}^s r_i\mu_i
\underalign{\text{(Thm. \ref{thm-slope-equiv-ineq})}}{\geq}
\beta\sum_{i=k}^s r_i-B
\qquad(k=1,\ldots,s).
\]
Since $\beta\leq\mu_v(\mathscr V)$ and $B\geq0$, we may apply Lemma \ref{lem-numerical-tail-bound}.
Hence
\begin{align*}
&\frac{\left(\left(2(n+1)c_2^\T(X)-n c_1^\T(X)^2\right)\cdot e^{c_1^\T(X)}\right)_\T(\xi)}
{(e^{c_1^\T(X)})_\T(\xi)}\\
&\underalign{\text{as in \eqref{eq-Langer-ineq}}}{\geq}
-\frac{1}{\bigl((e^{c_1^\T(X)})_\T(\xi)\bigr)^2}
\sum_{1\leq i<j\leq s}r_ir_j(\mu_i-\mu_j)^2\\
&\underalign{\text{(Lem. \ref{lem-numerical-tail-bound})}}{\geq}
-n^3(1-\gamma)^2
-2n(n+1)(1-\gamma)\frac{B}{(e^{c_1^\T(X)})_\T(\xi)}
-2(n+1)\left(\frac{B}{(e^{c_1^\T(X)})_\T(\xi)}\right)^2.
\end{align*}
In the last inequality, we substituted $r=n+1$ and $\mu_v(\mathscr V)-\beta=\frac{n(1-\gamma)}{n+1}(e^{c_1^\T(X)})_\T(\xi)$.
Substituting the definition of $B$ and letting $\gamma\uparrow\delta'_v(X)$, we obtain \eqref{eq-equiv-MY-corrected-delta}.
\end{proof}

\begin{rem}
If we apply the above arguments only to the tangent sheaf, 
it yields 
\begin{equation*}
\left((2nc^{\T}_2(T_X) - (n-1)c^{\T}_1(T_X)^2 ) \cdot e^{c_1^\T(X)} \right)_\T(\xi) \geq -n^2(n-1)(1-\delta_v'(X))^2(e^{c_1^\T(X)})_\T(\xi). 
\end{equation*}
This can be rewritten as  
\begin{equation}\label{eq-tangent-MY}
  \begin{aligned}
&\frac{\left(\left(2(n+1)c_2^\T(X)-n c_1^\T(X)^2\right)\cdot e^{c_1^\T(X)}\right)_\T(\xi)}
{(e^{c_1^\T(X)})_\T(\xi)} 
\geq  
%-\frac{\left(\left(c_2^\T(X)\right)\cdot e^{c_1^\T(X)}\right)_\T(\xi)}{n(e^{c_1^\T(X)})_\T(\xi)} 
-\frac{\left(c_1^\T(X)^2\cdot e^{c_1^\T(X)}\right)_\T(\xi)}
{n(e^{c_1^\T(X)})_\T(\xi)}
-n(n^2-1)(1-\delta_v'(X))^2. 
  \end{aligned}
\end{equation}
Unless $v=1$, we cannot say which inequality, \eqref{eq-equiv-MY-corrected-delta} or \eqref{eq-tangent-MY}, is stronger. 
\end{rem}

\subsection{Outlook}

We consider the equality case of the equivariant Miyaoka--Yau inequality for a general weight $v$.
Assuming that the canonical extension sheaf is $v$-semistable, Assumptions \ref{assump-equiv-v-hodge} and \ref{assump-equiv-v-BG} give the following characterization of projective space.

\begin{prop}
\label{prop-equiv-MY-equality}
Assume Setup \ref{setup-soliton-situation}, with $X$ a Fano manifold of dimension $n\geq2$ and $\alpha_{\T}=c_1^{\T}(X)$.
Let $v\colon P\to\R_{>0}$ be a weight for which Assumptions \ref{assump-equiv-v-hodge} and \ref{assump-equiv-v-BG} hold.
If the canonical extension sheaf $\mathscr V$ in Definition \ref{defn-canonical-extension-equiv} is $v$-semistable and
\begin{equation}
\label{eq-equiv-MY-equality}
\left(
\left(2(n+1)c_2^{\T}(X)-n c_1^{\T}(X)^2\right)
\cdot v(c_1^{\T}(X))
\right)=0,
\end{equation}
then $X\cong \C\mathbb P^n$.
\end{prop}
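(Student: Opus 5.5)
By Definition \ref{defn-canonical-extension-equiv}, $\mathscr V$ is a $\G$-equivariant vector bundle of rank $n+1$ with $c_1^\T(\mathscr V)=c_1^\T(X)$ and $c_2^\T(\mathscr V)=c_2^\T(X)$. Hence the quantity in \eqref{eq-equiv-MY-equality} is exactly the weighted Bogomolov discriminant $\bigl((2rc_2^\T(\mathscr V)-(r-1)c_1^\T(\mathscr V)^2)\cdot v(\alpha_\T)\bigr)$ with $r=n+1$. Since $\mathscr V$ is locally free, in particular reflexive, and $v$-semistable by hypothesis, Proposition \ref{prop-num-proj-flat}(1) applies. It gives that equality is the extremal case: $\mathscr V$ is projectively numerically flat in the sense of \cite{LOY24}. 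In other words, the $\Q$-twisted bundle $\mathscr V\langle -\tfrac{1}{n+1}c_1(X)\rangle$ is nef, and so is its dual. In particular the ordinary discriminant vanishes as a cohomology class, $\bigl(2(n+1)c_2(X)-nc_1(X)^2\bigr)=0$ in $H^4(X,\R)$, and projective flatness holds. The point of the plan is to reduce to this non-equivariant statement, so that the weight $v$ disappears from the rest of the argument.

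Next I turn this into information about $T_X$. From the quotient $\mathscr V\to T_X$, the twist $T_X\langle-\tfrac{1}{n+1}c_1(X)\rangle$ is a quotient of a nef $\Q$-twisted bundle, hence nef. Dually, $\mathcal O_X\langle -\tfrac{1}{n+1}c_1(X)\rangle^{\vee}$ is a quotient of the nef dual twist, so $\mathcal O_X(\tfrac{1}{n+1}c_1(X))$ is nef, which is automatic. The useful consequence is that $\mathscr V(-\tfrac1{n+1}K_X^{-1})$ is numerically flat. Since $X$ is Fano, it is simply connected, and a numerically projectively flat bundle on a simply connected manifold is projectively flat, hence of the form $L^{\oplus(n+1)}$ after twisting. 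So I will show that $\mathscr V\cong \mathcal O_X(H)^{\oplus(n+1)}$ for some $\Q$-divisor, in fact an integral one $H$ with $(n+1)H\sim -K_X$.

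Alternatively, I use the standard route of \cite{GKPT19b}: once $\mathscr V$ is projectively flat and $X$ is simply connected, the bundle $\mathbb P(\mathscr V)$ is trivial. The subbundle $\mathcal O_X\subset\mathscr V$ then gives a morphism $X\to\mathbb P^n$, and its differential is identified with the extension, so it is étale. An étale map to $\mathbb P^n$ from a compact manifold is an isomorphism. I would make this precise through the splitting $\mathscr V\cong\mathcal O_X(H)^{\oplus(n+1)}$. The inclusion $\mathcal O_X\to\mathcal O_X(H)^{\oplus(n+1)}$ gives $n+1$ sections of $\mathcal O_X(H)$ without common zeros, since the inclusion is a subbundle, and these define $\phi\colon X\to\mathbb P^n$ with $\phi^*T_{\mathbb P^n}\cong T_X$ via the Euler sequence. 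Thus $\phi$ is étale and finite, hence an isomorphism.

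I expect the main obstacle to be the step from ``projectively numerically flat'' to ``projectively Hermitian flat and split.'' This needs \cite[Theorem 4.2]{LOY24}, or the equality case in Assumption \ref{assump-equiv-v-BG} applied to the Jordan--Hölder factors, together with simple connectedness of Fano manifolds. The rest is the classical argument for the equality case of the Miyaoka--Yau inequality.
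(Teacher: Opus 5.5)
\textbf{Where your proposal agrees with the paper.} Up to the splitting $\mathscr V\cong\mathcal O_X(H)^{\oplus(n+1)}$ with $(n+1)H\sim -K_X$, your argument is the paper's. Proposition \ref{prop-num-proj-flat}(1) gives projective numerical flatness, and \cite[Theorem~1.7]{LOY24} upgrades this to projective flatness. That upgrade does not use simple connectedness; $\pi_1(X)=1$ is used only afterwards, to trivialize $\mathbb P(\mathscr V)$ and split $\mathscr V$. The paper then finishes in one line: $H$ is ample because $(n+1)H\sim -K_X$ is, so Kobayashi--Ochiai \cite{KO73} gives $X\cong\C\mathbb P^n$. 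You already have all of this in hand.

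\textbf{The gap in your last step.} You instead conclude through the morphism $\phi\colon X\to\mathbb P^n$ given by the sections $(s_0,\ldots,s_n)$, and there the key step is not justified as written. Comparing the cokernel of $\mathcal O_X\to\mathscr V$ with that of the pulled-back Euler sequence $\mathcal O_X\to\phi^*\mathcal O(1)^{\oplus(n+1)}$ only produces \emph{some} abstract isomorphism $T_X\cong\phi^*T_{\mathbb P^n}$. Nothing you wrote shows that this isomorphism is $d\phi$, so the claim that ``its differential is identified with the extension'' is unproved. Hence ``thus $\phi$ is \'etale'' does not follow. Finiteness of $\phi$ is also asserted without reason.

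\textbf{How to repair it.} First, $\phi^*\mathcal O_{\mathbb P^n}(1)\cong\mathcal O_X(H)$ is ample, so $\phi$ contracts no curve; it is therefore finite and, by dimension, surjective. Second, $\phi^*K_{\mathbb P^n}\cong\mathcal O_X(-(n+1)H)\cong K_X$. So $\det d\phi$ is a global section of the trivial bundle $K_X\otimes\phi^*K_{\mathbb P^n}^{-1}$, and it is not identically zero because $\phi$ is surjective. It is therefore a nonzero constant, so $d\phi$ is everywhere invertible and $\phi$ is finite \'etale. Since $\mathbb P^n$ is simply connected, $\phi$ is an isomorphism.

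\textbf{Comparison.} With this repair your ending is a valid, more hands-on alternative to Kobayashi--Ochiai. It buys nothing over the paper's determinant-plus-Kobayashi--Ochiai conclusion. The digression on nefness of the twisted $T_X$ and on the vanishing of the ordinary discriminant is not needed.
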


\begin{proof}
Equation \eqref{eq-equiv-MY-equality} means that equality holds in the equivariant Bogomolov--Gieseker inequality for $\mathscr V$.
Since $\mathscr V$ is $v$-semistable, Proposition \ref{prop-num-proj-flat} (1) implies that it is projectively numerically flat.
Thus $\mathscr V$ is projectively flat by \cite[Theorem~1.7]{LOY24}.
Since $X$ is a smooth Fano manifold, it is simply connected.
It follows that $\mathbb P(\mathscr V)$ is trivial, and hence
\(
\mathscr V\cong M^{\oplus(n+1)}
\)
for some line bundle $M$.
Taking determinants, we obtain
\[
\mathcal O_X(-K_X)
\cong\det\mathscr V
\cong M^{\otimes(n+1)}.
\]
Since $-K_X$ is ample, so is $M$.
The Kobayashi--Ochiai theorem \cite{KO73} therefore gives $X\cong\C\mathbb P^n$.
\end{proof}

\section{Counterexamples}
\label{sec-examples}

In this section, we give counterexamples to weighted semistability of the canonical extension and the equivariant Miyaoka-Yau inequality, even for K\"ahler--Ricci solitons (Examples \ref{ex-unstable-canonical-extension} and \ref{ex-negative-equiv-MY-sixfold}).
These counterexamples were suggested by AI and prompted the authors to revisit and revise their earlier paper. Although these computations are difficult to carry out by hand, they are straightforward using AI-based tools.

\subsection{Failure of the semistability of the canonical extension sheaf}
\begin{ex}
\label{ex-unstable-canonical-extension}
Let $N\coloneqq\Z e_1\oplus\Z e_2\oplus\Z e_3$ and $M\coloneqq N^\vee\simeq\Z^3$, and set $\T\coloneqq(S^1)^3$ and $\G\coloneqq(\C^*)^3$.
Consider
\(
X\coloneqq\mathbb P_{\C\mathbb P^2}\bigl(\mathcal O\oplus\mathcal O(2)\bigr).
\)
Since $X$ is a smooth toric Fano threefold, it admits a $\T$-invariant K\"ahler--Ricci soliton by \cite[Theorem~1.1]{WZ04}.
Let $\omega\in c_1(X)$ be the soliton metric, let $\xi_{\mathrm{KRS}}$ be its soliton vector, and set $v(\mu)\coloneqq e^{\mu_{\xi_{\mathrm{KRS}}}}$.
We show that the canonical extension sheaf is not semistable with respect to this exponential weight.

The primitive ray generators of the fan of $X$ can be chosen as
\[
v_0=(-1,-1,2),\quad v_1=(1,0,0),\quad v_2=(0,1,0),\quad v_3=(0,0,1),\quad v_4=(0,0,-1).
\]
The maximal cones are
\[
\sigma_{ij\varepsilon}\coloneqq\Cone(v_i,v_j,v_\varepsilon),
\qquad (i,j)\in\{(0,1),(1,2),(2,0)\},\quad \varepsilon\in\{3,4\}.
\]
Let $D_j$ be the torus-invariant prime divisor corresponding to the ray $\R_{\geq0}v_j$, and equip each $\mathcal O_X(D_j)$ with the linearization for which its canonical section is invariant.

Since $-K_X=\sum_{j=0}^{4}D_j$, the anticanonical polytope in the dual lattice coordinates $(u_1,u_2,y)$ is
\begin{align*}
P\coloneqq P_{-K_X}
&=\{m\in M_\R\mid\langle m,v_j\rangle\geq-1,\ j=0,\ldots,4\}\\
&=\{(u_1,u_2,y)\mid-1\leq y\leq1,\ u_1,u_2\geq-1,\ u_1+u_2\leq1+2y\}.
\end{align*}
Choose the moment map $\mu$ so that $\{\omega+\mu\}=c_1^\T(X)$. Notice that $\mu(X)=P$.
By \cite[Lemma~2.2]{WZ04}, the soliton vector $\xi_{\mathrm{KRS}}=(c_1,c_2,c_3)$ is the unique vector satisfying
\begin{equation}
\label{eq-unstable-weighted-barycenter}
\int_P u_i e^{c_1u_1+c_2u_2+c_3y}\,du_1\,du_2\,dy=0\quad(i=1,2),
\qquad
\int_P y e^{c_1u_1+c_2u_2+c_3y}\,du_1\,du_2\,dy=0.
\end{equation}
A direct computation gives $\xi_{\mathrm{KRS}}=(0,0,\tau)$, where $\tau$ is the unique real number satisfying $A'(\tau)=0$ for
\begin{equation}
\label{eq-defn-At-solition}
A(t)\coloneqq
\int_Pe^{ty}\,du_1\,du_2\,dy
=\frac12\int_{-1}^{1}(3+2y)^2e^{ty}\,dy.
\end{equation}
Using the Taylor expansion of the exponential function up to degree $30$ and estimating the remainder, we obtain
\(
A'(-1.47061)<0<A'(-1.47060).
\)
Since $A''>0$ and $A'(\tau)=0$, it follows that $-1.47061<\tau<-1.47060$.

Next, we explicitly construct the canonical extension sheaf in Definition \ref{defn-canonical-extension-equiv}.
The ordinary Picard group has basis $[D_0],[D_4]$, and
\[
[D_1]=[D_2]=[D_0],\qquad [D_3]=[D_4]-2[D_0],\qquad [-K_X]=[D_0]+2[D_4].
\]
Dualizing the Euler sequence in \cite[Theorem~8.1.6]{CLS10}, we obtain
\begin{equation}
\label{eq-unstable-toric-euler}
0\longrightarrow\mathcal O_X^{\oplus2}
\xrightarrow{\iota}\bigoplus_{j=0}^{4}\mathcal O_X(D_j)
\longrightarrow T_X\longrightarrow0,
\quad
\iota(a,b)\coloneqq(a,a,a,-2a+b,b).
\end{equation}
The extension class of this sequence is
\[
\left(c_1(\mathcal O_X(D_0)),\,c_1(\mathcal O_X(D_4))\right)
\in\operatorname{Ext}^1(T_X,\mathcal O_X^{\oplus2})
\simeq H^1(X,\Omega_X^1)^{\oplus2}.
\]
Taking the pushout along $\ell(a,b)\coloneqq a+2b$, we obtain the following commutative diagram:
\[
\xymatrix@C=20pt@R=20pt{
0 \ar[r] & \mathcal O_X^{\oplus2} \ar[r]^-{\iota} \ar[d]_{\ell}
& \bigoplus_{j=0}^{4}\mathcal O_X(D_j) \ar[r] \ar[d]^{\pi}
& T_X \ar[r] \ar@{=}[d] & 0\\
0 \ar[r] & \mathcal O_X \ar[r]
& \mathscr V \ar[r] & T_X \ar[r] & 0.
}
\]
The extension class of the bottom row is
\[
c_1(\mathcal O_X(D_0))+2c_1(\mathcal O_X(D_4))
=c_1(-K_X)=c_1(X).
\]
Thus $\mathscr V$ is the canonical extension sheaf, and
\[
\mathscr V\cong
\operatorname{coker}\left(
\mathcal O_X\xrightarrow{\rho}
\bigoplus_{j=0}^{4}\mathcal O_X(D_j)
\right),
\quad
\rho(c)\coloneqq(2c,2c,2c,-5c,-c).
\]
Since $D_3\cap D_4=\varnothing$, the cokernel description shows that $\mathcal O_X(D_1)\to\mathscr V$ is a $\T$-equivariant subbundle.
The composite $\mathcal O_X(D_1)\to\mathscr V\to T_X$ identifies $\mathcal O_X(D_1)$ with a saturated subsheaf of $T_X$.

We now compute the slope of $\mathcal O_X(D_1)$.
Since $c_1^\G(\mathcal O_X(D_1))\cap[X]_\G=[D_1]_\G$ and $\rk\mathcal O_X(D_1)=1$, we have
\begin{equation}
\label{eq-slope-D1}
\mu_v(\mathcal O_X(D_1))
=(c_1^\T(\mathcal O_X(D_1))\cdot e^{c_1^\T(X)})_\T(\xi_{\mathrm{KRS}})
\underalign{\text{(Prop. \ref{prop-semipositive-slope})}}{=}
(e^{c_1^\T(X)|_{D_1}})_\T(\xi_{\mathrm{KRS}})
\underalign{\text{(Lem. \ref{lem-intersection-chern})}}{=}
\int_{D_1}e^{\mu_{\xi_{\mathrm{KRS}}}}(\omega|_{D_1})^{[2]}.
\end{equation}
The moment map image of $D_1$ is
\[
\mu(D_1)=P\cap\{u_1=-1\}
=\{(-1,u_2,y)\mid-1\leq y\leq1,\ -1\leq u_2\leq2+2y\}.
\]
The pushforward of $(\omega|_{D_1})^{[2]}$ under the moment map is the lattice area measure $du_2\,dy$ on this facet
(cf.~\cite[Theorems~13.3.7 and~13.5.2]{CLS10}).
Thus, using $\mu_{\xi_{\mathrm{KRS}}}=\tau y$, we obtain
\[
\mu_v(\mathcal O_X(D_1))
\underalign{\eqref{eq-slope-D1}}{=}
\int_{-1}^{1}\int_{-1}^{2+2y}e^{\tau y}\,du_2\,dy
=\int_{-1}^{1}(3+2y)e^{\tau y}\,dy.
\]

On the other hand, the canonical normalization and $\rk\mathscr V=4$ give
\[
\mu_v(\mathscr V)
=\frac14(c_1^\T(X)\cdot e^{c_1^\T(X)})_\T(\xi_{\mathrm{KRS}})
=\frac34\int_Xe^{\mu_{\xi_{\mathrm{KRS}}}}\omega^{[3]}
\underalign{\eqref{eq-defn-At-solition}}{=}\frac34A(\tau)
=\frac38\int_{-1}^{1}(3+2y)^2e^{\tau y}\,dy.
\]
Using $-1.47061<\tau<-1.47060$ in these integral formulas and estimating the errors in the same way, we obtain
\[
\mu_v(\mathcal O_X(D_1))>5.9899>5.7547>\mu_v(\mathscr V).
\]
Therefore, $\mathscr V$ is not $v$-semistable.
The same subsheaf $\mathcal O_X(D_1)\subset T_X$, together with its lift to $\mathscr V$, also gives a counterexample to \cite[Theorem~7.2]{HL24}.
\end{ex}

\subsection{Failure of the Miyaoka--Yau inequality}

\begin{ex}
\label{ex-negative-equiv-MY-sixfold}
Let $X\coloneqq\mathbb P_{\C\mathbb P^5}(\mathcal O\oplus\mathcal O(5))$, and let $\T$ be the maximal compact torus of its acting algebraic torus.
Since $X$ is a smooth toric Fano sixfold, it admits a $\T$-invariant K\"ahler--Ricci soliton by \cite[Theorem~1.1]{WZ04}.
Let $\omega\in c_1(X)$ be the soliton metric, let $\xi_{\mathrm{KRS}}$ be its soliton vector, and set $v(\mu)\coloneqq e^{\mu_{\xi_{\mathrm{KRS}}}}$.

Denote the boundary divisors by $D_0,\ldots,D_7$, and equip each $\mathcal O_X(D_j)$ with the linearization for which its canonical section is invariant.
As in Example \ref{ex-unstable-canonical-extension}, choose the moment map $\mu$ so that $\{\omega+\mu\}=c_1^\T(X)$.
In suitable lattice coordinates $(u_1,\ldots,u_5,y)$, the anticanonical polytope is
\[
P\coloneqq P_{-K_X}
=\left\{(u_1,\ldots,u_5,y)\in\R^6\ \middle|\ -1\leq y\leq1,\ u_i\geq-1,\ \sum_{i=1}^5u_i\leq1+5y\right\}.
\]
Using the weighted barycenter condition as in that example, we obtain $\xi_{\mathrm{KRS}}=(0,0,0,0,0,\tau)$, where $\tau$ is the unique real number satisfying $A'(\tau)=0$ for
\begin{equation}
\label{eq-sixfold-defn-At-solition}
A(t)\coloneqq\int_Pe^{ty}\,du_1\cdots du_5\,dy
=\frac1{120}\int_{-1}^{1}(6+5y)^5e^{ty}\,dy.
\end{equation}
Using the Taylor expansion of the exponential function up to degree $60$ and estimating the remainder, we obtain $A'(-4.71627)<0<A'(-4.71625)$.
Since $A''>0$, it follows that $-4.71627<\tau<-4.71625$.

Next, we compute the equivariant intersection numbers.
The Euler sequence in \cite[Theorem~8.1.6]{CLS10} gives
\begin{equation}
\label{eq-sixfold-c2-integral-p1}
c_2^\T(X)=\sum_{0\leq i<j\leq7}c_1^\T(\mathcal O_X(D_i))c_1^\T(\mathcal O_X(D_j)).
\end{equation}
If $D_i\cap D_j$ is nonempty, it is a smooth submanifold of codimension $2$.
Thus the equivariant projection formula and Lemma \ref{lem-intersection-chern} give
\begin{equation}
\label{eq-sixfold-c2-integral-p2}
\left(c_1^\T(\mathcal O_X(D_i))c_1^\T(\mathcal O_X(D_j))\cdot e^{c_1^\T(X)}\right)_\T(\xi_{\mathrm{KRS}})
=\int_{D_i\cap D_j}e^{\mu_{\xi_{\mathrm{KRS}}}}(\omega|_{D_i\cap D_j})^{[4]}.
\end{equation}
It follows that
\begin{equation}
\label{eq-sixfold-c2-integral}
(c_2^\T(X)\cdot e^{c_1^\T(X)})_\T(\xi_{\mathrm{KRS}})
\underalign{\text{\eqref{eq-sixfold-c2-integral-p1} \& \eqref{eq-sixfold-c2-integral-p2}}}{=}
\frac52\int_{-1}^{1}(6+5y)^3e^{\tau y}\,dy
+\frac{e^{-\tau}+14641e^\tau}{4}.
\end{equation}
On the other hand,
\begin{equation}
\label{eq-sixfold-c1-square-integral}
(c_1^\T(X)^2\cdot e^{c_1^\T(X)})_\T(\xi_{\mathrm{KRS}})
=\frac1{120}\int_{-1}^{1}(30+12\tau y+\tau^2y^2)(6+5y)^5e^{\tau y}\,dy
\underalign{\text{(by $A'(\tau)=0$)}}{=}30A(\tau)+\tau^2A''(\tau).
\end{equation}
Using the above bounds on $\tau$ in \eqref{eq-sixfold-c2-integral} and \eqref{eq-sixfold-c1-square-integral}, and estimating the errors in the same way, we obtain
\[
\left(\left(14c_2^\T(X)-6c_1^\T(X)^2\right)\cdot e^{c_1^\T(X)}\right)_\T(\xi_{\mathrm{KRS}})<-14<0.
\]
Its numerical value is approximately $-14.03728176$.
\end{ex}

\bibliographystyle{alpha}
\bibliography{ref_MY_2_soliton.bib}
\end{document}